\documentclass[11pt]{amsart}

\usepackage{amscd,amsmath,amssymb,amsfonts,amsthm,ascmac}
\usepackage{enumerate,mathrsfs,stmaryrd,latexsym, comment, mathtools, mathdots} 

\usepackage{amsthm, amssymb}
\usepackage[leqno]{amsmath}
\usepackage{caption}
\usepackage{subcaption}
\usepackage{hyperref}
\usepackage{relsize}
\usepackage{booktabs}

\usepackage[all]{xy}

%%TIKZ packages
\usepackage{tikz-cd}
\usepackage{tikz}
\usetikzlibrary{snakes, 
	3d, matrix,decorations.pathreplacing,calc,decorations.pathmorphing}
\usepackage{tikz-3dplot}
	
%% Equation number
\numberwithin{equation}{section}
\allowdisplaybreaks[1]

% Caption
\captionsetup[subfigure]{labelformat=simple}

% Tags
\makeatletter
\newcommand{\leqnomode}{\tagsleft@true\let\veqno\@@leqno}
\newcommand{\reqnomode}{\tagsleft@false\let\veqno\@@eqno}
\makeatother

%%%%%%%%%%%%%%%%%%%%%%%%%%%%%%%%%%%%%%%%%%%%%%%%%%%%%%%%%%%%%%%%%%%%%%%%% 
% Eunjeong's short commands
%%%%%%%%%%%%%%%%%%%%%%%%%%%%%%%%%%%%%%%%%%%%%%%%%%%%%%%%%%%%%%%%%%%%%%%%% 

\newcommand{\defi}[1]{{\textit{#1}}}

\newcommand{\flag}{{\mathcal{F} \ell}}
\newcommand{\al}{{ \alpha}}
\newcommand{\C}{{\mathbb{C}}}

\newcommand{\R}{{\mathbb{R}}}

\newcommand{\Z}{{\mathbb{Z}}}
\newcommand{\Cstar}{{\C^{\ast}}}

\newcommand{\xii}[2]{{\xi^{{(#1)}}_{{#2}}}}
\newcommand{\bolda}[3]{{\mathbf{a}^{{(#1)}}_{{{#2}}, {{#3}}}}}

\DeclareMathOperator{\Pic}{Pic}
\DeclareMathOperator{\Lie}{Lie}

\DeclareMathOperator{\SL}{SL}
\DeclareMathOperator{\GL}{GL}

%%%%%%%%%%%%%%%%%%%%%%%%%%%%%%%%%%%%%%%%%%%%%%%%%%%%%%%%%%%%%%%%%%%%%%%%

\newtheorem{theorem}{Theorem}[section]
\newtheorem{lemma}[theorem]{Lemma}
\newtheorem{proposition}[theorem]{Proposition}
\newtheorem{corollary}[theorem]{Corollary}

\theoremstyle{definition}
\newtheorem{example}[theorem]{Example}
\newtheorem{definition}[theorem]{Definition}
\newtheorem{remark}[theorem]{Remark}

%%%%%%%%%%%%%%%%%%%%%%%%%%%%%%%%%%%%%%%%%%%%%%%%%%%%%%%%%%%%%%%%%%%%%%%%
\begin{document}

\author{Naoki Fujita}
\address{Graduate School of Mathematical Sciences, The University of Tokyo, 3-8-1 Komaba, Meguro-ku, Tokyo 153-8914, Japan}
\email{nfujita@ms.u-tokyo.ac.jp}

\author{Eunjeong Lee}
\address{Center for Geometry and Physics, Institute for Basic Science (IBS), Pohang 37673, Korea}
\email{eunjeong.lee@ibs.re.kr}

\author{Dong Youp Suh}
\address{Department of Mathematical Sciences
	\\ KAIST \\ 291 Daehak-ro Yuseong-gu \\ Daejeon \\ Republic of Korea 34141}
\email{dysuh@kaist.ac.kr}

%\keywords{} 

\subjclass[2010]{Primary: 05E10;
Secondary: 14M15, 	57S25}

\thanks{
Fujita was partially supported by Grant-in-Aid for JSPS Fellows (No. 16J00420).
Lee was partially supported by IBS-R003-D1.
Suh was partially supported by Basic
Science Research Program through the National Research Foundation of Korea (NRF) funded by the Ministry of Science, ICT \& Future Planning (No. 2016R1A2B4010823).
}

\title[Flag Bott--Samelson varieties]{Algebraic and geometric properties of \\
	flag Bott--Samelson varieties and \\
	applications to representations}

\maketitle

%%%%%%%%%%%%%%%%%%%%%%%%%%%%%%%%%%%%%%%%%%%%%%%%%%%%%%%%%%%%%%%%%%%%%%%%

\begin{abstract}
We define and study flag Bott--Samelson varieties which generalize both Bott--Samelson varieties and flag varieties.
Using a birational morphism from an appropriate Bott--Samelson variety to a flag Bott--Samelson variety, we compute the Newton--Okounkov bodies of  flag Bott--Samelson varieties as generalized string polytopes, which are applied to give polyhedral expressions for irreducible decompositions of  tensor products of  $G$-modules. Furthermore, we show that flag Bott--Samelson varieties degenerate into  flag Bott manifolds with higher rank torus actions, and we describe the Duistermaat--Heckman measures of the moment map images of  flag Bott--Samelson varieties with torus actions and invariant closed 2-forms.
\end{abstract}

%%%%%%%%%%%%%%%%%%%%%%%%%%%%%%%%%%%%%%%%%%%%%%%%%%%%%%%%%%%%%%%%%%%%%%%%
\section{Introduction}

Bott--Samelson varieties provide fruitful connections between representation theory and algebraic geometry. 
They are nonsingular towers of $\C P^1$-fibrations, and studied in \cite{BoSa58}, \cite{De74}, and \cite{Hansen73} to find resolutions of singularities of  Schubert varieties. 
Moreover, the set of global sections of a holomorphic line 
bundle over a Bott--Samelson variety is the dual of a \defi{generalized Demazure module}. This leads to worthwhile connections
between representation theory and algebraic geometry exemplified by
the character formulas of  Demazure modules in~\cite{And85, Ku87}, the standard monomial theory in~\cite{LMS79, LS91, LaRa08, Ses14}, and 
the theory of Newton--Okounkov bodies in~\cite{Fuj17, Kav15}.

On the other hand, for a given Bott--Samelson variety, it is presented by Grossberg and Karshon~\cite{GrKa94} that there is a complex one-parameter family of smooth varieties, which are all diffeomorphic, such that a generic fiber is the Bott--Samelson variety and the special fiber is a nonsingular toric variety, called a \defi{Bott manifold}. 
It should be noted that the Bott manifold is toric, while 
the Bott--Samelson variety is not toric in general.
Using this connection, Grossberg and Karshon~\cite{GrKa94} also introduces a combinatorial object, called \defi{Grossberg--Karshon twisted cubes}, which is used to compute the multiplicities of generalized Demazure modules  (see~\cite[Theorem~3]{GrKa94}).

One of the primary goals of this paper is to generalize the notion of Bott--Samelson varieties and to extend its rich connections with representation theory. Moreover, the generalization also supports the Grossberg--Karshon type degeneration into flag Bott manifolds. 
Indeed, we consider a \defi{flag Bott--Samelson variety} (see Definition~\ref{definition:flag Bott--Samelson variety}) which
is a nonsingular projective  tower of products of full flag manifolds. 
Moreover, under a certain condition, the flag Bott--Samelson variety  is a desingularization
of a Schubert variety. Because of the definition, 
both the full  flag varieties  and Bott--Samelson varieties are flag Bott--Samelson varieties. 
Hence we may regard flag Bott--Samelson varieties as the generalization of 
both flag varieties and Bott--Samelson varieties. 

This notion of flag Bott--Samelson varieties is not new. Actually, in~\cite{Jan07}, flag Bott--Samelson varieties are
treated in a more general setting without naming them. Indeed, flag Bott--Samelson varieties are iterated flag manifold fibrations but Jantzen~\cite{Jan07} considers iterated Schubert varieties fibrations. 
Perrin~\cite{Perrin07small} uses these varieties to obtain small resolutions of Schubert varieties. In fact, they are called \textit{generalized Bott--Samelson varieties} (see~\cite{brion2019minimal, brion2019some} and references therein). Moreover, flag Bott--Samelson varieties are generalized Bott--Samelson varieties (see Remark~\ref{rmk_Perrin_gBS}).

Let $G$ be a simply-connected semisimple algebraic group of rank $n$ over $\C$. 
Bott--Samelson varieties $Z_{\mathbf i}$ are parametrized by words $\mathbf i= (i_1,\dots,i_r)$, where $i_1,\dots,i_r$ are elements in the set $[n] \coloneqq \{1,\dots,n\}$. On the other hand,  flag Bott--Samelson varieties $Z_{\mathcal I}$ are parametrized by sequences $\mathcal I = (I_1,\dots,I_r)$ of subsets of $[n]$. 
Even though the class of  flag Bott--Samelson varieties is much larger than that of Bott--Samelson varieties, for each  flag Bott--Samelson variety, there exists a Bott--Samelson variety such that there is a birational morphism from the Bott--Samelson variety to the flag Bott--Samelson variety (see Proposition~\ref{prop_fBS_and_BS}).

Using the above mentioned birational morphism, we provide Theorem~\ref{thm_isom_between_holom_sections}, which describes the set of holomorphic sections of a holomorphic line bundle over the flag Bott--Samelson variety in terms of generalized Demazure modules. 
The theory of Newton--Okounkov bodies of projective varieties has been used to present a connection between representation theory and algebraic geometry (see \S\ref{subsect_NOBY} for the definition of Newton--Okounkov bodies).
The description of holomorphic sections of flag Bott--Samelson varieties is used to compute their Newton--Okounkov bodies. Indeed, using the result of Newton--Okounkov bodies of Bott--Samelson varieties by Fujita~\cite{Fuj18}, we obtain Theorem~\ref{thm_NOBY_of_fBS}, which shows that the Newton--Okounkov bodies of flag Bott--Samelson varieties with an appropriate valuation agree with generalized string polytopes up to sign.

One of the fundamental questions in group representation theory is to find the multiplicities of irreducible representations in the tensor product of two representations. Berenstein and Zelevinsky~\cite{BeZe01} describes the multiplicities in terms of the numbers of lattice points in some explicit rational convex polytope. 
In Theorem~\ref{c:tensor product multiplicity} we give a different description of the multiplicities using
the integral lattice points of the Newton--Okounkov bodies, hence generalized string polytopes, of flag
Bott--Samelson varieties. We notice that our results give concrete constructions of convex bodies, appearing in~\cite{KaKh12a}, which encode multiplicities of irreducible representations.

As is mentioned before, we degenerate the complex structures of flag Bott--Samelson varieties.
The notion of Bott manifolds is generalized to that of \defi{flag Bott manifolds} in terms of iterated flag manifold fibrations described in~\cite{KKLS, KLSS}. More precisely, a flag Bott manifold is the total space of an iterated flag manifold fibrations which are taken by the full flag fibration of a sum of line bundles (see Definition~\ref{def_fBT}). 
In general, a flag Bott manifold is not toric but admits an action of a certain torus.
For a given flag Bott--Samelson variety, we provide a complex one-parameter family of smooth varieties, which are all diffeomorphic, such that a generic fiber is the flag Bott--Samelson variety and the special fiber is a {flag Bott manifold} (see  Corollary~\ref{cor_fBS_are_fBT_general}). Moreover, when the Levi subgroup $L_{I_k}$ of the parabolic subgroup $P_{I_k}$ is of type $A$, we explicitly describe such flag Bott manifolds in Theorem~\ref{thm_fBS_is_fBT} in terms of the Chern classes of the line bundles used in the construction of the flag Bott manifold.

For a given flag Bott--Samelson variety, there exists a Bott--Samelson variety which is birationally equivalent to the flag Bott--Samelson variety. Moreover, using the result of Grossberg and Karshon~\cite{GrKa94}, and our one-parameter family, we obtain two manifolds: a flag Bott manifold and a Bott manifold, and a map between them. We study a relation between these manifolds.
Actually, considering torus actions on these manifolds, we describe the Duistermaat--Heckman measure of the flag Bott manifold with a certain closed $2$-form using a Grossberg--Karshon twisted cube  in Theorem~\ref{thm_DH_measure_of_fBS}. 

This paper is organized as follows.
In Section~\ref{sec_NOBY_of_fBS}, we provide the definition of flag Bott--Samelson varieties and their properties. In particular, we investigate a relation between flag Bott--Samelson varieties and Bott--Samelson varieties. Moreover, we describe the set of holomorphic sections of a line bundle over  a flag Bott--Samelson variety using generalized Demazure modules in Theorem~\ref{thm_isom_between_holom_sections}. Using this association, we describe the Newton--Okounkov bodies of flag Bott--Samelson varieties in Theorem~\ref{thm_NOBY_of_fBS}.
In Section~\ref{sec_appli_to_RT}, we give an application of Newton--Okounkov bodies of flag Bott--Samelson varieties to representation theory.
Indeed, we provide a way to compute the multiplicities of representations in the tensor product of representations counting certain lattice points in the Newton--Okounkov bodies of flag Bott--Samelson varieties in Theorem~\ref{c:tensor product multiplicity}. 
In Section~\ref{sec_fBS_and_fBT}, we present a Grossberg--Karshon type degeneration of complex structures on flag Bott--Samelson varieties, and explicitly describe the corresponding flag Bott manifold when all Levi subgroups of parabolic subgroups $P_{I_k}$ are of type $A$ in Theorem~\ref{thm_fBS_is_fBT}. 
In Section~\ref{sec_torus_actions_on_fBS}, we study torus actions on flag Bott manifolds which are obtained by the degeneration of flag Bott--Samelson manifolds. Moreover, we describe the Duistermaat--Heckman measure of flag Bott manifolds using Grossberg--Karshon twisted cubes  in Theorem~\ref{thm_GK_DH_measure}. 

\smallskip
\noindent\textbf{Acknowledgments.}
We are grateful to the anonymous referee for a careful reading and helpful comments to improve our initial manuscript.
%%%%%%%%%%%%%%%%%%%%%%%%%%%%%%%%%%%%%%%%%%%%%%%%%%%%%%%%%%%%%%%%%%%%%%%
\section{Newton--Okounkov bodies of flag Bott--Samelson varieties}
\label{sec_NOBY_of_fBS}
%%%%%%%%%%%%%%%%%%%%%%%%%%%%%%%%%%%%%%%%%%%%%%%%%%%%%%%%%%%%%%%%%%%%%%%%
%%%%%%%%%%%%%%%%%%%%%%%%%%%%%%%%%%%%%%%%%%%%%%%%%%%%%%%%%%%%%%%%%%%%%%%%
\subsection{Definition of flag Bott--Samelson varieties}
\label{subsec_def_of_fBS}
%%%%%%%%%%%%%%%%%%%%%%%%%%%%%%%%%%%%%%%%%%%%%%%%%%%%%%%%%%%%%%%%%%%%%%%%
In this subsection we introduce flag Bott--Samelson varieties
which generalize both Bott--Samelson varieties and flag varieties, 
and study their properties. 
We notice that the notion of flag Bott--Samelson varieties is already
considered in Jantzen's book~\cite[II.13]{Jan07}
without naming it. 

Let $G$ be a simply-connected semisimple algebraic group of rank $n$ over $\mathbb C$. Choose a Cartan subgroup $H$, and let $\mathfrak{g} = \mathfrak{h} 
\oplus \sum_{\alpha} \mathfrak{g}_{\alpha}$ be the decomposition of the Lie algebra $\mathfrak{g}$ of $G$ into root spaces, where $\mathfrak{h}$ is the Lie algebra of $H$.
Let $\Delta \subset \mathfrak{h}^{\ast}$ denote the roots of $G$. Choose a set of 
positive roots $\Delta^+ \subset \Delta$, and let $\Sigma = \{\alpha_1,\dots,\alpha_n\} \subset \Delta^+$ denote the simple roots. Let $\Delta^- \coloneqq - \Delta^+$ be the set of negative roots. 
Let $B$ be the Borel subgroup whose Lie algebra is 
$\mathfrak{b} = \mathfrak{h} \oplus \sum_{\alpha \in \Delta^+} \mathfrak{g}_{\alpha}$.
Let
$\{\alpha_1^{\vee},\dots,\alpha_n^{\vee}\}$ denote the coroots, and 
$\{\varpi_1,\dots,\varpi_n\}$ the fundamental weights which are characterized by the relation $\langle \varpi_i, \alpha_j^{\vee} \rangle = \delta_{ij}$. Here, $\delta_{ij}$ denotes the Kronecker symbol. Let $s_i \in W$ denote the simple reflection in the Weyl group $W$ of $G$ corresponding to the simple root $\alpha_i$. 

For a subset $I$ of $[n]\coloneqq \{1,\dots,n\}$, define the subtorus $H_I \subset H$ as
\begin{equation}\label{eq_def_of_HI}
H_I \coloneqq \{h \in H \mid \alpha_i(h) = 1 \text{ for all } i \in I\}^0.
\end{equation}
Here, for a group $G$, $G^0$ is the connected component which contains the identity element of $G$.
Then the centralizer $C_{G}(H_I)
= \{g \in G \mid g h = h g \text{ for all } h \in H_I\}$ of $H_I$
is a connected reductive subgroup of $G$ whose
Weyl group is isomorphic to $W_I \coloneqq \langle s_{i} \mid i \in I \rangle$. 
We set $L_I \coloneqq C_{G}(H_I)$ for simplicity.
Then the Borel subgroup $B_I$ of $L_I$ is $B \cap L_I$ (see~\cite[\S 8.4.1]{Springer09}).
Let $\Delta_I$ be the subset $\Delta \cap \text{span}_{\mathbb Z}\{\alpha_i \mid i \in I \}$ of $\Delta$. 
The set of roots $\Delta^+ \setminus \Delta_I$ defines 
the unipotent subgroup $U_I$ of $G$ satisfying the condition
\[
\Lie(U_I) = \bigoplus_{\alpha \in \Delta^+ \setminus \Delta_I} \mathfrak{g}_{\alpha}.
\]
The \defi{parabolic subgroup} $P_I$ of $G$ corresponding to $I$ is
defined to be $P_I \coloneqq L_I U_I$. The subgroup $L_I$ is called 
a \defi{Levi subgroup} of $P_I$. 

Note that the Lie algebra of the parabolic subgroup $P_I$ is 
\[
\Lie (P_I) =  \mathfrak{b}\oplus \bigoplus_{\alpha \in \Delta^- \cap \Delta_I} 
\mathfrak{g}_{\alpha}.
\]
Moreover the parabolic subgroup $P_I$ can be described that
\[
P_I = \bigcup_{w \in W_I} BwB = \overline{B w_I B} \subset G,
\]
where $w_I$ be the longest element in $W_I$ (see~\cite[Theorem~8.4.3]{Springer09}).

We now define the flag Bott--Samelson variety using a sequence of parabolic subgroups.
Let $\mathcal{I} = (I_1,\dots,I_r)$ be a sequence of 
subsets of $[n]$, and let
$\mathbf{P}_{\mathcal I} = P_{I_1} \times \cdots \times P_{I_r}$.
Define a right action $\Theta$ of $B^r = \underbrace{B \times \cdots \times B}_r$ on $\mathbf P_{\mathcal I}$ as
\begin{equation}\label{eq_Bm-action_defining_fBS}
\Theta((p_1,\dots,p_r) , (b_1,\dots,b_r))
= (p_1b_1, b_1^{-1}p_2b_2,\dots, b_{r-1}^{-1}p_rb_r)
\end{equation}
for $(p_1,\dots,p_r) \in \mathbf{P}_{\mathcal I}$ and
$(b_1,\dots,b_r) \in B^r$.
\begin{definition}\label{definition:flag Bott--Samelson variety}
	Let $\mathcal{I} = (I_1,\dots,I_r)$ be a sequence of 
	subsets of $[n]$.
	The \defi{flag Bott--Samelson variety} $Z_{\mathcal I}$ is defined 
	to be the 	orbit space 
	\[
	Z_{\mathcal I} \coloneqq \mathbf{P}_{\mathcal I}/\Theta.
	\]
\end{definition}
For instance, suppose that $\mathcal I = ([n])$. Then we have 
$\mathbf P_{\mathcal I} = G$ and the action $\Theta$
is the right multiplication of $B$. Therefore
the flag Bott--Samelson variety $Z_{\mathcal I}$
is the flag variety $G/B$.  Moreover, for the case when $|I_k|=1$ for all $k$, the flag Bott--Samelson 
variety is a \defi{Bott--Samelson variety},
see~\cite{BoSa58} for the definition of Bott--Samelson variety.
In this case we use a sequence $(i_1,\dots,i_r)$ of elements of $[n]$
rather than $(\{i_1\}, \dots, \{i_r\})$, and we write 
$Z_{(i_1,\dots,i_r)}$ for
the corresponding Bott--Samelson variety. 

For the subsequence $\mathcal I' = (I_1,\dots,I_{r-1})$ of $\mathcal I$, 
there is a fibration structure
on the flag Bott--Samelson variety $Z_{\mathcal I}$:
\begin{equation}\label{eq_bdle_str_on_fBS}
P_{I_r}/B \hookrightarrow 
Z_{\mathcal I} \stackrel{\pi}{\longrightarrow} Z_{\mathcal I'},
\end{equation}
where the projection map 
$\pi \colon Z_{\mathcal I} \to Z_{\mathcal I'}$ is defined as 
\[
\pi([p_1,\dots,p_{r-1},p_r]) = [p_1,\dots,p_{r-1}].
\]
On the other hand, we have that
\[
Z_{\mathcal I} = P_{I_1} \times^B Z_{(I_2,\dots,I_r)}.
\]

\begin{remark}\label{rmk_Perrin_gBS}
	For a finite sequence $\widehat{w} = (w_1,\dots,w_r)$ of elements of $W$, 
	Perrin~\cite{Perrin07small} considers a tower $\widehat{X}(\widehat{w})$ of Schubert varieties $X(w_1),\dots,X(w_r)$ fibrations and call it a \defi{generalized Bott--Samelson variety}.
	Let $\mathcal I = (I_1,\dots,I_r)$ be a sequence of subsets of $[n]$. When we take $w_k$ to be the longest element in $W_{I_k}$ for $1 \leq k \leq r$,  the generalized Bott--Samelson variety $\widehat{X}(w_1,\dots,w_r)$ is the flag Bott--Samelson variety $Z_{\mathcal I}$.
	Indeed, using the notations in~\cite[\S 5.2]{Perrin07small},  we obtain
	$P^{w_k} = P_{[n] \setminus I_k}, P_{w_k} = P_{I_k}, G_{w_k} = L_{I_k}$.	Therefore, $P^{w_k} \cap G_{w_k}$ is a Borel subgroup $B_{I_k}$ of $ L_{I_k}$ and $P_{w_k} \cap  L_{I_k} =  L_{I_k}$. Thus, we obtain
	\[
	\begin{split}
	\widehat{X}(w_1,\dots,w_r)  &= \overline{(P_{w_1} \cap G_{w_1}) w_1 (P^{w_1} \cap G_{w_1})} \times^{(P^{w_1} \cap G_{w_1})} \widehat{X}(w_2,\dots,w_{r}) \\
	&= \overline{ B_{I_1} w_1 L_{I_1}} \times^{B_{I_1}} \widehat{X}(w_2,\dots,w_{r}) \\
	&= L_{I_1} \times^{B_{I_1}} \widehat{X}(w_2,\dots,w_{r}) \\
	&\simeq P_{I_1} \times^B\widehat{X}(w_2,\dots,w_{r}).
	\end{split}
	\]
	Continuing this procedure, we get $\widehat{X}(w_1,\dots,w_r) \simeq Z_{\mathcal I}$. This shows that flag Bott--Samelson varieties are generalized Bott--Samelson varieties. Because we are considering sequences $\widehat{w} = (w_1,\dots,w_r)$ consisting of  longest elements, not all generalized Bott--Samelson varieties are flag Bott--Samelson varieties.
\end{remark}

Let $w_k \in W_{I_k}$ be the longest element in $W_{I_k}$
for $1 \leq k \leq r$.
Consider the following subset of $\mathbf{P}_{\mathcal I}$:
\[
\mathbf{P}_{\mathcal I}' \coloneqq 
B w_{1} B \times \cdots \times 
B w_{r} B \subset \mathbf{P}_{\mathcal I}. 
\]
One can check that $\mathbf{P}_{\mathcal I}'$ is closed under
the action $\Theta$ of $B^r$ in~\eqref{eq_Bm-action_defining_fBS}, 
so we consider the orbit space
\[
Z_{\mathcal I}' \coloneqq \mathbf{P}_{\mathcal I}'/\Theta.
\]
It is known that flag Bott--Samelson varieties $Z_{\mathcal I}$ have
following properties (see~\cite[II.13]{Jan07} for details).
\begin{proposition}\label{prop_properties_of_fBS}
	Let $\mathcal{I} = (I_1,\dots,I_r)$ be a sequence of 
	subsets of $[n]$. Then the flag Bott--Samelson variety $Z_{\mathcal I}$ has following properties:
	\begin{enumerate}
		\item $Z_{\mathcal I}$ is 
		a smooth projective variety.
		\item $Z_{\mathcal I}'$ is a dense open subset in
		$Z_{\mathcal I}$.
		\item $Z_{\mathcal I}' \simeq 
		\mathbb{C}^{\sum_{k=1}^r \ell(w_{k})}$,
		where $\ell(w_k)$ is the length of the element $w_k$.
	\end{enumerate}
\end{proposition}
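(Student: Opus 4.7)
The plan is to prove all three statements simultaneously by induction on the length $r$ of the sequence $\mathcal I$, using the fibration structure~\eqref{eq_bdle_str_on_fBS} and the description $Z_{\mathcal I} = P_{I_1} \times^B Z_{(I_2,\dots,I_r)}$. For the base case $r=1$, the action $\Theta$ is just the right translation of $B$ on $P_{I_1}$, so $Z_{(I_1)} = P_{I_1}/B$, which is the partial flag variety associated with the parabolic subgroup $P_{I_1}$; this is well known to be smooth and projective, and its open Schubert cell $B w_1 B/B$ is isomorphic to $\mathbb C^{\ell(w_1)}$ via the big-cell parametrization.

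For the inductive step for (1), I would verify that $\Theta$ is free and proper (both follow from the fact that right multiplication by $B$ on any $P_{I_k}$ is free and proper, iterated along the $r$ factors), so that the geometric quotient exists. Then, presenting $Z_{\mathcal I} = P_{I_1} \times^B Z_{(I_2,\dots,I_r)}$ exhibits $Z_{\mathcal I}$ as the total space of a Zariski-locally-trivial fibration
\[
Z_{(I_2,\dots,I_r)} \hookrightarrow Z_{\mathcal I} \longrightarrow P_{I_1}/B,
\]
with smooth projective base and (by induction) smooth projective fiber. Local triviality of $P_{I_1} \to P_{I_1}/B$ implies the same for the associated bundle, and smooth projectivity is preserved under such fibrations.

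For (2), I would invoke the Bruhat decomposition of the parabolic $P_{I_k} = \bigsqcup_{w \in W_{I_k}} BwB$; since $w_k$ is the longest element of $W_{I_k}$, the stratum $Bw_kB$ has maximal dimension and is therefore the unique open dense $B$-$B$ double coset in $P_{I_k}$. Consequently $\mathbf P_{\mathcal I}' = Bw_1B \times \cdots \times Bw_rB$ is open and dense in $\mathbf P_{\mathcal I}$, and as it is $\Theta$-stable, its image $Z_{\mathcal I}'$ is open and dense in $Z_{\mathcal I}$ because the quotient map $\mathbf P_{\mathcal I} \to Z_{\mathcal I}$ is open.

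For (3), the key technical input is the parametrization $Bw_kB = U_{w_k}\, w_k B$, where $U_{w_k} \coloneqq U \cap w_k U^- w_k^{-1}$ is a unipotent group isomorphic to $\mathbb C^{\ell(w_k)}$; the map $U_{w_k} \times B \to Bw_kB$, $(u,b) \mapsto u w_k b$, is an isomorphism of varieties. Using this, for any $B$-variety $X$ one has a natural isomorphism
\[
Bw_kB \times^B X \;\xrightarrow{\sim}\; U_{w_k} \times X, \qquad [u w_k b, x] \;\longmapsto\; (u, b\cdot x).
\]
Applying this identification iteratively to the description
\[
Z_{\mathcal I}' \;=\; Bw_1B \times^B \bigl(Bw_2B \times^B \bigl(\cdots \times^B (Bw_rB/B)\bigr)\bigr)
\]
yields $Z_{\mathcal I}' \simeq U_{w_1} \times U_{w_2} \times \cdots \times U_{w_r} \simeq \mathbb C^{\sum_{k=1}^r \ell(w_k)}$, as required. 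The main subtlety to handle carefully is verifying that the iterated cross-section $(u_1 w_1, u_2 w_2, \ldots, u_r w_r)$ genuinely picks out a unique representative in each $\Theta$-orbit; this reduces to the uniqueness of the big-cell decomposition for each factor and is straightforward once unwound, but is the only point where one cannot simply quote a black box.
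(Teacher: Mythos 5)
The paper does not prove this proposition; it merely cites \cite[II.13]{Jan07}, and the isomorphism $\psi_{\mathcal I} \colon U(v_1)\times\cdots\times U(v_r)\xrightarrow{\sim} Z_{\mathcal I}'$ invoked later in the proof of Proposition~\ref{prop_fBS_and_BS} is exactly the iterated big-cell parametrization you construct in part~(3), with $U_{w_k}=U\cap w_kU^-w_k^{-1}$ playing the role of $U(v_k)$. Your induction via $Z_{\mathcal I}=P_{I_1}\times^B Z_{(I_2,\dots,I_r)}$, the Bruhat decomposition for~(2), and the $Bw_kB\times^B X\simeq U_{w_k}\times X$ isomorphism for~(3) is precisely the standard Jantzen-style argument the paper is relying on, and it is correct.
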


Consider the multiplication map 
\begin{equation}\label{eq_fBS_to_flag}
\eta \colon Z_{\mathcal I} \to G/B,
\quad [p_1,\dots,p_r] \mapsto p_1 \cdots p_r
\end{equation}
which is a well-defined morphism. 
The following proposition says that
certain flag Bott--Samelson varieties 
are birationally equivalent to Schubert varieties 
via the map $\eta$.
\begin{proposition}[{\cite[II.13.5]{Jan07}}]\label{prop_fBS_and_flag}
	Let $\mathcal{I} = (I_1,\dots,I_r)$ be a sequence of 
	subsets of $[n]$, and
	let $w_{k} \in W_{I_k}$ be the longest element in $W_{I_k}
	= \langle s_i \mid i \in I_k \rangle$.
	Set $ w= w_{1} \cdots w_{r}$. If 
	$\ell(w) = \ell(w_1) + \cdots + \ell(w_r)$, then the morphism $\eta$
	induces an isomorphism between
	$Z_{\mathcal I}'$ and $B w B/B \subset G/B$.
	Indeed, the morphism $\eta$ maps $Z_{\mathcal I}$ birationally onto its image
	$X(w) \coloneqq \overline{B w B/B} \subset G/B$. 
\end{proposition}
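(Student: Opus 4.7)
The plan is to reduce the assertion to the standard multiplication formula for Bruhat cells under length-additivity. The key input I would use (or prove by a short induction on the number of simple reflections in $v$) is that for $u,v\in W$ with $\ell(uv)=\ell(u)+\ell(v)$, multiplication induces an isomorphism of varieties
\[
BuB \times^{B} BvB \xrightarrow{\ \sim\ } BuvB,
\]
where $\times^{B}$ denotes the balanced product with respect to the right action of $B$ on the first factor and the left action of $B$ on the second. Concretely, using the parametrization $BwB = U_{w}\cdot w\cdot B$ with $U_{w}=U\cap wU^{-}w^{-1}$ of dimension $\ell(w)$, the length-additivity hypothesis guarantees that $U_{u}\cdot (uU_{v}u^{-1})$ is a direct product inside $U_{uv}$, which upgrades the well-known set-theoretic identity $BuB\cdot BvB = BuvB$ to the required isomorphism of balanced products.

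Iterating this fact under the hypothesis $\ell(w)=\sum_{k=1}^{r}\ell(w_{k})$ gives an isomorphism
\[
\mu\colon Bw_{1}B \times^{B} Bw_{2}B \times^{B}\cdots\times^{B} Bw_{r}B \xrightarrow{\ \sim\ } BwB.
\]
I would then unwind the definition of $Z_{\mathcal I}'$ to identify its source with this iterated balanced product. Splitting the $B^{r}$-action $\Theta$ of~\eqref{eq_Bm-action_defining_fBS} into the subgroup $B^{r-1}\times\{1\}$ acting by the first $r-1$ entries and the residual $\{1\}^{r-1}\times B$ acting on the last slot, one sees directly from~\eqref{eq_Bm-action_defining_fBS} that the partial quotient $\mathbf{P}_{\mathcal I}'/B^{r-1}$ is exactly the iterated balanced product, while the residual $B$-action corresponds via $\mu$ to right multiplication of $B$ on $BwB$. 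Hence
\[
Z_{\mathcal I}' \;=\; \mathbf{P}_{\mathcal I}'/\Theta \;\simeq\; BwB/B,
\]
and tracing through the definitions shows that this isomorphism is induced by $\eta$ of~\eqref{eq_fBS_to_flag}, giving the first assertion.

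For the birationality statement, I would invoke Proposition~\ref{prop_properties_of_fBS}: $Z_{\mathcal I}$ is projective, so $\eta(Z_{\mathcal I})$ is closed in $G/B$; it contains $\eta(Z_{\mathcal I}') = BwB/B$, whose closure is $X(w)$, so $\eta(Z_{\mathcal I})=X(w)$. Since $Z_{\mathcal I}'$ is dense open in $Z_{\mathcal I}$ and $BwB/B$ is dense open in $X(w)$, the isomorphism $Z_{\mathcal I}'\xrightarrow{\sim} BwB/B$ shows that $\eta$ is birational onto $X(w)$. The main obstacle is the iterated-balanced-product isomorphism of the first paragraph: the underlying set-theoretic identity of cells is immediate from BN-pair axioms, but establishing it as a scheme-theoretic isomorphism (as opposed to just a bijection on closed points) requires the explicit $U_{w}$-parametrization and a careful verification that length-additivity makes the corresponding multiplication map between affine spaces an algebraic isomorphism, not merely a birational one.
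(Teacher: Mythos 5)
Your proposal is correct and takes essentially the same route as Jantzen's \cite[II.13.5]{Jan07}, which the paper cites rather than reproving: under length-additivity the iterated $U(w)$-factorization $U_{uv}=U_u\cdot(uU_vu^{-1})$ underlies both your balanced-product isomorphism $Bw_1B\times^B\cdots\times^B Bw_rB\xrightarrow{\sim}BwB$ and Jantzen's coordinate map $\psi_{\mathcal I}\colon U(w_1)\times\cdots\times U(w_r)\xrightarrow{\sim}Z_{\mathcal I}'$ that the paper later invokes in the proof of Proposition~\ref{prop_fBS_and_BS}. Your closing step---projectivity of $Z_{\mathcal I}$ forcing $\eta(Z_{\mathcal I})$ to be the closed set $X(w)$, combined with the open-cell isomorphism to conclude birationality---is the standard argument and is correct.
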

\begin{example}\label{example_biratioanl_morphism_fBS_and_Sch_var}
	Let $G = \SL(4)$.
	\begin{enumerate}
		\item Suppose that $\mathcal I_1 = 	(\{1\}, \{2\}, \{1\}, \{3\})$.
		Then we have 
		$w_1 = s_1, w_2 = s_2, w_3 = s_1, w_4 = s_3$, 
		and $w = s_1s_2s_1s_3$, which 
		is a reduced decomposition. Hence the morphism
		$\eta$ gives a birational morphism between
		$Z_{\mathcal I_1}$ and $X(s_1s_2s_1s_3)$.	
		\item Let $\mathcal I_2 = 
		(\{ 1,2 \}, \{3\})$. 
		Then we have that $w_1 = s_1s_2s_1$, $w_2 = s_3$,
		and $w = w_1w_2 = s_1s_2s_1s_3$. Again, this is a reduced
		decomposition, so the morphism $\eta$ gives a birational
		morphism between $Z_{\mathcal I_2}$ and $X(s_1s_2s_1s_3)$. 
	\end{enumerate}
\end{example}
\begin{remark}
	Example~\ref{example_biratioanl_morphism_fBS_and_Sch_var} 
	gives two different choices of flag Bott--Samelson varieties
	each of which has a birational morphism onto the same Schubert variety
	$X(s_1s_2s_1s_3)$. For a given Schubert variety $X(w)$, 
	there are different choices of flag Bott--Samelson varieties 
	which define birational morphisms
	onto $X(w)$, and there are several studies about such different choices.
	See, for example,~\cite{El97, Es16, Ten06}.
\end{remark}

We now define a multiplication
map between two flag Bott--Samelson varieties. Let 
\begin{equation}\label{eq_set_J}
\mathcal J = (J_{1,1},
\dots,J_{1,N_1},
\dots,J_{r,1},\dots,J_{r,N_r})
\end{equation}
be a sequence of subsets of $[n]$ such that 
$J_{k,l} \subset I_k$ for $1 \leq l \leq N_k$ and $1 \leq k \leq r$. 
Since each $J_{k,l}$ is contained in $I_k$,
we have $P_{J_{k,l}} \subset P_{I_k}$
 by the definition of parabolic subgroups.
Hence we have a multiplication map 
\begin{equation}\label{eq_mul_two_fBS}
\eta_{\mathcal J,\mathcal I}
\colon Z_{\mathcal J} \to Z_{\mathcal I}
\end{equation}
defined as 
\[
[(p_{k,l})_{1 \leq k \leq r,1\leq l \leq N_k}]
\mapsto 
\left[ 
\prod_{l=1}^{N_1} p_{1,l},\dots,
\prod_{l = 1}^{N_r} p_{r,l}
\right].
\]
The following proposition describes 
a birational morphism between two flag Bott--Samelson
varieties.
\begin{proposition}\label{prop_fBS_and_BS}
Let $\mathcal{I} = (I_1,\dots,I_r)$ be a sequence of 
subsets of $[n]$, and let
	$\mathcal J = (J_{1,1},\dots,J_{1,N_1},
	\dots,J_{r,1},\dots,J_{r,N_r})$ 
	be a sequence of subsets of $[n]$ such that 
	$J_{k,1},\dots,J_{k,N_k} \subset I_k$ for $1 \leq k \leq r$.
	Let $w_{k,l}$, respectively $v_k$, be the longest element in $W_{J_{k,l}}$, respectively in $W_{I_k}$. 
	Suppose that $w_{{k,1}} \cdots w_{k,N_k} = v_k$ and 
	$\ell(w_{k,1}) + \cdots + \ell(w_{k,N_k}) = \ell(v_k)$
	for $1\leq k\leq r$. 
	Then the multiplication map $\eta_{\mathcal J,\mathcal I}
	\colon Z_{\mathcal J} \to Z_{\mathcal I}$ 
	in~\eqref{eq_mul_two_fBS} induces an isomorphism 
	between dense open subsets $Z_{\mathcal J}' 
	\stackrel{\sim}{\longrightarrow} Z_{\mathcal I}'$. 
\end{proposition}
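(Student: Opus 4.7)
My plan is to reduce the proposition to the classical Bruhat-decomposition fact that, whenever lengths are additive, iterated multiplication of Bruhat cells is an isomorphism of $B$-varieties. Concretely, the key input is: for any sequence $u_1,\dots,u_m \in W$ with $\ell(u_1\cdots u_m)=\ell(u_1)+\cdots+\ell(u_m)$, the multiplication map
\[
Bu_1B \times^B Bu_2B \times^B \cdots \times^B Bu_mB \;\longrightarrow\; Bu_1\cdots u_mB,\qquad [g_1,\dots,g_m]\mapsto g_1\cdots g_m,
\]
is a $B$-equivariant isomorphism of varieties. Applied within block $k$ to the sequence $(w_{k,1},\dots,w_{k,N_k})$, whose product is $v_k$ with additive length by hypothesis, this yields a $B$-equivariant isomorphism
\[
\mu_k\colon Bw_{k,1}B\times^B\cdots\times^B Bw_{k,N_k}B\xrightarrow{\;\sim\;} Bv_kB.
\]

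Next, I would rewrite both open subsets as iterated balanced products. Unwinding the definition of the $B^r$-action $\Theta$ in~\eqref{eq_Bm-action_defining_fBS}, together with the description $\mathbf P_{\mathcal I}' = Bv_1B\times\cdots\times Bv_rB$ and its counterpart for $\mathcal J$, gives
\[
Z_{\mathcal I}'=Bv_1B\times^B\cdots\times^B Bv_rB\,\big/\,B,\qquad Z_{\mathcal J}'=Bw_{1,1}B\times^B\cdots\times^B Bw_{r,N_r}B\,\big/\,B,
\]
where the rightmost slash in each case is the quotient by the residual right $B$-action on the last factor. Because the iterated balanced product is associative, the $N=\sum_k N_k$ factors of $Z_{\mathcal J}'$ can be regrouped into $r$ consecutive blocks of sizes $N_1,\dots,N_r$. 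Applying $\mu_k$ simultaneously inside each block, and using the $B$-equivariance of $\mu_k$ to descend through the remaining $\times^B$-identifications and the final $/B$, I obtain an isomorphism onto $Bv_1B\times^B\cdots\times^B Bv_rB/B=Z_{\mathcal I}'$. Comparing with the formula for $\eta_{\mathcal J,\mathcal I}$ in~\eqref{eq_mul_two_fBS}, this assembled isomorphism is exactly the restriction of $\eta_{\mathcal J,\mathcal I}$ to $Z_{\mathcal J}'$; in particular $\eta_{\mathcal J,\mathcal I}$ really does carry $Z_{\mathcal J}'$ into $Z_{\mathcal I}'$, consistent with $Bw_{k,1}B\cdots Bw_{k,N_k}B = Bv_kB$ under the length-additivity.

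I expect the main obstacle to be the careful bookkeeping of the various $B$-actions during the regrouping of the iterated balanced products: one must verify that each quotient appearing along the way is a genuine geometric quotient and that associativity is compatible with the leftover right $B$-actions used downstream. Freeness of the relevant $B$-actions follows from the explicit affine-cell description in Proposition~\ref{prop_properties_of_fBS}, and a dimension count using $\sum_{k,l}\ell(w_{k,l})=\sum_k\ell(v_k)$ matches the dimensions of source and target. Once this bookkeeping is settled, the argument is a block-by-block application of the classical Bruhat-cell isomorphism recalled above.
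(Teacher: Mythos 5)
Your proposal is correct and arrives at the same isomorphism, but it is organized at a higher level of abstraction than the paper's proof. The paper works directly in explicit root-group coordinates: it recalls the isomorphism $\psi(w)\colon U_{\alpha_{i_1}}\times\cdots\times U_{\alpha_{i_N}}\xrightarrow{\ \sim\ }U(w)$ attached to a reduced word $w=s_{i_1}\cdots s_{i_N}$ together with Jantzen's chart $\psi_{\mathcal I}\colon U(v_1)\times\cdots\times U(v_r)\xrightarrow{\ \sim\ }Z'_{\mathcal I}$, then builds the block maps $\psi_k\colon U(w_{k,1})\times\cdots\times U(w_{k,N_k})\xrightarrow{\ \sim\ }U(v_k)$ and finishes with a commutative square identifying $\eta_{\mathcal J,\mathcal I}|_{Z'_{\mathcal J}}$ with $\psi_1\times\cdots\times\psi_r$ under those coordinates. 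By staying in these affine charts the paper sidesteps entirely the balanced-product associativity and geometric-quotient bookkeeping you flag as the main obstacle. Your route instead takes the length-additive iterated Bruhat-cell multiplication $Bu_1B\times^B\cdots\times^B Bu_mB\xrightarrow{\ \sim\ }Bu_1\cdots u_mB$ as a black-box input, writes both open cells $Z'_{\mathcal J}$ and $Z'_{\mathcal I}$ as iterated balanced products, regroups by associativity, and applies the block maps $\mu_k$, using $B$-equivariance to descend. This is conceptually cleaner, but it does shift weight onto verifying that the quotients are honest varieties and that regrouping commutes with the residual right $B$-action; as you note, freeness of the relevant $B$-actions and the already-established variety structure on the quotients (from the definition of flag Bott--Samelson varieties) resolve this. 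Since your black-box input is itself proved by exactly the root-group computation that the paper performs explicitly, the two arguments are equivalent in substance; the paper's version is simply more self-contained given the machinery it has already set up.
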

There always exists a sequence $(i_{k,1},\dots,i_{k,N_k}) \in [n]^{N_k}$ which is a reduced word for the longest element in~$W_{I_k}$ for $1 \leq k \leq r$. Concatenating such sequences we get a sequence
$\mathbf i = (i_{k,l})_{1 \leq k \leq r, 1 \leq l \leq N_k} \in [n]^{N_1 + \cdots + N_r}$.
 Hence for a given flag Bott--Samelson variety $Z_{\mathcal I}$ one can always find a Bott--Samelson variety $Z_{\mathbf i}$ which is birationally isomorphic to~$Z_{\mathcal I}$.
\begin{proof}[Proof of Proposition~\ref{prop_fBS_and_BS}]
	First we recall from~\cite[VI. \S 1, Corollary~2 of Proposition~17]{Bourbaki02Lie} and~\cite[II.13.1]{Jan07}
	that for a reduced decomposition
	$w = s_{i_1} \cdots s_{i_N} \in W$, the subgroup $U(w) \subset G$ is defined to be
	\[
	U(w) \coloneqq U_{\alpha_{i_1}} \cdot U_{s_{i_1}(\alpha_{i_2})} \cdot 
	U_{s_{i_1} s_{i_2}(\alpha_{i_3})} \cdots
		U_{s_{i_1} \cdots s_{i_{N-1}}(\alpha_{i_N})}.
	\]
	Moreover, we have an isomorphism
	\begin{equation}\label{eq_U_and_BwB}
	\psi(w) \colon U_{\alpha_{i_1}} \times U_{\alpha_{i_2}} \times \cdots \times U_{\alpha_{i_N}} \stackrel{\sim}{\longrightarrow} U(w)
	\end{equation}
	which is defined to be $(u_1,\dots,u_N) \mapsto u_1 s_{i_1} u_2 s_{i_2} \cdots u_N s_{i_N} w^{-1}$.
	Also we have another isomorphism $\psi_{\mathcal I}$
	between varieties:
	\begin{equation}\label{eq_U_and_ZI}
	\psi_{\mathcal I} \colon U(v_1) \times \cdots \times U(v_r) 
	\stackrel{\sim}{\longrightarrow} Z_{\mathcal I}'
	\end{equation}
	which sends $(g_1,\dots,g_r)$ to $[g_1v_1,\dots,g_rv_r]$ (see~\cite[II.13.5]{Jan07}).
	
	Because of the assumption,
	the concatenation $w_{k,1} \cdots w_{k,N_k}$
	is a reduced decomposition of the element $v_k$. Hence
	we have an isomorphism induced by~\eqref{eq_U_and_BwB}:
	\[
	\psi_k \colon U(w_{k,1}) \times \cdots \times 
	U(w_{k,N_k}) \stackrel{\sim}{\longrightarrow} U(v_k)
	\]
	which maps $(u_1,\dots,u_{N_k})$ to $u_1w_{k,1}u_2w_{k,2}\cdots u_{N_k}w_{k,N_k} v_k^{-1}$
	for $1 \leq k\leq r$.
	Combining isomorphisms~$\psi_k$ and~\eqref{eq_U_and_ZI}
	we have the following commutative diagram:
	\[
	\begin{tikzcd}
	Z_{\mathcal J}' \arrow[d, "\eta_{\mathcal J,\mathcal I}"'] & 
	U(w_{1,1}) \times \cdots \times U(w_{1,N_1})
	\times \cdots \times U(w_{r,1}) \times \cdots \times U(w_{{r,N_r}})
	\arrow[l, "\psi_{\mathcal J}"', "\sim"]
	\arrow[d, "\psi_1 \times \cdots \times \psi_r"]
	\arrow{d}[anchor=center, rotate=90, yshift=1ex]{\sim}
	%	\arrow[d, anchor=center,rotate=0, "\sim"']
	\\
	Z_{\mathcal I}' & 
	U(v_1) \times \cdots \times U(v_r)
	\arrow[l, "\psi_{\mathcal I}"', "\sim"]
	\end{tikzcd}
	\]
	Hence the result follows. 
\end{proof}
\begin{example}\label{ex_fBS_and_BS}
	Let $G = \SL(4)$, and let $\mathcal I = (\{1,2\},\{3\})$. Then 
	$w_1 =s_1s_2s_1$, respectively $w_2 = s_3$, is a reduced decomposition of 
	the longest element of $W_{\{1,2\}}$, respectively $W_{\{3\}}$. 
	Then we have
	the birational morphism $\eta_{(1,2,1,3), \mathcal I} \colon Z_{(1,2,1,3)} \to Z_{\mathcal I}$.
	Together with the birational morphism $\eta$ described in 
	Example~\ref{example_biratioanl_morphism_fBS_and_Sch_var}-(2), 
	we can see that 
	three varieties $Z_{(1,2,1,3)}$, $Z_{\mathcal I}$, and $X(s_1s_2s_1s_3)$
	are birationally equivalent:
	\[
	Z_{(1,2,1,3)} \stackrel{}{\longrightarrow} 
	Z_{\mathcal I} \stackrel{}{\longrightarrow}
	X(s_1s_2s_1s_3).
	\]
	On the other hand, we have another reduced decomposition 
	$w_1' = s_2s_1s_2$ of the longest element of $W_{\{1,2\}}$. This also
	gives the birational morphism $\eta_{(2,1,2,3), \mathcal I} \colon Z_{(2,1,2,3)} 
	\to Z_{\mathcal I}$. Hence we have the following diagram whose
	maps are all birational morphisms:
	\[
	\reqnomode
	\begin{tikzcd}[row sep = 0.5ex]
	Z_{(1,2,1,3)} \arrow[rd, ""]\\
	& Z_{\mathcal I} \arrow[r, ""]& X(s_1s_2s_1s_3)\\
	Z_{(2,1,2,3)} \arrow[ru, ""']
	\end{tikzcd} %\tag*{\qed}
	\]
\end{example}

%%%%%%%%%%%%%%%%%%%%%%%%%%%%%%%%%%%%%%%%%%%%%%%%%%%%%%%%%%%%%%%%%%%%%%%%
\subsection{Line bundles over flag Bott--Samelson varieties}
\label{sec_line_bundles_of_fBS}
%%%%%%%%%%%%%%%%%%%%%%%%%%%%%%%%%%%%%%%%%%%%%%%%%%%%%%%%%%%%%%%%%%%%%%%%
Let $\mathcal I$ be a sequence of subsets of $[n]$. 
In this subsection we study line bundles over a flag Bott--Samelson
variety $Z_{\mathcal I}$ and their pullbacks in Proposition~\ref{prop_line_bdle_over_fBS_and_BS}. 
For an integral weight $\lambda \in \mathbb{Z}{\varpi_1}+\cdots+\mathbb{Z} \varpi_n$,
we have the homomorphism $e^{\lambda} \colon H \to \Cstar$.
We  extend it to the homomorphism $e^{\lambda} \colon B \to \Cstar$
by composing with the homomorphism 
\begin{equation}\label{eq_def_of_Upsilon}
\Upsilon \colon B \to H 
\end{equation}
induced by the canonical projection of Lie algebras $\mathfrak{b} \to \mathfrak{h}$ as in~\cite[II.1.8]{Jan07}.
Suppose that $\lambda_1,\dots,\lambda_r$ are integral weights.
Define a representation 
$\C_{\lambda_1,\dots,\lambda_r}$ of $B^r = 
\underbrace{B\times \cdots \times B}_r$ on $\C$ as
\[
(b_1,\dots,b_r) \cdot v = e^{\lambda_1}(b_1)\cdots e^{\lambda_r}(b_r) v.
\]
From this we can build a line bundle over $Z_{\mathcal I}$ by setting
\begin{equation}\label{eq_def_of_L_fBS}
\mathcal L_{\mathcal I, \lambda_1,\dots,\lambda_r} = %
\mathbf P_{\mathcal I} \times_{B^r} \C_{-\lambda_1,\dots,-\lambda_r},
\end{equation}
where an action of $B^r$ is defined as
\begin{equation}\label{eq_action_of_L_fBS}
\begin{split}
&(p_1,\dots,p_r,w)  \cdot (b_1,\dots,b_r) \\
&\qquad= (\Theta((p_1,\dots,p_r), (b_1,\dots,b_r)),
e^{\lambda_1}(b_1) \cdots e^{\lambda_r}(b_r) w).
\end{split}
\end{equation}
For simplicity, we use the following notation:
\begin{equation}
\mathcal L_{\mathcal I, \lambda} \coloneqq 
\mathcal{L}_{\mathcal I,0,\dots,0,\lambda}. 
\end{equation}

\begin{remark}\label{rmk_Pic_and_H2}
	Recall from~\cite[Example~19.1.11(d)]{Ful13} that for a flag bundle $X$ over $Y$, the cycle map $cl_X \colon A_{k} (X) \to H_{2k} (X) $ is an isomorphism if and only if $cl_Y$ is an isomorphism. Moreover, the cycle map is isomorphic for an arbitrary flag manifold.
	Since a flag Bott--Samelson variety is an iterated bundle of flags $P_{I_k}/B$ over a point, 
	the cycle map  $cl_{Z_{\mathcal I}} \colon A_k(Z_{\mathcal I}) \to H_{2k}(Z_{\mathcal I})$ is an isomorphism. On the other hand, since flag Bott--Samelson varieties are smooth  (see Proposition~\ref{prop_properties_of_fBS}(1)), we obtain the following isomorphisms
	\begin{equation}\label{eq_Pic_and_H2}
	\begin{tikzcd}
	\Pic(Z_{\mathcal I}) \arrow[r, "\cong"]
	& A_{(\dim_{\C}Z_{\mathcal I})-1}(Z_{\mathcal I})  \arrow[r, "cl_{Z_{\mathcal I}}"', "\cong"]
	& H_{2(\dim_{\C}Z_{\mathcal I})-2}(Z_{\mathcal I}) \arrow[r, "\cong"]
	& H^2(Z_{\mathcal I}).
	 \end{tikzcd}
	\end{equation}
	Here, the first isomorphism comes from~\cite[Example~2.1.1]{Ful13} and the last isomorphism is obtained by the Poicar\'e duality.
	Indeed, $c_1 \colon \Pic(Z_{\mathcal I}) \to H^2(Z_{\mathcal I})$ is the isomorphism~\eqref{eq_Pic_and_H2}. 
	When   the Levi subgroup $L_{I_k}$ of $P_{I_k}$ has Lie type $A$ for all $k$, 
	we present the association~\eqref{eq_Pic_and_H2} using a certain generator of $H^2(Z_{\mathcal I})$ in~\eqref{eq_c1_xi_and_a}, and  we present the first Chern class of the line bundle $\mathcal L_{\mathcal I,\lambda_1,\dots,\lambda_r}$ in Proposition~\ref{prop_deg_of_line_bdle}.
\end{remark}

Specifically when a flag Bott--Samelson variety is a usual 
Bott--Samelson variety, we will choose the weights 
to be of  special form. 
We recall a description of the Picard group of $Z_{\mathbf i}$ from~\cite{LaTh04}. 
Suppose given an integer vector
$\mathbf a = (a_1,\dots,a_r) \in \mathbb{Z}^r$,
we define a sequence of weights $\lambda_1,\dots,\lambda_r$
associated to the word $\mathbf i =(i_1,\dots,i_r)$ and the vector $\mathbf a $
by setting 
\[
\lambda_1 \coloneqq a_1 \varpi_{i_1}, \dots, \lambda_r \coloneqq a_r \varpi_{{i}_{r}}.
\]
For such $\lambda_j$ we use the notation
\begin{equation}\label{eq_def_of_L_ia}
\mathcal L_{\mathbf i, \mathbf a} \coloneqq 
\mathcal L_{\mathbf i, \lambda_1,\dots,\lambda_r}. 
\end{equation}
Since a Bott--Samelson variety is an iterated sequence of projective bundles,
the Picard group of Bott--Samelson variety $Z_{\mathbf i}$ is a free abelian group of rank~$r$ by~\cite[Exercise II.7.9]{Ha77}.
Indeed, the association between $\mathbf a \in \mathbb{Z}^r$ and~$\mathcal L_{\mathbf i, \mathbf a}$ gives an isomorphism between $\mathbb{Z}^r$ and $\text{Pic}(Z_{\mathbf i})$ (see~\cite[\S 3.1]{LaTh04}).

Let ${\bf i} = (i_{k, l})_{1 \le k \le r, 1 \le l \le N_k} \in [n]^{N_1 + \cdots + N_r}$ be a sequence such that
$(i_{k,1},\dots,i_{k,N_k})$ is a reduced word for the longest element in $W_{I_k}$ for $1 \leq k \leq r$. 
Recall from Proposition~\ref{prop_fBS_and_BS} that we have
the birational morphism 
$\eta_{{\bf i}, \mathcal{I}} \colon Z_{\mathbf i}
\to Z_{\mathcal I}$. 
The following proposition describes the pullback bundle 
$\eta_{{\bf i}, \mathcal{I}}^{\ast} \mathcal L_{\mathcal I, \lambda_1,
\dots,\lambda_r}$ under the morphism $\eta_{{\bf i}, \mathcal{I}}$
in terms of an integer vector.
\begin{proposition}\label{prop_line_bdle_over_fBS_and_BS}
Let $\mathcal I$, $\mathbf i$, and $\lambda_1,\dots,\lambda_r$ be as above.
The pullback bundle $\eta_{{\bf i}, \mathcal{I}}^{\ast} \mathcal L_{\mathcal I, \lambda_1,
	\dots,\lambda_r}$ over the Bott--Samelson variety $Z_{\mathbf i}$
is isomorphic to the line bundle $\mathcal L_{\mathbf i, \mathbf a}$ for
the integer vector
$\mathbf a  = (\mathbf a_1(1),\dots, \mathbf a_1(N_1),\dots, \mathbf a_r(1),\dots,\mathbf a_r(N_r))
\in \mathbb{Z}^{N_1}\oplus \cdots \oplus \mathbb{Z}^{N_r}$ 
given by
\begin{equation*}\label{eq_pull_back_bdle_over_BS}
\mathbf a_k(l)
= \begin{cases}
\displaystyle\langle \lambda_k, \alpha_{s}^{\vee} \rangle 
+ \sum_{\substack{
	k < j \leq r; \\
	s \notin \{ i_{t,u} \mid k < t \leq j, 1 \leq u \leq N_t\}
}}
\langle \lambda_j, \alpha_s^{\vee} \rangle
& \text{ if } l = \max \{q \mid i_{k,q} =s \},\\
0 & \text{ otherwise}.
\end{cases}
\end{equation*}
\end{proposition}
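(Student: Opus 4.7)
The plan is to first compute the $B^N$-equivariance of the multiplication map underlying $\eta_{\mathbf i, \mathcal I}$ to identify the pullback as a line bundle associated to a specific $B^N$-character, and then to rewrite the resulting line bundle in the canonical form $\mathcal L_{\mathbf i, \mathbf a}$ through a sequence of weight reductions matching the stated formula. Set $N = N_1 + \cdots + N_r$; the underlying map of $\eta_{\mathbf i, \mathcal I}$ is the blockwise multiplication $\mu \colon \mathbf P_{\mathbf i} \to \mathbf P_{\mathcal I}$, $(q_{k,l})_{k,l} \mapsto (\prod_{l=1}^{N_k} q_{k,l})_{k=1}^{r}$. A direct telescoping within each block shows that $\mu$ intertwines the right actions $\Theta_{\mathbf i}$ and $\Theta_{\mathcal I}$ through the group homomorphism $\phi \colon B^N \to B^r$, $(b_{k,l})_{k,l} \mapsto (b_{1, N_1}, \ldots, b_{r, N_r})$: inside block $k$ the factors $b_{k,l}$ with $l < N_k$ cancel pairwise, $b_{k, N_k}$ survives at the end, and $b_{k, N_k}^{-1}$ reappears at the start of block $k+1$. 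By functoriality, $\eta_{\mathbf i, \mathcal I}^{\ast} \mathcal L_{\mathcal I, \lambda_1, \ldots, \lambda_r}$ is then isomorphic to $\mathcal L_{\mathbf i, \mu_1, \ldots, \mu_N}$ where $\mu_j = \lambda_k$ if $j = N_1 + \cdots + N_k$ and $\mu_j = 0$ otherwise.

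Next, I establish a weight-reduction lemma for line bundles over $Z_{\mathbf i}$: if $\chi$ is an integral weight with $\langle \chi, \alpha_{i_j}^{\vee} \rangle = 0$, then $\mathcal L_{\mathbf i, \nu_1, \ldots, \nu_N} \cong \mathcal L_{\mathbf i, \nu_1, \ldots, \nu_{j-1} - \chi, \nu_j + \chi, \nu_{j+1}, \ldots, \nu_N}$, with the convention that for $j = 1$ we may freely add any character of $P_{i_1}$ to $\nu_1$. For the proof, such $\chi$ extends uniquely to a character of $P_{i_j}$, so the regular function $\mathbf P_{\mathbf i} \to \Cstar$, $(p_1, \ldots, p_N) \mapsto e^{\chi}(p_j)$, is well-defined, and a direct computation gives its transformation under $\Theta_{\mathbf i}$ as multiplication by $e^{-\chi}(b_{j-1}) e^{\chi}(b_j)$ (with the $b_{j-1}$-factor omitted when $j = 1$); it thus trivializes the line bundle whose weight datum is $-\chi$ at position $j-1$ and $\chi$ at position $j$, and tensoring yields the claimed isomorphism. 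Iterating the lemma from $j = N$ down to $j = 1$, I decompose the current weight $\tilde{\mu}_j = \sum_s \langle \tilde\mu_j, \alpha_s^{\vee} \rangle \varpi_s$, and for each $s \neq i_j$ apply the lemma with $\chi = -\langle \tilde\mu_j, \alpha_s^{\vee} \rangle \varpi_s$ to transfer the $\varpi_s$-component from position $j$ to position $j - 1$. After processing position $j$, only the $\varpi_{i_j}$-component with coefficient $\langle \tilde\mu_j, \alpha_{i_j}^{\vee} \rangle$ remains, giving the $(k, l)$-entry of $\mathbf a$ under the correspondence $j = N_1 + \cdots + N_{k-1} + l$.

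Finally, I apply this reduction procedure to the weight sequence $(\mu_1, \ldots, \mu_N)$ produced in the first step and track the propagation of each initial weight $\lambda_j$, originally located at position $(j, N_j)$. For each $s \in [n]$, the $\varpi_s$-component of $\lambda_j$ (with coefficient $\langle \lambda_j, \alpha_s^{\vee} \rangle$) flows backward through consecutive positions whose index differs from $s$, and is captured at the largest position $(k, l)$ not exceeding $(j, N_j)$ in the linear order with $i_{k, l} = s$; this $(k, l)$ is characterized by $l = \max\{q : i_{k, q} = s\}$ together with $s \notin \{i_{t, u} : k < t \leq j,\ 1 \leq u \leq N_t\}$, which are precisely the conditions appearing in the proposition. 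Summing the captured contributions at each $(k, l)$ over all valid $j$ recovers the stated formula for $\mathbf a_k(l)$; components that reach position $1$ without being captured are absorbed at no cost, consistent with those components not contributing to any $\mathbf a_k(l)$. The main technical obstacle is the combinatorial bookkeeping across block boundaries, but once the weight-reduction lemma is established this reduces to a direct case analysis of where each component of $\lambda_j$ terminates in the word $\mathbf i$.
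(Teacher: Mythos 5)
Your proof is correct, and it arrives at the same intermediate object as the paper (the bundle with $\lambda_k$ placed at position $(k,N_k)$ and $0$ elsewhere) before reorganizing the weights, but both halves of your argument are packaged differently from the paper's. For the first step, the paper constructs explicit mutually inverse morphisms $f$ and $g$ between $\eta_{\mathbf i,\mathcal I}^{\ast}\mathcal L_{\mathcal I,\lambda_1,\dots,\lambda_r}$ and $\mathcal L_{\mathbf i,\lambda_1,\dots,\lambda_r}$ and verifies $f\circ g = g\circ f = \mathrm{id}$ by hand; you instead observe that the blockwise multiplication $\mu$ is $\phi$-equivariant for $\phi(b_{k,l}) = (b_{1,N_1},\dots,b_{r,N_r})$ and invoke functoriality of associated bundles for a $\phi$-equivariant map of principal bundles. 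This is cleaner and makes the telescoping that underlies the paper's formulas for $C$ conceptually transparent. For the second step, the paper writes down a single isomorphism $f_2$ governed by a product $C'$ of characters applied to the auxiliary group elements $\zeta(j,s)$ and proves well-definedness by a direct computation; you instead isolate an elementary weight-reduction lemma (a single nowhere-vanishing section $e^{\chi}(p_j)$ trivializes the bundle with weights $(-\chi,\chi)$ at positions $(j-1,j)$ whenever $\langle\chi,\alpha_{i_j}^{\vee}\rangle = 0$) and apply it iteratively from $j=N$ down to $j=1$. This factors the paper's monolithic $f_2$ into a composition of obviously well-defined transfer isomorphisms, trading the paper's single well-definedness computation for a combinatorial tracking of where each $\varpi_s$-component of each $\lambda_j$ is first captured; the bookkeeping you describe does match the condition $l=\max\{q: i_{k,q}=s\}$ together with $s\notin\{i_{t,u}: k<t\le j\}$ in the stated formula, including the boundary convention at $j=1$. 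Your modular version is easier to verify piece by piece; the paper's version has the virtue of exhibiting the isomorphism in closed form, which it then reuses in the proof of Theorem~\ref{thm_isom_between_holom_sections}(2).
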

\begin{example}\label{example_of_prop2.8}
Let $G = \SL(4)$, $\mathcal I = (\{1,2\}, \{3\})$ and $\mathbf 
i = (1,2,1,3)$. Consider the line bundle 
$\mathcal L_{\mathcal I, \lambda_1, \lambda_2}$. Then
the pullback line bundle $\eta_{\mathbf i, \mathcal I}^{\ast} 
\mathcal L_{\mathcal I, \lambda_1, \lambda_2}$ corresponds to
the integer vector
\begin{align*}
\mathbf a &= (\mathbf a_1 (1), \mathbf a_1 (2), \mathbf a_1 (3), \mathbf a_2 (1)) \\
&= (0, \langle \lambda_1, \alpha_2^{\vee} \rangle + \langle \lambda_2, \alpha_2^{\vee} \rangle,
\langle \lambda_1, \alpha_1^{\vee}\rangle + \langle \lambda_2, \alpha_1^{\vee} \rangle,
\langle \lambda_2, \alpha_{3}^{\vee} \rangle).
\end{align*}
\end{example}
\begin{remark}\label{rmk_pullback_is_not_very_ample}
It is known from~\cite[Theorem 3.1, Corollary 3.3]{LaTh04} that the line bundle $\mathcal L_{\mathbf i, \mathbf a}$ is very ample, respectively generated by global sections, if and only if $\mathbf a \in \mathbb{Z}^{|\mathbf i|}_{>0}$, respectively $\mathbf a \in \mathbb{Z}^{|\mathbf i|}_{\geq 0}$. Suppose that $\mathbf i$ is a sequence satisfying the condition in Proposition~\ref{prop_line_bdle_over_fBS_and_BS}. 
As we saw in Example~\ref{example_of_prop2.8}, we cannot ensure that the pullback line bundle
$\eta_{{\bf i}, \mathcal{I}}^{\ast} \mathcal L_{\mathcal I, \lambda_1,
	\dots,\lambda_r}$ is very ample even if 
the weights $\lambda_1,\dots,\lambda_r$ are regular dominant weights.
\end{remark}
\begin{proof}[Proof of Proposition~\ref{prop_line_bdle_over_fBS_and_BS}]
By the definition of pullback line bundles, we have
\[
\eta_{\bf i, \mathcal{I}}^{\ast} \mathcal{L}_{\mathcal I, \lambda_1,\dots,
\lambda_r} =
\{(p,q) \in Z_{\bf i} \times \mathcal{L}_{\mathcal I, \lambda_1,\dots,\lambda_r}
\mid \eta_{{\bf i}, \mathcal I}(p) = \pi_{\mathcal I, \lambda_1,\dots,
	\lambda_r} (q)\},
\]
where $\pi_{\mathcal I, \lambda_1,\dots,\lambda_r}\colon 
\mathcal{L}_{\mathcal I, \lambda_1,\dots,\lambda_r} \to Z_{\mathcal I}$.
In other words,
\begin{equation}\label{eq_pull_back_eta_iI_LI}
\begin{split}
\eta_{{\bf i}, \mathcal I}^{\ast} \mathcal{L}_{\mathcal I,\lambda_1,\dots,\lambda_r}
&= \bigg\{(\left[(p_{k,l})_{1\leq k \leq r, 1 \leq l \leq N_k}], [p_1,\dots,p_r,w] \right) ~\bigg|~ \\
& \qquad \qquad \left[\prod_{l=1}^{N_1} p_{1,l}, \dots, \prod_{l=1}^{N_{r}} p_{r,l} \right]
= [p_1,\dots,p_r] \text{ in } Z_{\mathcal I}
\bigg\}.
\end{split}
\end{equation}

Define the line bundle $\mathcal{L}_{\mathbf{i},\lambda_1,\dots,\lambda_r}$ on $Z_{\bf i}$ by
\begin{align*}
\mathcal{L}_{\mathbf{i}, \lambda_1,\dots,\lambda_r} 
&\coloneqq \mathcal{L}_{\mathbf{i}, \scriptsize{\underbrace{0,\dots,0,\lambda_1}_{N_1},
	\underbrace{0,\dots,0,\lambda_2}_{N_2}},\dots,
	\scriptsize{\underbrace{0,\dots,0,\lambda_r}_{N_r}}} \\
&= ({\bf P}_{\bf i} \times \C_{0,\dots,0,\lambda_1,0,\dots,0,\lambda_2,
\dots,0,\dots,0,\lambda_r})/B^{N_1+\dots+N_r}.
\end{align*}
\smallskip 
\noindent\textbf{Claim 1.} $\eta_{\mathbf i, \mathcal I}^{\ast} \mathcal{L}_{\mathcal I, \lambda_1,\dots,\lambda_r} \cong \mathcal{L}_{\mathbf i, \lambda_1,\dots,\lambda_r}$. \\
\smallskip 
\noindent Consider a well-defined morphism $f \colon \eta_{\mathbf{i}, \mathcal{I}}^{\ast} \mathcal{L}_{\mathcal I, \lambda_1,\dots,\lambda_r} \to \mathcal{L}_{\mathbf i, \lambda_1,\dots,\lambda_r}$ given by 
\begin{equation}\label{eq_isomor_prop2.8_1}
f([(p_{k,l})_{k,l}], [p_1,\dots,p_r,w]) \coloneqq 
[(p_{k,l})_{k,l}, Cw].
\end{equation}
Here, the value $C$ is defined as follows. Because of the description in~\eqref{eq_pull_back_eta_iI_LI}, for each element $([(p_{k,l})_{k,l}], [p_1,\dots,p_r,w])$ in the pullback bundle $\eta_{\mathbf i, \mathcal I}^{\ast} \mathcal L_{\mathcal I, \lambda_1,\dots,\lambda_r}$, there exist $b_1,\dots,b_r \in B$ such that 
\begin{equation}\label{eq_bi_in_prop_2.8}
p_1b_1 = \prod_{l=1}^{N_1} p_{1,l}, \quad b_1^{-1} p_2 b_2 = \prod_{l=1}^{N_2} p_{2,l}, \quad \dots, \quad 
b_{r-1}^{-1}p_r b_r = \prod_{l=1}^{N_r} p_{r,l}.
\end{equation}
Using these elements $b_1,\dots,b_r$, the value $C$ is defined by
\begin{equation}\label{eq_def_of_C}
C \coloneqq e^{\lambda_1}(b_1)
e^{\lambda_2}(b_2) \cdots e^{\lambda_r}(b_r).
\end{equation}

On the other hand, we have a well-defined morphism $g \colon  \mathcal{L}_{\mathbf i, \lambda_1,\dots,\lambda_r} \to \eta_{\mathbf{i}, \mathcal{I}}^{\ast} \mathcal{L}_{\mathcal I, \lambda_1,\dots,\lambda_r}$ defined by
\begin{equation}\label{eq_isomor_prop2.8_11}
g([(p_{k,l})_{k,l},w]) \coloneqq 
\left(
[(p_{k,l})_{k,l}], \left[ \prod_{l=1}^{N_1} p_{1,l}, \prod_{l=1}^{N_2} p_{2,l},\dots,\prod_{l=1}^{N_r} p_{r,l}, w \right] 
\right).
\end{equation}
We claim that both compositions $f \circ g$ and $g \circ f$ are identities. First we consider the composition $f \circ g$:
\[
\begin{split}
f \circ g ([(p_{k,l})_{k,l},w]) 
&= f \left(
[(p_{k,l})_{k,l}], \left[ \prod_{l=1}^{N_1} p_{1,l}, \prod_{l=1}^{N_2} p_{2,l},\dots,\prod_{l=1}^{N_r} p_{r,l}, w \right] 
\right) \quad (\text{by}~\eqref{eq_isomor_prop2.8_11}) \\
&= ([(p_{k,l})_{k,l},w]).
\end{split}
\]
Here, the last equality holds because all the elements $b_1,\dots,b_r$ satisfying the equations~\eqref{eq_bi_in_prop_2.8} are  the identity element, and so  $C = 1$. 
For the composition $ g \circ f$, we obtain
\[
\begin{split}
& g \circ f ([(p_{k,l})_{k,l}], [p_1,\dots,p_r,w]) = g ([(p_{k,l})_{k,l}, Cw]) \\
&\qquad = \left(
[(p_{k,l})_{k,l}], \left[ \prod_{l=1}^{N_1} p_{1,l}, \prod_{l=1}^{N_2} p_{2,l},\dots,\prod_{l=1}^{N_r} p_{r,l}, Cw \right] 
\right) \\
&\qquad = \left(
[(p_{k,l})_{k,l}], [ p_1b_1, b_1^{-1} p_2b_2,\dots, b_{r-1}^{-1} p_r b_r, Cw] 
\right) \quad (\text{by}~\eqref{eq_bi_in_prop_2.8}) \\
&\qquad = \left(
[(p_{k,l})_{k,l}], [p_1,p_2,\dots,p_r, e^{\lambda_1}(b_1^{-1}) \cdots e^{\lambda_r}(b_r^{-1}) Cw]
\right) \\
& \qquad = ([(p_{k,l})_{k,l}], [p_1,\dots,p_r,w]) \quad (\text{by}~\eqref{eq_def_of_C}).
\end{split} 
\]
Accordingly, $f$ is an isomorphism, and Claim~1 holds.

\smallskip
\noindent\textbf{Claim 2.} $\mathcal L_{\mathbf i, \lambda_1,\dots,\lambda_r} \cong \mathcal{L}_{\mathbf i, \mathbf a}$, where $\mathbf a$ is the integer vector given in the statement of the proposition.\\
\smallskip
To present a concrete isomorphism, we set
\begin{equation}\label{eq_def_of_kjs}
\begin{split}
k(j,s) &\coloneqq \max \left( \{k  \mid 1 \leq k \leq j,\ i_{k,l} = s  \text{ for some } 1 \leq l \leq N_k
\} \cup \{0\}\right), \\
m(j,s) &\coloneqq \max \{ q \mid i_{k(j,s),q} = s\}
\end{split}
\end{equation}
for $1 \leq j \leq r$ and $s \in [n]$.
We define certain products $\zeta(j,s)$ of $p_{k,l}$ using $k(j,s)$ as follows:
\[
\zeta(j,s) \coloneqq \left(\prod_{m = m(j,s) +1}^{N_{k(j,s)}} p_{k(j,s),m}\right)
\left(\prod_{k = k(j,s) +1}^j \prod_{l = 1}^{N_k} p_{k,l}\right).
\]
Here, we set $p_{0,l}$ is the identity element. 

We denote the integral weight $\lambda_k$ by $d_{k,1} \varpi_1 + \cdots + d_{k,n} \varpi_n$ using integers $d_{k,j}$ for $1 \leq k \leq r$. We consider the following morphism:
\begin{equation}\label{eq_isomor_prop2.8_2}
\begin{split}
f_2 \colon \mathcal{L}_{\mathbf{i}, \lambda_1,\dots,\lambda_r}
&\stackrel{}{\longrightarrow} \mathcal{L}_{\mathbf{i}, \mathbf{a}} \\
[(p_{k,l})_{k,l},w]
& \mapsto [(p_{k,l})_{k,l}, C'w],
\end{split}
\end{equation}
where the value $C'$ is defined to be
\begin{equation}\label{eq_def_of_C'}
C' \coloneqq \prod_{s=1}^n \prod_{j=1}^r  e^{d_{j,s} \varpi_s}(\zeta(j,s))^{-1}.
\end{equation}
We note that if $I \subset [n]$ and $s \notin I$, then 
the map $e^{\varpi_s} \colon B \to \Cstar$ is naturally extended to 
$e^{\varpi_s} \colon P_I \to \Cstar$ by setting 
$e^{\varpi_s}(\exp(\mathfrak{g}_{\alpha})) = \{1\}$ for all $\alpha \in \Delta^- \cap \Delta_I$. Hence $e^{d_{j,s} \varpi_s}(\zeta(j,s))$ is defined. 

If the map $f_2$ is well-defined, then we obtain Claim~2 because the inverse of $f_2$ is attained by multiplying $(C')^{-1}$.  Therefore, to prove Claim~2, it is enough to show that $f_2$ is well-defined. 
Suppose that 
\[
[(b_{k,l-1}^{-1} p_{k,l} b_{k,l})_{k,l}, e^{\lambda_1}(b_{1,N_1}) \cdots e^{\lambda_r}(b_{r,N_r}) w]
\]
is another representative of the element $[(p_{k,l})_{k,l}, w]$ in $\mathcal L_{\mathbf i, \lambda_1,\dots,\lambda_r}$. Here, we use the convention that $b_{k,0} = b_{k-1,N_{k-1}}$ and $b_{0,l}$ is the identity element. 
To show the well-definedness of $f_2$, we have to prove that the following equality holds:
\begin{equation}\label{eq_prop_2.8_well_definedness}
\begin{split}
&\left( \prod_{k=1}^r \prod_{l=1}^{N_k} e^{\mathbf{a}_{k}(l) \varpi_{i_{k,l}}}(b_{k,l}) \right)
\left( \prod_{s=1}^n \prod_{j=1}^r  e^{d_{j,s} \varpi_s} ( \zeta(j,s))^{-1} \right) \\
& \qquad = \left(\prod_{j=1}^r \prod_{s=1}^n e^{d_{j,s} \varpi_s} (\zeta(j,s)')^{-1} \right)
\left(\prod_{k=1}^r e^{\lambda_k}(b_{k,N_k})\right).
\end{split}
\end{equation}
Here, $\zeta(j,s)'$ is defined by
\[
\begin{split}
\zeta(j,s)' &= 
\left(\prod_{m=m(j,s)+1}^{N_{k(j,s)}} b_{k(j,s),m-1}^{-1} p_{k(j,s),m} b_{k(j,s),m} \right)
\left(\prod_{k=k(j,s)+1}^{j} \prod_{l=1}^{N_k} b_{k,l-1}^{-1} p_{k,l} b_{k,l} \right)  \\
&= b_{k(j,s),m(j,s)}^{-1} \zeta(j,s) b_{j, N_j}.
\end{split}
\]
Furthermore, since the weight $\lambda_j$ is the sum of $d_{j,s}$, we have that
\[
\prod_{j=1}^r \prod_{s=1}^n e^{d_{j,s} \varpi_s} (b_{j,N_j}) 
= \prod_{j=1}^r e^{\lambda_j} (b_{j,N_j}).
\]
Therefore, the right hand side of the equation~\eqref{eq_prop_2.8_well_definedness} becomes
\[
\left(\prod_{j=1}^r \prod_{s=1}^n e^{d_{j,s} \varpi_s}(b_{k(j,s), m(j,s)}) \right)
\left( \prod_{s=1}^n  \prod_{j=1}^r e^{d_{j,s} \varpi_s} (\zeta(j,s))^{-1} \right).
\]
This implies that to show the equality~\eqref{eq_prop_2.8_well_definedness}, it is enough to show that
\begin{equation}\label{eq_prop_2.8_well_definedness2}
\prod_{k=1}^r \prod_{l=1}^{N_k} e^{\mathbf{a}_k(l) \varpi_{i_{k,l}}}(b_{k,l}) = 
\prod_{s=1}^n \prod_{j=1}^r e^{d_{j,s} \varpi_s}(b_{k(j,s), m(j,s)}).
\end{equation}
The left hand side of the equation~\eqref{eq_prop_2.8_well_definedness2} is written by
\[
\prod_{k=1}^r \prod_{l=1}^{N_k} e^{\mathbf{a}_k(l) \varpi_{i_{k,l}}}(b_{k,l})
= \prod_{s=1}^n ~~~ \prod_{\substack{ 1 \leq k \leq r, 1 \leq l \leq N_k, \\ i_{k,l} = s}} e^{\mathbf{a}_{k}(l) \varpi_s} (b_{k,l}).
\]
Using this observation, we verify the equality~\eqref{eq_prop_2.8_well_definedness2} by showing 
\begin{equation}\label{eq_prop_2.8_well_definedness3}
\prod_{\substack{ 1 \leq k \leq r, 1 \leq l \leq N_k, \\ i_{k,l} = s}} e^{\mathbf{a}_{k}(l) \varpi_s} (b_{k,l})
= \prod_{j=1}^r e^{d_{j,s} \varpi_s}(b_{k(j,s), m(j,s)})
\end{equation}
for all $s  \in [n]$.
Let $s$ be an arbitrary index in $[n]$. 
If $s$ does not appear in $(i_{k,l})_{k,l}$, then $k(j,s) = 0$ for all $j$, and so the equality~\eqref{eq_prop_2.8_well_definedness3} holds. Otherwise,
let $1 \leq j_1 < \cdots < j_x \leq r$ be the indices such that $s \in \{ i_{j,1},\dots,i_{j,N_{j}}\}$ if and only if $j \in \{j_1,\dots,j_x\}$. By the definition of the number $k(j,s)$, we have that
\[
\begin{split}
0 &= k(1,s) = \cdots = k(j_1-1,s),\\
j_u &= k(j_u,s) = \cdots = k(j_{u+1}-1,s) \quad \text{ for }1 \leq u \leq x.
\end{split}
\]
Here, we set $j_{x+1} =r+1$. 
Therefore, we have that
\[
\prod_{j=1}^r e^{d_{j,s} \varpi_s}(b_{k(j,s),m(j,s)})  
= \prod_{u=1}^x e^{(d_{j_u,s} + \dots + d_{j_{u+1}-1,s}) \varpi_s}(b_{j_u, m(j_u,s)}).
\]
On the other hand, by the definition of the integer vector $\mathbf a$, if ${i_{k,l}} = s$, then $k \in \{j_1,\dots,j_x\}$ and we have that
\[
\mathbf{a}_{k}(l) = 
\begin{cases}
d_{j_u,s} + d_{j_u+1,s} + \cdots + d_{j_{u+1}-1,s} & \text{ if $k = j_u$ and $l = m(j_u,s)$}, \\
0 & \text{ otherwise}. 
\end{cases}
\]
Accordingly, we obtain the equality~\eqref{eq_prop_2.8_well_definedness3} for all $s \in [n]$, and we get the equality~\eqref{eq_prop_2.8_well_definedness2}. Therefore, the morphism $f_2$ is a well-defined isomorphism. This proves Claim~2. 
By combining Claim~1 and Claim~2, the result follows.
\end{proof}
We put an example for explaining notations $C, k(j,s), C'$ in the proof of Proposition~\ref{prop_line_bdle_over_fBS_and_BS} for the reader's convenience.
\begin{example}
Let $G = \SL(4)$. 
Suppose that  $\mathcal I$ and $\bf{i}$ are given as in Example~\ref{example_of_prop2.8}.
Then for an element $([p_{1,1},p_{1,2},p_{1,3},p_{2,1}], [p_1,p_2,w])$ in 
$\eta_{{\bf i}, \mathcal I}^{\ast} \mathcal{L}_{\mathcal I,\lambda_1,\lambda_2}$ 
the value $C$ in~\eqref{eq_def_of_C} is given by
\[
C = e^{\lambda_1}\left(p_1^{-1} p_{1,1} p_{1,2} p_{1,3} \right)
e^{\lambda_2}\left(p_2^{-1} p_1^{-1} p_{1,1} p_{1,2} p_{1,3} p_{2,1} \right). 
\]
Moreover the indices $k(j,s)$ in~\eqref{eq_def_of_kjs} are computed by 
\[
k(1,1) = 1, k(1,2) = 1, k(1,3) = 0, k(2,1) = 1, k(2,2) = 1, k(2,3)=2.
\]
Hence the value $C'$ in~\eqref{eq_def_of_C'} is
\[
C' = e^{d_{1,2}\varpi_2}(p_{1,3})^{-1}
e^{d_{1,3} \varpi_3}(p_{1,1} p_{1,2} p_{1,3})^{-1}
e^{d_{2,1} \varpi_1}(p_{2,1})^{-1} 
e^{d_{2,2} \varpi_2}(p_{1,3} p_{2,1})^{-1},
\]
where $\lambda_k = d_{k,1} \varpi_1 + d_{k,2} \varpi_2 + d_{k,3} \varpi_3$ for
$k = 1,2$.
\end{example}

%%%%%%%%%%%%%%%%%%%%%%%%%%%%%%%%%%%%%%%%%%%%%%%%%%%%%%%%%%%%%%%%%%%%%%%%
\subsection{Newton--Okounkov bodies of flag Bott--Samelson varieties}
\label{subsect_NOBY}
%%%%%%%%%%%%%%%%%%%%%%%%%%%%%%%%%%%%%%%%%%%%%%%%%%%%%%%%%%%%%%%%%%%%%%%%
In this section we study the Newton--Okounkov bodies of flag
Bott--Samelson varieties in Theorem~\ref{thm_NOBY_of_fBS}.
First we recall the definition and background of Newton--Okounkov bodies. We refer the reader to~\cite{Fuj17, HaKa15, Kav15, KaKh12} for more details.
Let $R$ be a $\C$-algebra without nonzero zero-divisors, and fix a 
total order $<$ on $\mathbb{Z}^r$, $r \geq 1$, respecting
the addition. 
\begin{definition}
	A map $v \colon R \setminus \{0\} \to \mathbb{Z}^r$ is called 
	a \defi{valuation} on $R$ if the following conditions hold.
	For every $f, g \in R \setminus \{0\}$ and $c \in \C \setminus \{0\}$,
	\begin{enumerate}
		\item $v (f \cdot g) = v(f) + v(g)$,
		\item $v(c f) = v(f)$, and
		\item $v(f + g) \geq \min \{ v(f), v(g)\}$ unless
		$f + g = 0$. 
	\end{enumerate}
\end{definition}
Moreover we say the valuation $v$ has \defi{one-dimensional leaves}
if it satisfies that if $v(f) = v(g)$ then there exists 
a nonzero constant $\lambda \in \C$ such that 
$v(g - \lambda f) > v(g)$ or $g - \lambda f =0$. 

Let $X$ be a projective variety of dimension $r$ over $\C$
equipped with a line bundle $\mathcal L$ which is generated by global sections.
Fix a valuation $v$ 
which has one-dimensional leaves
on the function field $\C(X)$ of $X$. Using the valuation
$v$ one can associate a semigroup $S \subset \mathbb{N} 
\times \mathbb{Z}^r$ as follows. Fix a nonzero element $\tau\in 
H^0(X,{\mathcal L})$. We use $\tau$ to identify $H^0(X,\mathcal L)$
with a finite-dimensional subspace of $\C(X)$ by mapping
\[
H^0(X,\mathcal L) \to \C(X), \quad \sigma \mapsto \sigma/\tau.
\]
Similarly we have the map
\[
H^0(X, \mathcal L^{\otimes k}) \to \C(X),\quad
\sigma \mapsto \sigma/\tau^k. 
\]
Using these identifications we define the semigroup:
\[
S = S(v, \tau)
= \bigcup_{k > 0} \left\{(k, v(\sigma/ \tau^k))
\mid \sigma \in 
H^0(X, \mathcal L^{\otimes k}) \setminus \{0\} \right\} \subset \mathbb{N}
\times \mathbb{Z}^r,
\]
and denote by $C = C(v,\tau) \subset \mathbb{R}_{\geq 0} \times \mathbb{R}^r$ the smallest real closed cone containing $S(v,\tau)$.
Now we have the definition of Newton--Okounkov body:
\begin{definition}
	The \defi{Newton--Okounkov body  associated to 
		$(X,\mathcal L ,v,\tau)$}  is defined to be
	\[
\{ \mathbf x \in \mathbb{R}^r \mid (1,\mathbf x) \in C(v,\tau)\}.
	\]
	We denote the Newton--Okounkov body by 	$\Delta(X,\mathcal L ,v,\tau)$.
\end{definition}
If we take another section $\tau' \in H^0(X,\mathcal L) \setminus \{0\}$ then $\Delta(X, \mathcal L, v, \tau') = \Delta(X, \mathcal L, v, \tau) + v(\tau/\tau')$. Hence the Newton--Okounkov body $\Delta(X, \mathcal L, v, \tau)$ does not fundamentally depend on the choice of the nonzero section $\tau \in H^0(X,\mathcal L) \setminus \{0\}$.  
Accordingly, we sometimes denote the Newton--Okounkov body by $\Delta(X, \mathcal L, v)$. 

\begin{remark}\label{rmk_dimension_of_NOBY}
If we choose a very ample line bundle $\mathcal L$ in the construction, then it is known in~\cite[Theorem 3.9]{HaKa15} that the Newton--Okounkov body has maximal dimension, i.e. it has real dimension $r$. Since we do not necessarily assume that the line bundle $\mathcal L$ is very ample in this paper, the real dimension of the Newton--Okounkov body may be less than $r$.
\end{remark}

There are many possible valuations with one-dimensional leaves. We recall one of them introduced in~\cite{Kav15}. One can construct a valuation on the function field $\C(X)$ using a regular system of parameters $u_1,\dots,u_r$ in a neighborhood of a smooth point $p$ on $X$. Fix a total ordering on $\mathbb{Z}^r$ respecting the addition. Let $f$ be a polynomial in $u_1,\dots,u_r$. Suppose that $c_k u_1^{k_1} \cdots u_r^{k_r}$ is the term in $f$ with the largest exponent $k = (k_1,\dots,k_r)$. Then
\[
v(f) \coloneqq (-k_1,\dots,-k_r)
\]
defines a valuation on $\C(X)$, called the \defi{highest term valuation} with respect to the parameters $u_1,\dots,u_r$.
\begin{example}\label{example_valuation_vi}
Let $X = Z_{\mathbf i}$ be the Bott--Samelson variety determined by 
a word $\mathbf i = (i_1,\dots,i_r)$. 
Let $f_i$ be a nonzero element in $\mathfrak{g}_{-\alpha_i}$.
Then the following map $\Phi_{\mathbf i} \colon \mathbb{C}^r \to Z_{\mathbf i}$ defines a coordinate system as in~\cite[\S 2.3]{Fuj18} and~\cite[\S 2.2]{Kav15}:
\[
\Phi_{\mathbf i} \colon (t_1,\dots,t_r) \mapsto 
(\exp(t_1 f_{i_1}), \dots, \exp(t_r f_{i_r}))
\quad
\mod B^r
\]
We denote the highest term valuation with respect to the lexicographic order on $\mathbb{Z}^r$ by $v_{\mathbf i}^{\rm high}$.
\end{example}
There are some results on computing the Newton--Okounkov bodies using the valuation $v_{\mathbf i}^{\rm high}$. 
We recall a result of Kaveh~\cite{Kav15}:
\begin{example}
Let $X = G/B$ be the full flag variety, and
let $\mathcal L$ be the line bundle over $X$ given by a dominant weight $\lambda$. Suppose that 
$\mathbf i = (i_1,\dots,i_m)$ is a reduced word for the longest element in the Weyl group $W$ of $G$. 
Then the Bott--Samelson variety $Z_{\mathbf i}$ and the full flag variety $G/B$ 
are birational by~Proposition~\ref{prop_fBS_and_flag}. 
Hence their function fields are isomorphic, i.e., $\mathbb{C}(Z_{\mathbf i}) \cong \mathbb{C}(G/B)$. Using the valuation $v_{\mathbf i}^{\rm high}$  in~Example~\ref{example_valuation_vi}, 
Kaveh~\cite[Corollary 4.2]{Kav15} proves that
the Newton--Okounkov body $\Delta(G/B, \mathcal L,
v_{\mathbf i}^{\rm high})$  can be identified with the string polytope.
\end{example}
The following lemma directly comes from 
the definition of Newton--Okounkov bodies.
\begin{lemma}\label{lemma_NO_body_same}
	Let $f \colon X \to Y$ be a birational morphism between 
	varieties of dimension $r$, and let $\mathcal L$ be a 
	 line bundle on $Y$ generated by global sections.
	Suppose that the canonical morphism $H^0(Y, \mathcal{L}^{\otimes k}) \to H^0(X, f^{\ast} \mathcal{L}^{\otimes k})$ is an isomorphism for every $k > 0$. Then 
	their Newton--Okounkov bodies coincide, i.e.,
	\[
	\Delta(X, f^{\ast} \mathcal L, v, f^{\ast} \tau) 
	= \Delta(Y, \mathcal L, v, \tau)
	\]
	for any valuation $v \colon \C(X)\setminus \{0\} \to \Z^r$ and $\tau \in H^0 (Y, \mathcal{L}) \setminus \{0\}$. Here $v$ is regarded also as a valuation on $\C(Y)$ under the isomorphism $\C(Y) \cong \C(X)$.
\end{lemma}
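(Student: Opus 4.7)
The plan is to prove the lemma by showing that the two defining semigroups
\[
S(v,\tau) \subset \mathbb{N} \times \mathbb{Z}^r \quad\text{and}\quad S(v,f^{\ast}\tau) \subset \mathbb{N}\times\mathbb{Z}^r
\]
coincide; once this is done, their closed cones in $\mathbb{R}_{\geq 0}\times\mathbb{R}^r$ coincide, and hence so do the slices at height $1$, which are precisely the Newton--Okounkov bodies.

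To compare the two semigroups I will first exploit the fact that $f$ is birational, so $f$ induces an isomorphism of function fields $f^{\ast}\colon \mathbb{C}(Y) \stackrel{\sim}{\to} \mathbb{C}(X)$. Under this isomorphism, the valuation $v$ on $\mathbb{C}(X)$ corresponds to a valuation on $\mathbb{C}(Y)$, and by the statement of the lemma we regard $v$ as a valuation on both function fields. Next, for each $k>0$, I use the embeddings $H^0(Y,\mathcal{L}^{\otimes k}) \hookrightarrow \mathbb{C}(Y)$ given by $\sigma\mapsto \sigma/\tau^k$ and $H^0(X, f^{\ast}\mathcal{L}^{\otimes k}) \hookrightarrow \mathbb{C}(X)$ given by $\sigma'\mapsto \sigma'/(f^{\ast}\tau)^k$. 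The key compatibility is that for any $\sigma \in H^0(Y,\mathcal{L}^{\otimes k})$ one has the identity
\[
f^{\ast}\!\left(\sigma/\tau^k\right) \;=\; f^{\ast}\sigma \big/ (f^{\ast}\tau)^k
\]
in $\mathbb{C}(X)$, so the following diagram commutes: the assumed isomorphism $H^0(Y,\mathcal{L}^{\otimes k}) \stackrel{\sim}{\to} H^0(X, f^{\ast}\mathcal{L}^{\otimes k})$, followed by the embedding into $\mathbb{C}(X)$, equals the embedding into $\mathbb{C}(Y)$ followed by $f^{\ast}$.

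Combining these observations, for every $k>0$ the set
\[
\{\sigma/\tau^k \mid \sigma \in H^0(Y, \mathcal{L}^{\otimes k}) \setminus \{0\}\} \subset \mathbb{C}(Y)
\]
is carried bijectively by $f^{\ast}$ onto
\[
\{\sigma'/(f^{\ast}\tau)^k \mid \sigma' \in H^0(X, f^{\ast}\mathcal{L}^{\otimes k}) \setminus \{0\}\} \subset \mathbb{C}(X),
\]
and since $v$ is the same valuation under $f^{\ast}$, the values $v(\sigma/\tau^k)$ match perfectly with the values $v(\sigma'/(f^{\ast}\tau)^k)$. Taking the union over $k$ yields $S(v,\tau) = S(v,f^{\ast}\tau)$, and passing to closed cones and slicing at height $1$ gives the desired equality of Newton--Okounkov bodies. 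I do not foresee a substantive obstacle here; the only point requiring a sentence of care is the verification of the commutativity of the diagram above, i.e.\ that the canonical pullback map on global sections really does coincide with $f^{\ast}$ on rational functions after dividing by the chosen section, which is a formal consequence of $f^{\ast}\mathcal{L}^{\otimes k}$ being defined as the pullback line bundle and $f^{\ast}\tau$ being the corresponding section.
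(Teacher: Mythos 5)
Your proof is correct, and it fills in the details of what the paper describes only as ``directly comes from the definition of Newton--Okounkov bodies'': you verify that the assumed isomorphism $H^0(Y,\mathcal{L}^{\otimes k}) \xrightarrow{\sim} H^0(X,f^{\ast}\mathcal{L}^{\otimes k})$ is compatible with the two embeddings into the (identified) function fields, so the semigroups and hence the bodies coincide. This is the intended argument; no gaps.
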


Now we define left actions of $P_{I_1}$ on $Z_{\mathcal{I}}$ and  $\mathcal{L}_{\mathcal{I}, \lambda_1, \ldots, \lambda_r}$ by
\begin{equation}\label{eq_action_of_PI}
\begin{split}
&p \cdot [p_1, \ldots, p_r] \coloneqq [p p_1, p_2, \ldots, p_r],\\
&p \cdot [p_1, \ldots, p_r, v] \coloneqq [p p_1, p_2, \ldots, p_r, v]
\end{split}
\end{equation}
for $p, p_1 \in P_{I_1}$, $p_2 \in P_{I_2}, \ldots, p_r \in P_{I_r}$, and $v \in \mathbb{C}$. Since the projection $\mathcal{L}_{\mathcal{I}, \lambda_1, \ldots, \lambda_r} \twoheadrightarrow Z_{\mathcal{I}}$ is compatible with these actions, it follows that the space $H^0(Z_{\mathcal{I}}, \mathcal{L}_{\mathcal{I}, \lambda_1, \ldots, \lambda_r})$ of global sections has the natural $P_{I_1}$-module structure.

\begin{theorem}\label{thm_isom_between_holom_sections}
	Let $\mathcal{I} = (I_1,\dots,I_r)$ be a sequence of subsets of $[n]$, 
	and let ${\bf i} = (i_{k, l})_{1 \le k \le r, 1 \le l \le N_k} \in [n]^{N_1 + \cdots + N_r}$ be a sequence such that $(i_{k, 1}, \ldots, i_{k, N_k})$ is a reduced word for the longest element in $W_{I_k}$ for $1 \le k \le r$. 
	Let $\eta_{\mathbf i,\mathcal I} \colon
	Z_{\mathbf i} \to Z_{\mathcal I}$ be the birational morphism  
	in Proposition~\ref{prop_fBS_and_BS}.
	Then for integral weights $\lambda_k \coloneqq d_{k,1} \varpi_1 + \cdots
	+ d_{k,n} \varpi_n$ for $1\leq k \leq r$, and the corresponding integer vector $\mathbf a$ given in Proposition~\ref{prop_line_bdle_over_fBS_and_BS},
	\begin{enumerate}
		\item the canonical morphism $H^0(Z_{\mathcal I}, \mathcal{L}_{\mathcal I, \lambda_1,\dots,\lambda_r}) \to H^0(Z_{\bf i}, \mathcal{L}_{\mathbf i, \mathbf a})$ is an isomorphism.
%		
%		we have a $\mathbb{C}$-vector space isomorphism
%\[		H^0(Z_{\mathcal I},\mathcal{L}_{\mathcal I,\lambda_1,\dots,\lambda_r})
%		\cong H^0(Z_{\mathbf i}, \mathcal{L}_{\bf i, \bf a}).
%		\]
		\item The isomorphism in (1) induces the $B$-module isomorphism
	\[
H^0(Z_{\mathcal I},\mathcal{L}_{\mathcal I,\lambda_1,\dots,\lambda_r})
\cong H^0(Z_{\mathbf i}, \mathcal{L}_{\bf i, \bf a}) \otimes \C_{-\mu},
\]
where 
$\mu$ is the weight defined by
\[
\mu = \sum_{j=1}^r
\sum_{s \in [n] \setminus \{i_{k,l} \mid 1 \leq k \leq j, 
	1 \le l \le N_k\}} d_{j,s} \varpi_s.
\]
	\end{enumerate}
\end{theorem}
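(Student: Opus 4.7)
The plan is to reduce both statements to the explicit birational picture of Proposition~\ref{prop_fBS_and_BS} together with the concrete line-bundle isomorphism constructed in the proof of Proposition~\ref{prop_line_bdle_over_fBS_and_BS}: part~(1) is purely the vector-space statement, while part~(2) requires tracking the $B$-equivariance through that identification.

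For~(1), I would use the projection formula. Both $Z_{\mathbf i}$ and $Z_{\mathcal I}$ are smooth projective varieties by Proposition~\ref{prop_properties_of_fBS}, and $\eta_{\mathbf i,\mathcal I}$ is proper and birational by Proposition~\ref{prop_fBS_and_BS}. Normality of $Z_{\mathcal I}$ together with Zariski's main theorem (Stein factorization) yields $(\eta_{\mathbf i,\mathcal I})_{\ast}\mathcal O_{Z_{\mathbf i}} = \mathcal O_{Z_{\mathcal I}}$. Combining the projection formula with Proposition~\ref{prop_line_bdle_over_fBS_and_BS}, which identifies $\eta_{\mathbf i,\mathcal I}^{\ast}\mathcal L_{\mathcal I,\lambda_1,\dots,\lambda_r}\cong \mathcal L_{\mathbf i,\mathbf a}$, gives
\[
H^0(Z_{\mathbf i},\mathcal L_{\mathbf i,\mathbf a})\cong H^0(Z_{\mathcal I},(\eta_{\mathbf i,\mathcal I})_{\ast}\eta_{\mathbf i,\mathcal I}^{\ast}\mathcal L_{\mathcal I,\lambda_1,\dots,\lambda_r})\cong H^0(Z_{\mathcal I},\mathcal L_{\mathcal I,\lambda_1,\dots,\lambda_r}),
\]
and one checks that this is inverse to the canonical pullback morphism, which settles~(1).

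For~(2), I would trace the left $B$-action from~\eqref{eq_action_of_PI} through the composite isomorphism $f_2\circ f$ built in the proof of Proposition~\ref{prop_line_bdle_over_fBS_and_BS}. A short calculation shows that $f$ is $B$-equivariant: for $b\in B$, the auxiliary elements $b_1,\dots,b_r$ determined by~\eqref{eq_bi_in_prop_2.8} are unchanged when $p_{1,1}$ is replaced by $bp_{1,1}$, so the scalar $C$ is unchanged. The map $f_2$, however, transforms $C'$ nontrivially, because among the factors in~\eqref{eq_def_of_C'} the products $\zeta(j,s)$ with $k(j,s)=0$ begin with $p_{1,1}$, while the remaining factors do not involve $p_{1,1}$ at all. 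For such pairs $(j,s)$ one has $\zeta(j,s)\mapsto b\,\zeta(j,s)$, and since the condition $k(j,s)=0$ is exactly $s\notin\{i_{k,l}\mid 1\le k\le j,\ 1\le l\le N_k\}$, the accumulated scaling is precisely $e^{-\mu}(b)$. Consequently $f_2(b\cdot x)=e^{-\mu}(b)\cdot(b\cdot f_2(x))$, and on global sections this twist translates into the tensor factor $\C_{-\mu}$. Composing with~(1) gives the desired $B$-module isomorphism.

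The main obstacle is the combinatorial bookkeeping in part~(2): identifying which $\zeta(j,s)$ are affected by the $B$-action and matching the resulting character with the closed-form $\mu$ in the statement. This reduces to the identity $\sum_{k,l}\mathbf a_k(l)\varpi_{i_{k,l}}+\mu = \lambda_1+\cdots+\lambda_r$, which one verifies for each fixed $s\in[n]$ by telescoping the defining formula for $\mathbf a_k(l)$ over the indices $k$ at which $s$ appears in $(i_{k,1},\dots,i_{k,N_k})$. A minor point requiring care is that $e^{\varpi_s}$ is only a character of $P_{I}$ when $s\notin I$, which is precisely why the scalars $e^{\varpi_s}(\zeta(j,s))$ are well-defined under the assumption $k(j,s)=0$.
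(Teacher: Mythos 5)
Your proposal is correct and follows essentially the paper's own proof: part~(1) uses the same push-forward of the structure sheaf (the paper invokes Jantzen's Lemma II.14.5.(a), which packages the normality/Zariski's-main-theorem argument you describe) together with the projection formula, and part~(2) hinges on exactly the same combinatorial observation that $p_{1,1}$ enters $\zeta(j,s)$ precisely when $k(j,s)=0$, i.e.\ $s\notin\{i_{k,l}\mid 1\le k\le j,\ 1\le l\le N_k\}$, so the accumulated twist is $e^{-\mu}(b)$. The paper carries out the equivariance computation in terms of covariant functions on $\mathbf{P}_{\mathbf i}$ rather than your bundle-map picture using $f$ and $f_2$, but this is a cosmetic difference.
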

To prove the theorem, we recall the following lemma.
\begin{lemma}[{\cite[II.14.5.(a)]{Jan07}}]
	\label{lemma_push_forward_str_sheaf}
	Let $\varphi \colon Y \to X$ be a dominant and projective
	morphism of noetherian and integral schemes such that $\varphi$
	induces an isomorphism $\C(X) \stackrel{\sim}{\longrightarrow}
	\C(Y)$ of function fields. If $X$ is normal, then
	$\varphi_{\ast} \mathcal{O}_Y = \mathcal{O}_X$.
%	\hfill \qed %
\end{lemma}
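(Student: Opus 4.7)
The plan is to prove (1) by applying Lemma~\ref{lemma_push_forward_str_sheaf} to the birational morphism $\eta_{\mathbf i,\mathcal I} \colon Z_{\mathbf i} \to Z_{\mathcal I}$ together with the projection formula, and then to prove (2) by tracking how the explicit isomorphism of line bundles constructed in the proof of Proposition~\ref{prop_line_bdle_over_fBS_and_BS} intertwines the left $B$-action on both sides.

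For part~(1), I first verify the hypotheses of Lemma~\ref{lemma_push_forward_str_sheaf}. By Proposition~\ref{prop_fBS_and_BS} the map $\eta_{\mathbf i,\mathcal I}$ is birational and both $Z_{\mathbf i}$ and $Z_{\mathcal I}$ are smooth projective by Proposition~\ref{prop_properties_of_fBS}, so $\eta_{\mathbf i,\mathcal I}$ is a dominant projective morphism of noetherian integral schemes inducing an isomorphism of function fields, and $Z_{\mathcal I}$ is normal. Hence $(\eta_{\mathbf i,\mathcal I})_{\ast} \mathcal{O}_{Z_{\mathbf i}} = \mathcal{O}_{Z_{\mathcal I}}$. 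Applying the projection formula to $\mathcal L \coloneqq \mathcal{L}_{\mathcal I, \lambda_1,\dots,\lambda_r}$ gives $(\eta_{\mathbf i,\mathcal I})_{\ast} \eta_{\mathbf i,\mathcal I}^{\ast} \mathcal L = \mathcal L \otimes (\eta_{\mathbf i,\mathcal I})_{\ast} \mathcal{O}_{Z_{\mathbf i}} = \mathcal L$, so taking global sections yields the canonical isomorphism $H^0(Z_{\mathcal I},\mathcal L) \xrightarrow{\sim} H^0(Z_{\mathbf i}, \eta_{\mathbf i,\mathcal I}^{\ast} \mathcal L)$. Combining with the isomorphism $\eta_{\mathbf i,\mathcal I}^{\ast} \mathcal{L}_{\mathcal I, \lambda_1,\dots,\lambda_r} \cong \mathcal{L}_{\mathbf i, \mathbf a}$ from Proposition~\ref{prop_line_bdle_over_fBS_and_BS} completes~(1).

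For part~(2), I trace the $B$-equivariance through the two isomorphisms used in~(1). The morphism $\eta_{\mathbf i,\mathcal I}$ is $B$-equivariant for the natural left actions (by left multiplication on the first factor), so the pullback $H^0(Z_{\mathcal I},\mathcal L) \to H^0(Z_{\mathbf i}, \eta_{\mathbf i,\mathcal I}^{\ast} \mathcal L)$ is a $B$-module isomorphism. Next, the isomorphism $f$ of Claim~1 in the proof of Proposition~\ref{prop_line_bdle_over_fBS_and_BS} is also $B$-equivariant, since left multiplication by $b$ on the first factor does not alter the elements $b_1,\dots,b_r$ solving~\eqref{eq_bi_in_prop_2.8}, hence leaves the constant $C$ in~\eqref{eq_def_of_C} unchanged. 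The entire character twist therefore comes from the isomorphism $f_2$ in~\eqref{eq_isomor_prop2.8_2}, which multiplies fibers by $C' = \prod_{s,j} e^{d_{j,s}\varpi_s}(\zeta(j,s))^{-1}$. Replacing $p_{1,1}$ by $b p_{1,1}$ changes $\zeta(j,s)$ to $b\,\zeta(j,s)$ exactly when $\zeta(j,s)$ starts at $p_{1,1}$, i.e.\ when $k(j,s) = 0$, which by the definition~\eqref{eq_def_of_kjs} is equivalent to $s \notin \{i_{k,l} \mid 1 \leq k \leq j,\ 1\leq l \leq N_k\}$. Summing the resulting characters over such pairs $(j,s)$ gives precisely the weight $\mu$, so $C'(b \cdot x) = e^{-\mu}(b)\, C'(x)$.

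Turning this computation into the stated $B$-module isomorphism is the main, though routine, bookkeeping step. For a section $\sigma$ of $\mathcal L_{\mathbf i,\lambda_1,\dots,\lambda_r}$ set $\tilde\sigma \coloneqq f_2 \circ \sigma$. Writing out $(b \cdot \tilde\sigma)(x) = b \cdot \tilde\sigma(b^{-1}x)$ and using the fiberwise identity $b \cdot f_2(y) = e^{\mu}(b)\, f_2(b \cdot y)$ established above, one obtains $\widetilde{b \cdot \sigma} = e^{-\mu}(b)\, b \cdot \tilde\sigma$. This is exactly the relation which identifies the target with $H^0(Z_{\mathbf i}, \mathcal L_{\mathbf i,\mathbf a}) \otimes \C_{-\mu}$: under this tensor product the $B$-action reads $b \cdot (\tau \otimes 1) = e^{-\mu}(b)\,(b\tau) \otimes 1$, which matches exactly. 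Composing with the $B$-equivariant pullback and Claim~1 isomorphism yields the desired $B$-module isomorphism, completing~(2). The main subtle point will be carefully checking the sign of the character twist (i.e.\ distinguishing $\C_\mu$ from $\C_{-\mu}$) and verifying that $k(j,s) = 0$ is precisely the condition appearing in the definition of $\mu$.
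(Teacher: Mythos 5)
Your proposal does not address the statement it was supposed to prove. The statement is Lemma~\ref{lemma_push_forward_str_sheaf} itself: for a dominant projective morphism $\varphi \colon Y \to X$ of noetherian integral schemes inducing an isomorphism $\C(X) \stackrel{\sim}{\longrightarrow} \C(Y)$ of function fields, with $X$ normal, one has $\varphi_{\ast}\mathcal{O}_Y = \mathcal{O}_X$. What you wrote is a proof plan for Theorem~\ref{thm_isom_between_holom_sections}, in which this lemma is invoked as a black box (``Hence $(\eta_{\mathbf i,\mathcal I})_{\ast}\mathcal{O}_{Z_{\mathbf i}} = \mathcal{O}_{Z_{\mathcal I}}$''); nowhere do you supply an argument for that equality, so the assigned statement remains unproved. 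For the record, the paper itself gives no proof either --- it quotes the lemma from \cite[II.14.5.(a)]{Jan07} --- so there is no in-paper argument to compare against, but your text still needs to establish the lemma rather than consume it.

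A proof would run roughly as follows. Since $\varphi$ is projective (hence proper) and the schemes are noetherian, $\varphi_{\ast}\mathcal{O}_Y$ is a coherent sheaf of $\mathcal{O}_X$-algebras; over an affine open $U = \operatorname{Spec} A \subset X$ its sections form a finite $A$-algebra $B$. Because $Y$ is integral and $\varphi$ is dominant with $\C(Y) \cong \C(X)$, the ring $B$ embeds into $\operatorname{Frac}(A)$, so every element of $B$ is an element of $\operatorname{Frac}(A)$ that is integral over $A$. Normality of $X$ means $A$ is integrally closed in $\operatorname{Frac}(A)$, hence $B = A$, and therefore $\varphi_{\ast}\mathcal{O}_Y = \mathcal{O}_X$. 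None of these steps --- coherence/finiteness of the pushforward, the embedding of its sections into the common function field, and the integral-closedness argument --- appears in your proposal. Incidentally, the material you did write largely parallels the paper's own proof of Theorem~\ref{thm_isom_between_holom_sections} (part (1) via the lemma plus the projection formula, part (2) by tracking the constants $C$, $C'$ and the weight $\mu$), but that theorem is not the statement in question.
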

\begin{proof}[Proof of Theorem~\ref{thm_isom_between_holom_sections}]
	\begin{enumerate}
	\item
	Because of Propositions~\ref{prop_properties_of_fBS} and~\ref{prop_fBS_and_BS}, the morphism 
	$\eta=\eta_{\mathbf i,\mathcal I} \colon Z_{\mathbf i}\to Z_{\mathcal I}$
	satisfies all the conditions in Lemma~\ref{lemma_push_forward_str_sheaf}. Hence we have that
	\begin{equation}\label{eq_structure_sheaves}
	\eta_{\ast} \mathcal{O}_{Z_{\mathbf i}}
	= \mathcal{O}_{ Z_{\mathcal I}}.
	\end{equation}
	Then we have the following:
	\begin{align*}
	\eta_{\ast}(\eta^{\ast} \mathcal{L}_{\mathcal I, \lambda_1,
		\dots,\lambda_r})
	&= 
	\eta_{\ast}(\mathcal{O}_{Z_{\mathbf i}}
	\otimes_{
		\mathcal{O}_{Z_{\mathbf i}}}
	\eta^{\ast} \mathcal{L}_{\mathcal I, \lambda_1,
		\dots,\lambda_r}) \\
	&\cong \eta_{\ast}\mathcal{O}_{Z_{\mathbf i}}
	\otimes_{\mathcal{O}_{ Z_{\mathcal I}}}
	\mathcal{L}_{\mathcal I, \lambda_1,	\dots,\lambda_r}
	\quad  
	\text{(by~\cite[Exercise II.5.1.(d)]{Ha77})} \\
	&= \mathcal{O}_{Z_{\mathcal I}}
	\otimes_{\mathcal{O}_{Z_{\mathcal I}}}
	\mathcal{L}_{\mathcal I, \lambda_1,	\dots,\lambda_r} 
	\quad  \text{(by~\eqref{eq_structure_sheaves})} \\
	&= \mathcal{L}_{\mathcal I, \lambda_1,\dots,\lambda_r}.
	\end{align*}
	Taking global sections we have an isomorphism between $H^0(Z_{\mathcal I}, \mathcal L_{\mathcal I, \lambda_1,\dots,\lambda_r}) $ and 
	$H^0(Z_{\mathbf i}, \eta_{{\bf i}, \mathcal I}^{\ast} \mathcal{L}_{\mathcal{I},\lambda_1,\dots,\lambda_r})$ as $\C$-vector spaces. And the later one is isomorphic to $H^0(Z_{\mathbf i}, \mathcal{L}_{\bf i, \bf a})$ as $\C$-vector spaces by Proposition~\ref{prop_line_bdle_over_fBS_and_BS}.
	\item
	Note that there is a bijective correspondence between the set $H^0(Z_{\mathcal I}, \mathcal{L}_{\mathcal I, \lambda_1,\dots,\lambda_r})$ of holomorphic sections and the set of morphisms $f \colon \mathbf{P}_{\mathcal I} \to \C$ satisfying 
	\begin{equation}\label{eq_section_f}
	f((p_1,\dots,p_r)\cdot (b_1,\dots,b_r)) = e^{\lambda_1}(b_1) \cdots e^{\lambda_r}(b_r) f(p_1,\dots,p_r)
	\end{equation} 
	for $(p_1,\dots,p_r) \in {\bf P}_{\mathcal I}$ and $(b_1,\dots,b_r) \in B^r$.
	Indeed, a morphism~$f$ defines a section $[p_1,\dots,p_r] \mapsto [p_1,\dots,p_r, f(p_1,\dots,p_r)]$. 
	Using $C$ and $C'$ defined in the proof of Proposition~\ref{prop_line_bdle_over_fBS_and_BS} (see~\eqref{eq_def_of_C} and~\eqref{eq_def_of_C'}), for a morphism $f$ satisfying~\eqref{eq_section_f}, we associate a morphism $\tilde{f} \colon \mathbf{P}_{\mathbf i} \to \C$ 
	\[
	\tilde{f}((p_{k,l})_{k,l}) = C' C f\left(\prod_{l=1}^{N_1} p_{1,l}, \dots, \prod_{l=1}^{N_r} p_{r,l} \right)
	\]
	which also gives a section in $H^0(Z_{\mathbf i}, \mathcal{L}_{\mathbf i, \mathbf a})$. Actually, this association is the isomorphism in (1).
	
	On the other hand, the left action of $P_{I_1}$ on $Z_{\mathcal I}$ and that of $P_{i_{1,1}}$ on $Z_{\mathbf i}$ given in~\eqref{eq_action_of_PI} define actions of $B$ on the sets of holomorphic sections. For $b \in B$, $f \colon P_{\mathcal I} \to \C$, and $\tilde{f} \colon P_{\mathbf i} \to \C$, we set
	\[
	\begin{split}
	(b \cdot f)(p_1,\dots,p_r) &\coloneqq f(b^{-1}p_1,p_2,\dots,p_r), \\
	(b \cdot \tilde{f})((p_{k,l})_{k,l})&\coloneqq \tilde{f}(b^{-1} p_{1,1}, p_{1,2},\dots,p_{1,N_1},\dots,p_{r,N_r}).
	\end{split}
	\]	
	Recall from~\eqref{eq_def_of_C'} that $C'$  is the product of $e^{d_{j,s} \varpi_s} (\zeta(j,s))^{-1}$. For each $s \in [n]$ and $j \in [r]$, by~\eqref{eq_def_of_kjs}, the following three conditions are equivalent:
	\begin{itemize}
		\item $p_{1,1}$ is involved in  $\zeta(j,s)$;
	 \item $k(j,s) = 0$;
	\item $s \in [n] \setminus \{ i_{1,1},\dots,i_{1,N_1},\dots,i_{j,N_j}\}$. 
	\end{itemize}
	Using this observation, we obtain that
	\[
	\begin{split}
	&(b \cdot \tilde{f})((p_{k,l})_{k,l})\\
	&\qquad=
	\tilde{f}(b^{-1}p_{1,1},p_{1,2},\dots,p_{1,N_1},\dots,p_{r,N_r}) \\
	&\qquad = \left(\prod_{j=1}^r \prod_{s \in [n] \setminus \{ i_{k,l} \mid 1 \leq k \leq j, 1 \leq l \leq N_k\}} e^{d_{j,s} \varpi_s} (b) \right) C' C f \left(b^{-1} \prod_{l=1}^{N_1} p_{1,l}, \dots, \prod_{l=1}^{N_r} p_{r,l} \right) \\
	&\qquad = e^{\mu}(b)  \left(  \widetilde{ b \cdot f}((p_{k,l})_{k,l}) \right)\!,
	\end{split}
	\]
	where $C$ and $C'$ are values determined by $(p_{k,l})_{k,l}$, and $\mu$ is the weight given in the statement.
	This proves the desired equality $\widetilde{b \cdot f} = e^{-\mu}(b) (b \cdot \tilde{f})$.
	\qedhere
	\end{enumerate}
\end{proof}

As a direct consequence of Theorem~\ref{thm_isom_between_holom_sections}(1)
and Lemma~\ref{lemma_NO_body_same} we have the following theorem.
\begin{theorem}\label{thm_NOBY_of_fBS}
	Let $\mathcal I = (I_1,\dots, I_r)$ be a sequence of subsets
	of $[n]$, and
	let ${\bf i} = (i_{k, l})_{1 \le k \le r, 1 \le l \le N_k} \in [n]^{N_1 + \cdots + N_r}$ be a sequence such that
	$(i_{k,1},\dots,i_{k,N_k})$ is a reduced word for the longest element in $W_{I_k}$ for $1\leq k \leq r$. 
	Let $\eta_{\mathbf i,\mathcal I} \colon
	Z_{\mathbf i} \to Z_{\mathcal I}$ be the birational morphism  
	defined in Proposition~\ref{prop_fBS_and_BS}.
	Then for integral dominant weights $\lambda_k$, $1 \leq k \leq r$, a valuation $v$ on $\C(Z_{\mathcal I})$, and a nonzero section $\tau \in H^0(Z_{\mathcal I}, \mathcal L_{\mathcal I,
		\lambda_1,\dots, \lambda_r})$, we have the equality
	\[
	\Delta(Z_{\mathcal I}, \mathcal L_{\mathcal I,
		\lambda_1,\dots, \lambda_r}, v, \tau)
	= \Delta(Z_{\mathbf i}, \eta_{\mathbf i,\mathcal I}^{\ast} \mathcal L_{\mathcal I,
		\lambda_1,\dots, \lambda_r}, v, \eta_{\mathbf i,\mathcal I}^{\ast} \tau).
	\]
\end{theorem}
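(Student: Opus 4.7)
The plan is to reduce the statement to the identity promised by Lemma~\ref{lemma_NO_body_same} applied to the birational morphism $\eta_{\mathbf i, \mathcal I} \colon Z_{\mathbf i} \to Z_{\mathcal I}$ from Proposition~\ref{prop_fBS_and_BS}. First I would check that the three hypotheses of Lemma~\ref{lemma_NO_body_same} are available in our setting: (i) the morphism is birational, which is exactly Proposition~\ref{prop_fBS_and_BS}; (ii) the source and target have the same dimension, again by Proposition~\ref{prop_fBS_and_BS}; and (iii) the line bundle $\mathcal L_{\mathcal I, \lambda_1,\dots,\lambda_r}$ is generated by global sections, which follows from the dominance assumption on the $\lambda_k$ (combine Proposition~\ref{prop_line_bdle_over_fBS_and_BS}, which identifies the pullback with $\mathcal L_{\mathbf i, \mathbf a}$ for an effective integer vector $\mathbf a \in \mathbb Z_{\geq 0}^{N_1+\cdots+N_r}$, with the standard criterion of Lauritzen--Thomsen~\cite{LaTh04} recalled in Remark~\ref{rmk_pullback_is_not_very_ample}; alternatively, the quotient description of $\mathcal L_{\mathcal I, \lambda_1, \dots, \lambda_r}$ in~\eqref{eq_def_of_L_fBS} makes this immediate since products of Borel--Weil line bundles on partial flag varieties are already globally generated).

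The next step is to verify, for every $k > 0$, that the pullback map
\[
H^0(Z_{\mathcal I}, \mathcal L_{\mathcal I, \lambda_1,\dots,\lambda_r}^{\otimes k}) \longrightarrow H^0(Z_{\mathbf i}, \eta_{\mathbf i, \mathcal I}^{\ast} \mathcal L_{\mathcal I, \lambda_1,\dots,\lambda_r}^{\otimes k})
\]
is an isomorphism. From the construction~\eqref{eq_def_of_L_fBS} one reads off the tensor-power identity $\mathcal L_{\mathcal I, \lambda_1,\dots,\lambda_r}^{\otimes k} \cong \mathcal L_{\mathcal I, k\lambda_1,\dots,k\lambda_r}$, and since each $k\lambda_j$ is still an integral dominant weight, part~(1) of Theorem~\ref{thm_isom_between_holom_sections} applied to the sequence $(k\lambda_1,\dots,k\lambda_r)$ yields exactly the required isomorphism. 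At this point all hypotheses of Lemma~\ref{lemma_NO_body_same} are in force.

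Finally I would invoke Lemma~\ref{lemma_NO_body_same} directly with $X = Z_{\mathbf i}$, $Y = Z_{\mathcal I}$, $f = \eta_{\mathbf i, \mathcal I}$, $\mathcal L = \mathcal L_{\mathcal I, \lambda_1,\dots,\lambda_r}$, $v$ the given valuation (viewed on $\mathbb C(Z_{\mathbf i})$ via the isomorphism $\mathbb C(Z_{\mathcal I}) \cong \mathbb C(Z_{\mathbf i})$ induced by $\eta_{\mathbf i, \mathcal I}$), and the section $\tau$, to conclude the desired equality of Newton--Okounkov bodies.

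Conceptually this argument is an assembly of pieces that were set up precisely with this application in mind, so no genuine obstacle remains; the only subtle point that deserves an explicit sentence in the write-up is the tensor-power identification $\mathcal L_{\mathcal I, \lambda_1,\dots,\lambda_r}^{\otimes k} \cong \mathcal L_{\mathcal I, k\lambda_1,\dots,k\lambda_r}$, which is what allows the single statement Theorem~\ref{thm_isom_between_holom_sections}(1) to promote from one cohomology isomorphism into the family of isomorphisms needed to compute the semigroup $S(v,\tau)$, and hence the convex body itself.
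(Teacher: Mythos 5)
Your proposal is correct and follows essentially the same route as the paper: the paper derives Theorem~\ref{thm_NOBY_of_fBS} directly from Lemma~\ref{lemma_NO_body_same} together with Theorem~\ref{thm_isom_between_holom_sections}(1). The only point you elaborate that the paper leaves implicit is how the single cohomology isomorphism of Theorem~\ref{thm_isom_between_holom_sections}(1) extends to all tensor powers, which you correctly handle via $\mathcal L_{\mathcal I, \lambda_1,\dots,\lambda_r}^{\otimes k} \cong \mathcal L_{\mathcal I, k\lambda_1,\dots,k\lambda_r}$ (alternatively, the projection-formula argument $\eta_{\ast}\eta^{\ast}\mathcal F \cong \mathcal F$ in the proof of Theorem~\ref{thm_isom_between_holom_sections}(1) applies uniformly to any line bundle, hence to every power at once).
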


\begin{remark}\label{remark_dimension_of_NOBY}
	Even if the line bundle $\mathcal L = \mathcal L_{\mathcal I, \lambda_1,\dots,\lambda_r}$ is very ample, the pullback bundle $\eta_{{\bf i}, \mathcal I}^{\ast} \mathcal L$ is not necessarily very ample when $Z_{\mathcal I}$ is not a Bott--Samelson variety (see Remark~\ref{rmk_pullback_is_not_very_ample}). Therefore the real dimension of Newton--Okounkov body $\Delta(Z_{\mathbf i}, \eta_{\mathbf i, \mathcal I}^{\ast} \mathcal L, v)$ can possibly be smaller than the complex dimension of $Z_{\mathbf i}$ as is mentioned in Remark~\ref{rmk_dimension_of_NOBY}. However, by Theorem~\ref{thm_NOBY_of_fBS}, when $\mathcal L$ is very ample, we can see that
	\[
	\dim_{\mathbb R} \Delta(Z_{\mathbf i}, \eta_{{\bf i}, \mathcal I}^{\ast} \mathcal L,v) 
	= \dim_{\mathbb R} \Delta(Z_{\mathcal I}, \mathcal L, v) = \dim_{\mathbb C} Z_{\mathcal I} = \dim_{\mathbb C} Z_{\mathbf i}
	\]
	for any valuation $v$ which has one-dimensional leaves. 
\end{remark}
By~Theorem~\ref{thm_NOBY_of_fBS} and~\cite[Corollary~5.4]{Fuj18},
we have the following corollary.
\begin{corollary}
Suppose that the line bundle $\mathcal L_{\mathcal I, \lambda_1,\dots, \lambda_r}$ constructed by weights $\lambda_1,\dots,\lambda_r$ is very ample. Then,
the Newton--Okounkov body $\Delta(Z_{\mathcal I}, 
\mathcal L_{\mathcal I, \lambda_1,\dots, \lambda_r}, v_{\mathbf i}^{\rm high})$
is a rational convex polytope of real dimension equal to the complex
dimension of $Z_{\mathcal I}$.
\end{corollary}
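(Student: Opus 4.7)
The plan is to combine three ingredients already at our disposal: the comparison of Newton--Okounkov bodies in Theorem~\ref{thm_NOBY_of_fBS}, Fujita's result for Bott--Samelson varieties cited as~\cite[Corollary~5.4]{Fuj18}, and the general dimension fact for very ample line bundles recalled in Remark~\ref{rmk_dimension_of_NOBY}. First, I would fix a sequence $\mathbf i = (i_{k,l})_{1 \leq k \leq r, 1\leq l \leq N_k}$ as in Proposition~\ref{prop_fBS_and_BS} (such a sequence always exists, as noted immediately after that proposition), so that we obtain the birational morphism $\eta_{\mathbf i, \mathcal I} \colon Z_{\mathbf i} \to Z_{\mathcal I}$ together with the pullback line bundle $\eta_{\mathbf i, \mathcal I}^{\ast} \mathcal{L}_{\mathcal I, \lambda_1, \dots, \lambda_r} \cong \mathcal{L}_{\mathbf i, \mathbf a}$ of Proposition~\ref{prop_line_bdle_over_fBS_and_BS}, where $\mathbf a$ is the associated integer vector.

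Next, applying Theorem~\ref{thm_NOBY_of_fBS} with the valuation $v_{\mathbf i}^{\rm high}$ (and any nonzero section $\tau$), I obtain the equality
\[
\Delta(Z_{\mathcal I}, \mathcal{L}_{\mathcal I, \lambda_1, \dots, \lambda_r}, v_{\mathbf i}^{\rm high})
= \Delta(Z_{\mathbf i}, \mathcal{L}_{\mathbf i, \mathbf a}, v_{\mathbf i}^{\rm high}).
\]
Thus it suffices to analyze the right-hand side, which is the Newton--Okounkov body of a Bott--Samelson variety. At this point I invoke~\cite[Corollary~5.4]{Fuj18}, which identifies such a Newton--Okounkov body (up to sign) with a generalized string polytope, and in particular shows that it is a rational convex polytope. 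This delivers the rationality and polytopality assertion of the corollary.

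Finally, for the dimension statement, the hypothesis that $\mathcal{L}_{\mathcal I, \lambda_1, \dots, \lambda_r}$ is very ample is used via the general principle recalled in Remark~\ref{rmk_dimension_of_NOBY} (from~\cite[Theorem~3.9]{HaKa15}): the Newton--Okounkov body of a projective variety with respect to a very ample line bundle has real dimension equal to the complex dimension of the variety. Hence
\[
\dim_{\mathbb R} \Delta(Z_{\mathcal I}, \mathcal{L}_{\mathcal I, \lambda_1, \dots, \lambda_r}, v_{\mathbf i}^{\rm high}) = \dim_{\mathbb C} Z_{\mathcal I},
\]
completing the proof. The only subtle point is that we cannot conclude the dimension directly on the Bott--Samelson side, since, as Remark~\ref{rmk_pullback_is_not_very_ample} warns, the pullback $\eta_{\mathbf i, \mathcal I}^{\ast} \mathcal{L}_{\mathcal I, \lambda_1, \dots, \lambda_r}$ need not be very ample even when $\mathcal{L}_{\mathcal I, \lambda_1, \dots, \lambda_r}$ is. This is precisely why the dimension assertion must be deduced on the flag Bott--Samelson side (using very ampleness of $\mathcal L_{\mathcal I,\lambda_1,\dots,\lambda_r}$) and then transported back via Theorem~\ref{thm_NOBY_of_fBS}; this is the one step requiring care rather than a direct citation.
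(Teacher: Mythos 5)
Your proof is correct and follows the same route the paper takes: reduce via Theorem~\ref{thm_NOBY_of_fBS} to the Bott--Samelson side, cite \cite[Corollary~5.4]{Fuj18} for the rational convex polytope claim, and deduce the dimension on the $Z_{\mathcal I}$ side via \cite[Theorem~3.9]{HaKa15} (as in Remark~\ref{rmk_dimension_of_NOBY}), transporting it back through the NOBY equality. In particular your careful observation about why the dimension cannot be read off directly from the pullback bundle is precisely the content of Remark~\ref{remark_dimension_of_NOBY} in the paper.
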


%%%%%%%%%%%%%%%%%%%%%%%%%%%%%%%%%%%%%%%%%%%%%%%%%%%%%%%%%%%%%%%%%%%%%%%%
\section{Applications to representation theory}
\label{sec_appli_to_RT}
%%%%%%%%%%%%%%%%%%%%%%%%%%%%%%%%%%%%%%%%%%%%%%%%%%%%%%%%%%%%%%%%%%%%%%%%
In this section, we give applications of Newton--Okounkov bodies of flag Bott--Samelson varieties to representation theory, using the theory of generalized string polytopes introduced in~\cite{Fuj18}. We restrict ourselves to a specific class of flag Bott--Samelson varieties $Z_{\mathcal{I}}$, that is, to the case of a sequence $\mathcal{I} = (I_1, \ldots, I_r)$ of subsets of $[n]$ such that $I_1 = [n]$. In this case, we have $P_{I_1} = P_{[n]} = G$. Hence the space $H^0(Z_{\mathcal{I}}, \mathcal{L}_{\mathcal{I}, \lambda_1, \ldots, \lambda_r})$ of global sections has a natural $G$-module structure. Let $\chi(H) \coloneqq \mathbb{Z} \varpi_1 + \cdots + \mathbb{Z} \varpi_n$ be the character lattice, and let $\chi_+ (H) \coloneqq \mathbb{Z}_{\ge 0} \varpi_1 + \cdots + \mathbb{Z}_{\ge 0} \varpi_n$ be the set of integral dominant weights. Fix nonzero elements $e_i \in \mathfrak{g}_{\alpha_i}$, $f_i \in \mathfrak{g}_{-\alpha_i}$ for $i \in [n]$. For $\lambda \in \chi_+ (H)$, let $V(\lambda)$ denote the irreducible highest weight $G$-module over $\mathbb{C}$ with the highest weight $\lambda$, and let $v_{\lambda} \in V(\lambda)$ be a highest weight vector. Recall that every finite-dimensional irreducible $G$-module is isomorphic to $V(\lambda)$ for some $\lambda \in \chi_+ (H)$, see~\cite[\S 31.3]{Hum75}, and that every finite-dimensional $G$-module is completely reducible, that is, isomorphic to a direct sum of irreducible $G$-modules (see~\cite[\S 14.3]{Hum75}). For $\lambda_1, \ldots, \lambda_r \in \chi_+ (H)$, we denote by $\tau_{\mathcal{I}, \lambda_1, \ldots, \lambda_r} \in H^0(Z_{\mathcal I}, \mathcal L_{\mathcal I, \lambda_1,\dots,\lambda_r})$ the section corresponding to $\tau_{\bf i, \bf a} \in H^0(Z_{\bf i}, \mathcal L_{{\bf i}, {\bf a}})$ under the isomorphism in Theorem \ref{thm_isom_between_holom_sections} (1), where $\tau_{\bf i, \bf a}$ is the section defined in~\cite[\S 2.3]{Fuj18}. Let $\pi_{\ge 2} \colon \mathbb{R}^{N_1 + \cdots + N_r} \twoheadrightarrow \mathbb{R}^{N_2 + \cdots + N_r}$ be the canonical projection given by $\pi_{\ge 2}((x_{k, l})_{1 \le k \le r, 1 \le l \le N_k}) \coloneqq (x_{k, l})_{2 \le k \le r, 1 \le l \le N_k}$, and set \[\widehat{\Delta}_{{\bf i}, \lambda_1, \ldots, \lambda_r} \coloneqq \pi_{\ge 2}(-\Delta(Z_{\mathcal{I}}, \mathcal{L}_{\mathcal{I}, \lambda_1, \ldots, \lambda_r}, v_{\bf i} ^{\rm high}, \tau_{\mathcal{I}, \lambda_1, \ldots, \lambda_r})).\] 
Since $\Delta(Z_{\mathcal{I}}, \mathcal{L}_{\mathcal{I}, \lambda_1, \ldots, \lambda_r}, v_{\bf i} ^{\rm high}, \tau_{\mathcal{I}, \lambda_1, \ldots, \lambda_r})$ is a rational convex polytope, the image $\widehat{\Delta}_{{\bf i}, \lambda_1, \ldots, \lambda_r}$ is also a rational convex polytope. The following is the main result in this section. 

\begin{theorem}\label{t:representation}
	Let $\mathcal{I} = (I_1, \ldots, I_r)$ be a sequence of subsets of $[n]$ such that $I_1 = [n]$, and fix ${\bf i} = (i_{1, 1}, \ldots, i_{1, N_1}, \ldots, i_{r, 1}, \ldots, i_{r, N_r}) \in [n]^{N_1 + \cdots + N_r}$ such that $(i_{k, 1}, \ldots, i_{k, N_k})$ is a reduced word for the longest element in $W_{I_k}$ for $1 \le k \le r$. For $\lambda_1, \ldots, \lambda_r \in \chi_+ (H)$, write \[H^0(Z_{\mathcal{I}}, \mathcal{L}_{\mathcal{I}, \lambda_1, \ldots, \lambda_r})^\ast \simeq \bigoplus_{\nu \in \chi_+ (H)} V(\nu)^{\oplus c_{\mathcal{I}, \lambda_1, \ldots, \lambda_r} ^\nu}\] as a $G$-module. Then, the multiplicity $c_{\mathcal{I}, \lambda_1, \ldots, \lambda_r} ^\nu$ equals the cardinality of 
	\[
	\begin{split}
	\bigg\{{\bf x} = (x_{k, l})_{2 \le k \le r, 1 \le l \le N_k} &\in \widehat{\Delta}_{{\bf i}, \lambda_1, \ldots, \lambda_r} \cap \mathbb{Z}^{N_2 + \cdots + N_r} ~\bigg| \\
	&\lambda_1 + \cdots + \lambda_r - \sum_{2 \le k \le r, 1 \le l \le N_k} x_{k, l} \alpha_{i_{k, l}} = \nu\bigg\}.
	\end{split}
	\]
\end{theorem}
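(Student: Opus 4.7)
The plan is to transport the computation to the Bott--Samelson variety $Z_{\bf i}$, where Fujita's results~\cite{Fuj18} provide an explicit weight basis of $H^{0}(Z_{\bf i},\mathcal{L}_{{\bf i},{\bf a}})^{*}$ indexed by the lattice points of the generalized string polytope, and then to use the hypothesis $I_{1}=[n]$ to extract each multiplicity $c^{\nu}$ as the dimension of a weight-$\nu$ highest-weight subspace.

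By Theorem~\ref{thm_isom_between_holom_sections}(2) combined with Theorem~\ref{thm_NOBY_of_fBS}, there is a $B$-equivariant isomorphism
$H^{0}(Z_{\mathcal{I}},\mathcal{L}_{\mathcal{I},\lambda_{1},\ldots,\lambda_{r}})\cong H^{0}(Z_{\bf i},\mathcal{L}_{{\bf i},{\bf a}})\otimes \C_{-\mu}$
whose associated Newton--Okounkov body is the generalized string polytope $-\Delta(Z_{\bf i},\mathcal{L}_{{\bf i},{\bf a}},v_{\bf i}^{\rm high},\tau_{{\bf i},{\bf a}})$. From~\cite{Fuj18}, the lattice points $\mathbf{x}=(x_{k,l})$ of this polytope index an $H$-weight basis of $H^{0}(Z_{\bf i},\mathcal{L}_{{\bf i},{\bf a}})^{*}$ whose weight at $\mathbf{x}$ is of the form $\lambda_{1}+\cdots+\lambda_{r}-\sum_{k,l}x_{k,l}\alpha_{i_{k,l}}$. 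Transporting through the above isomorphism (the twist $\C_{-\mu}$ is absorbed by a direct weight-space check, since $\mu$ is precisely the shift relating ${\bf a}$ to $\lambda_{1},\ldots,\lambda_{r}$) yields an $H$-weight basis $\{\sigma_{\mathbf{x}}\}$ of $H^{0}(Z_{\mathcal{I}},\mathcal{L}_{\mathcal{I},\lambda_{1},\ldots,\lambda_{r}})^{*}$ with the same weight formula.

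Next, the hypothesis $I_{1}=[n]$ gives $P_{I_{1}}=G$ and the identification $Z_{\mathcal{I}}\cong G\times^{B}Z_{(I_{2},\ldots,I_{r})}$, which upgrades $H^{0}(Z_{\mathcal{I}},\mathcal{L}_{\mathcal{I},\lambda_{1},\ldots,\lambda_{r}})^{*}$ to a $G$-module. Since for every $V(\nu)$ the space of vectors of weight $\nu$ fixed by the unipotent radical of $B$ is one-dimensional, the multiplicity $c_{\mathcal{I},\lambda_{1},\ldots,\lambda_{r}}^{\nu}$ equals the dimension of the weight-$\nu$ subspace of vectors in $H^{0}(Z_{\mathcal{I}},\mathcal{L})^{*}$ fixed by the unipotent radical of $B$. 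In the Bott--Samelson local chart $\Phi_{\bf i}$ of Example~\ref{example_valuation_vi}, the first $N_{1}$ factors cover the big cell of $G/B=P_{I_1}/B$ (because $(i_{1,1},\ldots,i_{1,N_{1}})$ is a reduced word for the longest element $w_{0}$ of $W$), while the remaining factors parametrize the fiber $Z_{(I_{2},\ldots,I_{r})}$ over $eB\in G/B$. Hence, under Fujita's indexing, the vectors fixed by the unipotent radical correspond exactly to those $\sigma_{\mathbf{x}}$ with $x_{1,1}=\cdots=x_{1,N_{1}}=0$, i.e.~to the lattice points of $\widehat{\Delta}_{{\bf i},\lambda_{1},\ldots,\lambda_{r}}$. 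Counting those satisfying $\lambda_{1}+\cdots+\lambda_{r}-\sum_{k\ge 2,\,l}x_{k,l}\alpha_{i_{k,l}}=\nu$ gives the asserted formula.

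The main technical obstacle is the last identification: rigorously matching ``invariance under the unipotent radical of $B$'' with ``vanishing of the first $N_{1}$ Bott--Samelson coordinates'' in Fujita's basis. This requires a careful comparison between the left action of $G=P_{I_{1}}$ on $H^{0}(Z_{\mathcal{I}},\mathcal{L}_{\mathcal{I},\lambda_{1},\ldots,\lambda_{r}})$ and the highest-term valuation $v_{\bf i}^{\rm high}$ attached to $\Phi_{\bf i}$, together with correct bookkeeping of the $\C_{-\mu}$ twist from Theorem~\ref{thm_isom_between_holom_sections}(2) when comparing $H$-weights on the two sides.
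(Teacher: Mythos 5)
Your strategy—transfer to $Z_{\bf i}$, use Fujita's polytope-indexed basis, then pick out highest-weight vectors by the condition $x_{1,1}=\cdots=x_{1,N_1}=0$—is plausible in spirit but has a genuine gap at the step you yourself flag, and that gap cannot be closed by "careful bookkeeping" of the chart $\Phi_{\bf i}$ and the $\C_{-\mu}$ twist alone. The problem is that a weight basis indexed by lattice points of $\widehat\Delta$ is not a basis compatible with the nilpotent action: a highest-weight vector (i.e.\ one annihilated by the $e_i$'s) is in general a nontrivial linear combination of the $\sigma_{\bf x}$'s, so there is no reason for the individual basis elements with $x_{1,\cdot}=0$ to be $N$-invariant, nor for the $N$-invariants of a given weight to be spanned by them. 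The chart argument ("the first $N_1$ factors cover the big cell") controls the valuation $v_{\bf i}^{\rm high}$, not the $\mathfrak{n}$-action on $H^0(Z_{\mathcal I},\mathcal L)^\ast$, and the two are related in a genuinely nontrivial way.

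The paper bridges this precisely via crystal-basis theory, which is the ingredient missing from your proposal. One first shows (Proposition~\ref{proposition_relation_with_generalized_Demazure} together with \cite[Theorems~2, 5, Corollary~10]{LLM02}) that $\mathcal{B}_{\mathcal I,\lambda_1,\ldots,\lambda_r}$ is the crystal basis of the $G$-module $V_{\mathcal I,\lambda_1,\ldots,\lambda_r}$ and, because $I_1=[n]$, decomposes as a disjoint union of full crystals $\mathcal{B}(\nu)$ (Proposition~\ref{p:crystal basis for flag BS}, via Lemma~\ref{l:crystal for longest}); this is what legitimizes reading off $c^\nu$ as a count of connected components. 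One then uses the combinatorics of Kashiwara operators (\cite[Proposition~3.2.3]{Kas93}) to show that the highest element of each component has generalized string parametrization with $x_{1,1}=\cdots=x_{1,N_1}=0$, which produces a genuine bijection $\Psi$ between components and lattice points of $\widehat\Delta_{{\bf i},\lambda_1,\ldots,\lambda_r}$. That crystal-theoretic step is not a reformulation of your chart heuristic; it is the substance of the proof. Without it your argument conflates "weight vector indexed by a point with vanishing first block" with "highest-weight vector", which is false at the level of vectors and only becomes a correct count after passing to crystals.
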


\begin{remark}\normalfont
	Since $\Delta(Z_{\mathcal{I}}, \mathcal{L}_{\mathcal{I}, \lambda_1, \ldots, \lambda_r}, v_{\bf i} ^{\rm high}, \tau_{\mathcal{I}, \lambda_1, \ldots, \lambda_r}) = \Delta(Z_{\bf i}, \mathcal{L}_{{\bf i}, {\bf a}}, v_{\bf i} ^{\rm high}, \tau_{{\bf i}, {\bf a}})$ by Theorem \ref{thm_NOBY_of_fBS}, it is natural to ask why we consider not only $Z_{\bf i}$ but also $Z_{\mathcal{I}}$. The reason is that the space $H^0 (Z_{\bf i}, \mathcal{L}_{{\bf i}, {\bf a}})$ of global sections does not have a natural $G$-module structure because $Z_{\bf i}$ is not a $G$-variety. The theory of flag Bott-Samelson varieties gives a natural framework to relate the usual Bott-Samelson variety $Z_{\bf i}$ with $G$-modules.
\end{remark}

In order to prove Theorem~\ref{t:representation}, we use the theory of crystal bases, see~\cite{Kas95} for a survey on this topic. Lusztig~\cite{Lus90, Lus91, Lus10} and Kashiwara~\cite{Kas91} constructed a specific $\mathbb{C}$-basis of $V(\lambda)$ via the quantized enveloping algebra associated with $\mathfrak{g}$. This is called (the specialization at $q = 1$ of) the \emph{lower global basis} ($=$ the \emph{canonical basis}), and denoted by $\{G_{\lambda}^{\rm low}(b) \mid b \in \mathcal{B}(\lambda)\} \subset V(\lambda)$. See, for example,~\cite[\S 12]{Kas95} for the definition of $G_{\lambda}^{\rm low}(b)$. In this manuscript, we put `$\rm{low}$' to emphasize that we are considering the \textit{lower} global basis while Kashiwara~\cite{Kas95} denoted it by $G_{\lambda}(b)$.
The index set $\mathcal{B}(\lambda)$ is endowed with specific maps 
\begin{gather*}
{\rm wt} \colon \mathcal{B}(\lambda) \rightarrow \chi(H),\ \varepsilon_i, \varphi_i \colon \mathcal{B}(\lambda) \rightarrow \mathbb{Z}_{\ge 0},\ {\rm and}\ \\
\tilde{e}_i, \tilde{f}_i \colon \mathcal{B}(\lambda) \rightarrow \mathcal{B}(\lambda) \cup \{0\}\ {\rm for}\ i \in [n],
\end{gather*}
which have the following properties:
\begin{align*}
&{\rm wt}(b_\lambda) = \lambda,\\ 
&{\rm wt}(\tilde{e}_i b) = {\rm wt}(b) +\alpha_i \quad{\rm if}\ \tilde{e}_i b \neq 0,\\ 
&{\rm wt}(\tilde{f}_i b) = {\rm wt}(b) -\alpha_i \quad{\rm if}\ \tilde{f}_i b \neq 0,\\ 
&\varepsilon_i(b) = \max\{k \in \mathbb{Z}_{\ge 0} \mid \tilde{e}_i ^k b \neq 0\},\\ 
&\varphi_i (b) = \max\{k \in \mathbb{Z}_{\ge 0} \mid \tilde{f}_i ^k b \neq 0\},\\
&e_i \cdot G^{\rm low} _{\lambda} (b) \in \mathbb{C}^\ast G^{\rm low} _{\lambda} (\tilde{e}_i b) + \sum_{\substack{b^\prime \in \mathcal{B}(\lambda);\ {\rm wt}(b^\prime) = {\rm wt}(b) + \alpha_i,\\ \varphi_i (b^\prime) > \varphi_i (b) +1}} \mathbb{C} G^{\rm low} _{\lambda} (b^\prime),\\
&f_i \cdot G^{\rm low} _{\lambda} (b) \in \mathbb{C}^\ast G^{\rm low} _{\lambda} (\tilde{f}_i b) + \sum_{\substack{b^\prime \in \mathcal{B}(\lambda);\ {\rm wt}(b^\prime) = {\rm wt}(b) -\alpha_i,\\ \varepsilon_i (b^\prime) > \varepsilon_i (b) +1}} \mathbb{C} G^{\rm low} _{\lambda} (b^\prime)
\end{align*}
for $i \in [n]$ and $b \in \mathcal{B}(\lambda)$, where $\Cstar = \mathbb{C} \setminus \{0\}$, and $b_\lambda \in \mathcal{B}(\lambda)$ is defined as $G^{\rm low} _{\lambda} (b_\lambda) \in \mathbb{C}^\ast v_\lambda$, called the \emph{highest element}. We call $\mathcal{B}(\lambda)$ the \emph{crystal basis} for $V(\lambda)$, which satisfies the axiom of \emph{crystals}, see~\cite[Definition 1.2.1]{Kas93} for the definition of crystals. The operations $\tilde{e}_i$ and $\tilde{f}_i$ are called the \defi{Kashiwara operators}.

\begin{definition}[{see~\cite[\S 4.2]{Kas95}}]\normalfont
	The {\it crystal graph} of a crystal $\mathcal{B}$ is the $[n]$-colored, directed graph with vertex set $\mathcal{B}$ whose directed edges are given by: $b \xrightarrow{i} b^\prime$ if and only if $b^\prime = \tilde{f}_i b$.
\end{definition}

In this paper, we identify a crystal $\mathcal{B}$ with its crystal graph. By \cite[Theorem 3]{Kas91}, for a $G$-module $V = V(\nu_1) \oplus \cdots \oplus V(\nu_M)$, the crystal graph of the corresponding crystal basis $\mathcal{B}(V)$ is the disjoint union of the crystal graphs $\mathcal{B}(\nu_1), \ldots, \mathcal{B}(\nu_M)$. 

\begin{proposition}[{see~\cite[Proposition 3.2.3]{Kas93}}]\label{p:Demazure crystal}
	Let ${\bf i} = (i_1, \ldots, i_r) \in [n]^r$ be a reduced word for $w \in W$, and $\lambda \in \chi_+ (H)$. Then, the subset \[\mathcal{B}_w(\lambda) \coloneqq \left\{\tilde{f}_{i_1} ^{x_1} \cdots \tilde{f}_{i_r} ^{x_r} b_\lambda ~\Big|~ x_1, \ldots, x_r \in \mathbb{Z}_{\ge 0} \right\} \setminus \{0\} \subset \mathcal{B}(\lambda)\] is independent of the choice of a reduced word ${\bf i}$.
\end{proposition}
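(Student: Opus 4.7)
The plan is to identify $\mathcal{B}_w(\lambda)$ with the indexing set of a basis of the Demazure module $V_w(\lambda) \coloneqq U(\mathfrak{b}) \cdot v_{w\lambda} \subset V(\lambda)$, where $v_{w\lambda}$ spans the one-dimensional weight space $V(\lambda)_{w\lambda}$. Since $V_w(\lambda)$ depends only on $w$ by construction, once the identification
\[
V_w(\lambda) = \bigoplus_{b \in \mathcal{B}_w(\lambda)} \mathbb{C}\, G^{\rm low}_\lambda(b)
\]
is in hand, the independence of $\mathcal{B}_w(\lambda)$ from the reduced word ${\bf i}$ is immediate, because the subspace on the left and the linear independence of the lower global basis on the right uniquely determine the indexing subset of $\mathcal{B}(\lambda)$.

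I would prove this identification by induction on $r = \ell(w)$. The base case $r = 0$ is clear since $V_e(\lambda) = \mathbb{C} v_\lambda \ni G^{\rm low}_\lambda(b_\lambda)$ and $\mathcal{B}_e(\lambda) = \{b_\lambda\}$. For the inductive step, set ${\bf i}' = (i_2, \ldots, i_r)$, a reduced word for $w' \coloneqq s_{i_1} w$ with $\ell(w') = r - 1$. From the definition one has $\mathcal{B}_w(\lambda) = \{\tilde{f}_{i_1}^k b \mid k \geq 0,\, b \in \mathcal{B}_{w'}(\lambda)\} \setminus \{0\}$, while on the module side the classical recurrence reads $V_w(\lambda) = \sum_{k \geq 0} \mathbb{C}\, f_{i_1}^{(k)} V_{w'}(\lambda)$ with divided powers $f_{i_1}^{(k)} = f_{i_1}^k/k!$. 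Granting the inductive identification for $w'$, what remains is the compatibility statement: for every $b \in \mathcal{B}(\lambda)$ with $\varepsilon_{i_1}(b) = 0$, the $\mathbb{C}$-span of $\{f_{i_1}^{(k)} G^{\rm low}_\lambda(b)\}_{k \geq 0}$ coincides with $\bigoplus_{k \geq 0,\, \tilde{f}_{i_1}^k b \neq 0} \mathbb{C}\, G^{\rm low}_\lambda(\tilde{f}_{i_1}^k b)$. This is a triangularity argument based on the expansion formula for $f_{i_1}\cdot G^{\rm low}_\lambda(b)$ recalled in the excerpt, where the error terms have strictly larger $\varepsilon_{i_1}$-value.

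The main obstacle is to show that the $\tilde{f}_{i_1}$-strings emanating from the $\varepsilon_{i_1}$-tops of $\mathcal{B}_{w'}(\lambda)$ exactly exhaust $\mathcal{B}_w(\lambda)$. Equivalently, I need $\mathcal{B}_{w'}(\lambda)$ to be stable under those $\tilde{e}_{i_1}$-moves landing inside it, so that enumerating the $\tilde{f}_{i_1}$-strings does not overcount or miss global basis elements of $V_w(\lambda)$. This closure property is a nontrivial combinatorial feature of Demazure crystals; it would be established in tandem with the module identification by the same induction, exploiting the interplay between $\tilde{e}_{i_1}$ and the $\tilde{f}_{i_j}$ for $j \geq 2$ appearing in ${\bf i}'$ (in effect, a Weyl-group-theoretic braid-type analysis at the crystal level). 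With the closure in hand the induction closes, yielding the desired basis of $V_w(\lambda)$ and thereby the independence of $\mathcal{B}_w(\lambda)$ from ${\bf i}$.
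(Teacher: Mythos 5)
The paper provides no proof of this statement; it is simply quoted from Kashiwara~\cite[Proposition~3.2.3]{Kas93}, so there is no ``paper's own proof'' to compare against. That said, your plan faithfully reconstructs the strategy of Kashiwara's original argument: identify the span of the global basis vectors indexed by $\mathcal{B}_w(\lambda)$ with the Demazure module $V_w(\lambda)=U(\mathfrak{b})\cdot v_{w\lambda}$ (which is manifestly intrinsic in $w$), run an induction on $\ell(w)$ using $V_w(\lambda)=\sum_{k\geq 0}\mathbb{C}\,f_{i_1}^{(k)}V_{w'}(\lambda)$, and control the difference between divided powers of $f_{i_1}$ and Kashiwara operators $\tilde{f}_{i_1}$ by the unitriangularity of the $f_i$-action on the lower global basis with respect to $\varepsilon_i$.

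The gap you correctly flag as ``the main obstacle'' is, however, not closed in your sketch, and it is precisely the technical heart of Kashiwara's proof. You need $\mathcal{B}_{w'}(\lambda)$ to be stable under $\tilde{e}_{i_1}$ (the so-called string property of Demazure crystals) in order for the $\tilde f_{i_1}$-strings launched from $\varepsilon_{i_1}$-tops of $\mathcal{B}_{w'}(\lambda)$ to exhaust $\mathcal{B}_w(\lambda)$ without double-counting and in order for the triangularity argument to actually pin down the indexing subset. Kashiwara establishes this closure \emph{simultaneously} with the module identification by a joint induction (proving several mutually reinforcing statements at once), together with preliminary results on the compatibility of divided powers with the global basis; this is not a corollary one can simply invoke at the right moment. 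Saying it ``would be established in tandem\ldots exploiting the interplay between $\tilde e_{i_1}$ and the $\tilde f_{i_j}$'' gestures at the right mechanism but does not supply it. Until that closure property is genuinely proven, the induction does not close, so what you have is an accurate outline of Kashiwara's proof rather than a proof.
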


The subset $\mathcal{B}_w (\lambda)$ is called a {\it Demazure crystal}. 

\begin{example}\normalfont
	Let $G = \SL(3)$, and $\lambda = \alpha_1 + \alpha_2 = \varpi_1 + \varpi_2$. Then, the crystal graph of $\mathcal{B}(\lambda)$ is given as follows:
	\[\xymatrix{
		& {\circ} \ar[r]^-{2} & {\circ} \ar[r]^-{2} & {\circ} \ar[dr]^-{1}\\
		{b_\lambda\ \circ} \ar[ur]^-{1} \ar[dr]^-{2} & &  & & {\circ}.\\
		& {\circ} \ar[r]^-{1} & {\circ} \ar[r]^-{1} & {\circ} \ar[ur]^-{2} & 
	}\]
	In addition, for $w = s_2 s_1 \in W$, the following directed graph gives the Demazure crystal $\mathcal{B}_w(\lambda)$:
	\[\xymatrix{
		& {\circ} \ar[r]^-{2} & {\circ} \ar[r]^-{2} & {\circ}.\\
		{b_\lambda\ \circ} \ar[ur]^-{1} \ar[dr]^-{2} & &  & \\
		& {\circ} & &
	}\]
\end{example}

The following is an immediate consequence of~\cite[Proposition 3.2.3]{Kas93}.

\begin{lemma}\label{l:crystal for longest}
	Let ${\bf i} = (i_1, \ldots, i_N) \in [n]^N$ be a reduced word for the longest element $w_0 \in W$. Then, the following equalities hold for all $\lambda \in \chi_+ (H)$$:$ \[\mathcal{B}(\lambda) = \mathcal{B}_{w_0}(\lambda) = \{\tilde{f}_{i_1} ^{x_1} \cdots \tilde{f}_{i_N} ^{x_N} b_\lambda \mid x_1, \ldots, x_N \in \mathbb{Z}_{\ge 0}\} \setminus \{0\}.\] In particular, the following equality holds for all $w \in W$$:$
	\[\left\{\tilde{f}_{i_1} ^{x_1} \cdots \tilde{f}_{i_N} ^{x_N} b ~\Big|~ x_1, \ldots, x_N \in \mathbb{Z}_{\ge 0},\ b \in \mathcal{B}_w (\lambda)\right\} \setminus \{0\} = \mathcal{B}(\lambda).\]
\end{lemma}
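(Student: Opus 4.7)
The plan is to deduce both equalities from Proposition~\ref{p:Demazure crystal} together with the standard identification of the Demazure module for the longest element with the whole irreducible module.

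First I would establish the displayed equality $\mathcal{B}(\lambda) = \mathcal{B}_{w_0}(\lambda)$. The second equality in the display, $\mathcal{B}_{w_0}(\lambda) = \{\tilde{f}_{i_1}^{x_1}\cdots\tilde{f}_{i_N}^{x_N} b_\lambda \mid x_k \in \mathbb{Z}_{\ge 0}\}\setminus\{0\}$, is just the definition of Demazure crystal applied to the specific reduced word $\mathbf{i}$ for $w_0$, and Proposition~\ref{p:Demazure crystal} guarantees this set does not depend on the choice of reduced word. For $\mathcal{B}(\lambda) = \mathcal{B}_{w_0}(\lambda)$ itself, one inclusion $\mathcal{B}_{w_0}(\lambda)\subset\mathcal{B}(\lambda)$ is immediate from the fact that $b_\lambda \in \mathcal{B}(\lambda)$ and $\mathcal{B}(\lambda)$ is closed under the Kashiwara operators $\tilde{f}_i$ (interpreted modulo $0$). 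For the reverse inclusion I would invoke the Demazure character (or dimension) formula: for any $w \in W$ the Demazure module $V_w(\lambda) \subset V(\lambda)$ has $\dim V_w(\lambda) = |\mathcal{B}_w(\lambda)|$, and for $w = w_0$ we have $V_{w_0}(\lambda) = V(\lambda)$ because $V(\lambda)$ is irreducible and contains the lowest weight line $\mathbb{C} v_{w_0 \lambda}$ that generates it over $\mathfrak{b}$. Hence $|\mathcal{B}_{w_0}(\lambda)| = \dim V(\lambda) = |\mathcal{B}(\lambda)|$, and the two finite sets must agree.

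Once that is in hand, the second assertion is a short bookkeeping step. Write $T \coloneqq \{\tilde{f}_{i_1}^{x_1}\cdots\tilde{f}_{i_N}^{x_N} b \mid x_k \in \mathbb{Z}_{\ge 0},\ b \in \mathcal{B}_w(\lambda)\}\setminus\{0\}$. The inclusion $T \subset \mathcal{B}(\lambda)$ holds because $\mathcal{B}_w(\lambda) \subset \mathcal{B}(\lambda)$ and $\mathcal{B}(\lambda)$ is closed under the $\tilde{f}_i$ modulo $0$. Conversely, since $b_\lambda = \tilde{f}_{i_1}^0\cdots\tilde{f}_{i_r}^0\, b_\lambda$ lies in $\mathcal{B}_w(\lambda)$ for every $w$, taking $b = b_\lambda$ in the definition of $T$ shows $T \supset \{\tilde{f}_{i_1}^{x_1}\cdots\tilde{f}_{i_N}^{x_N} b_\lambda\}\setminus\{0\} = \mathcal{B}_{w_0}(\lambda) = \mathcal{B}(\lambda)$ by the first part.

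The only real obstacle is the inclusion $\mathcal{B}(\lambda) \subset \mathcal{B}_{w_0}(\lambda)$ in the first step: a priori an arbitrary element of $\mathcal{B}(\lambda)$ is reached from $b_\lambda$ by some sequence $\tilde{f}_{j_1}\cdots\tilde{f}_{j_m}$, and there is no obvious way to rewrite this in the prescribed order determined by the fixed reduced word $\mathbf{i}$. This is precisely the point at which the dimension count (equivalently, Kashiwara's Demazure character formula, or the statement $V_{w_0}(\lambda)=V(\lambda)$) is indispensable; all other steps are formal consequences of the defining properties of Demazure crystals already recalled in Proposition~\ref{p:Demazure crystal}.
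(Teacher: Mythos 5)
Your argument is correct, and since the paper gives no proof here (it simply declares the lemma an immediate consequence of Kashiwara's \cite[Proposition~3.2.3]{Kas93}), your fleshed-out version captures exactly what that reference supplies. You correctly isolate the only nontrivial point, namely the inclusion $\mathcal{B}(\lambda) \subset \mathcal{B}_{w_0}(\lambda)$, and the dimension count via $V_{w_0}(\lambda) = V(\lambda)$ together with $\dim V_w(\lambda) = |\mathcal{B}_w(\lambda)|$ is precisely what Kashiwara's Proposition~3.2.3(i) gives; the recursive description of $\mathcal{B}_w(\lambda)$ via any reduced word is Proposition~3.2.3(iii), which is what the paper's Proposition~\ref{p:Demazure crystal} records. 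One small slip: you wrote $\tilde{f}_{i_r}^0$ where you meant $\tilde{f}_{i_N}^0$, but this does not affect the argument.
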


For two crystals $\mathcal{B}_1, \mathcal{B}_2$, we can define another crystal $\mathcal{B}_1 \otimes \mathcal{B}_2$, called the \emph{tensor product} of $\mathcal{B}_1$ and $\mathcal{B}_2$, see~\cite[\S 1.3]{Kas93} for the definition. For $\lambda_1, \ldots, \lambda_r \in \chi_+ (H)$, the tensor product $\mathcal{B}(\lambda_1) \otimes \cdots \otimes \mathcal{B}(\lambda_r)$ is identical to the crystal basis for the tensor product module $V(\lambda_1) \otimes \cdots \otimes V(\lambda_r)$ by~\cite[Theorem 1]{Kas91}. Let us recall the definitions of generalized Demazure crystals and generalized string polytopes.

\begin{definition}[{see~\cite[\S 1.2]{LLM02}}]\normalfont
	Let ${\bf i} = (i_1, \ldots, i_r) \in [n]^r$ be an arbitrary word, and ${\bf a} = (a_1, \ldots, a_r) \in \mathbb{Z}_{\ge 0} ^r$. We define $\mathcal{B}_{{\bf i}, {\bf a}} \subset \mathcal{B}(a_1 \varpi_{i_1}) \otimes \cdots \otimes \mathcal{B}(a_r \varpi_{i_r})$ to be the subset
	\[
	\begin{split}
	\Big\{\tilde{f}_{i_1} ^{x_1} (b_{a_1 \varpi_{i_1}} \otimes \tilde{f}_{i_2} ^{x_2} (b_{a_2 \varpi_{i_2}} \otimes \cdots \otimes \tilde{f}_{i_{r-1}} ^{x_{r-1}} (b_{a_{r-1} \varpi_{i_{r-1}}} \otimes \tilde{f}_{i_r} ^{x_r} (b_{a_r \varpi_{i_r}}))\cdots))~\Big|~ \\
	{x_1,\dots,x_r \in \mathbb{Z}_{\ge 0}} \Big\} \setminus \{0\};
	\end{split}
	\] this is called a {\it generalized Demazure crystal}.
\end{definition}

\begin{definition}[{\cite[Definition 4.4]{Fuj18}}]\normalfont\label{generalized string parametrization}
	Let ${\bf i} = (i_1, \ldots, i_r) \in [n]^r$ be an arbitrary word, and ${\bf a} = (a_1, \ldots, a_r) \in \mathbb{Z}_{\ge 0} ^r$. For $b \in \mathcal{B}_{{\bf i}, {\bf a}}$, we set $b(1) \coloneqq b$,
	\begin{align*}
	&x_1 \coloneqq \max\{x \in \mathbb{Z}_{\ge 0} \mid \tilde{e}_{i_1} ^x b(1) \neq 0\},\quad \tilde{e}_{i_1} ^{x_1} b(1) = b_{a_1 \varpi_{i_1}} \otimes b(2),\\
	&x_2 \coloneqq \max\{x \in \mathbb{Z}_{\ge 0} \mid \tilde{e}_{i_2} ^x b(2) \neq 0\},\quad \tilde{e}_{i_2} ^{x_2} b(2) = b_{a_2 \varpi_{i_2}} \otimes b(3),\\
	&\ \vdots \\
	&x_r \coloneqq \max\{x \in \mathbb{Z}_{\ge 0} \mid \tilde{e}_{i_r} ^x b(r) \neq 0\}, 
	\end{align*}
	and define the {\it generalized string parametrization} $\Omega_{\bf i} (b)$ of $b$ with respect to ${\bf i}$ by $\Omega_{\bf i} (b) \coloneqq (x_1, \ldots, x_r)$.
\end{definition}

\begin{definition}[{\cite[Definition 4.7]{Fuj18}}]\normalfont\label{generalized string polytope}
	For an arbitrary word ${\bf i} \in [n]^r$ and ${\bf a} \in \mathbb{Z}_{\ge 0} ^r$, define a subset $\mathcal{S}_{{\bf i}, {\bf a}} \subset \mathbb{Z}_{>0} \times \mathbb{Z}^r$ by \[\mathcal{S}_{{\bf i}, {\bf a}} \coloneqq \bigcup_{k>0} \{(k, \Omega_{\bf i}(b)) \mid b \in \mathcal{B}_{{\bf i}, k{\bf a}}\},\] and denote by $\mathcal{C}_{{\bf i}, {\bf a}} \subset \mathbb{R}_{\ge 0} \times \mathbb{R}^r$ the smallest real closed cone containing $\mathcal{S}_{{\bf i}, {\bf a}}$. Let us define a subset $\Delta_{{\bf i}, {\bf a}} \subset \mathbb{R}^r$ by \[\Delta_{{\bf i}, {\bf a}} \coloneqq \{{\bf x} \in \mathbb{R}^r \mid (1, {\bf x}) \in \mathcal{C}_{{\bf i}, {\bf a}}\};\] this is called the {\it generalized string polytope} associated to ${\bf i}$ and ${\bf a}$.
\end{definition}

The following is a fundamental property of generalized string polytopes.

\begin{proposition}[{see~\cite[Corollaries 4.16, 5.4 (3)]{Fuj18}}]\label{finite union of polytopes}
	The generalized string polytope $\Delta_{{\bf i}, {\bf a}}$ is a rational convex polytope, and the equality $\Omega_{\bf i}(\mathcal{B}_{{\bf i}, {\bf a}}) = \Delta_{{\bf i}, {\bf a}} \cap \mathbb{Z}^r$ holds.
\end{proposition}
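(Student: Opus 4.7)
The plan is to prove Proposition~\ref{finite union of polytopes} by realizing $\Delta_{\mathbf{i}, \mathbf{a}}$ as a Newton--Okounkov body for $(Z_{\mathbf{i}}, \mathcal{L}_{\mathbf{i}, \mathbf{a}}, v_{\mathbf{i}}^{\rm high})$ and then exploiting the dictionary between crystal bases and valuations. First I would identify, for each $k > 0$, the generalized Demazure crystal $\mathcal{B}_{\mathbf{i}, k\mathbf{a}}$ with a crystal labelling of a $\mathbb{C}$-basis of the dual generalized Demazure module $H^0(Z_{\mathbf{i}}, \mathcal{L}_{\mathbf{i}, k\mathbf{a}})^{\ast}$, using the standard fact that generalized Demazure modules arise as iterated $U(\mathfrak{b})$-submodules generated by highest-weight vectors inside tensor products $V(ka_1 \varpi_{i_1}) \otimes \cdots \otimes V(ka_r \varpi_{i_r})$, combined with the realization of the lower global basis as a basis for such submodules. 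Dualizing produces a distinguished basis $\{\sigma_b^{(k)}\}_{b \in \mathcal{B}_{\mathbf{i}, k\mathbf{a}}}$ of $H^0(Z_{\mathbf{i}}, \mathcal{L}_{\mathbf{i}, k\mathbf{a}})$.

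The second step is to match $\Omega_{\mathbf{i}}$ with $v_{\mathbf{i}}^{\rm high}$. Using the coordinate chart $\Phi_{\mathbf{i}}(t_1, \ldots, t_r) = [\exp(t_1 f_{i_1}), \ldots, \exp(t_r f_{i_r})]$ recalled in Example~\ref{example_valuation_vi}, the function $\sigma_b^{(k)}/\tau_{\mathbf{i}, k\mathbf{a}}^{k}$ pulled back to $\mathbb{C}^r$ unfolds into a polynomial in $t_1, \ldots, t_r$ whose coefficients record successive applications of the divided powers $f_{i_1}^{(x_1)}, f_{i_2}^{(x_2)}, \ldots, f_{i_r}^{(x_r)}$ to the tensor highest-weight vector. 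The bullet list of properties of the lower global basis (the $f_i$-action has leading term $G^{\rm low}(\tilde{f}_i b)$ modulo strictly smaller $\varepsilon_i$-strings) implies that the lexicographically largest monomial in this expansion is precisely $t_1^{x_1} \cdots t_r^{x_r}$ where $(x_1, \ldots, x_r) = \Omega_{\mathbf{i}}(b)$, so $v_{\mathbf{i}}^{\rm high}(\sigma_b^{(k)}/\tau_{\mathbf{i}, k\mathbf{a}}^{k}) = -\Omega_{\mathbf{i}}(b)$. Consequently the semigroup $\mathcal{S}_{\mathbf{i}, \mathbf{a}}$ coincides with $-S(v_{\mathbf{i}}^{\rm high}, \tau_{\mathbf{i}, \mathbf{a}})$ and $\Delta_{\mathbf{i}, \mathbf{a}} = -\Delta(Z_{\mathbf{i}}, \mathcal{L}_{\mathbf{i}, \mathbf{a}}, v_{\mathbf{i}}^{\rm high}, \tau_{\mathbf{i}, \mathbf{a}})$.

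Rationality and polytopality then follow from Anderson's theorem: the section ring $\bigoplus_{k \ge 0} H^0(Z_{\mathbf{i}}, \mathcal{L}_{\mathbf{i}, k\mathbf{a}})$ is finitely generated (Bott--Samelson varieties are iterated $\mathbb{P}^1$-bundles, so the Cox-type argument goes through; alternatively, one uses that $Z_{\mathbf{i}}$ is spherical under a Borel action), and a valuation with one-dimensional leaves on a finitely generated graded domain has a finitely generated value semigroup, whence $\Delta_{\mathbf{i}, \mathbf{a}}$ is a rational convex polytope. For the equality $\Omega_{\mathbf{i}}(\mathcal{B}_{\mathbf{i}, \mathbf{a}}) = \Delta_{\mathbf{i}, \mathbf{a}} \cap \mathbb{Z}^r$, the inclusion $\subset$ is immediate from the definition; for $\supset$, the key input is a \emph{saturation/Minkowski-sum} property for generalized Demazure crystals, namely that the natural concatenation map $\mathcal{B}_{\mathbf{i}, \mathbf{a}} \times \mathcal{B}_{\mathbf{i}, k\mathbf{a}} \to \mathcal{B}_{\mathbf{i}, (k+1)\mathbf{a}}$ is compatible with $\Omega_{\mathbf{i}}$ in the sense that $\Omega_{\mathbf{i}}$ is additive under it, and that the image generates $\mathcal{S}_{\mathbf{i}, \mathbf{a}}$ in degree one.

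The main obstacle will be exactly this last saturation step: proving that no lattice point of $\Delta_{\mathbf{i}, \mathbf{a}}$ is missed at level $k = 1$. The semigroup $\mathcal{S}_{\mathbf{i}, \mathbf{a}}$ a priori acquires its lattice points only after passing to some $k \gg 0$, and there is no formal reason from general Newton--Okounkov theory to rule out primitive lattice points on the boundary that only appear at higher level. To dispose of this I would argue by induction on $r$, peeling off the last factor via the fibration $Z_{\mathbf{i}} \to Z_{(i_1,\ldots,i_{r-1})}$: the fiber direction contributes the string polytope of a single $\SL(2)$-Demazure module (which is a closed interval whose lattice points exhaust the corresponding crystal), and the base case reduces to the ordinary string polytope of a Demazure module, where saturation is known from Littelmann's polyhedral description of Kashiwara's string parametrization. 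An inductive tensor-product argument, combined with the compatibility of $\Omega_{\mathbf{i}}$ with this fibration, should then close the proof.
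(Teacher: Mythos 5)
This proposition is not proved in the paper at all: it is quoted from \cite[Corollaries 4.16, 5.4(3)]{Fuj18}, where polyhedrality comes from an explicit Littelmann-style inductive system of linear inequalities for the generalized string cone and the lattice-point equality from a direct analysis of the parametrization $\Omega_{\mathbf i}$. Your proposal instead tries to re-derive both statements from Newton--Okounkov theory, and there are two genuine gaps. First, the polyhedrality step fails as written: finite generation of the section ring $\bigoplus_{k}H^0(Z_{\mathbf i},\mathcal{L}_{{\bf i},k{\bf a}})$ does \emph{not} imply finite generation of the value semigroup $S(v_{\mathbf i}^{\rm high},\tau_{{\bf i},{\bf a}})$ --- this is exactly the failure mode that makes Newton--Okounkov bodies non-polyhedral even for ample line bundles on very well-behaved varieties. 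Anderson-type theorems run in the opposite direction (a finitely generated value semigroup yields a rational polytope and a toric degeneration), and $Z_{\mathbf i}$ is not a $G$-spherical variety, so neither of your shortcuts produces the required finite generation; rationality and convexity must come from the explicit combinatorial description of the cone $\mathcal{C}_{{\bf i},{\bf a}}$, which is the actual content of \cite[Corollary 4.16]{Fuj18}.

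Second, the inclusion $\Delta_{{\bf i},{\bf a}}\cap\mathbb{Z}^r\subset\Omega_{\mathbf i}(\mathcal{B}_{{\bf i},{\bf a}})$ --- the only nontrivial half of the equality and the heart of the statement --- is precisely the step you flag as the main obstacle and then only sketch. The asserted additivity of $\Omega_{\mathbf i}$ under a concatenation map $\mathcal{B}_{{\bf i},{\bf a}}\times\mathcal{B}_{{\bf i},k{\bf a}}\to\mathcal{B}_{{\bf i},(k+1){\bf a}}$ is not justified (the tensor-product rule for crystals does not make string-type parametrizations additive; the Minkowski-type containment one does have comes from multiplicativity of the valuation, not from a crystal map), and even granted, it would only show that $\mathcal{S}_{{\bf i},{\bf a}}$ is a semigroup, not that its level-one slice is saturated. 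The induction on $r$ via the fibration $Z_{\mathbf i}\to Z_{(i_1,\dots,i_{r-1})}$ does not obviously close this either: the fiberwise interval has endpoints depending on the remaining coordinates and on the level $k$, so the $\SL(2)$ case by itself does not exclude lattice points of the level-one slice of the cone that are reached only at higher levels. Note also that your identification $v_{\mathbf i}^{\rm high}(\sigma_b/\tau)=-\Omega_{\mathbf i}(b)$ on a dual global basis is essentially the main theorem of \cite{Fuj18} and is likewise only sketched; as it stands, the proposal establishes the easy inclusion and the framework, but not the result.
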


Fujita proved the following relation between the generalized string polytope and a Newton--Okounkov body of the Bott--Samelson variety $Z_{\bf i}$. 
\begin{theorem}[{see~\cite[Corollary 5.3]{Fuj18}}]\label{thm_generalized_string_polytope}
	Let $Z_{\mathbf i}$ be the Bott--Samelson variety
	determined by a word $\mathbf i \in [n]^r$, and 
	let $\mathcal L_{\mathbf i, \mathbf a}$ be the line
	bundle on $Z_{\mathbf i}$ determined by an integer vector $\mathbf a  \in \Z^r_{\geq 0}$ as in~\eqref{eq_def_of_L_ia}. Then we have that
	\[
	\Delta(Z_{\mathbf i}, \mathcal L_{\bf i, \bf a}, v_{\bf i}^{\rm high}, \tau_{\bf i, \bf a}) = - \Delta_{{\bf i}, {\bf a}}.
	\]
\end{theorem}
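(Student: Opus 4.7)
The plan is to produce, for each $k \geq 1$, an explicit basis of $H^0(Z_{\mathbf i}, \mathcal L_{\mathbf i, k\mathbf a})$ indexed by the generalized Demazure crystal $\mathcal{B}_{\mathbf i, k\mathbf a}$, and then to compute the highest term valuation $v_{\mathbf i}^{\rm high}$ of each basis section on the open chart $\Phi_{\mathbf i}(\mathbb{C}^r) \subset Z_{\mathbf i}$, showing that it equals the negative of the corresponding generalized string parametrization $\Omega_{\mathbf i}$. Once this is established, the semigroup $S(v_{\mathbf i}^{\rm high}, \tau_{\mathbf i, \mathbf a})$ coincides with $-\mathcal{S}_{\mathbf i, \mathbf a}$, so passing to closed cones and slicing at height $1$ yields the desired equality.

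First, I would identify $H^0(Z_{\mathbf i}, \mathcal L_{\mathbf i, \mathbf a})^\ast$ with the generalized Demazure module $V_{\mathbf i, \mathbf a}$. This identification is standard for Bott--Samelson varieties, obtained by an iterated induction using the $P_{i_1}/B$-fibration structure $Z_{\mathbf i} = P_{i_1} \times^B Z_{(i_2, \ldots, i_r)}$ together with Kempf vanishing. The module $V_{\mathbf i, \mathbf a}$ carries a lower global basis indexed by $\mathcal{B}_{\mathbf i, \mathbf a}$, inherited from the tensor product crystal basis of $V(a_1 \varpi_{i_1}) \otimes \cdots \otimes V(a_r \varpi_{i_r})$. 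Dualizing this basis produces a distinguished basis $\{\sigma_b \mid b \in \mathcal{B}_{\mathbf i, \mathbf a}\}$ of $H^0(Z_{\mathbf i}, \mathcal L_{\mathbf i, \mathbf a})$, and the fixed reference section $\tau_{\mathbf i, \mathbf a}$ corresponds, up to a scalar, to $\sigma_{b_{\rm high}}$ for the highest element $b_{\rm high}$.

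Next, I would compute $v_{\mathbf i}^{\rm high}(\sigma_b / \tau_{\mathbf i, \mathbf a})$. Using the parametrization $\Phi_{\mathbf i}(t_1, \ldots, t_r) = (\exp(t_1 f_{i_1}), \ldots, \exp(t_r f_{i_r})) \bmod B^r$ and the action of $\exp(t f_i)$ on the dual module via the $\mathfrak{sl}_2$-triple $(e_i, h_i, f_i)$, one expands $\sigma_b / \tau_{\mathbf i, \mathbf a}$ as a polynomial in $t_1, \ldots, t_r$. The crucial lemma is that if $\Omega_{\mathbf i}(b) = (x_1, \ldots, x_r)$, then the lexicographically largest monomial in this expansion is $t_1^{x_1} \cdots t_r^{x_r}$ up to a nonzero scalar. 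One proves this by induction on $r$, peeling off the first factor: the coefficient of $t_1^{x_1}$ in $\sigma_b / \tau_{\mathbf i, \mathbf a}$ corresponds, via $b = \tilde{f}_{i_1}^{x_1}(b_{a_1 \varpi_{i_1}} \otimes b')$, to the analogous dual basis section for $b' \in \mathcal{B}_{(i_2,\ldots,i_r), (a_2, \ldots, a_r)}$, so the induction hypothesis handles $(t_2, \ldots, t_r)$. The lex order then picks out exactly $(-x_1, -x_2, \ldots, -x_r)$ by the sign convention of the highest term valuation.

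The main obstacle is the inductive step above: showing rigorously that the dual canonical basis element $\sigma_b$, restricted to the open chart, has lex-leading monomial exactly $t_1^{x_1} \cdots t_r^{x_r}$. This requires controlling the lower-order corrections by invoking unitriangularity of the change of basis between the (dual) lower global basis and a PBW-type basis adapted to $\mathbf i$, together with the characterization of $\Omega_{\mathbf i}$ in Definition~\ref{generalized string parametrization} as a successive maximum. Once this lemma is in hand, one obtains $v_{\mathbf i}^{\rm high}(\sigma_b / \tau_{\mathbf i, \mathbf a}) = -\Omega_{\mathbf i}(b)$, and since $\{\sigma_b\}_{b \in \mathcal{B}_{\mathbf i, \mathbf a}}$ exhausts a basis of global sections, the valuation image is exactly $-\Omega_{\mathbf i}(\mathcal{B}_{\mathbf i, \mathbf a})$. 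Applying the same argument to $\mathcal L_{\mathbf i, \mathbf a}^{\otimes k} = \mathcal L_{\mathbf i, k\mathbf a}$ for all $k > 0$ gives $S(v_{\mathbf i}^{\rm high}, \tau_{\mathbf i, \mathbf a}) = -\mathcal{S}_{\mathbf i, \mathbf a}$, and taking the closed cone and slicing at height $1$ yields $\Delta(Z_{\mathbf i}, \mathcal L_{\mathbf i, \mathbf a}, v_{\mathbf i}^{\rm high}, \tau_{\mathbf i, \mathbf a}) = -\Delta_{\mathbf i, \mathbf a}$.
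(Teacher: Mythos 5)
The paper does not prove this statement at all: Theorem~\ref{thm_generalized_string_polytope} is imported verbatim from Fujita~\cite[Corollary~5.3]{Fuj18}, so the only meaningful comparison is with the argument in that reference. Your outline follows essentially the same route as \cite{Fuj18}: identify $H^0(Z_{\mathbf i},\mathcal L_{\mathbf i,k\mathbf a})^{\ast}$ with the generalized Demazure module $V_{\mathbf i,k\mathbf a}$ (this is \cite[Theorem~6]{LLM02}, not merely a ``standard'' induction, and it is also where global generation of $\mathcal L_{\mathbf i,k\mathbf a}$ for $\mathbf a\in\Z^r_{\ge 0}$, from \cite{LaTh04}, enters), take the basis of sections dual to the global basis indexed by $\mathcal B_{\mathbf i,k\mathbf a}$, show that $v_{\mathbf i}^{\rm high}(\sigma_b/\tau_{\mathbf i,\mathbf a}^{k})=-\Omega_{\mathbf i}(b)$, and conclude that the value semigroup is $-\mathcal S_{\mathbf i,\mathbf a}$, whence the equality of the cones and of their height-one slices.

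The genuine gap is the step you yourself flag as the main obstacle. The claim that $\sigma_b/\tau_{\mathbf i,\mathbf a}$, expanded in the coordinates $\Phi_{\mathbf i}$, has lexicographically largest monomial exactly $t_1^{x_1}\cdots t_r^{x_r}$ with $(x_1,\dots,x_r)=\Omega_{\mathbf i}(b)$ is the entire technical content of \cite{Fuj18}, and your proposed justification --- ``unitriangularity of the change of basis between the dual lower global basis and a PBW-type basis adapted to $\mathbf i$'' --- is not an available tool: for an arbitrary word $\mathbf i$, which may have repeated letters and need not be a reduced word for any Weyl group element, there is no PBW basis adapted to $\mathbf i$ to invoke. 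The real difficulty is that the global basis is only approximately compatible with the Chevalley generators, e.g.\ $f_i\cdot G^{\rm low}_{\lambda}(b)$ is a nonzero multiple of $G^{\rm low}_{\lambda}(\tilde f_i b)$ plus correction terms supported on $b'$ with $\varepsilon_i(b')>\varepsilon_i(b)+1$; one must show that, after pairing with $\exp(t_1f_{i_1})\cdots\exp(t_rf_{i_r})$, these corrections never create a lexicographically larger monomial, and also that the coefficient of the candidate leading monomial is genuinely nonzero. In \cite{Fuj18} this is carried out by a careful induction on $r$ using the tensor-product rule for crystals, the characterization of $\Omega_{\mathbf i}$ as successive maxima, and the compatibility of generalized Demazure modules with the global basis from \cite{LLM02}; your sketch assumes the outcome of exactly this analysis rather than supplying it. Two smaller points: the identification of $\tau_{\mathbf i,\mathbf a}$ with the dual-basis section attached to the highest element must be checked against the definition in \cite[\S 2.3]{Fuj18} so that the normalization $v_{\mathbf i}^{\rm high}(\tau_{\mathbf i,\mathbf a}/\tau_{\mathbf i,\mathbf a})=0$ is the intended one, and the inductive ``peeling off the first factor'' step needs the equivariant structure of $\mathcal L_{\mathbf i,\mathbf a}$ along the fibration $Z_{\mathbf i}=P_{i_1}\times^B Z_{(i_2,\dots,i_r)}$ spelled out, since that is what makes the coefficient of $t_1^{x_1}$ a section over the smaller Bott--Samelson variety.
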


\begin{remark}\normalfont
	The combinatorial structure of generalized string polytopes is quite complicated that even their real dimensions are not easy to be determined.
	By~Remark~\ref{remark_dimension_of_NOBY}, Theorem~\ref{thm_generalized_string_polytope} determines the dimensions of generalized string polytopes of the type $\Delta(Z_{\mathbf i}, \eta_{{\bf i}, \mathcal I}^{\ast} \mathcal L, v_{\mathbf i}^{\rm high}, \tau_{\bf i, \bf a})$, where 
	$\mathcal I$ is a sequence of subsets of $[n]$ and $\mathcal L$ is a very ample line bundle over $Z_{\mathcal I}$.
\end{remark}

Let $\mathcal{I} = (I_1, \ldots, I_r)$ be a sequence of subsets of $[n]$, and fix a sequence ${\bf i} = (i_{k, l})_{1 \le k \le r, 1 \le l \le N_k} \in [n]^{N_1 + \cdots + N_r}$ such that $(i_{k, 1}, \ldots, i_{k, N_k})$ is a reduced word for the longest element in $W_{I_k}$ for $1 \le k \le r$. Given $\lambda_1, \ldots, \lambda_r \in \chi_+ (H)$, we denote the dual $P_{I_1}$-module $H^0(Z_{\mathcal{I}}, \mathcal{L}_{\mathcal{I}, \lambda_1, \ldots, \lambda_r})^\ast$ by $V_{\mathcal{I}, \lambda_1, \ldots, \lambda_r}$, and define $\mathcal{B}_{{\bf i}, \lambda_1, \ldots, \lambda_r} \subset \mathcal{B}(\lambda_1) \otimes \cdots \otimes \mathcal{B}(\lambda_r)$ to be the set of elements of the form 
\begin{equation}\label{equation_element_in_generalized_Demazure}
\begin{aligned}
\tilde{f}_{i_{1, 1}} ^{x_{1, 1}} \cdots \tilde{f}_{i_{1, N_1}} ^{x_{1, N_1}}\left(b_{\lambda_1} \otimes \cdots \otimes \tilde{f}_{i_{r-1, 1}} ^{x_{r-1, 1}} \cdots \tilde{f}_{i_{r-1, N_{r-1}}} ^{x_{r-1, N_{r-1}}} \left(b_{\lambda_{r-1}} \otimes \tilde{f}_{i_{r, 1}} ^{x_{r, 1}} \cdots \tilde{f}_{i_{r, N_r}} ^{x_{r, N_r}} (b_{\lambda_r})\right)\cdots\right)
\end{aligned}
\end{equation}
for some $x_{1, 1}, \ldots, x_{1, N_1}, \ldots, x_{r, 1}, \ldots, x_{r, N_r} \in \mathbb{Z}_{\ge 0}$. 

\begin{proposition}\label{proposition_relation_with_generalized_Demazure}
	For $\lambda_1,\dots,\lambda_r \in \chi_+ (H)$, let ${\bf a} \in \mathbb{Z}^{N_1 + \cdots + N_r}$ be the integer vector such that
	$\mathcal{L}_{{\bf i}, {\bf a}} \simeq \eta_{{\bf i}, \mathcal{I}} ^\ast \mathcal{L}_{\mathcal{I}, \lambda_1, \ldots, \lambda_r}$ as given in Proposition~\ref{prop_line_bdle_over_fBS_and_BS}, and let $\mu \in \chi_+ (H)$ be the weight defined in Theorem \ref{thm_isom_between_holom_sections} {\rm (2)}.
	\begin{enumerate}
		\item The $B$-module $V_{\mathcal{I}, \lambda_1, \ldots, \lambda_r}$ is naturally isomorphic to $\mathbb{C}_\mu \otimes V_{{\bf i}, {\bf a}}$, where $V_{{\bf i}, {\bf a}}$ is the generalized Demazure module defined in~\cite[\S 1.1]{LLM02}.
		\item There is a natural bijective map \[\mathcal{B}_{{\bf i}, \lambda_1, \ldots, \lambda_r} \xrightarrow{\sim} b_\mu \otimes \mathcal{B}_{{\bf i}, {\bf a}}\] compatible with the crystal structures.
		\item The crystal graph of $\mathcal{B}_{{\bf i}, \lambda_1, \ldots, \lambda_r}$ is identical to that of $\mathcal{B}_{{\bf i}, {\bf a}}$.
	\end{enumerate}
\end{proposition}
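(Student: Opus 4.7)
Part (1) is immediate from Theorem~\ref{thm_isom_between_holom_sections}(2). Taking $B$-module duals of the isomorphism $H^0(Z_{\mathcal{I}},\mathcal{L}_{\mathcal I, \lambda_1, \ldots, \lambda_r}) \cong H^0(Z_{\mathbf i}, \mathcal{L}_{\mathbf i, \mathbf a}) \otimes \C_{-\mu}$ and invoking the identification of the generalized Demazure module $V_{\mathbf i, \mathbf a}$ from~\cite[\S 1.1]{LLM02} with $H^0(Z_{\mathbf i}, \mathcal{L}_{\mathbf i, \mathbf a})^{\ast}$ yields $V_{\mathcal I, \lambda_1, \ldots, \lambda_r} \cong \C_\mu \otimes V_{\mathbf i, \mathbf a}$ as $B$-modules.

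For part (2), the plan is to identify the crystal bases on both sides of the isomorphism in (1) and then transport them through the isomorphism. For the right-hand side, since the one-dimensional $B$-module $\C_\mu$ has singleton crystal $\{b_\mu\}$, the tensor product rule gives that the crystal basis of $\C_\mu \otimes V_{\mathbf i, \mathbf a}$ is $b_\mu \otimes \mathcal B_{\mathbf i, \mathbf a}$, using that $\mathcal B_{\mathbf i, \mathbf a}$ is the crystal basis of $V_{\mathbf i, \mathbf a}$ by~\cite[\S 1.2]{LLM02}. For the left-hand side, the claim is that $\mathcal B_{\mathbf i, \lambda_1, \ldots, \lambda_r}$ is the crystal basis of $V_{\mathcal I, \lambda_1, \ldots, \lambda_r}$; this will be proved by induction on $r$. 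The base case $r = 1$ uses that $Z_{(I_1)} = P_{I_1}/B$ is identified with the Schubert variety $X(w_{I_1}) \subset G/B$ (Proposition~\ref{prop_fBS_and_flag}): by Demazure's theorem, $V_{\mathcal I, \lambda_1} \cong V_{w_{I_1}}(\lambda_1)$ with crystal basis $\mathcal B_{w_{I_1}}(\lambda_1)$, and this coincides with $\mathcal B_{\mathbf i, \lambda_1}$ by Proposition~\ref{p:Demazure crystal}. For the inductive step, apply the standard push-forward argument to the fibration $\pi \colon Z_{\mathcal I} \to Z_{(I_1, \ldots, I_{r-1})}$ with fiber $P_{I_r}/B$: the resulting $B$-module structure on $V_{\mathcal I, \lambda_1, \ldots, \lambda_r}$ matches, at the crystal level, the recursive definition of $\mathcal{B}_{\mathbf i, \lambda_1, \ldots, \lambda_r}$ obtained by tensoring with $\mathcal B(\lambda_r)$ and applying the block of Kashiwara operators $\tilde f_{i_{r, 1}}, \ldots, \tilde f_{i_{r, N_r}}$. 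Composing the two crystal basis identifications with the isomorphism from (1) produces the desired natural bijection.

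Part (3) is immediate from (2) and the tensor product rule for crystals: since $\{b_\mu\}$ is a singleton on which the Kashiwara operators act trivially, one has $\tilde f_i(b_\mu \otimes b) = b_\mu \otimes \tilde f_i b$ and $\tilde e_i(b_\mu \otimes b) = b_\mu \otimes \tilde e_i b$, so the crystal graph of $b_\mu \otimes \mathcal B_{\mathbf i, \mathbf a}$ is isomorphic as a labeled directed graph to that of $\mathcal B_{\mathbf i, \mathbf a}$; combined with the bijection in (2), this yields (3). The main obstacle will be the inductive identification of $\mathcal B_{\mathbf i, \lambda_1, \ldots, \lambda_r}$ as the crystal basis of $V_{\mathcal I, \lambda_1, \ldots, \lambda_r}$ in part (2): although the template follows the arguments of~\cite{LLM02}, one must verify at each level of the fibration tower that applying the block of Kashiwara operators $\tilde f_{i_{k, 1}}, \ldots, \tilde f_{i_{k, N_k}}$ to a general dominant weight vector $b_{\lambda_k}$—rather than to the fundamental weight multiple $b_{a_k \varpi_{i_k}}$ treated in the Bott--Samelson setting—produces precisely the Demazure crystal $\mathcal B_{w_{I_k}}(\lambda_k)$, an independence that ultimately reduces to Proposition~\ref{p:Demazure crystal}.
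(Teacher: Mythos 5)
Part (1) of your argument matches the paper's: dualize the isomorphism from Theorem~\ref{thm_isom_between_holom_sections}(2) and use the identification of $V_{{\bf i}, {\bf a}}$ with $H^0(Z_{\bf i}, \mathcal{L}_{{\bf i}, {\bf a}})^*$ from~\cite{LLM02}. Parts (2) and (3), however, diverge from the paper and contain genuine gaps.

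For part (2) the paper gives a direct, purely combinatorial rewriting: it uses the identification $b_\lambda = b_{\lambda - \langle\lambda,\alpha_i^\vee\rangle\varpi_i} \otimes b_{\langle\lambda,\alpha_i^\vee\rangle\varpi_i}$ together with the tensor product rule to show, peeling one string of Kashiwara operators at a time, that every element of the form~\eqref{equation_element_in_generalized_Demazure} is \emph{literally} an element of $b_\mu \otimes \mathcal{B}_{{\bf i},{\bf a}}$ inside a common ambient tensor product crystal. Your plan instead tries to realize both sides as ``crystal bases'' of $B$-modules and transport one through the isomorphism from (1). This runs into two problems. First, transporting a crystal lattice and global basis across an abstract $B$-module isomorphism is not automatic — it requires compatibility with the integral/crystal lattices that you have not established, and the notion of a crystal basis for a generic $B$-module is not the standard framework here. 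Second, you have a $T_\mu$-versus-$b_\mu$ confusion: the set $b_\mu \otimes \mathcal{B}_{{\bf i},{\bf a}}$ in the statement sits inside $\mathcal{B}(\mu)\otimes\mathcal{B}(a_1\varpi_{i_1})\otimes\cdots$, where $b_\mu$ is the highest element of $\mathcal{B}(\mu)$ (so $\varphi_i(b_\mu)=\langle\mu,\alpha_i^\vee\rangle$), \emph{not} the one-point crystal $T_\mu$ that would naturally model ``tensoring a module with $\C_\mu$.'' These are genuinely different crystals. Finally, your inductive step via push-forward along the fibration $\pi\colon Z_{\mathcal{I}}\to Z_{\mathcal{I}'}$ is left at the level of a sketch, and the ``independence reducing to Proposition~\ref{p:Demazure crystal}'' you mention is exactly what the paper proves only later (in a separate proposition, after~\eqref{proposition_relation_with_generalized_Demazure} has been established) — so as written there is a circularity concern.

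For part (3), your claim that ``$\{b_\mu\}$ is a singleton on which the Kashiwara operators act trivially'' is false for $\mu$ dominant and nonzero: $b_\mu$ is the highest element of $\mathcal{B}(\mu)$, so $\varphi_i(b_\mu)=\langle\mu,\alpha_i^\vee\rangle$ can be positive and $\tilde{f}_i b_\mu \neq 0$. The tensor product rule therefore does \emph{not} in general give $\tilde{f}_i(b_\mu\otimes b)=b_\mu\otimes\tilde{f}_i b$. The paper's proof of (3) requires the extra observation that $\mathcal{B}_{{\bf i},{\bf a}}$ has edges only for colours $i\in\{i_{k,l}\}$, and for precisely those $i$ one has $\langle\mu,\alpha_i^\vee\rangle=0$ by the definition of $\mu$; this vanishing is what makes the tensor product rule simplify. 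Your argument as stated would even ``prove'' that $b_\mu\otimes\mathcal{B}$ and $\mathcal{B}$ always have the same crystal graph for any $\mathcal{B}$ and any dominant $\mu$, which is not true. This gap must be filled by invoking the specific structure of $\mu$ and $\mathcal{B}_{{\bf i},{\bf a}}$.
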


\begin{proof}
	\begin{enumerate}
		\item The assertion is an immediate consequence of Theorem~\ref{thm_isom_between_holom_sections} and~\cite[Theorem 6]{LLM02}.
		\item For $\lambda, \mu \in \chi_+ (H)$, the crystal basis $\mathcal{B}(\lambda +\mu)$ can be regarded as a connected component of $\mathcal{B}(\lambda) \otimes \mathcal{B}(\mu)$ by identifying $b_{\lambda +\mu}$ with $b_\lambda \otimes b_\mu$ (see~\cite[\S 4.5]{Kas95}). If we identify $b_\lambda$ with $b_{\lambda - \langle \lambda, \alpha_i ^\vee \rangle \varpi_i} \otimes b_{\langle \lambda, \alpha_i ^\vee \rangle \varpi_i}$ for $i \in [n]$ and $\lambda \in \chi_+ (H)$, then the definition of tensor product crystals implies that $\tilde{f}_i ^a b_\lambda = b_{\lambda - \langle \lambda, \alpha_i ^\vee \rangle \varpi_i} \otimes \tilde{f}_i ^a b_{\langle \lambda, \alpha_i ^\vee \rangle \varpi_i}$ for all $a \in \mathbb{Z}_{\ge 0}$ (see~\cite[Appendix A]{Fuj18}). Hence it follows that
		\begin{align*}
		&\tilde{f}_{i_{1, 1}} ^{x_{1, 1}} \cdots \tilde{f}_{i_{1, N_1}} ^{x_{1, N_1}}(b_{\lambda_1} \otimes b)\\ 
		=\ &b_{\lambda_1 - \sum_{1 \le l \le N_1} \mu_l} \otimes \tilde{f}_{i_{1, 1}} ^{x_{1, 1}} \left(b_{\mu_1} \otimes \tilde{f}_{i_{1, 2}} ^{x_{1, 2}} \left(b_{\mu_2} \otimes \cdots \otimes \tilde{f}_{i_{1, N_1}} ^{x_{1, N_1}}(b_{\mu_{N_1}} \otimes b) \cdots\right)\right)
		\end{align*}
		for $b \in \mathcal{B}_{{\bf i}_{\ge 2}, \lambda_2, \ldots, \lambda_r}$ and $x_{1, 1}, \ldots, x_{1, N_1} \in \mathbb{Z}_{\ge 0}$, where 
		\[\mu_l \coloneqq  
		\begin{cases}
		\langle \lambda_1, \alpha_{i_{1, l}} ^\vee \rangle \varpi_{i_{1, l}}\quad&{\rm if}\ l = \max\{1 \le q \le N_1 \mid i_{1, q} = i_{1, l}\},\\
		0\quad&{\rm otherwise}
		\end{cases}\]
		for $1 \le l \le N_1$, and ${\bf i}_{\ge 2} \coloneqq (i_{k, l})_{2 \le k \le r, 1 \le l \le N_k}$. By repeating this deformation, all the elements of the form \eqref{equation_element_in_generalized_Demazure} can be naturally written as elements in $b_\mu \otimes \mathcal{B}_{{\bf i}, {\bf a}}$. This proves part (2).
		\item Let us prove that $\tilde{e}_i (b_\mu \otimes b) = b_\mu \otimes \tilde{e}_i b$ for all $i \in [n]$ and $b \in \mathcal{B}_{{\bf i}, {\bf a}}$. By the definition of $\mathcal{B}_{{\bf i}, {\bf a}}$, we have \[{\rm wt}(b) -{\rm wt}(b') \in \sum_{j \in \{i_{k, l} \mid 1 \le k \le r,\ 1 \le l \le N_k\}} \mathbb{Z} \alpha_j\] for all $b, b' \in \mathcal{B}_{{\bf i}, {\bf a}}$. Hence $\mathcal{B}_{{\bf i}, {\bf a}}$ does not have edges labeled by $j \notin \{i_{k, l} \mid 1 \le k \le r,\ 1 \le l \le N_k\}$. From this, we may assume that $i \in \{i_{k, l} \mid 1 \le k \le r,\ 1 \le l \le N_k\}$. Then, we have $\langle \mu, \alpha_{i} ^\vee \rangle = 0$ by the definition of $\mu$, which implies by the definition of tensor product crystals that $\tilde{e}_i (b_\mu \otimes b) = b_\mu \otimes \tilde{e}_i b$. Thus, we have proved that the crystal graph of $b_\mu \otimes \mathcal{B}_{{\bf i}, {\bf a}}$ is identical to that of $\mathcal{B}_{{\bf i}, {\bf a}}$. Then, part (3) follows immediately from part (2). \qedhere
	\end{enumerate}
\end{proof}

Proposition \ref{proposition_relation_with_generalized_Demazure} implies that all the results in~\cite{LLM02} for $V_{{\bf i}, {\bf a}}$ and $\mathcal{B}_{{\bf i}, {\bf a}}$ are applicable also for $V_{\mathcal{I}, \lambda_1, \ldots, \lambda_r}$ and $\mathcal{B}_{{\bf i}, \lambda_1, \ldots, \lambda_r}$. 

\begin{proposition}
	The set $\mathcal{B}_{{\bf i}, \lambda_1, \ldots, \lambda_r}$ depends only on $\mathcal{I}, \lambda_1, \ldots, \lambda_r$, that is, does not depend on the choice of ${\bf i}$.
\end{proposition}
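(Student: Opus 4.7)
The plan is to induct on $r$. For $r = 1$, one has $\mathcal{B}_{\mathbf{i}, \lambda_1} = \mathcal{B}_{w_{I_1}}(\lambda_1)$ directly from the definition, so the assertion reduces to Proposition~\ref{p:Demazure crystal}.

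For the inductive step, fix $r \ge 2$ and set $\mathbf{i}_{\ge 2} \coloneqq (i_{k,l})_{2 \le k \le r, 1 \le l \le N_k}$. By the induction hypothesis, $\mathcal{B}' \coloneqq \mathcal{B}_{\mathbf{i}_{\ge 2}, \lambda_2, \ldots, \lambda_r}$ depends only on $(I_2, \ldots, I_r)$ and $(\lambda_2, \ldots, \lambda_r)$. Unpacking the description \eqref{equation_element_in_generalized_Demazure} and collecting the outermost layer of Kashiwara operators,
\[
\mathcal{B}_{\mathbf{i}, \lambda_1, \ldots, \lambda_r} = \bigcup_{b \in \mathcal{B}'} \left( \left\{ \tilde{f}_{i_{1,1}}^{x_1} \cdots \tilde{f}_{i_{1,N_1}}^{x_{N_1}}(b_{\lambda_1} \otimes b) \mid x_l \in \mathbb{Z}_{\ge 0} \right\} \setminus \{0\} \right).
\]
It therefore suffices to show that, for each fixed $b \in \mathcal{B}'$, the bracketed set is independent of the choice of reduced word $(i_{1,1}, \ldots, i_{1,N_1})$ for $w_{I_1}$.

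The key lemma needed is the following tensor-product extension of Proposition~\ref{p:Demazure crystal}: for any $\nu \in \chi_+(H)$, any $v \in W$, any crystal $\mathcal{C}$, and any $c \in \mathcal{C}$, the set $\{\tilde{f}_{j_1}^{y_1} \cdots \tilde{f}_{j_m}^{y_m}(b_\nu \otimes c) \mid y_l \in \mathbb{Z}_{\ge 0}\} \setminus \{0\}$ depends only on $v$, $\nu$, $c$, not on the chosen reduced word $(j_1, \ldots, j_m)$ for $v$. I would establish this lemma by identifying the set with the crystal basis of the Demazure-type submodule of $V(\nu) \otimes V_c$ generated by $v_\nu \otimes v_c$ under the truncated Borel action associated with $v$ (where $V_c$ is any $G$-module whose crystal basis contains $c$); since this submodule is defined intrinsically from $v$, its crystal basis does not depend on a choice of reduced expression. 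An alternative route is to invoke Matsumoto--Tits to reduce to a single rank-two braid move and check invariance using the tensor product rule for Kashiwara operators together with Proposition~\ref{p:Demazure crystal} applied within the rank-two subcrystal through $b_\nu \otimes c$.

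The main obstacle is precisely this key lemma. Proposition~\ref{p:Demazure crystal} as stated applies only when the base element is the highest weight vector $b_\lambda$, while $b_\nu \otimes c$ is generally not highest weight in $\mathcal{B}(\nu) \otimes \mathcal{C}$, so braid invariance of the resulting Demazure-type $\tilde{f}$-orbit is not automatic. The rigidity needed comes from the fact that $b_\nu$ is itself highest weight, which makes the tensor product rule behave predictably on the outer tensor factor; this is essentially the content of the generalized Demazure crystal theory of~\cite{LLM02}, whose results, as noted following Proposition~\ref{proposition_relation_with_generalized_Demazure}, apply to $\mathcal{B}_{\mathbf{i}, \lambda_1, \ldots, \lambda_r}$ and should furnish the required reduced-word invariance.
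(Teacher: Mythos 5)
Your inductive setup (base case $r=1$ via Proposition~\ref{p:Demazure crystal}, inductive step isolating the outermost layer of Kashiwara operators) is identical to the paper's, but the way you cut up the union for the inductive step creates a gap that the paper carefully avoids.

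You decompose $\mathcal{B}_{\mathbf{i},\lambda_1,\ldots,\lambda_r}$ as a union over a fixed element $b \in \mathcal{B}'$ and then try to prove reduced-word-invariance of the single-element orbit $\{\tilde{f}_{i_{1,1}}^{x_1}\cdots\tilde{f}_{i_{1,N_1}}^{x_{N_1}}(b_{\lambda_1}\otimes b)\} \setminus \{0\}$. This is a strictly stronger claim than what is needed, and it is not what Proposition~\ref{p:Demazure crystal} or Kashiwara's Demazure crystal machinery actually deliver: those results apply to the operator $\tilde{f}_i^{\max}$ acting on an entire Demazure crystal, not to the $\tilde{f}$-orbit of an arbitrary non-highest element. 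You flag this as "the main obstacle," but your two suggested resolutions do not close the gap. The Matsumoto--Tits route fails because Proposition~\ref{p:Demazure crystal} in rank two still requires starting from the highest weight vector, while $b_{\lambda_1}\otimes b$ need not be highest weight. The "Demazure-type submodule defined intrinsically from $v$" route is circular: the statement that the span $\sum f_{j_1}^{y_1}\cdots f_{j_m}^{y_m}\mathbb{C}(v_\nu\otimes v_c)$ is independent of the reduced word for $v$ is essentially the claim to be proven, and identifying that space with the crystal basis of a canonically defined submodule is not automatic.

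The paper sidesteps the single-element claim entirely by decomposing at a coarser level. Using \cite[Theorem~2]{LLM02} together with Proposition~\ref{proposition_relation_with_generalized_Demazure}, it first shows that the \emph{whole set} $b_{\lambda_1}\otimes\mathcal{B}_{\mathbf{i}_{\ge 2},\lambda_2,\ldots,\lambda_r}$ is a disjoint union of Demazure crystals $\mathcal{B}_v(\lambda)$. It then applies $\tilde{f}_{i_{1,1}}^{x_1}\cdots\tilde{f}_{i_{1,N_1}}^{x_{N_1}}$ to each such $\mathcal{B}_v(\lambda)$ as a unit (all elements of that component at once, not one element at a time), and invokes \cite[Lemma~3.2.1, Proposition~3.2.3, Proposition~3.2.5]{Kas93} to show the result is again a Demazure crystal $\mathcal{B}_{v_{N_1}}(\lambda)$, hence reduced-word-independent. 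So the right idea is not a pointwise lemma but a decomposition of $b_{\lambda_1}\otimes\mathcal{B}'$ into Demazure crystals; you invoke LLM02 in the right spirit, but at the wrong granularity. If you regroup the union over $b$ into a union over connected components of $b_{\lambda_1}\otimes\mathcal{B}'$ (each of which is a Demazure crystal by LLM02 Theorem~2), the argument goes through with exactly the tools you already have in mind.
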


\begin{proof}
	We proceed by induction on $r$. If $r = 1$, then the assertion is an immediate consequence of Proposition \ref{p:Demazure crystal}. Assume that $r \ge 2$, and that $\mathcal{B}_{{\bf i}_{\ge 2}, \lambda_2, \ldots, \lambda_r}$ is independent of the choice of ${\bf i}_{\ge 2}$. By~\cite[Theorem 2]{LLM02} and Proposition \ref{proposition_relation_with_generalized_Demazure}, it follows that $b_{\lambda_1} \otimes \mathcal{B}_{{\bf i}_{\ge 2}, \lambda_2, \ldots, \lambda_r}$ is a disjoint union of Demazure crystals. Hence it suffices to prove that
	for each connected component $\mathcal{B}_v(\lambda)$ of $b_{\lambda_1} \otimes \mathcal{B}_{{\bf i}_{\ge 2}, \lambda_2, \ldots, \lambda_r}$
	 the set \[\mathcal{B}_{v, i_{1, 1}, \ldots, i_{1, N_1}}(\lambda) \coloneqq \left\{\tilde{f}_{i_{1, 1}} ^{x_1} \cdots \tilde{f}_{i_{1, N_1}} ^{x_{N_1}} b ~\Big|~ x_1, \ldots, x_{N_1} \in \mathbb{Z}_{\ge 0},\ b \in \mathcal{B}_v(\lambda)\right\} \setminus \{0\}\] does not depend on the choice of $(i_{1, 1}, \ldots, i_{1, N_1})$. We define $v_1, \ldots, v_{N_1} \in W$ inductively by 
	\begin{align*}
	&v_1 \coloneqq 
	\begin{cases}
	s_{i_{1, N_1}} v\quad &{\rm if}\ \ell(s_{i_{1, N_1}} v) > \ell (v),\\
	v\quad &{\rm if}\ \ell(s_{i_{1, N_1}} v) < \ell (v),
	\end{cases}\\
	&v_l \coloneqq 
	\begin{cases}
	s_{i_{1, N_1 -l +1}} v_{l -1}\quad &{\rm if}\ \ell(s_{i_{1, N_1 -l +1}} v_{l -1}) > \ell (v_{l -1}),\\
	v_{l -1}\quad &{\rm if}\ \ell(s_{i_{1, N_1 -l +1}} v_{l -1}) < \ell (v_{l -1}).
	\end{cases} 
	\end{align*}
	Then, we deduce by~\cite[Proposition~3.2.3~(iii)]{Kas93} that $\mathcal{B}_{v, i_{1, 1}, \ldots, i_{1, N_1}}(\lambda) = \mathcal{B}_{v_{N_1}} (\lambda)$. In addition, it follows by~\cite[Lemma~3.2.1 and Proposition~3.2.3~(i)]{Kas93} that 
	\[\sum_{x_1, \ldots, x_{N_1} \in \mathbb{Z}_{\ge 0}} f_{i_{1, 1}} ^{x_1} \cdots f_{i_{1, N_1}} ^{x_{N_1}} \left(\sum_{b \in \mathcal{B}_v(\lambda)} \mathbb{C} G^{\rm low} _\lambda(b)\right) = \sum_{b \in \mathcal{B}_{v_{N_1}} (\lambda)} \mathbb{C} G^{\rm low} _\lambda(b).\]
	From these, we have
	\[\sum_{b \in \mathcal{B}_{v, i_{1, 1}, \ldots, i_{1, N_1}}(\lambda)} \mathbb{C} G^{\rm low} _\lambda(b) = \sum_{x_1, \ldots, x_{N_1} \in \mathbb{Z}_{\ge 0}} f_{i_{1, 1}} ^{x_1} \cdots f_{i_{1, N_1}} ^{x_{N_1}} \left(\sum_{b \in \mathcal{B}_v(\lambda)} \mathbb{C} G^{\rm low} _\lambda(b)\right);\] the right hand side does not depend on the choice of $(i_{1, 1}, \ldots, i_{1, N_1})$ by~\cite[Proposition~3.2.5~(v)]{Kas93}, which implies that the set $\mathcal{B}_{v, i_{1, 1}, \ldots, i_{1, N_1}}(\lambda)$ is also independent. This proves the proposition.
\end{proof}

We denote $\mathcal{B}_{{\bf i}, \lambda_1, \ldots, \lambda_r}$ by $\mathcal{B}_{\mathcal I, \lambda_1,\dots,\lambda_r}$, which is also called a \emph{generalized Demazure crystal}. By definition, we have 
\begin{align*}
&\mathcal{B}_{\mathcal{I}, \lambda_1, \ldots, \lambda_r} \\
&\quad = \left\{\tilde{f}_{i_{1, 1}} ^{x_1} \cdots \tilde{f}_{i_{1, N_1}} ^{x_{N_1}} (b_{\lambda_1} \otimes b)~\Big|~ x_1, \ldots, x_{N_1} \in \mathbb{Z}_{\ge 0},\ b \in \mathcal{B}_{(I_2, \ldots, I_r), \lambda_2, \ldots, \lambda_r} \right\} \setminus \{0\}.
\end{align*}
Assume that $I_1 = [n]$, and hence that $(i_{1, 1}, \ldots, i_{1, N_1})$ is a reduced word for $w_0 \in W$. By~\cite[Theorem~2]{LLM02} and Proposition \ref{proposition_relation_with_generalized_Demazure}, the set $b_{\lambda_1} \otimes \mathcal{B}_{(I_2, \ldots, I_r), \lambda_2, \ldots, \lambda_r}$ is a disjoint union of Demazure crystals. Hence the second assertion of Lemma~\ref{l:crystal for longest} implies that each connected component of $\mathcal{B}_{\mathcal{I}, \lambda_1, \ldots, \lambda_r}$ is of the form $\mathcal{B}(\nu)$ for some $\nu \in \chi_+ (H)$. Note that the character of $V_{\mathcal{I}, \lambda_1, \ldots, \lambda_r}$ equals the formal character of $\mathcal{B}_{\mathcal{I}, \lambda_1, \ldots, \lambda_r}$ by~\cite[Theorem~5 and Corollary~10]{LLM02} and Proposition \ref{proposition_relation_with_generalized_Demazure}. Since finite-dimensional $G$-modules are characterized by their characters, we obtain the following.

\begin{proposition}\label{p:crystal basis for flag BS}
	Let $\mathcal{I} = (I_1, \ldots, I_r)$ be a sequence of subsets of $[n]$ such that $I_1 = [n]$. Then, the generalized Demazure crystal $\mathcal{B}_{\mathcal{I}, \lambda_1, \ldots, \lambda_r}$ is isomorphic to the crystal basis for the $G$-module $V_{\mathcal{I}, \lambda_1, \ldots, \lambda_r}$. In particular, if $\mathcal{B}_{\mathcal{I}, \lambda_1, \ldots, \lambda_r}$ is the disjoint union of $\mathcal{B}(\nu_1), \ldots, \mathcal{B}(\nu_M)$, then $V_{\mathcal{I}, \lambda_1, \ldots, \lambda_r}$ is isomorphic to $V(\nu_1) \oplus \cdots \oplus V(\nu_M)$.
\end{proposition}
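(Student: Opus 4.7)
The plan is to combine three pieces already assembled in the excerpt: (i) the disjoint-union decomposition of $b_{\lambda_1} \otimes \mathcal{B}_{(I_2,\ldots,I_r),\lambda_2,\ldots,\lambda_r}$ into Demazure crystals, (ii) the fact that $(i_{1,1},\ldots,i_{1,N_1})$ is a reduced word for $w_0$ when $I_1=[n]$, so applying $\tilde f_{i_{1,1}}^{x_1}\cdots\tilde f_{i_{1,N_1}}^{x_{N_1}}$ to a Demazure piece fills up a full crystal (Lemma~\ref{l:crystal for longest}), and (iii) a character comparison together with complete reducibility of finite-dimensional $G$-modules.

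First, I would fix a reduced word ${\bf i}$ adapted to $\mathcal{I}$ and reduce everything through Proposition~\ref{proposition_relation_with_generalized_Demazure} so that it is enough to analyze the tensor-product crystal $b_{\lambda_1} \otimes \mathcal{B}_{(I_2,\ldots,I_r),\lambda_2,\ldots,\lambda_r}$ together with the action of $\tilde f_{i_{1,1}}^{x_1}\cdots\tilde f_{i_{1,N_1}}^{x_{N_1}}$. By \cite[Theorem~2]{LLM02} combined with Proposition~\ref{proposition_relation_with_generalized_Demazure}, $b_{\lambda_1}\otimes \mathcal{B}_{(I_2,\ldots,I_r),\lambda_2,\ldots,\lambda_r}$ is a disjoint union of Demazure crystals $\mathcal{B}_{v_\alpha}(\mu_\alpha)$ for suitable $v_\alpha\in W$ and $\mu_\alpha\in\chi_+(H)$.

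Second, since $I_1=[n]$, the word $(i_{1,1},\ldots,i_{1,N_1})$ is reduced for the longest element $w_0$. Applying the second assertion of Lemma~\ref{l:crystal for longest} to each Demazure piece $\mathcal{B}_{v_\alpha}(\mu_\alpha)$ shows that
\[
\left\{\tilde f_{i_{1,1}}^{x_1}\cdots \tilde f_{i_{1,N_1}}^{x_{N_1}} b \mid x_j \in \mathbb{Z}_{\ge 0},\ b \in \mathcal{B}_{v_\alpha}(\mu_\alpha)\right\}\setminus\{0\} \;=\; \mathcal{B}(\mu_\alpha).
\]
Taking the union over $\alpha$ and using that the pieces are disjoint (they have distinct highest weight vectors sitting in distinct weight spaces of the ambient tensor-product crystal, which is preserved by the Kashiwara operators), one concludes that $\mathcal{B}_{\mathcal{I},\lambda_1,\ldots,\lambda_r}$ is a disjoint union $\mathcal{B}(\nu_1)\sqcup\cdots\sqcup\mathcal{B}(\nu_M)$ of full crystal bases, where the multiset $\{\nu_1,\ldots,\nu_M\}$ coincides with $\{\mu_\alpha\}$.

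Third, I would match this combinatorial decomposition with the $G$-module structure. By \cite[Theorem~5 and Corollary~10]{LLM02} together with Proposition~\ref{proposition_relation_with_generalized_Demazure}, the formal character of $\mathcal{B}_{\mathcal{I},\lambda_1,\ldots,\lambda_r}$ agrees with the character of $V_{\mathcal{I},\lambda_1,\ldots,\lambda_r}$ as an $H$-module. Since $V_{\mathcal{I},\lambda_1,\ldots,\lambda_r}$ is a finite-dimensional $G$-module (here the hypothesis $I_1=[n]$, hence $P_{I_1}=G$, is essential), it is completely reducible, and its isomorphism class is determined by its character. Comparing with $\mathrm{ch}(\mathcal{B}(\nu_1))+\cdots+\mathrm{ch}(\mathcal{B}(\nu_M)) = \mathrm{ch}(V(\nu_1))+\cdots+\mathrm{ch}(V(\nu_M))$ yields the claimed isomorphism $V_{\mathcal{I},\lambda_1,\ldots,\lambda_r}\cong V(\nu_1)\oplus\cdots\oplus V(\nu_M)$, and hence identifies $\mathcal{B}_{\mathcal{I},\lambda_1,\ldots,\lambda_r}$ with the crystal basis of this module.

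The main obstacle is the middle step: verifying that the connected components produced by saturating Demazure pieces under $\tilde f_{i_{1,1}}^{x_1}\cdots\tilde f_{i_{1,N_1}}^{x_{N_1}}$ remain genuinely disjoint, rather than two Demazure pieces merging once one enlarges them to full crystals. This is where I would have to be careful: one must argue that the highest weight elements of distinct Demazure components sit in distinct connected components of the ambient tensor product crystal $\mathcal{B}(\lambda_1)\otimes\cdots\otimes\mathcal{B}(\lambda_r)$, so that the enlarged sets $\mathcal{B}(\nu_\alpha)$ remain disjoint; this follows from the fact that connected components of crystals are preserved by arbitrary compositions of Kashiwara operators, combined with the disjointness asserted in \cite[Theorem~2]{LLM02}.
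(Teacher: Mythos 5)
Your argument follows exactly the same route as the paper: decompose $b_{\lambda_1}\otimes\mathcal{B}_{(I_2,\ldots,I_r),\lambda_2,\ldots,\lambda_r}$ into Demazure crystals via \cite[Theorem~2]{LLM02} and Proposition~\ref{proposition_relation_with_generalized_Demazure}, use Lemma~\ref{l:crystal for longest} (with $(i_{1,1},\ldots,i_{1,N_1})$ reduced for $w_0$) to saturate each piece to a full crystal $\mathcal{B}(\nu)$, then match characters and invoke complete reducibility of finite-dimensional $G$-modules. Your extra remark on disjointness is a correct and welcome clarification of a step the paper leaves implicit (each Demazure crystal contains the highest element of its connected component, so pairwise disjoint pieces necessarily live in distinct components), but it does not change the approach.
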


Since the crystal graph of $\mathcal{B}_{{\bf i}, {\bf a}}$ is identical to that of $\mathcal{B}_{\mathcal{I}, \lambda_1, \ldots, \lambda_r}$ by Proposition \ref{proposition_relation_with_generalized_Demazure}~(3), the generalized string parametrization $\Omega_{\bf i}$ of $\mathcal{B}_{{\bf i}, {\bf a}}$ can be regarded as a parametrization of $\mathcal{B}_{\mathcal{I}, \lambda_1, \ldots, \lambda_r}$. We denote $\Delta_{{\bf i}, {\bf a}}$ by $\Delta_{{\bf i}, \lambda_1, \ldots, \lambda_r}$. Then, we have $\widehat{\Delta}_{{\bf i}, \lambda_1, \ldots, \lambda_r} = \pi_{\ge 2} (\Delta_{{\bf i}, \lambda_1, \ldots, \lambda_r})$ by Theorems~\ref{thm_NOBY_of_fBS}, \ref{thm_generalized_string_polytope}. 

\begin{proof}[Proof of Theorem~\ref{t:representation}]
	By Proposition~\ref{p:crystal basis for flag BS}, the multiplicity $c_{\mathcal{I}, \lambda_1, \ldots, \lambda_r} ^\nu$ equals the number of connected components of $\mathcal{B}_{\mathcal{I}, \lambda_1, \ldots, \lambda_r}$ isomorphic to $\mathcal{B}(\nu)$. For $b \in \mathcal{B}_{\mathcal{I}, \lambda_1, \ldots, \lambda_r}$, we write $\Omega_{\bf i}(b) = (x_{1, 1}, \ldots, x_{1, N_1}, \ldots, x_{r, 1}, \ldots, x_{r, N_r})$. By the definition of $\Omega_{\bf i}$, we have 
	\begin{equation}\label{equation_usual_string_parametrization}
	\begin{aligned}
	x_{1, l} = \max\{x \in \mathbb{Z}_{\ge 0} \mid \tilde{e}_{i_{1, l}} ^x \tilde{e}_{i_{1, l-1}} ^{x_{1, l-1}} \cdots \tilde{e}_{i_{1, 1}} ^{x_{1, 1}} b \neq 0\}
	\end{aligned}
	\end{equation}
	for $1 \le l \le N_1$. Let $\mathcal{C}_b$ denote the connected component of $\mathcal{B}_{\mathcal{I}, \lambda_1, \ldots, \lambda_r}$ containing $b$. Since $I_1 = [n]$, it follows that $(i_{1, 1}, \ldots, i_{1, N_1})$ is a reduced word for $w_0 \in W$. So we deduce by~\cite[Proposition~3.2.3]{Kas93} that $\tilde{e}_{i_{1, N_1}} ^{x_{1, N_1}} \cdots \tilde{e}_{i_{1, 1}} ^{x_{1, 1}} b$ is the highest element in $\mathcal{C}_b$. Hence \begin{equation}\label{equation_par_of_height_wts_in_Cb}
	(0, \ldots, 0, x_{2, 1}, \ldots, x_{2, N_2}, \ldots, x_{r, 1}, \ldots, x_{r, N_r})
	\end{equation} 
	is the generalized string parametrization of the highest element. In particular, the surjective map $\mathcal{B}_{\mathcal{I}, \lambda_1, \ldots, \lambda_r} \twoheadrightarrow \widehat{\Delta}_{{\bf i}, \lambda_1, \ldots, \lambda_r} \cap \mathbb{Z}^{N_2 + \cdots + N_r}$ given by $b \mapsto \pi_{\ge 2}(\Omega_{\bf i}(b))$ induces a bijective map 
	\[\Psi \colon \{{\rm connected\ components\ of}\ \mathcal{B}_{\mathcal{I}, \lambda_1, \ldots, \lambda_r}\} \xrightarrow{\sim} \widehat{\Delta}_{{\bf i}, \lambda_1, \ldots, \lambda_r} \cap \mathbb{Z}^{N_2 + \cdots + N_r}.\] In addition, for a connected component $\mathcal{C}$ of $\mathcal{B}_{\mathcal I, \lambda_1,\dots,\lambda_r}$, the weight of the highest element in $\mathcal{C}$ is determined by $\Psi(\mathcal{C})$ due to the definition of generalized string parametrizations (see Definition~\ref{generalized string parametrization}).  Indeed, if $ \Psi (\mathcal{C}) = (x_{2, 1}, \ldots, x_{2, N_2}, \allowbreak \ldots, x_{r, 1}, \ldots, x_{r, N_r})$, then the weight of the highest element in $\mathcal{C}$ is given by 
	\[
	\lambda_1 + \cdots + \lambda_r - \sum_{2 \le k \le r, 1 \le l \le N_k} x_{k, l} \alpha_{i_{k, l}}
	\]
	since the generalized string parametrization of this element is given by~\eqref{equation_par_of_height_wts_in_Cb}.
	By these reasons, we deduce the assertion of the theorem.
\end{proof}

The following is an immediate consequence of the proof of Theorem~\ref{t:representation}.

\begin{corollary}\label{c:connected component}
	Let $\mathcal{I} = (I_1, \ldots, I_r)$ be a sequence of subsets of $[n]$ such that $I_1 = [n]$. Then, the number of connected components of $\mathcal{B}_{\mathcal{I}, \lambda_1, \ldots, \lambda_r}$ equals the cardinality of \[\widehat{\Delta}_{{\bf i}, \lambda_1, \ldots, \lambda_r} \cap \mathbb{Z}^{N_2 + \cdots + N_r}.\]
\end{corollary}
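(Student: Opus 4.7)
The plan is to observe that this corollary is in fact an immediate byproduct of the proof of Theorem~\ref{t:representation}, so rather than a new argument what is needed is to extract and record the relevant bijection. Specifically, inside that proof one constructs the map $b \mapsto \pi_{\geq 2}(\Omega_{\mathbf i}(b))$ from $\mathcal{B}_{\mathcal{I},\lambda_1,\ldots,\lambda_r}$ to $\widehat{\Delta}_{\mathbf i,\lambda_1,\ldots,\lambda_r}\cap \mathbb Z^{N_2+\cdots+N_r}$, and shows that this map descends to a \emph{bijection} $\Psi$ between connected components of $\mathcal{B}_{\mathcal{I},\lambda_1,\ldots,\lambda_r}$ and the lattice points in $\widehat{\Delta}_{\mathbf i,\lambda_1,\ldots,\lambda_r}$. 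Thus cardinalities agree, which is exactly the corollary.

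First I would recall the two essential facts established in the proof of Theorem~\ref{t:representation}: (i) because $I_1=[n]$, the word $(i_{1,1},\ldots,i_{1,N_1})$ is reduced for the longest element $w_0 \in W$, and hence by~\cite[Proposition 3.2.3]{Kas93} each connected component $\mathcal{C}_b$ contains a unique highest weight element obtainable as $\tilde e_{i_{1,N_1}}^{x_{1,N_1}}\cdots \tilde e_{i_{1,1}}^{x_{1,1}}b$; (ii) by the recursive definition of $\Omega_{\mathbf i}$ in Definition~\ref{generalized string parametrization} together with formula~\eqref{equation_usual_string_parametrization}, the first $N_1$ entries of the generalized string parametrization of the highest element of $\mathcal{C}_b$ vanish, while the remaining entries $(x_{2,1},\ldots,x_{r,N_r})$ are exactly $\pi_{\geq 2}(\Omega_{\mathbf i}(b))$ and depend only on the component $\mathcal{C}_b$.

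Next, combining~Propositions~\ref{finite union of polytopes} and~\ref{proposition_relation_with_generalized_Demazure}~(3) with the equality $\widehat{\Delta}_{\mathbf i,\lambda_1,\ldots,\lambda_r} = \pi_{\geq 2}(\Delta_{\mathbf i,\lambda_1,\ldots,\lambda_r})$, one knows that $\Omega_{\mathbf i}(\mathcal B_{\mathcal I,\lambda_1,\ldots,\lambda_r}) = \Delta_{\mathbf i,\lambda_1,\ldots,\lambda_r}\cap \mathbb Z^{N_1+\cdots+N_r}$, so $\pi_{\geq 2}\circ \Omega_{\mathbf i}$ surjects onto $\widehat\Delta_{\mathbf i,\lambda_1,\ldots,\lambda_r}\cap \mathbb Z^{N_2+\cdots+N_r}$. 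Restricting to highest elements of connected components and using injectivity of $\Omega_{\mathbf i}$, I get the bijection $\Psi$.

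The only step that requires any care — and the one I would spell out most carefully — is the well-definedness and bijectivity of $\Psi$: that two elements in the same connected component produce the same value of $\pi_{\geq 2}\circ\Omega_{\mathbf i}$, and that different connected components produce different values. Both of these reduce to the observation that $\pi_{\geq 2}\circ\Omega_{\mathbf i}$ agrees with $\pi_{\geq 2}\circ \Omega_{\mathbf i}$ of the unique highest element, which in turn follows from~\cite[Proposition 3.2.3]{Kas93} and the inductive structure of $\Omega_{\mathbf i}$. Once this is in place, counting the domain and codomain of $\Psi$ yields the corollary directly.
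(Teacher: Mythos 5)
Your proposal is correct and follows the paper's own reasoning: the paper declares the corollary an immediate consequence of the proof of Theorem~\ref{t:representation}, and that proof constructs precisely the bijection $\Psi$ you describe, using the vanishing of the first $N_1$ string coordinates (via~\cite[Proposition~3.2.3]{Kas93} and $I_1 = [n]$), the injectivity of $\Omega_{\bf i}$ from Proposition~\ref{finite union of polytopes}, and the surjectivity onto $\widehat{\Delta}_{{\bf i},\lambda_1,\ldots,\lambda_r}\cap\mathbb{Z}^{N_2+\cdots+N_r}$. Your filling in of the well-definedness and injectivity of $\Psi$ is exactly the content the paper relies on.
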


Let $\pi_1 \colon \mathbb{R}^{N_1 + \cdots + N_r} \twoheadrightarrow \mathbb{R}^{N_1}$ denote the canonical projection given by \[(x_{1, 1}, \ldots, x_{1, N_1}, \ldots, x_{r, 1}, \ldots, x_{r, N_r}) \mapsto (x_{1, 1}, \ldots, x_{1, N_1}).\] 

\begin{proposition}\label{proposition_fiber_equal_string}
	For ${\bf x} \in \widehat{\Delta}_{{\bf i}, \lambda_1, \ldots, \lambda_r} \cap \mathbb{Z}^{N_2 + \cdots + N_r}$, the set $\pi_1 (\pi_{\ge 2} ^{-1}({\bf x}) \cap \Delta_{{\bf i}, \lambda_1, \ldots, \lambda_r})$ is identical to the string polytope for the connected component $\Psi ^{-1}({\bf x})$ of $\mathcal{B}_{\mathcal{I}, \lambda_1, \ldots, \lambda_r}$ with respect to the reduced word $(i_{1, 1}, \ldots, i_{1, N_1})$ for $w_0 \in W;$ see~\cite[Definition~3.5]{Kav15} and \cite[\S 1]{Lit98}  for the definition of string polytopes.
\end{proposition}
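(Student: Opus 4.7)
The plan is to establish equality of lattice-point sets at every integer scale $k>0$ and then conclude that the two rational convex polytopes coincide. Throughout, let $\nu \in \chi_+(H)$ denote the highest weight of the connected component $\Psi^{-1}({\bf x})$; by Theorem~\ref{t:representation}, $\nu = \lambda_1+\cdots+\lambda_r - \sum_{2\le k\le r,\,1\le l\le N_k} x_{k,l}\alpha_{i_{k,l}}$.

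The first and crucial step is a structural decomposition of $\Omega_{\bf i}$: for $b \in \mathcal{B}_{\mathcal{I},\lambda_1,\ldots,\lambda_r}$ with connected component $\mathcal{C}_b$, writing $\Omega_{\bf i}(b) = (y_{1,1},\ldots,y_{1,N_1},x_{2,1},\ldots,x_{r,N_r})$, I would show that (a) the trailing coordinates $(x_{2,1},\ldots,x_{r,N_r})$ depend only on $\mathcal{C}_b$ and equal $\Psi(\mathcal{C}_b)$, and (b) the leading coordinates $(y_{1,1},\ldots,y_{1,N_1})$ coincide with the classical string parametrization of $b$ intrinsic to $\mathcal{C}_b \cong \mathcal{B}(\nu)$ with respect to the reduced word $(i_{1,1},\ldots,i_{1,N_1})$ for $w_0$. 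For (a), equation~(\ref{equation_usual_string_parametrization}) together with Proposition~\ref{p:Demazure crystal} shows that $\tilde{e}_{i_{1,N_1}}^{y_{1,N_1}}\cdots \tilde{e}_{i_{1,1}}^{y_{1,1}}b$ is the highest element of $\mathcal{C}_b$, so the subsequent steps in Definition~\ref{generalized string parametrization} produce $\Psi(\mathcal{C}_b)$; this is exactly the computation used at the end of the proof of Theorem~\ref{t:representation}. For (b), the Kashiwara operators preserve connected components of the crystal graph, so the maximal exponents $y_{1,l}$ appearing in the definition of $\Omega_{\bf i}$ are the same whether computed in the ambient generalized Demazure crystal or intrinsically inside $\mathcal{C}_b \cong \mathcal{B}(\nu)$ through the isomorphism of Proposition~\ref{p:crystal basis for flag BS}.

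Granted the structural decomposition, at the level $k=1$ the set $\pi_1(\pi_{\ge 2}^{-1}({\bf x})\cap \Omega_{\bf i}(\mathcal{B}_{\mathcal{I},\lambda_1,\ldots,\lambda_r}))$ is precisely the collection of classical string parametrizations of elements of $\mathcal{B}(\nu)$ with respect to $(i_{1,1},\ldots,i_{1,N_1})$. By Proposition~\ref{finite union of polytopes} the former equals $\pi_1(\pi_{\ge 2}^{-1}({\bf x})\cap \Delta_{{\bf i},\lambda_1,\ldots,\lambda_r}) \cap \mathbb{Z}^{N_1}$, and by Littelmann's description of string polytopes~\cite{Lit98} (see also~\cite{Kav15}) the latter is the set of lattice points of the string polytope of $\mathcal{B}(\nu)$ relative to $(i_{1,1},\ldots,i_{1,N_1})$, which establishes the proposition at the level of lattice points.

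To promote this lattice-point equality to an equality of rational convex polytopes, I would replace $(\lambda_1,\ldots,\lambda_r)$ by $(k\lambda_1,\ldots,k\lambda_r)$ and ${\bf x}$ by $k{\bf x}$ and repeat the argument for every $k>0$. Two ingredients are required: the scaling identity $\Delta_{{\bf i},k\lambda_1,\ldots,k\lambda_r}=k\cdot\Delta_{{\bf i},\lambda_1,\ldots,\lambda_r}$, which is immediate from the semigroup definition of generalized string polytopes, and the observation that the connected component indexed by $k{\bf x}$ in $\mathcal{B}_{\mathcal{I},k\lambda_1,\ldots,k\lambda_r}$ has highest weight $k\nu$, which follows from the explicit weight formula in Theorem~\ref{t:representation}. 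Together these yield equality of the $k$-th dilated lattice points of the two polytopes for every $k>0$, which forces the polytopes themselves to coincide. The hard part is really the structural decomposition~(b): the $\Omega_{\bf i}$-algorithm processes the tensor factors in a prescribed order while the Kashiwara operators on a tensor product may move between factors, so one must verify that the leading $N_1$ maxima computed in the ambient crystal really match the intrinsic string parameters in $\mathcal{B}(\nu)$; fortunately, this reduces to the connected-component-preservation property of Kashiwara operators together with Proposition~\ref{p:crystal basis for flag BS}.
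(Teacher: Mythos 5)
Your argument is correct and follows essentially the same route as the paper's: both rest on the bijectivity of $\Omega_{\bf i}$ from Proposition~\ref{finite union of polytopes}, the observation from~\eqref{equation_usual_string_parametrization} (established in the proof of Theorem~\ref{t:representation}) that $\pi_{\ge 2}(\Omega_{\bf i}(b))$ depends only on the connected component $\mathcal{C}_b$, and the identification of $\pi_1(\Omega_{\bf i}(b))$ with the classical string parametrization of $b$ inside $\mathcal{C}_b \cong \mathcal{B}(\nu)$. Where you go beyond the paper's write-up is in explicitly running the argument at every dilation level $k$ (replacing $\lambda_j$ by $k\lambda_j$ and $\bf x$ by $k{\bf x}$) to promote lattice-point equality to equality of rational polytopes; the paper's proof compresses this step into "From these, we obtain the assertion," implicitly relying on the cone constructions underlying both $\Delta_{{\bf i},\lambda_1,\ldots,\lambda_r}$ and the string polytope, so your spelling-out of the scaling identity and of the highest weight $k\nu$ of the dilated component is a genuine tightening of that last inference rather than a different approach.
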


\begin{proof}
	Recall that $\Omega_{\bf i} \colon \mathcal{B}_{\mathcal{I}, \lambda_1, \ldots, \lambda_r} \rightarrow \Delta_{{\bf i}, \lambda_1, \ldots, \lambda_r} \cap \mathbb{Z}^{N_1 + \cdots + N_r}$ is bijective by Proposition \ref{finite union of polytopes}. Hence by the definition of $\Psi$, we obtain the following bijective map:
	\begin{align*}
	\Psi ^{-1}({\bf x}) &\rightarrow \pi_{\ge 2} ^{-1}({\bf x}) \cap \Delta_{{\bf i}, \lambda_1, \ldots, \lambda_r} \cap \mathbb{Z}^{N_1 + \cdots + N_r},\\ 
	b &\mapsto \Omega_{\bf i} (b).
	\end{align*}
	In addition, we see by \eqref{equation_usual_string_parametrization} that $\pi_1 (\Omega_{\bf i} (b))$ is the string parametrization of $b \in \Psi ^{-1}({\bf x})$ with respect to the reduced word $(i_{1, 1}, \ldots, i_{1, N_1})$; see~\cite[\S 1]{Lit98} and \cite[Definition~3.2]{Kav15} for the definition of string parametrizations. From these, we obtain the assertion of the proposition.
\end{proof}

\begin{remark}\normalfont
	Kaveh and Khovanskii~\cite{KaKh12a} gave a general framework to describe multiplicities of irreducible representations by using the Newton--Okounkov bodies. Our results give concrete constructions of convex bodies appearing in~\cite{KaKh12a}. Indeed, by the proof of Theorem~\ref{t:representation} and \cite[Theorem 5.2]{Fuj18}, it is not hard to prove that the rational convex polytope $\widehat{\Delta}_{{\bf i}, \lambda_1, \ldots, \lambda_r}$ is identical to the multiplicity convex body $\widehat{\Delta}_G (A)$ in~\cite[\S 4.1]{KaKh12a} for the valuation $v_{\bf i} ^{\rm high}$, where \[A \coloneqq \bigoplus_{k \ge 0} H^0(Z_{\mathcal{I}}, \mathcal{L}_{\mathcal{I}, \lambda_1, \ldots, \lambda_r} ^{\otimes k}).\] From this and Proposition \ref{proposition_fiber_equal_string}, we deduce that the generalized string polytope $\Delta_{{\bf i}, \lambda_1, \ldots, \lambda_r}$ equals the string convex body $\widetilde{\Delta} (A)$ in~\cite[\S 5.2]{KaKh12a}.
\end{remark}

In representation theory, it is a fundamental problem to determine the $G$-module structure of the tensor product module $V(\lambda) \otimes V(\mu)$, which is equivalent to determining the multiplicity $c_{\lambda, \mu} ^\nu$ of $V(\nu)$ in $V(\lambda) \otimes V(\mu)$. Berenstein and Zelevinsky~\cite[Theorems 2.3, 2.4]{BeZe01} describes the multiplicity $c_{\lambda, \mu} ^\nu$ as the number of lattice points in some explicit rational convex polytope. In the following, we see that Theorem~\ref{t:representation} gives a different approach to such polyhedral expressions for $c_{\lambda, \mu} ^\nu$. Let us consider the case $\mathcal{I} = ([n], [n])$. In this case, the flag Bott--Samelson variety $Z_{\mathcal{I}}$ is identical to $G \times_B G/B$, and the following map is an isomorphism of varieties: 
\[Z_{\mathcal{I}} \xrightarrow{\sim} G/B \times G/B,\ [g_1, g_2] \mapsto (g_1 B/B, g_1 g_2 B/B);\] 
the inverse map is given by $(g_1 B/B, g_2 B/B) \mapsto [g_1, g_1 ^{-1} g_2]$. It is easily seen that under the isomorphism $Z_{\mathcal{I}} \simeq G/B \times G/B$, the $G$-action on $Z_{\mathcal{I}}$ coincides with the diagonal action on $G/B \times G/B$, and the line bundle $\mathcal{L}_{\mathcal{I}, \lambda, \mu}$ corresponds to the direct product of $\mathcal{L}_{\lambda}$ and $\mathcal{L}_{\mu}$, where $\mathcal{L}_\nu$ denotes the line bundle $\mathcal{L}_{([n]), \nu}$ over $G/B$ for $\nu \in \chi_+ (H)$. Hence we obtain the following isomorphisms of $G$-modules:
\begin{align*}
H^0(Z_{\mathcal{I}}, \mathcal{L}_{\mathcal{I}, \lambda, \mu})^\ast &\simeq H^0(G/B \times G/B, \mathcal{L}_{\lambda} \times \mathcal{L}_{\mu})^\ast\\ 
&\simeq H^0(G/B, \mathcal{L}_{\lambda})^\ast \otimes H^0(G/B, \mathcal{L}_{\mu})^\ast\\ 
&\simeq V(\lambda) \otimes V(\mu)
\end{align*} 
by the Borel--Weil theorem (see~\cite[Corollary~II.5.6]{Jan07}). If we write \[V(\lambda) \otimes V(\mu) \simeq \bigoplus_{\nu \in \chi_+ (H)} V(\nu) ^{\oplus c_{\lambda, \mu} ^{\nu}}\] as a $G$-module, then we obtain the following by Theorem~\ref{t:representation}:

\begin{theorem}\label{c:tensor product multiplicity}
	Let $\mathcal{I} = ([n], [n])$, and $(i_1, \ldots, i_N), (j_1, \ldots, j_N) \in [n]^N$ reduced words for $w_0 \in W$. Then, the tensor product multiplicity $c_{\lambda, \mu} ^{\nu}$ equals the cardinality of \[\left\{(y_1, \ldots, y_N) \in \widehat{\Delta}_{{\bf i}, \lambda, \mu} \cap \mathbb{Z}^N ~\bigg|~ \lambda + \mu - \sum_{1 \le l \le N} y_l \alpha_{j_l} = \nu\right\}\!,\] where ${\bf i} \coloneqq (i_1, \ldots, i_N, j_1, \ldots, j_N)$.
\end{theorem}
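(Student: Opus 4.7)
The plan is to reduce the statement to a direct application of Theorem~\ref{t:representation} in the specialized case $\mathcal{I} = ([n], [n])$, relying on the tensor-product identification established in the paragraph immediately preceding the theorem. That paragraph already supplies the essential nontrivial input: via the isomorphism $Z_{([n],[n])} \simeq G/B \times G/B$ (with $\mathcal{L}_{\mathcal{I}, \lambda, \mu}$ corresponding to the external tensor product $\mathcal{L}_\lambda \boxtimes \mathcal{L}_\mu$) and the Borel--Weil theorem, one has the $G$-module isomorphism
\[H^0(Z_{\mathcal{I}}, \mathcal{L}_{\mathcal{I}, \lambda, \mu})^\ast \simeq V(\lambda) \otimes V(\mu).\]
Consequently, the multiplicity $c_{\lambda,\mu}^\nu$ of $V(\nu)$ in $V(\lambda) \otimes V(\mu)$ coincides with the multiplicity $c_{\mathcal{I}, \lambda, \mu}^\nu$ appearing in Theorem~\ref{t:representation}.

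Next, I would verify that the hypotheses of Theorem~\ref{t:representation} hold and carefully match its notation to the one used here. In the present setting $r = 2$ and $I_1 = I_2 = [n]$, so the condition $I_1 = [n]$ required by Theorem~\ref{t:representation} is satisfied, and the longest elements of $W_{I_1} = W_{I_2} = W$ are both $w_0$. Any reduced word for $w_0$ has length $N = \ell(w_0)$, so $N_1 = N_2 = N$. Taking $(i_{1,1}, \ldots, i_{1,N_1}) \coloneqq (i_1, \ldots, i_N)$ and $(i_{2,1}, \ldots, i_{2,N_2}) \coloneqq (j_1, \ldots, j_N)$ is a legitimate choice since both are reduced words for $w_0$, and their concatenation is precisely the sequence $\mathbf{i} = (i_1, \ldots, i_N, j_1, \ldots, j_N)$ of the statement.

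Finally, for $r = 2$ the projection $\pi_{\geq 2}$ simply records the coordinates indexed by $k = 2$, namely $(x_{2,1}, \ldots, x_{2,N})$. Renaming these coordinates as $(y_1, \ldots, y_N)$ and using $\alpha_{i_{2,l}} = \alpha_{j_l}$, the condition
\[\lambda_1 + \lambda_2 - \sum_{1 \le l \le N_2} x_{2,l}\, \alpha_{i_{2,l}} = \nu\]
from Theorem~\ref{t:representation} specializes to $\lambda + \mu - \sum_{l=1}^{N} y_l\, \alpha_{j_l} = \nu$. Combining this with the $G$-module identification above yields the claim.

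I expect no substantial obstacle: once Theorem~\ref{t:representation} and the Borel--Weil identification of the preceding paragraph are invoked, the remaining argument is pure index bookkeeping, confirming that the ``first-factor'' coordinates $x_{1,\bullet}$ are precisely the ones killed by $\pi_{\geq 2}$, while the ``second-factor'' coordinates $x_{2,\bullet}$ are the ones recorded as $(y_1, \ldots, y_N)$. The only mild point worth being careful about is the dualization in Borel--Weil (so that the multiplicity of $V(\nu)$ in $V(\lambda) \otimes V(\mu)$, rather than in its dual, matches $c_{\mathcal{I},\lambda,\mu}^\nu$), but this is immediate since finite-dimensional $G$-modules are self-dual up to the involution $\nu \mapsto -w_0 \nu$ on dominant weights, and the identification $H^0(Z_{\mathcal{I}}, \mathcal{L}_{\mathcal{I}, \lambda, \mu})^\ast \simeq V(\lambda) \otimes V(\mu)$ (not its dual) is already written down in the excerpt.
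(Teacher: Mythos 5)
Your proposal is correct and follows the same route as the paper: identify $H^0(Z_{\mathcal{I}}, \mathcal{L}_{\mathcal{I}, \lambda, \mu})^\ast \simeq V(\lambda) \otimes V(\mu)$ via $Z_{\mathcal{I}} \simeq G/B \times G/B$ and Borel--Weil, then specialize Theorem~\ref{t:representation} to $r = 2$ with the obvious index translation. The closing worry about dualization is harmless but unnecessary, since the identification already places the dual on both sides and no self-duality or $\nu \mapsto -w_0\nu$ involution is needed.
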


\begin{example}\label{ex_projection_of_gstring_polytope}\normalfont
	Let $G = \SL(3)$, $\mathcal{I} = ([2], [2])$, and ${\bf i} = (1, 2, 1, 1, 2, 1)$. By~\cite[Corollary~4.15]{Fuj18}, the generalized string polytope $\Delta_{{\bf i}, \lambda, \mu}$ is identical to the set of $(x_1, x_2, x_3, y_1, y_2, y_3) \in \mathbb{R}_{\ge 0} ^6$ satisfying the following inequalities:
	\begin{align*}
	&0 \le y_3 \le \min\{\lambda_2, \mu_1\},\\
	&y_3 \le y_2 \le y_3 + \mu_2,\\
	&y_2 - \lambda_2 \le y_1 \le \min\{\lambda_1, y_2 -2y_3 +\mu_1\},\\
	&\max\{y_3 -\lambda_2, -y_1 +y_2 -\lambda_2\} \le x_3 \le -2y_1 +y_2 -2y_3 +\lambda_1 +\mu_1,\\
	&x_3 \le x_2 \le x_3 +y_1 -2y_2 +y_3 +\lambda_2 +\mu_2,\\
	&0 \le x_1 \le x_2 -2x_3 -2y_1 +y_2 -2y_3 +\lambda_1 +\mu_1, 
	\end{align*}
	where $\lambda_i \coloneqq \langle \lambda, \alpha_i ^\vee\rangle$ and $\mu_i \coloneqq \langle \mu, \alpha_i ^\vee\rangle$ for $i = 1, 2$. Hence the polytope $\widehat{\Delta}_{{\bf i}, \lambda, \mu}$ is identical to the set of $(y_1, y_2, y_3) \in \mathbb{R}_{\ge 0} ^3$ satisfying the following inequalities:
	\begin{align*}
	&0 \le y_3 \le \min\{\lambda_2, \mu_1\},\\
	&y_3 \le y_2 \le y_3 + \mu_2,\\
	&y_2 - \lambda_2 \le y_1 \le \min\{\lambda_1, y_2 -2y_3 +\mu_1\}.
	\end{align*}
	We deduce by Theorem~\ref{c:tensor product multiplicity} that the tensor product multiplicity $c_{\lambda, \mu} ^{\nu}$ equals the cardinality of $(y_1, y_2, y_3) \in \widehat{\Delta}_{{\bf i}, \lambda, \mu} \cap \mathbb{Z}^3$ such that $\lambda +\mu -(y_1 +y_3) \alpha_1 -y_2 \alpha_2 = \nu$. 
	
	If $\lambda = \mu = \varpi_1 +\varpi_2$, then the polytope $\widehat{\Delta}_{{\bf i}, \lambda, \mu}$ is identical to the set of $(y_1, y_2, y_3) \in \mathbb{R}_{\ge 0} ^3$ satisfying the following inequalities:
	\begin{align*}
	0 \le y_3 \le 1,\quad y_3 \le y_2 \le y_3 + 1,\quad y_2 -1 \le y_1 \le \min\{1, y_2 -2y_3 +1\};
	\end{align*}
	see Figure~\ref{figure_ex_projection_of_gstring_polytope}. 
	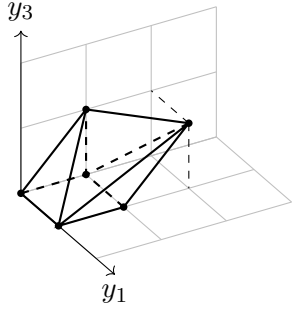
\begin{figure}
		\tdplotsetmaincoords{60}{60}
		\begin{tikzpicture}[tdplot_main_coords]
		
		\coordinate (1) at (0,0,0);
		\coordinate (2) at (1,0,0); 
		\coordinate (3) at (0,1,0);
		\coordinate (4) at (1,1,0);
		\coordinate (5) at (0,1,1);
		\coordinate (6) at (1,2,1);
		
		\begin{scope}[color=gray!50, thin]
		\foreach \xi in {0,1,2} { \draw (\xi, 0,0) -- (\xi, 3, 0); }%
		\foreach \yi in {0,1,2,3} {\draw (0,\yi,2) -- (0,\yi,0) -- (2,\yi,0);}%
		\foreach \zi in {0,1,2} {\draw (0,3,\zi) -- (0,0,\zi);}%
		\end{scope}
		
		\draw[->] (0,0,0) -- (2.5,0,0) node[anchor = north] {$y_1$};
		\draw[->] (0,0,0) -- (0,0,2.5) node[anchor = south] {$y_3$};
		
		\draw[thick] (1)-- (5) -- (6)--(4)--(2)--(6);
		\draw[thick] (5) -- (2) -- (1) ;
		
		\draw[thick, dashed] (6)--(3)--(1);
		\draw[thick, dashed] (4)--(3)--(5);
		
		\draw[dashed] (6) -- (1,2,0);
		\draw[dashed] (6) -- (0,2,1);

		\foreach \x in {1,...,6}{
			\node[circle,fill=black,inner sep=1pt] at (\x) {};
		}
		\end{tikzpicture}
		\caption{The polytope $\widehat{\Delta}_{\mathbf{i}, \lambda, \mu}$ in Example~\ref{ex_projection_of_gstring_polytope}.}
		\label{figure_ex_projection_of_gstring_polytope}
	\end{figure}
	Hence we deduce that \[V(\varpi_1 +\varpi_2)^{\otimes 2} \simeq V(2\varpi_1 +2\varpi_2) \oplus V(3\varpi_1) \oplus V(3\varpi_2) \oplus V(\varpi_1 +\varpi_2)^{\oplus 2} \oplus V(0).\]
\end{example}

Theorem~\ref{t:representation} can be applied to a more general class of representations than~\cite{BeZe01}. We next consider the case $\mathcal{I} = (\underbrace{[n], [n], \ldots, [n]}_r)$. In this case, we have \[Z_{\mathcal{I}} = \underbrace{G \times_B G \times_B \cdots \times_B G}_r /B,\] and this is isomorphic to $(G/B)^r \coloneqq \underbrace{G/B \times G/B \times \cdots \times G/B}_r$ as follows:
\[Z_{\mathcal{I}} \xrightarrow{\sim} (G/B)^r,\ [g_1, g_2, \ldots, g_r] \mapsto (g_1 B/B, g_1 g_2 B/B, \ldots, g_1 g_2 \cdots g_r B/B);\] 
the inverse map is given by $(g_1 B/B, g_2 B/B, \ldots, g_r B/B) \mapsto [g_1, g_1 ^{-1} g_2, g_2 ^{-1} g_3, \ldots, g_{r-1} ^{-1} g_r]$. As in the case $r = 2$, under the isomorphism $Z_{\mathcal{I}} \simeq (G/B)^r$, the $G$-action on $Z_{\mathcal{I}}$ coincides with the diagonal action on $(G/B)^r$, and the line bundle $\mathcal{L}_{\mathcal{I}, \lambda_1, \ldots, \lambda_r}$ corresponds to the direct product of $\mathcal{L}_{\lambda_1}, \ldots, \mathcal{L}_{\lambda_r}$. From this, we have the following isomorphisms of $G$-modules:
\begin{align*}
H^0(Z_{\mathcal{I}}, \mathcal{L}_{\mathcal{I}, \lambda_1, \ldots, \lambda_r})^\ast &\simeq H^0((G/B)^r, \mathcal{L}_{\lambda_1} \times \cdots \times \mathcal{L}_{\lambda_r})^\ast\\ 
&\simeq H^0(G/B, \mathcal{L}_{\lambda_1})^\ast \otimes \cdots \otimes H^0 (G/B, \mathcal{L}_{\lambda_r})^\ast\\ 
&\simeq V(\lambda_1) \otimes \cdots \otimes V(\lambda_r).
\end{align*}
If we write \[V(\lambda_1) \otimes \cdots \otimes V(\lambda_r) \simeq \bigoplus_{\nu \in \chi_+ (H)} V(\nu) ^{\oplus c_{\lambda_1, \ldots, \lambda_r} ^{\nu}}\] as a $G$-module, then Theorem~\ref{t:representation} implies the following.

\begin{corollary}
	Let $\mathcal{I} = (\underbrace{[n], [n], \ldots, [n]}_r)$, and take reduced words $(i_{k, 1}, \ldots, i_{k, N}) \in [n]^N$, $1 \le k \le r$, for $w_0 \in W$. Then, the multiplicity $c_{\lambda_1, \ldots, \lambda_r} ^{\nu}$ equals the cardinality of 
	\begin{align*}
	\bigg\{{\bf x} = (x_{k, l})_{2 \le k \le r, 1 \le l \le N} &\in \widehat{\Delta}_{{\bf i}, \lambda_1, \ldots, \lambda_r} \cap \mathbb{Z}^{(r-1) N} ~\bigg| \\
	&\lambda_1 + \cdots + \lambda_r - \sum_{2 \le k \le r, 1 \le l \le N} x_{k, l}\alpha_{i_{k, l}} = \nu \bigg\},
	\end{align*} 
	where ${\bf i} \coloneqq (i_{1, 1}, \ldots, i_{1, N}, \ldots, i_{r, 1}, \ldots, i_{r, N})$.
\end{corollary}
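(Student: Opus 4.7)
The plan is to deduce this corollary as a direct specialization of Theorem~\ref{t:representation} to the case $\mathcal{I} = (\underbrace{[n], \ldots, [n]}_r)$, identifying $V_{\mathcal{I}, \lambda_1, \ldots, \lambda_r}$ with the tensor product $V(\lambda_1) \otimes \cdots \otimes V(\lambda_r)$. The identification is already sketched in the paragraph preceding the corollary, so my job is to verify it and then invoke Theorem~\ref{t:representation}.

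First I would establish carefully the isomorphism of varieties
\[
Z_{\mathcal{I}} \xrightarrow{\sim} (G/B)^r, \qquad [g_1, g_2, \ldots, g_r] \mapsto (g_1 B/B, g_1 g_2 B/B, \ldots, g_1 g_2 \cdots g_r B/B),
\]
by writing down the inverse $(g_1 B/B, \ldots, g_r B/B) \mapsto [g_1, g_1^{-1} g_2, \ldots, g_{r-1}^{-1} g_r]$ and checking invariance under the right $B^r$-action $\Theta$ in~\eqref{eq_Bm-action_defining_fBS}. Next I would verify that under this isomorphism, the left action of $P_{I_1} = G$ on $Z_{\mathcal{I}}$ from~\eqref{eq_action_of_PI} transports to the diagonal $G$-action on $(G/B)^r$; this follows immediately because left multiplication by $g \in G$ on the first factor multiplies every partial product $g_1 g_2 \cdots g_k$ by $g$.

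Then I would identify the line bundle $\mathcal{L}_{\mathcal{I}, \lambda_1, \ldots, \lambda_r}$ with the external product $\mathcal{L}_{\lambda_1} \boxtimes \cdots \boxtimes \mathcal{L}_{\lambda_r}$. The key point is that the character $e^{\lambda_1}(b_1) \cdots e^{\lambda_r}(b_r)$ controlling $\mathcal{L}_{\mathcal{I}, \lambda_1, \ldots, \lambda_r}$ via~\eqref{eq_action_of_L_fBS} matches, under the variable change above, the product of the characters $e^{\lambda_k}$ governing each $\mathcal{L}_{\lambda_k}$ on $G/B$. Taking global sections and dualizing, and invoking the Borel--Weil theorem (see~\cite[Corollary~II.5.6]{Jan07}) to get $H^0(G/B, \mathcal{L}_{\lambda_k})^\ast \simeq V(\lambda_k)$, together with the K\"unneth formula, yields
\[
V_{\mathcal{I}, \lambda_1, \ldots, \lambda_r} \simeq V(\lambda_1) \otimes \cdots \otimes V(\lambda_r)
\]
as $G$-modules.

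Finally, with this identification in place, the multiplicity $c_{\mathcal{I}, \lambda_1, \ldots, \lambda_r}^{\nu}$ appearing in Theorem~\ref{t:representation} coincides with the tensor product multiplicity $c_{\lambda_1, \ldots, \lambda_r}^{\nu}$. Applying Theorem~\ref{t:representation} with the concatenated reduced word $\mathbf{i} = (i_{1,1}, \ldots, i_{1,N}, \ldots, i_{r,1}, \ldots, i_{r,N})$ for $w_0$ in each block, and noting that $N_k = N$ for all $k$, delivers exactly the claimed lattice-point count. I do not expect any serious obstacle here: the content is the compatibility of the definitions, and the only mildly technical point is the identification of line bundles, which is a direct computation from the defining data.
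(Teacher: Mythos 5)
Your proposal is correct and takes essentially the same route as the paper: it identifies $Z_{\mathcal{I}} \simeq (G/B)^r$ compatibly with the diagonal $G$-action and the line bundles, passes to $V(\lambda_1) \otimes \cdots \otimes V(\lambda_r)$ via Borel--Weil and K\"unneth, and then specializes Theorem~\ref{t:representation}. The paper's treatment is the same sequence of identifications (stated as a direct generalization of the $r=2$ case), so nothing substantively differs.
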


The following gives an application to $Z_{\mathcal{I}}$ for general $\mathcal{I}$ which does not necessarily start with $[n]$:

\begin{corollary}
	Let $\mathcal{I} = (I_1, \ldots, I_r)$ be a sequence of subsets of $[n]$, and set $I_0 \coloneqq [n]$. Fix ${\bf i}_0 = (i_{k, l})_{0 \le k \le r, 1 \le l \le N_k} \in [n]^{N_0 + \cdots + N_r}$ such that $(i_{k, 1}, \ldots, i_{k, N_k})$ is a reduced word for the longest element in $W_{I_k}$ for $0 \le k \le r$. Then, the number of connected components of $\mathcal{B}_{\mathcal{I}, \lambda_1, \ldots, \lambda_r}$ equals the cardinality of \[\widehat{\Delta}_{{\bf i}_0, 0, \lambda_1, \ldots, \lambda_r} \cap \mathbb{Z}^{N_1 + \cdots + N_r}.\]
\end{corollary}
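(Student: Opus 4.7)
The plan is to reduce the assertion directly to Corollary~\ref{c:connected component} by prepending the data with $I_0 = [n]$ and the weight $0$. Set $\mathcal{I}' \coloneqq (I_0, I_1, \ldots, I_r)$. Since $I_0 = [n]$, the pair $(\mathcal{I}', (0, \lambda_1, \ldots, \lambda_r))$ together with the sequence $\mathbf{i}_0$ satisfies the hypotheses of Corollary~\ref{c:connected component}, and the associated projection $\pi_{\ge 2}$ drops precisely the first $N_0$ coordinates. Applying Corollary~\ref{c:connected component} therefore yields
\[
\#\{\text{connected components of } \mathcal{B}_{\mathcal{I}', 0, \lambda_1, \ldots, \lambda_r}\} = |\widehat{\Delta}_{\mathbf{i}_0, 0, \lambda_1, \ldots, \lambda_r} \cap \mathbb{Z}^{N_1 + \cdots + N_r}|,
\]
so the whole problem collapses to showing that $\mathcal{B}_{\mathcal{I}', 0, \lambda_1, \ldots, \lambda_r}$ and $\mathcal{B}_{\mathcal{I}, \lambda_1, \ldots, \lambda_r}$ have the same number of connected components.

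For this equality I would use that $\mathcal{B}(0) = \{b_0\}$ is the trivial crystal, so the tensor-product rule forces $\tilde{e}_i(b_0 \otimes b) = b_0 \otimes \tilde{e}_i b$ and $\tilde{f}_i(b_0 \otimes b) = b_0 \otimes \tilde{f}_i b$ for every $i \in [n]$ and every $b$. Hence $b \mapsto b_0 \otimes b$ is an embedding of crystals, and under it $\mathcal{B}_{\mathcal{I}', 0, \lambda_1, \ldots, \lambda_r}$ is identified with
\[
\mathcal{B}^{\sharp} \coloneqq \{\tilde{f}_{i_{0,1}}^{x_1} \cdots \tilde{f}_{i_{0,N_0}}^{x_{N_0}} b \mid x_1, \ldots, x_{N_0} \in \mathbb{Z}_{\ge 0},\ b \in \mathcal{B}_{\mathcal{I}, \lambda_1, \ldots, \lambda_r}\} \setminus \{0\}
\]
sitting inside $\mathcal{B}(\lambda_1) \otimes \cdots \otimes \mathcal{B}(\lambda_r)$.

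Next, by \cite[Theorem~2]{LLM02} together with Proposition~\ref{proposition_relation_with_generalized_Demazure} (and iterated use of \cite[Proposition~3.2.3]{Kas93} along the $I_1$-factor, exactly as in the proof of Proposition~\ref{p:crystal basis for flag BS}), the crystal $\mathcal{B}_{\mathcal{I}, \lambda_1, \ldots, \lambda_r}$ is a disjoint union of Demazure subcrystals $\mathcal{B}_{w_\alpha}(\nu_\alpha)$, one per connected component. Because $(i_{0, 1}, \ldots, i_{0, N_0})$ is a reduced word for $w_0 \in W$, the second assertion of Lemma~\ref{l:crystal for longest} applied componentwise expands each $\mathcal{B}_{w_\alpha}(\nu_\alpha)$ inside the ambient tensor product into the whole copy of $\mathcal{B}(\nu_\alpha)$ containing it, and these copies together exhaust $\mathcal{B}^{\sharp}$.

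The step I expect to be the main obstacle is checking that distinct components of $\mathcal{B}_{\mathcal{I}, \lambda_1, \ldots, \lambda_r}$ give rise to distinct connected components of $\mathcal{B}^{\sharp}$. I would handle this by noting that each copy of $\mathcal{B}(\nu)$ in the ambient tensor product has a unique highest weight element $b_\nu$, while every Demazure subcrystal $\mathcal{B}_w(\nu)$ inside such a copy contains $b_\nu$ by definition. Consequently, two distinct (and therefore disjoint) Demazure components of $\mathcal{B}_{\mathcal{I}, \lambda_1, \ldots, \lambda_r}$ cannot lie in the same copy of $\mathcal{B}(\nu)$, so their expansions form distinct connected components of $\mathcal{B}^{\sharp}$. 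This bijection between components then completes the reduction to Corollary~\ref{c:connected component}.
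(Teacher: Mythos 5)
Your proposal is correct and follows essentially the same line as the paper's proof: prepend $I_0 = [n]$ with weight $0$, apply Corollary~\ref{c:connected component}, identify $b_0 \otimes \mathcal{B}_{\mathcal{I},\lambda_1,\ldots,\lambda_r}$ with $\mathcal{B}_{\mathcal{I},\lambda_1,\ldots,\lambda_r}$, and use the fact (from~\cite{LLM02} and Proposition~\ref{proposition_relation_with_generalized_Demazure}) that the connected components are Demazure crystals with distinct highest elements, so the $\tilde{f}_i$-saturation does not merge them. You spell out the Lemma~\ref{l:crystal for longest} expansion and the highest-element disjointness argument a bit more explicitly than the paper does, but the underlying reasoning is identical.
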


\begin{proof}
	We set $\mathcal{I}_0 \coloneqq (I_0, I_1, \ldots, I_r)$. By the definition of tensor product crystals, the bijective map $\mathcal{B}_{\mathcal{I}, \lambda_1, \ldots, \lambda_r} \xrightarrow{\sim} b_0 \otimes \mathcal{B}_{\mathcal{I}, \lambda_1, \ldots, \lambda_r}$, $b \mapsto b_0 \otimes b$, is compatible with their crystal structures,  where we mean by $b_0 \in \mathcal{B}(0)$ the element $b_{\lambda}$ for $\lambda = 0$. Hence we may identify $b_0 \otimes \mathcal{B}_{\mathcal{I}, \lambda_1, \ldots, \lambda_r}$ with $\mathcal{B}_{\mathcal{I}, \lambda_1, \ldots, \lambda_r}$. This implies by the definition that the crystal basis $\mathcal{B}_{\mathcal{I}_0, 0, \lambda_1, \ldots, \lambda_r}$ is obtained from $\mathcal{B}_{\mathcal{I}, \lambda_1, \ldots, \lambda_r}$ by actions of $\tilde{f}_i$, $i \in [n]$. By~\cite[Proof of Theorem 2]{LLM02} and Proposition \ref{proposition_relation_with_generalized_Demazure}, all connected components of $\mathcal{B}_{\mathcal{I}, \lambda_1, \ldots, \lambda_r}$ are Demazure crystals in connected components of $\mathcal{B}(\lambda_1) \otimes \cdots \otimes \mathcal{B}(\lambda_r)$. Hence they are not joined by $\tilde{f}_i$, $i \in [n]$, since they have different highest elements. From these, the crystal basis $\mathcal{B}_{\mathcal{I}_0, 0, \lambda_1, \ldots, \lambda_r}$ has the same number of connected components as $\mathcal{B}_{\mathcal{I}, \lambda_1, \ldots, \lambda_r}$, which implies the assertion of the corollary by Corollary~\ref{c:connected component}. 
\end{proof}

%%%%%%%%%%%%%%%%%%%%%%%%%%%%%%%%%%%%%%%%%%%%%%%%%%%%%%%%%%%%%%%%%%%%%%%
\section{Flag Bott--Samelson varieties and flag Bott towers}
\label{sec_fBS_and_fBT}
%%%%%%%%%%%%%%%%%%%%%%%%%%%%%%%%%%%%%%%%%%%%%%%%%%%%%%%%%%%%%%%%%%%%%%%
In this section, we study complex structures on the flag Bott--Samelson variety $Z_{\mathcal I}$, and its relation with a flag Bott tower in Theorem~\ref{thm_fBS_is_fBT}. 
We first recall flag Bott manifolds introduced in~\cite{KLSS}.
Let $M$ be a complex manifold and $E$ a holomorphic
vector bundle over $M$. The \defi{associated flag bundle} $\flag(E) \to M$
is a fiber bundle obtained from $E$ by replacing each fiber $E_p$  over a point $p \in M$
by the full flag manifold $\flag(E_p)$. 
\begin{definition}[{\cite[Definition 2.1]{KLSS}}]\label{def_fBT}
	A \defi{flag Bott tower} $\{F_k\}_{0 \leq k \leq r}$ of height~$r$
	(or an \defi{$r$-stage flag Bott tower}) is a sequence,
	\[
	\begin{tikzcd}
	F_r \arrow[r, "p_r"]& F_{r-1} \arrow[r, "p_{r-1}"] & \cdots \arrow[r, "p_2"]& F_1 \arrow[r, "p_1"] & F_0 = \{\text{a point}\}
	\end{tikzcd}
	\]
	of manifolds $F_k = \flag\left( \bigoplus_{l=1}^{m_k+1} \xii{l}{k} \right)$,
	where $\xii{l}{k}$ is a holomorphic line bundle over $F_{k-1}$ for each
	$1\leq l \leq m_k+1$ and $1\leq k \leq r$. We call $F_k$ 
	the \defi{$k$-stage flag Bott manifold} of the flag Bott tower. 
\end{definition}
For example, the flag manifold $\flag(\C^{m+1}) = \flag(m+1)$ is a 1-stage
flag Bott manifold, and the product of flag manifolds
$\flag(m_1+1) \times \cdots \times \flag(m_r+1)$ is an $r$-stage flag Bott manifold. Also an $r$-stage Bott manifold is an $r$-stage
flag Bott manifold (see~\cite{GrKa94} for the definition of Bott manifolds). 
We call two flag Bott towers $\{F_k\}_{0 \leq k \leq r}$ and
$\{F_k'\}_{0 \leq k \leq r}$ \defi{isomorphic} if there
is a collection of diffeomorphisms 
$\varphi \colon F_k \to F_k'$ which commutes with the maps 
$p_k \colon F_k \to F_{k-1}$ and $p_k' \colon F_k' \to F_{k-1}'$. 

\begin{remark}\label{rmk_fBT_general_type}
In~\cite{KKLS}, an iterated flag bundle whose fibers are not only full flag manifolds of type $A$ but also other flag manifolds of general Lie type is considered. We recall their construction briefly.	
	For $1 \leq k \leq r$, let $K_k$ be a compact connected Lie group, $T_k \subset K_k$  a maximal torus, and $Z_k \subset K_k$  the centralizer of a circle subgroup of $T_k$. 
Recall from~\cite[Definition~3.1]{KKLS} that an \defi{$r$-stage flag Bott tower $\{F_k\}_{0 \leq k \leq r}$ of general Lie type associated to $\{(K_{k}, Z_{k})\}_{0 \leq k \leq r}$} is defined recursively:
\begin{enumerate}
	\item $F_0$ is a point.
	\item $F_k$ is the flag bundle over $F_{k-1}$ with fiber $K_k/Z_k$ associated to a map
	\[
	f_k \colon F_{k-1} \to BK_k,
	\]
	where $f_k$ factors through $BT_k$.
\end{enumerate}
Here, the map $f_k$ induces the flag bundle $F_k \to F_{k-1}$ from the universal flag bundle $K_k/Z_k \hookrightarrow BZ_k \to BK_k$. 
\begin{equation}
\begin{tikzcd}[column sep = 0cm]
K_{k}/Z_k \arrow[rr, equal] \arrow[d, hook] && K_{k}/Z_k  \arrow[d, hook]\\
F_k \arrow[rr] \arrow[d]  && BZ_k \arrow[d] \\
F_{k-1} \arrow[rr, "f_{k}"] \arrow[dr] && BK_{k} \\
& BT_k \arrow[ur]
\end{tikzcd}
\end{equation}
Because  the map $f_k$ factors through $BT_k$, the bundle $F_k$ is the associated $K_k/Z_k$-flag bundle of the sum of complex line bundles over $F_{k-1}$. A flag Bott tower defined in Definition~\ref{def_fBT} is a flag Bott tower of general Lie type associated to $\{(U(m_k+1),T^{m_k+1})\}_{0\leq k \leq r}$.
\end{remark}

\begin{lemma}\label{lemma_tensor_preserves_fBT_str}
	Let $M$ be a complex manifold and $E$ a
	holomorphic vector bundle over $M$. 
	Let $\mathcal L$ be a holomorphic line bundle over $M$. 
	Then we have that $\flag(E) \cong \flag(E \otimes \mathcal L)$ 
	as differentiable manifolds.
\end{lemma}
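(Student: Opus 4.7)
The plan is to build the diffeomorphism fiberwise using a canonical identification of flag manifolds, and then glue the pointwise identifications via a trivializing cover of $\mathcal{L}$.

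First I would make the following fiberwise observation. For any finite-dimensional complex vector space $V$ and any $1$-dimensional complex vector space $L$, the assignment $W \mapsto W \otimes L$ defines an inclusion-preserving bijection between linear subspaces of $V$ and linear subspaces of $V \otimes L$. Hence there is a canonical isomorphism of complex manifolds
\[
\phi_{V,L} \colon \flag(V) \xrightarrow{\sim} \flag(V \otimes L),
\quad
(0 \subset V_1 \subset \cdots \subset V_n \subset V) \mapsto (0 \subset V_1 \otimes L \subset \cdots \subset V_n \otimes L \subset V \otimes L).
\]
A choice of nonzero $\ell \in L$ realizes $\phi_{V,L}$ as the map induced by the linear isomorphism $v \mapsto v \otimes \ell$, and replacing $\ell$ by $c\ell$ for some $c \in \C^{\ast}$ multiplies that linear isomorphism by $c$; since scalar multiplication preserves every linear subspace, the induced map on $\flag(V)$ is unchanged. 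Thus $\phi_{V,L}$ depends only on $V$ and $L$, not on any choice of basis vector in $L$.

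Next I would globalize this. Choose an open cover $\{U_\alpha\}$ of $M$ together with trivializations $\psi_\alpha \colon \mathcal{L}|_{U_\alpha} \xrightarrow{\sim} U_\alpha \times \C$. Pointwise the fiberwise isomorphisms $\phi_{E_p,\, \mathcal{L}_p}$ assemble, on each $U_\alpha$, into a fiber bundle isomorphism
\[
\Phi_\alpha \colon \flag(E)|_{U_\alpha} \xrightarrow{\sim} \flag(E \otimes \mathcal{L})|_{U_\alpha}
\]
covering the identity on $U_\alpha$; concretely, $\psi_\alpha$ determines a bundle isomorphism $E|_{U_\alpha} \to (E \otimes \mathcal{L})|_{U_\alpha}$, and $\Phi_\alpha$ is the induced map on associated flag bundles. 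On an overlap $U_\alpha \cap U_\beta$, the two trivializations differ by a nowhere-vanishing holomorphic transition function $g_{\alpha\beta} \colon U_\alpha \cap U_\beta \to \C^{\ast}$, so $\Phi_\alpha \circ \Phi_\beta^{-1}$ acts on each fiber of $\flag(E \otimes \mathcal{L})$ by the map induced from multiplication by $g_{\alpha\beta}(p)$ on $E_p \otimes \mathcal{L}_p$. By the invariance observed in the first step, this action is the identity on the flag manifold.

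Consequently the local isomorphisms $\{\Phi_\alpha\}$ agree on overlaps and glue to a global fiber-preserving isomorphism $\flag(E) \xrightarrow{\sim} \flag(E \otimes \mathcal{L})$, which is in particular a diffeomorphism. The only subtle point — and really the only thing to check carefully — is the scalar-invariance used in the gluing; everything else is formal from the construction of associated bundles.
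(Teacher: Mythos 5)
Your proof is correct, and it takes a genuinely different route from the paper. The paper's argument cites the known diffeomorphism $\mathbb{P}(E) \cong \mathbb{P}(E \otimes \mathcal{L})$ for projective bundles (Choi--Masuda--Suh) and then appeals to the Bott--Tu description of $\flag(E)$ as an iterated sequence of projective bundles to conclude by iteration. That is shorter, though carrying the twist by $\mathcal{L}$ through each stage of the tower is implicitly left to the reader. Your proof instead constructs the diffeomorphism directly and globally: the key observation that tensoring with a one-dimensional space induces a canonical, basis-independent bijection of subspace lattices (hence of flag manifolds) is exactly what makes the local maps $\Phi_\alpha$ built from trivializations of $\mathcal{L}$ agree on overlaps, since the transition functions $g_{\alpha\beta}$ act by scalars and therefore act trivially on flags. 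This yields a self-contained argument with no iteration and no appeal to the projective-bundle lemma; the tradeoff is that it is a bit longer to write out, and it does not reuse existing machinery. Both arguments are sound; yours makes the scalar-invariance mechanism explicit, which is arguably the conceptual heart of the lemma.
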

\begin{proof}
	It is well-known that for a holomorphic vector bundle $E \to M$ over a smooth manifold $M$ and a holomorphic line bundle $\mathcal L \to M$,
	there is a diffeomorphism $\mathbb{P}(E \otimes \mathcal L) \cong \mathbb{P}(E)$ (see, for example,~\cite[Lemma~2.1]{CMS10-trans}).
	Since the induced flag bundle is a sequence of projective bundles as shown in~\cite[Proposition 21.15]{BoTu82}, we have a diffeomorphism $\flag(E) \cong \flag(E \otimes \mathcal L)$.
\end{proof}

The flag manifold $\flag(m+1)$ can be identified with 
an orbit space $\GL(m+1)/B_{\GL(m+1)}$. Similarly, an $r$-stage 
flag Bott manifold $F_r$ can also be considered as an orbit space. 
We briefly review the orbit space construction of~\cite[\S 2.2]{KLSS}. 
Recall from~\cite[Lemma~2.12]{KLSS} that for a given Bott tower $\{F_k\}_{0 \leq k \leq r}$ such that $\flag(m_k+1) \hookrightarrow F_k \to F_{k-1}$, there is a surjective group homomorphism: 
\begin{equation}\label{eq_Pic_gp_of_Fj}
\psi \colon \Z^{m_1+1} \times \cdots 
\times \Z^{m_k+1} \twoheadrightarrow \text{Pic}(F_k)
\quad \text{ for } 1\leq k\leq r.
\end{equation}
We briefly explain the geometric meaning of the homomorphism~\eqref{eq_Pic_gp_of_Fj}. 
For the flag bundle $\flag(E) \stackrel{p}{\to} M$ obtained by a vector bundle $E$ of rank $n$ over a complex manifold $M$, consider the universal flag of bundles $0 \subset E_1 \subset E_2 \subset \cdots \subset E_n = p^{\ast}E$ on $\flag(E)$. Then every element of $\Pic(\flag(E))$ can be written as a polynomial in $x_i = c_1(E_i/E_{i-1})$ for $1 \leq i \leq n$ with coefficients in $\Pic(M)$ (see, for example,~\cite[Example~3.3.5]{Ful13}). 
Because $F_k$ is an iterated flag bundle, applying this procedure  recurrently, we obtain the homomorphism~\eqref{eq_Pic_gp_of_Fj}. Moreover, for $\xi \in \Pic(F_k)$, if we have 
$\xi = \psi(\mathbf a_1,\dots, \mathbf a_k)$, where $\mathbf{a}_j$ is an integer vector $(\mathbf{a}_{j}(1),\dots,\mathbf{a}_j(m_j+1)) \in \Z^{m_j+1}$ for $1 \leq j \leq k$, then 
\begin{equation}\label{eq_c1_xi_and_a}
c_1(\xi) = \sum_{j=1}^k \sum_{l=1}^{m_j+1} {\mathbf{a}_{j}(l)} x_{j,l}.
\end{equation}
Here, $x_{j,l}$ is the first Chern class of the quotient bundle $E_{j,l}/E_{j,l-1}$ obtained by the universal flag of bundles $0 \subset E_{j,1} \subset E_{j,2} \subset \cdots \subset E_{j,m_j+1}$ on $F_{j}$.\footnote{The classes $x_{j,l}$ generate the cohomology $H^2(F_j;\Z)$ with the relations $x_{j,1} + \dots + x_{j,m_j+1} = c_1(\xii{j}{1}) + \cdots + c_1(\xii{j}{m_j+1})$ for $1 \leq j \leq k$ (cf.~\cite[Example~3.3.5]{Ful13} or \cite[Corollary~2.4]{KKLS}).} 

Suppose that $c_1(\xii{k}{l})$ is determined by a set of integer vectors 
$\{\bolda{l}{k}{j} \in \Z^{m_{j}+1}\}_{1 \leq l \leq m_k+1, 1 \leq j < k \leq r}$:
\[
\psi(\bolda{l}{k}{1},\bolda{l}{k}{2},\dots,\bolda{l}{k}{k-1})
= \xii{l}{k} \to F_{k-1}
\]
for each $1\leq l \leq m_k+1$ and $2 \leq k \leq r$. 
Using this set of integer vectors, we define a right action 
$\Phi_k$ of $B_{\GL(m_1+1)} \times \cdots \times B_{\GL(m_k+1)}$ 
on $\GL(m_1+1) \times \cdots \times \GL(m_k +1)$ as
\begin{align*}
	&\Phi_k((g_1,\dots,g_k),(b_1,\dots,b_k)) \\
	&\qquad \coloneqq
	(g_1b_1, \Lambda_{2,1}(b_1)^{-1} g_2b_2,
	\Lambda_{3,1}(b_1)^{-1} \Lambda_{3,2}(b_2)^{-1} g_3b_3, \dots, \\
	&\qquad \qquad
	\Lambda_{k,1}(b_1)^{-1} \Lambda_{k,2}(b_2)^{-1} \cdots \Lambda_{k,k-1}(b_{k-1})^{-1} g_kb_k)
\end{align*}
for $1 \leq k \leq r$. Here $\Lambda_{k,j}$ is a homomorphism
$B_{\GL(m_{j}+1)} \to H_{\GL(m_k+1)}$ which sends $b\in B_{\GL(m_{j}+1)}$ to 
\[
\text{diag}(\Upsilon(b)^{\bolda{1}{k}{j}},
\Upsilon(b)^{\bolda{2}{k}{j}},\dots,
\Upsilon(b)^{\bolda{m_k+1}{k}{j}}) \in H_{\GL(m_k+1)}, 
\]
where $\Upsilon \colon B_{\GL(m_{j}+1)} \to H_{\GL(m_{j}+1)}$ is
the canonical projection in~\eqref{eq_def_of_Upsilon}, and 
\[
h^{\mathbf {a}} \coloneqq h_1^{\mathbf{a}(1)} h_2^{\mathbf{a}(2)}
\cdots h_{m+1}^{\mathbf{a}(m+1)}
\]
for $h = \text{diag}(h_1,\dots,h_{m+1}) \in H_{\GL(m+1)}$ and 
$\mathbf{a} = (\mathbf{a}(1),\dots,\mathbf{a}(m+1)) \in \Z^{m+1}$.
Now we can describe the flag Bott manifold
$F_r$ as an orbit space as follows:
\begin{proposition}[{\cite[Propositions~2.8 and~2.11]{KLSS}}]	\label{prop_fBT_quotient_description}
	Let $\{F_k\}_{0 \leq k \leq r}$ be a flag Bott tower. 
Suppose that $c_1(\xii{k}{l})$ is determined by a set of integer vectors 
$\{\bolda{l}{k}{j} \in \Z^{m_{j}+1}\}_{1 \leq l \leq m_k+1, 1 \leq j < k \leq r}$
and let $\Phi_k$ be the action determined by these integer vectors.
	Then  
	the flag Bott tower $\{F_k\}_{0 \leq k \leq r}$ is isomorphic to 
	\[
	\{ (\GL(m_1+1) \times \cdots \times \GL(m_k+1))/\Phi_k\}_{0 \leq k \leq r}
	\]
	as flag Bott towers. 
\end{proposition}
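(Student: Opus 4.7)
My plan is to proceed by induction on the stage $k$, building up the orbit-space description one level at a time.

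\textbf{Base case.} For $k = 0$ both sides are a point, and for $k=1$ we have $F_1 = \flag(\bigoplus_{l=1}^{m_1+1} \xii{1}{l})$, a flag bundle over a point. Since any line bundle over a point is trivial, $F_1 = \flag(\C^{m_1+1})$, which is well known to be $\GL(m_1+1)/B_{\GL(m_1+1)}$. The action $\Phi_1$ is precisely the right multiplication by $B_{\GL(m_1+1)}$, so the isomorphism is tautological.

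\textbf{Inductive step.} Assume $F_{k-1} \cong (\GL(m_1+1) \times \cdots \times \GL(m_{k-1}+1))/\Phi_{k-1}$. First I would show that under this identification the line bundle $\xii{k}{l}$ on $F_{k-1}$, which by hypothesis is $\psi(\bolda{l}{k}{1},\dots,\bolda{l}{k}{k-1})$, coincides with the associated line bundle
\[
(\GL(m_1+1) \times \cdots \times \GL(m_{k-1}+1)) \times_{\Phi_{k-1}} \C_{\bolda{l}{k}{1},\dots,\bolda{l}{k}{k-1}},
\]
where $B_{\GL(m_j+1)}$ acts on $\C$ through the character $b \mapsto \Upsilon(b)^{-\bolda{l}{k}{j}}$. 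This is essentially the Borel--Weil type construction recalled in Section~\ref{sec_line_bundles_of_fBS}: the characters $\Lambda_{k,j}$ restricted to a single diagonal entry are exactly these characters, and the classifying map of such an associated bundle realizes the surjection~\eqref{eq_Pic_gp_of_Fj}. One may verify this by tracing the first Chern class through~\eqref{eq_c1_xi_and_a} and matching it with the class coming from the quotient construction.

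Second, I would assemble the sum $E_k = \bigoplus_{l=1}^{m_k+1} \xii{k}{l}$ as a single associated vector bundle with fiber $\C^{m_k+1}$, now acted on by $B_{\GL(m_1+1)} \times \cdots \times B_{\GL(m_{k-1}+1)}$ through the diagonal character
\[
(b_1,\dots,b_{k-1}) \longmapsto \Lambda_{k,1}(b_1)\,\Lambda_{k,2}(b_2)\cdots \Lambda_{k,k-1}(b_{k-1}) \in H_{\GL(m_k+1)}.
\]
The associated flag bundle $\flag(E_k)$ is then obtained by replacing the fiber $\C^{m_k+1}$ by $\flag(\C^{m_k+1}) = \GL(m_k+1)/B_{\GL(m_k+1)}$. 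Standard general nonsense for associated bundles of principal bundles (applied to the chain $\GL(m_k+1) \to \GL(m_k+1)/B_{\GL(m_k+1)}$) identifies this associated flag bundle with the mixing construction in which we append a factor $\GL(m_k+1)$ on the right and quotient by the $B_{\GL(m_k+1)}$-action on this new factor as well as the already-present actions on the earlier factors. Reading off the resulting right action of $B_{\GL(m_1+1)} \times \cdots \times B_{\GL(m_k+1)}$ on $\GL(m_1+1) \times \cdots \times \GL(m_k+1)$ gives exactly $\Phi_k$.

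\textbf{Compatibility with the tower.} Finally I would check that the projection $p_k \colon F_k \to F_{k-1}$ corresponds under the isomorphisms to the natural forgetful map obtained by dropping the last $\GL(m_k+1)$ factor; this is immediate from the construction since $\flag(E_k) \to F_{k-1}$ is by definition the bundle projection and the mixing construction is compatible with it. The main technical obstacle, in my view, is keeping the bookkeeping of the twisting cocycles straight: one has to verify that composing the characters $\Lambda_{k,j}$ through the iterated associated bundle construction reproduces exactly the twists appearing in $\Phi_k$, with the correct signs and in the correct order. Once this cocycle identification is done cleanly, the rest of the proof is formal.
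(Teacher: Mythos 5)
The paper itself does not prove Proposition~\ref{prop_fBT_quotient_description}; it is imported verbatim from~\cite[Propositions~2.8 and 2.11]{KLSS}, so there is no in-text proof to compare against. That said, your argument is a reasonable reconstruction and, to the best of my knowledge, is the route taken in~\cite{KLSS} itself: induction on the stage $k$, identifying each line bundle $\xii{l}{k}$ with an associated line bundle of the principal $B_{\GL(m_1+1)}\times\cdots\times B_{\GL(m_{k-1}+1)}$-bundle $\GL(m_1+1)\times\cdots\times\GL(m_{k-1}+1)\to F_{k-1}$, then replacing the fiber $\C^{m_k+1}$ by $\flag(\C^{m_k+1})=\GL(m_k+1)/B_{\GL(m_k+1)}$ and appending the corresponding $\GL(m_k+1)$ factor. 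Your description of the inductive step and the final compatibility check of the tower projections are both correct, and the formula for the diagonal character $(b_1,\dots,b_{k-1})\mapsto\Lambda_{k,1}(b_1)\cdots\Lambda_{k,k-1}(b_{k-1})$ produces, after passing to the associated $\flag(\C^{m_k+1})$-bundle, precisely the twisted right action $\Phi_k$ (the inverses $\Lambda_{k,j}(b_j)^{-1}$ arise because an associated bundle identifies $(p,v)\sim(pb,\rho(b)^{-1}v)$).

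Two places where the sketch is thin and would need to be fleshed out to count as a complete proof. First, the step ``\emph{the line bundle $\psi(\bolda{l}{k}{1},\dots,\bolda{l}{k}{k-1})$ coincides with the associated line bundle}'' is exactly what~\cite[Proposition~2.8]{KLSS} establishes; it is not a formal tautology, because $\psi$ is defined cohomologically via the classes $x_{j,p}$ in~\eqref{eq_c1_xi_and_a}, whereas the associated bundle is a geometric object. One must show that the first Chern class of the associated bundle $(\GL(m_1+1)\times\cdots\times\GL(m_{k-1}+1))\times_{\Phi_{k-1}}\C_{\bolda{l}{k}{1},\dots,\bolda{l}{k}{k-1}}$ equals $\sum_{j,p}\bolda{l}{k}{j}(p)\,x_{j,p}$, which requires identifying the $x_{j,p}$ themselves with the Chern classes of tautological line bundles in the quotient model; this is the ``cocycle bookkeeping'' you flag, and it is the heart of the matter rather than a peripheral check. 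Second, note a small notational slip: in the inductive step you write $\xii{k}{l}$ where the paper's convention in Definition~\ref{def_fBT} is $\xii{l}{k}$ (superscript indexing the summand, subscript indexing the stage); this does not affect the argument but should be corrected if you write this up.
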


A Bott--Samelson variety has a family of complex structures which gives a toric degeneration (see~\cite[\S 3.4]{GrKa94} and~\cite{Pas10}).
Now we study a family of complex
structures on a given flag Bott--Samelson variety. 
Since the simple roots are linearly independent elements in $\mathfrak{h}^{\ast}$,
there exist $q \in \mathbb{Z}_{>0}$ and an injective homomorphism $\lambda
\colon \mathbb{C}^{\ast} \to H$ such that 
\begin{equation}\label{eq_condition_on_Upsilon}
e^\alpha(\lambda(t)) = t^q 
\end{equation}
for all simple roots $\alpha$ and $t \in \Cstar$. 
Here $e^{\alpha} \colon H \to \Cstar$ is a character induced from $\alpha \colon \mathfrak{h} \to \C$. 
For example, when $G = \SL(2k+1)$ and $q =1$, consider the homomorphism $\lambda \colon \Cstar \to H$ defined by
\begin{equation}\label{eq_example_of_lambda}
\lambda \colon t \mapsto \text{diag}(t^{k}, t^{k-1},\dots,t,1,t^{-1},\dots,t^{-k+1},t^{-k}).
\end{equation}
Then this homomorphism satisfies the condition on~\eqref{eq_condition_on_Upsilon}. 
We define $\Upsilon_t \colon B \to B$ by 
\[
\Upsilon_t \colon b \mapsto \lambda(t) b (\lambda(t))^{-1}
\]
for $t \in \Cstar$. It is proved in~\cite[Proposition~3.5]{GrKa94} that $\Upsilon = \lim_{t \to 0} \Upsilon_t$, where $\Upsilon \colon B \to H$ is the homomorphism in~\eqref{eq_def_of_Upsilon}. We put $\Upsilon_0 \coloneqq \Upsilon$. 
\begin{example}
	Suppose that $G = \SL(3)$ and $q = 1$. Considering the homomorphism $\lambda \colon \Cstar \to H$ defined in~\eqref{eq_example_of_lambda}, the homomorphism $\Upsilon_t \colon B \to B$ is given by 
	\[
	\begin{bmatrix}
	b_{11} & b_{12} & b_{13} \\ 0 & b_{22} & b_{23} \\ 0 & 0 & b_{33}
	\end{bmatrix}
	\mapsto 
	\begin{bmatrix}
	b_{11} & tb_{12} & t^2 b_{13} \\ 0 & b_{22} & t b_{23} \\
	0 & 0 & b_{33}
	\end{bmatrix}.
	\]
	Hence we have that $\lim_{t \to 0} \Upsilon_t = \Upsilon$. 
\end{example}

We use the homomorphism $\Upsilon_t \colon B \to B$ to construct a family
of complex structures on the flag Bott--Samelson manifold
$Z_{\mathcal I} = \mathbf P_{\mathcal I}/ B^r$. For $t \in \C$,
we define a right action $\Theta_t$ of $B^r$ on 
$\mathbf{P}_{\mathcal I}$ as
\begin{equation}\label{eq_degeneration_of_cplx_str}
\Theta_t((p_1,\dots,p_r), (b_1,\dots,b_r)) = (p_1b_1, %
\Upsilon_t(b_1)^{-1}p_2b_2, \dots,
\Upsilon_t(b_{r-1})^{-1}p_rb_r)
\end{equation}
for $(p_1,\dots,p_r) \in \mathbf P_{\mathcal I}$ and
$(b_1,\dots,b_r) \in B^r$. Then $\Theta_1$ coincides with the right 
action in~\eqref{eq_Bm-action_defining_fBS}
because $\lambda(1) = e \in H$ and hence $\Upsilon_{1} = \text{Id}_B$. 
Again we consider
the family of orbit spaces 
\[
Z_{\mathcal I}^t \coloneqq \mathbf P_{\mathcal I}/ \Theta_t
\]
for $t \in \C$.
The holomorphic line bundle $\mathcal L_{\mathcal I,\lambda_1,\dots,\lambda_r}^t$
over $Z_{\mathcal I}^t$ can be defined in a way similar to 
$\mathcal L_{\mathcal I, \lambda_1,\dots,\lambda_r}$  
in~\eqref{eq_def_of_L_fBS} for integral weights $\lambda_1,\dots,\lambda_r$.
Set $\mathcal L_{\mathcal I, \lambda}^t\coloneqq 
\mathcal L_{\mathcal I,0,\dots,0,\lambda}^t$ for simplicity. 
\begin{proposition}\label{prop_degeneration_diffeomorphic}
	For a given sequence $\mathcal I = (I_1,\dots,I_r)$, the manifolds
	$Z_{\mathcal I}^t$ are all diffeomorphic for $ t \in \C$.
\end{proposition}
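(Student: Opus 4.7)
I would separate the regular values $t \in \Cstar$, where $\Upsilon_t$ is an inner automorphism of $B$, from the degenerate value $t = 0$, which is handled by Ehresmann's fibration theorem applied to the entire one-parameter family. The key point is to avoid analyzing the singular behaviour of $\Upsilon_t$ at $t=0$ directly and instead realise all the $Z_{\mathcal I}^t$ simultaneously as the fibres of a proper holomorphic submersion over $\C$.

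For $t \in \Cstar$, since $\Upsilon_t(b) = \lambda(t)\, b\, \lambda(t)^{-1}$, I would introduce the explicit map
\[
F_t \colon Z_{\mathcal I}^{1} \longrightarrow Z_{\mathcal I}^{t}, \qquad
F_t\bigl([p_1, p_2, \dots, p_r]\bigr) \coloneqq \bigl[p_1,\, \lambda(t) p_2,\, \lambda(t) p_3,\, \dots,\, \lambda(t) p_r\bigr].
\]
Using the identity $\Upsilon_t(b)^{-1} \lambda(t) = \lambda(t)\, b^{-1}$, a direct computation shows that $F_t$ sends $\Theta_1$-orbits bijectively to $\Theta_t$-orbits; its inverse is the analogous formula with $\lambda(t)^{-1}$ in place of $\lambda(t)$. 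Thus $F_t$ is a biholomorphism and every $Z_{\mathcal I}^t$ with $t \neq 0$ is biholomorphic to $Z_{\mathcal I}^1 = Z_{\mathcal I}$.

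To include the special fibre, I would assemble the actions into a single holomorphic right $B^r$-action
\[
\widetilde\Theta \colon (\C \times \mathbf P_{\mathcal I}) \times B^r \longrightarrow \C \times \mathbf P_{\mathcal I}, \qquad
\widetilde\Theta\bigl((t, \mathbf p), \mathbf b\bigr) \coloneqq \bigl(t,\, \Theta_t(\mathbf p, \mathbf b)\bigr),
\]
which is holomorphic because $(t,b) \mapsto \Upsilon_t(b)$ is holomorphic on $\C \times B$. Setting $\mathcal Z \coloneqq (\C \times \mathbf P_{\mathcal I})/\widetilde\Theta$ with first-coordinate projection $\Pi \colon \mathcal Z \to \C$, one has $\Pi^{-1}(t) = Z_{\mathcal I}^t$. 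Ehresmann's fibration theorem, applied to the proper holomorphic submersion $\Pi$ over the connected base $\C$, then yields a $C^\infty$ trivialisation and therefore a diffeomorphism $Z_{\mathcal I}^t \cong Z_{\mathcal I}^0$ for every $t \in \C$.

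The hard part is verifying that $\mathcal Z$ is a smooth manifold and that $\Pi$ is a submersion, equivalently that $\widetilde\Theta$ is free and proper even at $t = 0$, where $\Upsilon_0 = \Upsilon$ collapses $B$ onto the Cartan $H$. I would handle this by induction on $r$ via the tower structure~\eqref{eq_bdle_str_on_fBS}: factoring out $\widetilde\Theta$ only by the last $B$-factor exhibits $\mathcal Z$ as a $P_{I_r}/B$-bundle over the analogous total space built from $\mathcal I' = (I_1, \dots, I_{r-1})$, where $B$ acts on the fibre $P_{I_r}/B$ through $\Upsilon_t$ (factoring as $B \twoheadrightarrow H \hookrightarrow B$ at $t = 0$). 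The base case $r = 1$ is immediate since $Z_{(I_1)}^t = P_{I_1}/B$ is independent of $t$, and at each inductive step smoothness, properness and the submersion property persist because the fibre $P_{I_r}/B$ is a smooth compact flag manifold and the $B$-action on it depends smoothly on $t$, reducing the whole question to the Ehresmann input above.
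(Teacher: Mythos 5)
Your proposal is correct, but it follows a genuinely different route from the paper's.

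The paper proves the statement in one stroke via the compact form: it takes maximal compact subgroups $K_{I_k} \subset P_{I_k}$ and the maximal compact torus $T \subset B$, forms the quotient $X_{\mathcal I} = (K_{I_1} \times \cdots \times K_{I_r})/T^{(r)}$ under the analogue of $\Theta_1$, and observes that the inclusion $K_{\mathcal I} \hookrightarrow \mathbf P_{\mathcal I}$ is $T^{(r)}$-equivariant with respect to the $\Theta_t$-action for \emph{every} $t$, simply because $\Upsilon_t|_T = \mathrm{id}_T$. Each $K_{I_k}/T \cong P_{I_k}/B$, so the induced map $f_{\mathcal I}^t \colon X_{\mathcal I} \to Z_{\mathcal I}^t$ is a diffeomorphism. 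This produces a single compact model $X_{\mathcal I}$, independent of $t$, and explicit diffeomorphisms to every fibre of the family — including $t=0$ — with no case distinction and no need for Ehresmann. (This is the Grossberg--Karshon argument, which the paper cites.) Your approach instead splits into regular and degenerate values: for $t \neq 0$ you write down the biholomorphism $F_t([p_1,\dots,p_r]) = [p_1, \lambda(t) p_2, \dots, \lambda(t) p_r]$, whose well-definedness follows precisely from $\Upsilon_t(b) = \lambda(t) b \lambda(t)^{-1}$, and then you fold $t = 0$ in by assembling the total space $\mathcal Z \to \C$ and invoking Ehresmann's theorem. Your route is sound but heavier: the Ehresmann input requires establishing that $\mathcal Z$ is smooth, that $\Pi$ is a submersion, and that $\Pi$ is proper, which you outline via the tower induction but do not fully verify (the needed facts do hold — $\mathcal Z \to \mathcal Z'$ is a fibre bundle with compact fibre $P_{I_r}/B$, so properness and submersivity propagate — but this is exactly the bookkeeping the compact-form argument avoids). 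On the other hand your $t \neq 0$ construction gives strictly more than the paper records at this point: it shows the \emph{holomorphic} type of $Z_{\mathcal I}^t$ is constant on $\Cstar$, whereas the compact-form diffeomorphisms are only $C^\infty$.
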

\begin{proof}
	We use the similar argument to the proof of Proposition~3.7 in~\cite{GrKa94}.
	Let $K_{I_j}$ be the maximal compact subgroup of $P_{I_j}$. 
	Let $T$ be the 
	maximal compact torus in $G$, i.e., $T = (S^1)^n$.
	Recall that
	$K_{I_j} \cap B = T$.
	Define a right action of $T^{(r)}\coloneqq 
	\underbrace{T \times T \times
		\cdots \times T}_r$ on $K_{\mathcal I}\coloneqq K_{I_1} \times \cdots \times
	K_{I_r}$ as
	\begin{equation}\label{eq_Km-action_defining_fBS}
	(g_1,\dots,g_r) \cdot (a_1,\dots,a_r) = (g_1a_1, a_1^{-1}g_2a_2,
	\dots, a_{r-1}^{-1}g_r a_r).
	\end{equation}
	Let $X_{\mathcal I}$ be the orbit space 
	\begin{equation}\label{eq_def_of_XI}
	X_{\mathcal I} \coloneqq (K_{I_1} \times
	\cdots \times K_{I_r})/(T \times \cdots \times T).
	\end{equation}
	The inclusion map 
	\[
	K_{\mathcal I} = K_{I_1} \times \cdots \times K_{I_r} 
	\hookrightarrow \mathbf P_{\mathcal I} = P_{I_1} \times 
	\cdots \times P_{I_r}
	\]
	is $T^{(r)}$-equivariant with respect to 
	the $T^{(r)}$-action of~\eqref{eq_Km-action_defining_fBS} on $K_{\mathcal I}$
	and the restricted $T^{(r)}$-action of~\eqref{eq_degeneration_of_cplx_str} on $\mathbf{P}_{\mathcal I}$
	via the inclusion $T^{(r)} \hookrightarrow
	B^r$ because $\Upsilon_t(a) = a$ for all $a \in T$. 
	Therefore we get a map 
	\begin{equation}\label{eq_def_of_fI_diffeomorphism}
	f_{\mathcal I}^t  \colon X_{\mathcal I} \to Z_{\mathcal I}^t. 
	\end{equation}
	Since, for all $k$, the
	inclusion $K_{I_k} \hookrightarrow P_{I_k}$ induces a diffeomorphism
	$K_{I_k}/T \cong P_{I_k}/B$, the map  $f_{\mathcal I}^t$  is  a diffeomorphism. 
\end{proof}

The manifold $Z_{\mathcal I}^t$ 
has a fibration structure, similar to a flag Bott--Samelson manifold
in~\eqref{eq_bdle_str_on_fBS}:
\begin{equation}\label{eq_fibra_of_fBS_t}
P_{I_r}/B \hookrightarrow Z_{\mathcal I}^t
\stackrel{\pi}{\longrightarrow} Z_{\mathcal I'}^t,
\end{equation}
where $\mathcal I' = (I_1,\dots,I_{r-1})$ is the subsequence of
$\mathcal I$ and $\pi$ is the first $r-1$ coordinates projection
for all $t \in \mathbb{C}$.

	Let $\mathcal I = (I_1,\dots,I_{r-1},I_r)$ and  $\mathcal I' = (I_1,\dots,I_{r-1})$. 
	We note that the orbit space $X_{\mathcal I}$ has a bundle structure.
	\[
	\begin{tikzcd}
	X_{\mathcal I} = \mathbf{P}_{\mathcal I'} \times_{T} (K_{I_r}/T) \dar & K_{I_r}/T \arrow[l, hook'] \\
	X_{\mathcal I'}
	\end{tikzcd}
	\]
	Because the structure group $T$ of this bundle is an abelian group, the map $f_k$ inducing the flag bundle $X_{\mathcal I} \to X_{\mathcal I'}$ from the universal flag bundle factors through $BT$.
	\[
	\begin{tikzcd}[column sep = 0cm]
	K_{I_r}/T \arrow[rr, equal] \arrow[d, hook] && K_{I_r}/T  \arrow[d, hook]\\
	X_{\mathcal I} \arrow[rr] \arrow[d]  && BT \arrow[d] \\
	X_{\mathcal I'} \arrow[rr, "f_{k}"] \arrow[dr] && BK_{I_r} \\
	& BT \arrow[ur]
	\end{tikzcd}
	\]
	Continuing this procedure, we obtain the following corollary.
	\begin{corollary}\label{cor_fBS_are_fBT_general}
	The manifold $X_{\mathcal I}$ is an $r$-stage flag Bott tower of general Lie type associated to $\{(K_{I_j}, T)\}_{0 \leq j \leq r}$, and so are $Z_{\mathcal I}^t$ for all $t \in \C$ \textup{(}see Remark~\ref{rmk_fBT_general_type} for the definition of flag Bott towers of general Lie type\textup{)}. 
	\end{corollary}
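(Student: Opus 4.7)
The plan is to induct on the length $r$ of $\mathcal{I}$, using at each stage the bundle description $X_{\mathcal{I}} \cong \mathbf{P}_{\mathcal{I}'} \times_T (K_{I_r}/T)$ recorded in the paragraph just before the corollary, and then to transfer the resulting tower structure to $Z_{\mathcal{I}}^t$ by means of the diffeomorphism $f_{\mathcal{I}}^t$ constructed in~\eqref{eq_def_of_fI_diffeomorphism}.

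For the base case $r = 1$, the space $X_{(I_1)} = K_{I_1}/T$ is a $(K_{I_1}/T)$-bundle over a point, so the classifying map factors trivially through $BT$ and the one-stage tower structure is immediate. For the inductive step, assume that $X_{\mathcal{I}'}$ with $\mathcal{I}' = (I_1,\ldots,I_{r-1})$ carries an $(r-1)$-stage flag Bott tower of general Lie type structure associated to $\{(K_{I_j}, T)\}_{1 \le j \le r-1}$. The identification $X_{\mathcal{I}} = \mathbf{P}_{\mathcal{I}'} \times_T (K_{I_r}/T)$ exhibits $X_{\mathcal{I}} \to X_{\mathcal{I}'}$ as a fibre bundle with fibre $K_{I_r}/T$ whose structure group is the abelian group $T$, acting on the fibre by left multiplication. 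Since $T$ is abelian, the classifying map $f_r \colon X_{\mathcal{I}'} \to BK_{I_r}$ factors through $BT$, which is exactly the condition of Remark~\ref{rmk_fBT_general_type}(2) with $(K_r,Z_r) = (K_{I_r}, T)$; here one also uses that a generic circle subgroup of $T$ has centralizer equal to $T$, so $T$ legitimately occurs as the centralizer of a circle subgroup as required.

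For the second assertion, I would observe that $f_{\mathcal{I}}^t$ is induced componentwise by the inclusions $K_{I_j} \hookrightarrow P_{I_j}$, and the projection onto the first $r-1$ factors is common to both sides; hence $f_{\mathcal{I}}^t$ intertwines the fibrations $X_{\mathcal{I}} \to X_{\mathcal{I}'}$ and~\eqref{eq_fibra_of_fBS_t}. The family of diffeomorphisms $\{f_{\mathcal{I}_k}^t\}_{1 \le k \le r}$, where $\mathcal{I}_k = (I_1,\ldots,I_k)$, then transports the flag Bott tower structure from $\{X_{\mathcal{I}_k}\}$ to $\{Z_{\mathcal{I}_k}^t\}$ for every $t \in \mathbb{C}$.

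The main technical point is identifying the structure group of $X_{\mathcal{I}} \to X_{\mathcal{I}'}$ as $T$ acting on $K_{I_r}/T$ by left multiplication, since this is precisely what forces the classifying map to factor through $BT$. It amounts to an elementary unpacking of the $T^{(r)}$-action in~\eqref{eq_Km-action_defining_fBS}, isolating the action of the $(r-1)$-st copy of $T$ on the last coordinate. This observation is what underlies the diagram preceding the corollary and makes the inductive step go through cleanly.
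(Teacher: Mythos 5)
Your proof is correct and follows essentially the same route as the paper: you identify the bundle $X_{\mathcal{I}} \to X_{\mathcal{I}'}$ with fibre $K_{I_r}/T$ and abelian structure group $T$, conclude that the classifying map factors through $BT$ as required by Remark~\ref{rmk_fBT_general_type}, iterate, and then transfer the structure to $Z_{\mathcal I}^t$ via the diffeomorphisms $f_{\mathcal I_k}^t$ of Proposition~\ref{prop_degeneration_diffeomorphic}, which respect the fibrations since they are induced componentwise by $K_{I_j} \hookrightarrow P_{I_j}$. Your explicit note that a generic circle in $T$ has centralizer $T$ (so that $T$ legitimately plays the role of $Z_k$) is a small but worthwhile detail that the paper leaves implicit.
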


For the remaining part of this section, we consider the case when the Levi subgroup $L_{I_k}$ of the parabolic subgroup $P_{I_k}$ has Lie type $A$, that is, the flag Bott tower $X_{\mathcal I}$ is a flag Bott manifold whose fibers are all full flag manifolds of Lie type $A$. Moreover, we describe the line bundles appearing in the construction explicitly (see Theorem~\ref{thm_fBS_is_fBT}).
We can always take an enumeration
$I_k = \{u_{k,1},\dots,u_{k,m_k}\}$ so that
\begin{equation}\label{eq_Cartan_integer_A}
\langle \alpha_{u_{k,s}}, \alpha_{u_{k,t}}^{\vee} \rangle
= \begin{cases}
2 & \text{ if } s = t, \\
-1 & \text{ if } s - t = \pm 1,\\
0 & \text{ otherwise}.
\end{cases}
\end{equation}
%For a given flag Bott--Samelson manifold $Z_{\mathcal I}$, the following proposition says that its degeneration
%$Z_{\mathcal I}^0$ becomes a flag Bott tower.
\begin{proposition}\label{prop_flag_bundle_str}
	Let $Z_{\mathcal I}$ be a flag Bott--Samelson manifold.
	Let $\mathcal I' = (I_1,\dots,I_{r-1})$
	be the subsequence of $\mathcal I$. 
	Assume that 
	the Levi subgroup $L_{I_k}$ of the parabolic subgroup $P_{I_k}$ has Lie type $A_{m_k}$ for all 
	$1 \leq k \leq r$. 
	Then the manifold $Z_{\mathcal I}^0$ is diffeomorphic to the induced
	flag bundle over $Z_{\mathcal I'}^0$:
	\[
	Z_{\mathcal I}^0 \cong \flag(
	\mathcal{L}_{\mathcal I',\chi_1}^0
	\oplus \cdots \oplus \mathcal{L}_{\mathcal I', \chi_{m_r}}^0
	\oplus \underline{\C}),
	\]
	where $\chi_{j} = \alpha_{u_{r, j}} + \cdots + \alpha_{u_{r, m_r}} \in
	\mathfrak{h}^{\ast}$ for $1\leq j \leq m_r$, 
	$\mathcal L^0_{\mathcal I', \chi} 
	= \mathcal L^0_{\mathcal I', 0,\dots,0,\chi}$,
	and $\underline{\C}$ is the trivial line bundle. 
	%In particular, the manifold $Z_{\mathcal I}^0$ is a flag Bott manifold.
\end{proposition}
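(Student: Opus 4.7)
The plan is to exhibit $Z_{\mathcal{I}}^0$ as an associated bundle to the principal $B^{r-1}$-bundle $\mathbf{P}_{\mathcal{I}'} \to Z_{\mathcal{I}'}^0$, then decompose the fiber as an $H$-representation.

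First, from the definition of $\Theta_0$ in~\eqref{eq_degeneration_of_cplx_str}, $B^{r-1}$ acts on $P_{I_r}/B$ through the last factor only, and that action factors through $\Upsilon_0 = \Upsilon\colon B \twoheadrightarrow H$ (since $t = 0$). Consequently, we have a canonical identification
\[
Z_{\mathcal{I}}^0 \;\cong\; \mathbf{P}_{\mathcal{I}'} \times_{B^{r-1}} (P_{I_r}/B),
\]
where $B^{r-1}$ acts on $P_{I_r}/B$ via the projection $B^{r-1} \twoheadrightarrow B \xrightarrow{\Upsilon} H$ on the last coordinate, and $H$ acts by (inverse) left multiplication.

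Next, I would use the type-$A_{m_r}$ hypothesis to realize the fiber as a classical flag manifold. The inclusion $L_{I_r} \hookrightarrow P_{I_r}$ induces $L_{I_r}/B_{I_r} \cong P_{I_r}/B$. Let $V$ denote the $(m_r+1)$-dimensional natural representation of the type-$A_{m_r}$ Levi $L_{I_r}$ (twisted by an $L_{I_r}$-character so that the $H$-weight of a lowest weight vector is zero). Then $L_{I_r}$ acts transitively on complete flags of $V$, giving an $L_{I_r}$-equivariant diffeomorphism $L_{I_r}/B_{I_r} \cong \flag(V)$. A direct computation using the ordering~\eqref{eq_Cartan_integer_A} of $I_r$ shows that the $H$-weights of $V$ are precisely
\[
\chi_1 = \alpha_{u_{r,1}} + \cdots + \alpha_{u_{r,m_r}},\; \chi_2 = \alpha_{u_{r,2}} + \cdots + \alpha_{u_{r,m_r}},\; \ldots,\; \chi_{m_r} = \alpha_{u_{r,m_r}},\; 0,
\]
indeed: if $\varpi$ denotes the highest weight of $V$ then $\varpi - (\alpha_{u_{r,1}} + \cdots + \alpha_{u_{r,j-1}})$ is the $j$-th weight, and subtracting the lowest weight $\varpi - (\alpha_{u_{r,1}} + \cdots + \alpha_{u_{r,m_r}})$ yields exactly $\chi_j$ (with $\chi_{m_r+1} = 0$). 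Hence $V$ decomposes as an $H$-module into $(m_r+1)$ weight lines.

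Finally, since the associated bundle construction commutes with taking flag manifolds,
\[
Z_{\mathcal{I}}^0 \;\cong\; \mathbf{P}_{\mathcal{I}'} \times_{B^{r-1}} \flag(V) \;\cong\; \flag\!\left(\mathbf{P}_{\mathcal{I}'} \times_{B^{r-1}} V\right),
\]
and the $H$-weight decomposition $V = \bigoplus_{j=1}^{m_r+1} V_{\chi_j}$ (with $\chi_{m_r+1} = 0$) yields
\[
\mathbf{P}_{\mathcal{I}'} \times_{B^{r-1}} V \;\cong\; \bigoplus_{j=1}^{m_r} \mathcal{L}_{\mathcal{I}', \chi_j}^0 \;\oplus\; \underline{\mathbb{C}},
\]
by comparing the $B^{r-1}$-character defining each summand with the definition~\eqref{eq_def_of_L_fBS} of $\mathcal{L}_{\mathcal{I}', \chi_j}^0 = \mathcal{L}_{\mathcal{I}', 0, \ldots, 0, \chi_j}^0$. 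Any auxiliary character twist introduced when normalizing $V$ above changes the total bundle only by tensoring with a single line bundle, which is harmless by Lemma~\ref{lemma_tensor_preserves_fBT_str}. The main obstacle is the careful weight bookkeeping in the second step; the type-$A$ hypothesis is used precisely to guarantee that $P_{I_r}/B$ is a genuine $\flag(V)$ for a natural $(m_r+1)$-dimensional $L_{I_r}$-representation whose weights match the $\chi_j$.
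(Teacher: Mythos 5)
Your proof is correct and follows essentially the same route as the paper: identify $P_{I_r}/B \cong L_{I_r}/B_{I_r}$ with a flag manifold $\flag(\C^{m_r+1})$ using the type-$A$ Levi, compute the $H$-weights of the fiber (arriving at $\chi_1,\dots,\chi_{m_r},0$ up to a common shift), invoke functoriality of the associated-bundle construction, and absorb the residual twist with Lemma~\ref{lemma_tensor_preserves_fBT_str}. The paper phrases the middle step via explicit diffeomorphisms $F_1\colon \SL(m+1)/B_{\SL(m+1)} \to P_{I_r}/B$ and $F_2\colon \SL(m+1)/B_{\SL(m+1)} \to \flag(m+1)$ and leaves the weights in the untwisted form $\chi_j - \chi'$ until the last line, whereas you normalize first; this is a cosmetic reorganization of the same argument. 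One small imprecision: the normalizing character $\chi'$ (with $F^*\chi' = \varpi_{m_r}$) satisfies $\langle \chi', \alpha_{u_{r,m_r}}^\vee\rangle = 1 \neq 0$, so it is not an $L_{I_r}$-character, only an $H$-character; hence $V \otimes \C_{\chi'}$ is not literally an $L_{I_r}$-twist of $V$. This does not harm your argument, because the associated-bundle construction only uses the $B$-action (which factors through $H$ after $\Upsilon$), and your concluding appeal to Lemma~\ref{lemma_tensor_preserves_fBT_str} handles the twist correctly regardless of whether it extends to $L_{I_r}$.
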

Before proving the proposition, we observe the following.
Suppose that the Levi subgroup $L_I$ of the parabolic subgroup $P_I$ for a subset $I\subset[n]$
has Lie type $A_m$. Then we can label the elements of $I$ as
$u_1,\dots,u_m$ which satisfy the relation~\eqref{eq_Cartan_integer_A}.
Also we have 
the group homomorphism 
$F \colon \SL(m+1) \to L_I \hookrightarrow P_{I}$.
Then the map $F$ induces the homomorphism 
$F_{\ast} \colon \mathfrak{h}_{\SL(m+1)} \to \mathfrak{h}$.
We label the coroots of $\SL(m+1)$ as $\beta_1^{\vee},\beta_2^{\vee},\dots,\beta_m^{\vee}$ so that $F_{\ast}$ sends $\beta_l^{\vee}$ to $\alpha_{u_l}^{\vee}$ for $1 \leq l \leq m$.
Then we have that 
\[
\langle F^{\ast}\lambda, \beta_{l}^{\vee} \rangle
= \langle \lambda, F_{\ast} \beta_{l}^{\vee} \rangle
= \langle \lambda, \alpha_{u_l}^{\vee} \rangle
\]
for a weight $\lambda \in \mathfrak{h}^{\ast}$ and $1\leq l \leq m$.
Here, we note that $F^{\ast} \lambda = \lambda \circ F $ for $\lambda \in \mathfrak{h}^{\ast}$.
Let $\varpi_1, \varpi_2,
\dots,\varpi_m \in \mathfrak{h}_{\SL(m+1)}^{\ast}$ be 
the fundamental weights. Then
the pullback $F^{\ast} \lambda$ is given by
\begin{equation}\label{eq_pull_back_lam}
F^{\ast} \lambda = 
\sum_{l=1}^{m} \langle \lambda, 
\alpha_{u_l}^{\vee} \rangle \varpi_l \in \mathfrak{h}_{\SL(m+1)}^{\ast}.
\end{equation}
\begin{proof}[Proof of Proposition~\ref{prop_flag_bundle_str}]
We write $I = I_r$, $m = m_r$, and $u_j = u_{r, j}$ for $1 \leq j \leq m$.
	Note that we have $P_I = L_I U_I$ (see \S\ref{subsec_def_of_fBS}). Since we have an isomorphism of varieties
	\[
	P_I/B = (L_I U_I)/B = L_I/(B \cap L_I) = L_I/B_I,
	\]
	we get a diffeomorphism
	\[
	F_1 \colon \SL(m+1)/B_{\SL(m+1)} \to P_I/B.
	\]
	Moreover, the map which sends an element $g$ in $\SL(m+1)$
	to a full flag $(V_1 \subsetneq V_2 \subsetneq \cdots \subsetneq V_m)$,
	where $V_l = \langle c_1,\dots,c_l \rangle$ and $c_l$ is the $l$-th column vector of $g$, descends to a diffeomorphism 
\[F_2 \colon \SL(m+1)/B_{\SL(m+1)} \to \flag(m+1).
\]

	The map $F_2$ is equivariant with respect to the following actions 
	of the torus $H_{\SL(m+1)}$:
	each element $h = \text{diag}(h_1, h_2,\dots,h_{m+1}) 
	\in H_{\SL(m+1)}$ acts on 
	$\SL(m+1)/B_{\SL(m+1)}$ by the left
	multiplication, and on $\flag(m+1)$ 
	as the induced action from the representation space 
	$\C^{m+1}$ with weights 
	\begin{equation}\label{eq_weights_of_H_on_C_m+1}
	(\varpi_1, -\varpi_1+\varpi_2,
	\dots, -\varpi_{m-1}+\varpi_m, -\varpi_m),
	\end{equation}
	namely $h \cdot v = (h_1v_1,h_2v_2,\dots,h_{m+1}v_{m+1})$ 
	for $v = (v_1,\dots,v_{m+1}) \in \C^{m+1}$.
	On the other hand, the map $F_1$ is equivariant with respect to the left multiplication actions of
	$H_{\SL(m+1)}$ and of $H$
	via the homomorphism $H_{\SL(m+1)} \to H$ given by the map $F$.
	
	By the relation~\eqref{eq_pull_back_lam}
	between weights in $\mathfrak{h}^{\ast}$ and $\mathfrak{h}_{\SL(m+1)}^{\ast}$, we have the following:
	\begin{align*}
		F^{\ast}(\chi_{j}) 
		&= F^{\ast}(\alpha_{u_j} + \cdots + \alpha_{u_m}) \\
		&= \sum_{l=1}^m \langle
		\alpha_{u_j} + \cdots + \alpha_{u_m}, \alpha_{u_l}^{\vee}
		\rangle \varpi_l \\
		&= -\varpi_{j-1} + \varpi_{j} + \varpi_m,
	\end{align*}
	where $\varpi_0 = 0$ for $1\leq j\leq m$. 
	Here the third equality follows by considering the Cartan matrix of $\SL(m+1)$.
	The $H_{\SL(m+1)}$-representation on $\C^{m+1}$ with weights~\eqref{eq_weights_of_H_on_C_m+1} becomes an $H$-representation on $\mathbb{C}^{m+1}$ with weights
	\[
	(\chi_1 - \chi', \chi_2 - \chi',\dots,
	\chi_m - \chi', -\chi'),
	\]
	where $\chi'$ is a weight which maps to $\varpi_m$ under the map $F^{\ast}$
	such that $F_2 \circ F_1 ^{-1}$ is equivariant with respect to the actions of elements in $H \setminus F(H_{\SL(m+1)})$.
	This proves that $F_2 \circ F_1^{-1}$ is 
	a left $H$-equivariant diffeomorphism
	\[
	F_2 \circ F_1^{-1} \colon P_I/B \to \flag(\C_{\chi_1 - \chi'} \oplus \cdots 
	\oplus \C_{\chi_{m} - \chi'} \oplus \C_{-\chi'}).
	\]
	
	We notice that the construction of twisted product is functorial, i.e., for a topological group $G$ and a right $G$-space $X$, if $f \colon Y \to Y'$ is an equivariant map of left $G$-spaces then we have the induced map $X \times_G Y \to X \times_G Y'$, see, for example,~\cite[\S II.2]{Bredon72Introduction}. 
	Since the unipotent part of $B$ acts trivially on $P_I/B$ and $\flag(\C_{\chi_1 - \chi'} \oplus \cdots 
	\oplus \C_{\chi_{m} - \chi'} \oplus \C_{-\chi'})$, the left $H$-equivariant diffeomorphism
	$F_2 \circ F_1^{-1}$ 
	induces a diffeomorphism 
	\[
	\mathbf{P}_{\mathcal I}/\Theta_0
	\cong\flag(\mathcal L^0_{\mathcal I', \chi_1 - \chi'} 
	\oplus \cdots \oplus \mathcal L^0_{\mathcal I', \chi_m - \chi'}
	\oplus \mathcal L_{\mathcal I', -\chi'}^0).
	\]
	Moreover we have that 
	\begin{align*}
	&\flag(\mathcal L^0_{\mathcal I', \chi_1 - \chi'} 
		\oplus \cdots \oplus \mathcal L^0_{\mathcal I', \chi_m - \chi'}
		\oplus \mathcal L_{\mathcal I', -\chi'}^0)  \\
	&\qquad =
	\flag((\mathcal L^0_{\mathcal I', \chi_1} \oplus
		\cdots \oplus \mathcal L^0_{\mathcal I', \chi_m} \oplus \underline{\C})
		\otimes \mathcal L^0_{\mathcal I', -\chi'}).
	\end{align*}
	Then by Lemma~\ref{lemma_tensor_preserves_fBT_str}, we 
	are done. 
\end{proof}

By Proposition~\ref{prop_flag_bundle_str},
we can conclude that $Z_{\mathcal I}^0$ is an $r$-stage flag
Bott manifold. 
For given integral weights $\lambda_1,\dots,\lambda_r$,
consider the line bundle 
$\mathcal{L}_{\mathcal I, \lambda_1,\dots,\lambda_r}^0$
%\mathbf{P}_{\mathcal I} \times_{\Theta_0} \C_{\lambda_1,\dots,\lambda_r}$
over a flag Bott manifold $Z_{\mathcal I}^0$. 
By~\eqref{eq_Pic_gp_of_Fj} there is a set of integer vectors
$\{\mathbf{a}_k \in \Z^{m_k+1}\}_{1 \leq k \leq r}$ determined by $c_1(\mathcal{L}_{\mathcal I, \lambda_1,\dots,\lambda_r}^0)$. Indeed, we have
\[
\psi(\mathbf{a}_1,\dots,\mathbf{a}_r) \cong \mathcal{L}_{\mathcal I, \lambda_1,\dots,\lambda_r}^0.
\]
The following proposition computes these integer vectors in terms of 
integral weights $\lambda_1,\dots,\lambda_r$ and 
a sequence $\mathcal I$ of subsets of $[n]$.
\begin{proposition}\label{prop_deg_of_line_bdle}
	Let $\mathcal I =(I_1,\dots,I_r)$ be a sequence of subsets of $[n]$.
		Assume that 
	the Levi subgroup $L_{I_k}$ of the parabolic subgroup $P_{I_k}$ has Lie type $A_{m_k}$ for all 
	$1 \leq k \leq r$. 
	For given integral weights $\lambda_1,\dots,\lambda_r \in \Z \varpi_1 + \cdots + \Z \varpi_n$, the first Chern class of the line bundle 
	$\mathcal{L} = \mathcal{L}_{\mathcal I, \lambda_1,\dots,\lambda_r}^0$ is given by integer vectors
		$\mathbf{a}_{k} 
	= (\mathbf{a}_{k}(1),\dots,\mathbf{a}_{k}(m_{k}+1)) \in \Z^{m_{k}+1} $ for ${1\leq k \leq r}$, where 
		\begin{align*}
	&\mathbf{a}_{k}(l) = 
	\langle \lambda_k+\cdots+\lambda_r,
	\alpha_{u_{k,l}}^{\vee} + \cdots+\alpha_{u_{k,m_k}}^{\vee}
	\rangle \quad
	\text{ for } 1 \leq l \leq m_k, \\
	& \mathbf a_{k}(m_k+1) = 0.
	\end{align*}	
	Here, we take an enumeration $I_k = \{u_{k,1},\dots,u_{k,m_k}\}$ which satisfies~\eqref{eq_Cartan_integer_A}.
	Indeed, $\mathcal L$ is	isomorphic to 	the line bundle $\psi(\mathbf{a}_1,\dots,\mathbf{a}_r)$.
\end{proposition}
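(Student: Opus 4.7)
The plan is to argue by induction on $r$. For the base case $r=1$, the manifold $Z_{(I_1)}^0 = P_{I_1}/B$ is $H$-equivariantly diffeomorphic to $\flag(m_1+1)$ via the map $F_2 \circ F_1^{-1}$ from the proof of Proposition~\ref{prop_flag_bundle_str}. Using equation~\eqref{eq_pull_back_lam}, the weight $\lambda_1$ of $G$ pulls back to $F^\ast \lambda_1 = \sum_{l=1}^{m_1} \langle \lambda_1, \alpha_{u_{1,l}}^\vee \rangle \varpi_l$ as a weight of $\SL(m_1+1)$. Expressing each fundamental weight $\varpi_l$ of $\SL(m_1+1)$ in terms of the tautological Chern classes $x_{1,1}, \ldots, x_{1,m_1+1}$ under the normalisation $\mathbf a_1(m_1+1)=0$, and reindexing the resulting double sum, yields $\mathbf a_1(j) = \langle \lambda_1, \alpha_{u_{1,j}}^\vee + \cdots + \alpha_{u_{1,m_1}}^\vee \rangle$ for $1 \le j \le m_1$.

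For the inductive step, fix $r \ge 2$ and assume the formula for sequences of length $r-1$. Set $\mathcal I' = (I_1, \ldots, I_{r-1})$ and combine the fibration $\pi \colon Z_{\mathcal I}^0 \to Z_{\mathcal I'}^0$ from~\eqref{eq_fibra_of_fBS_t} with the identification $Z_{\mathcal I}^0 \cong \flag(\mathcal L_{\mathcal I', \chi_1}^0 \oplus \cdots \oplus \mathcal L_{\mathcal I', \chi_{m_r}}^0 \oplus \underline{\C})$ given by Proposition~\ref{prop_flag_bundle_str}. Multiplicativity of the $B^r$-character produces the tensor decomposition
\[
\mathcal L_{\mathcal I, \lambda_1, \ldots, \lambda_r}^0 \cong \pi^\ast \mathcal L_{\mathcal I', \lambda_1, \ldots, \lambda_{r-1}}^0 \otimes \mathcal L_{\mathcal I, 0, \ldots, 0, \lambda_r}^0,
\]
the first factor being a pullback since its defining character does not involve $b_r$. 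By the inductive hypothesis, $c_1$ of this pullback contributes $\langle \lambda_k + \cdots + \lambda_{r-1}, \alpha_{u_{k,l}}^\vee + \cdots + \alpha_{u_{k,m_k}}^\vee \rangle$ to the coefficient of $x_{k,l}$ for $1 \le k \le r-1$.

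For the second factor, I would compute the coefficients of $x_{r,l}$ by restricting to a fiber $P_{I_r}/B$ and invoking the base case applied to $\lambda_r$, yielding $\mathbf a_r(l) = \langle \lambda_r, \alpha_{u_{r,l}}^\vee + \cdots + \alpha_{u_{r,m_r}}^\vee \rangle$ for $l \le m_r$ and $\mathbf a_r(m_r+1)=0$. The remaining coefficients of $x_{k,l}$ for $k<r$ I would extract by pulling back along the standard flag section $s \colon Z_{\mathcal I'}^0 \to Z_{\mathcal I}^0$ given by $s([p_1, \ldots, p_{r-1}]) = [p_1, \ldots, p_{r-1}, e]$. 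The well-definedness of $s$ under $\Theta_0$ forces $b_r = \Upsilon_0(b_{r-1})$ when transferring representatives, and combining this with the identity $e^{\lambda_r} \circ \Upsilon_0 = e^{\lambda_r}$ on $B$ gives
\[
s^\ast \mathcal L_{\mathcal I, \lambda_1, \ldots, \lambda_r}^0 \cong \mathcal L_{\mathcal I', \lambda_1, \ldots, \lambda_{r-2}, \lambda_{r-1} + \lambda_r}^0,
\]
whose first Chern class is $\sum_{k<r,\, l} \langle \lambda_k + \cdots + \lambda_r, \alpha_{u_{k,l}}^\vee + \cdots + \alpha_{u_{k,m_k}}^\vee \rangle x_{k,l}$ by the inductive hypothesis.

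The hard part will be the final coefficient-matching step: writing $c_1(\mathcal L_{\mathcal I, \lambda_1, \ldots, \lambda_r}^0) = \sum_{k \le r,\, l} \mathbf a_k(l)\, x_{k,l}$, applying $s^\ast$, and using $s^\ast x_{r,l} = c_1(\mathcal L_{\mathcal I', \chi_l}^0)$ for $l \le m_r$ yields extra cross-terms $\sum_{l=1}^{m_r} \mathbf a_r(l)\, c_1(\mathcal L_{\mathcal I', \chi_l}^0)$ that must be reconciled with the target formula. This reduces to an identity relating the weights $\chi_l = \alpha_{u_{r,l}} + \cdots + \alpha_{u_{r,m_r}}$ with the coroot sums $\alpha_{u_{k,j}}^\vee + \cdots + \alpha_{u_{k,m_k}}^\vee$, which should fall out by applying the inductive hypothesis carefully to each correction $c_1(\mathcal L_{\mathcal I', \chi_l}^0)$ and using the flag bundle relation $\sum_l x_{r,l} = \sum_l c_1(\mathcal L_{\mathcal I', \chi_l}^0)$ to absorb the residual terms into the normalisation $\mathbf a_k(m_k+1)=0$.
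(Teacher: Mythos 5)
Your proposal is a genuine induction on $r$, while the paper's proof is a single direct computation that never inducts: it builds, for each $k$, a group homomorphism $\varphi_k \colon B_{\SL(m_k+1)} \to B^r$, $b \mapsto (e,\ldots,e,F_k(b),F_k(\Upsilon(b)),\ldots,F_k(\Upsilon(b)))$, observes that $\psi_k$ is $\varphi_k$-equivariant for the degenerate action $\Theta_0$, and reads off the $k$-th integer vector by pulling back the character $(\lambda_1,\ldots,\lambda_r)$ along $\varphi_k$ and rewriting $\sum_l \langle\lambda_k+\cdots+\lambda_r,\alpha_{u_{k,l}}^\vee\rangle\varpi_l$ in the $\varepsilon_l$-basis. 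The appearance of the partial sum $\lambda_k+\cdots+\lambda_r$ falls out immediately from the fact that $\varphi_k$ places $F_k(\Upsilon(b))$ in positions $k+1,\ldots,r$. Your approach has to reproduce this propagation across stages inductively, and that is where it currently has a gap.

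The base case and the tensor decomposition $\mathcal L^0_{\mathcal I,\lambda_1,\ldots,\lambda_r} \cong \pi^\ast\mathcal L^0_{\mathcal I',\lambda_1,\ldots,\lambda_{r-1}} \otimes \mathcal L^0_{\mathcal I,0,\ldots,0,\lambda_r}$ are fine, and so is the identity $s^\ast\mathcal L^0_{\mathcal I,\lambda_1,\ldots,\lambda_r} \cong \mathcal L^0_{\mathcal I',\lambda_1,\ldots,\lambda_{r-2},\lambda_{r-1}+\lambda_r}$ (the constraint $b_r = \Upsilon_0(b_{r-1})$ for $s$ to be well-defined and the identity $e^{\lambda_r}\circ\Upsilon_0 = e^{\lambda_r}$ make this work). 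The problem is your assertion $s^\ast x_{r,l} = c_1(\mathcal L^0_{\mathcal I',\chi_l})$ for $1 \le l \le m_r$. If that were correct, the target identity would force
\[
\sum_{l=1}^{m_r} \langle \lambda_r, \alpha_{u_{r,l}}^\vee+\cdots+\alpha_{u_{r,m_r}}^\vee\rangle\, c_1(\mathcal L^0_{\mathcal I',\chi_l}) = 0
\]
for every integral weight $\lambda_r$; since the linear forms $\lambda \mapsto \langle\lambda,\alpha_{u_{r,l}}^\vee+\cdots+\alpha_{u_{r,m_r}}^\vee\rangle$, $1 \le l \le m_r$, are linearly independent, this would force $c_1(\mathcal L^0_{\mathcal I',\chi_l})=0$ for all $l$, which is false whenever $I_r$ is not orthogonal to $I_1,\ldots,I_{r-1}$. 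You already test this in the smallest case $G=\SL(2)$, $\mathcal I=(\{1\},\{1\})$: there $\chi_1=\alpha_1$, $c_1(\mathcal L^0_{\mathcal I',\alpha_1})=2x_{1,1}\ne 0$, and your value $s^\ast x_{2,1}=2x_{1,1}$ would make $s^\ast c_1(\mathcal L^0_{\mathcal I,0,\lambda_2}) = 3\langle\lambda_2,\alpha_1^\vee\rangle x_{1,1}$, contradicting $c_1(s^\ast\mathcal L^0_{\mathcal I,0,\lambda_2}) = c_1(\mathcal L^0_{\mathcal I',\lambda_2}) = \langle\lambda_2,\alpha_1^\vee\rangle x_{1,1}$. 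What the consistency of the statement actually forces is $s^\ast x_{r,l} = 0$ for $1 \le l \le m_r$: in the Grossberg--Karshon orbit-space description of Proposition~\ref{prop_fBT_quotient_description} the section $g_r = e$ corresponds to a flag whose graded pieces are trivial line bundles over the base, not to the flag $0 \subset \xii{1}{r} \subset \xii{1}{r}\oplus\xii{2}{r}\subset\cdots$. Once you establish $s^\ast x_{r,l}=0$, the cross-terms you were hoping to "absorb" disappear outright, the coefficient-matching becomes immediate, and your induction closes; as written, however, the sketch rests on an incorrect identification of the section class, and the claim that the residual terms can be absorbed by the flag-bundle relation and the normalization is not substantiated.
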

\begin{proof}
	Since the Levi subgroup $L_{I_k}$ of $P_{I_k}$ is Lie type $A_{m_k}$,
	we have a Lie group homomorphism 
	$F_k \colon \SL(m_k+1) \to P_{I_k}$. 
	For each $1 \leq k \leq r$,
	consider the homomorphism
	$	\psi_k \colon \SL(m_k+1) \to P_{I_1} \times \cdots \times P_{I_r}$
	defined as 
	\[
	p \mapsto (e,\dots,e,\underbrace{F_k(p)}_{k\text{th}},e,\dots,e)
	\]
	and consider
	\begin{equation}\label{eq_map_B}
	\varphi_k \colon B_{\SL(m_k+1)}\to \underbrace{B \times \cdots \times B}_r = B^r
	\end{equation}
	which sends $b$ to 
	\[
	(e,\dots,e,\underbrace{F_k(b)}_{k\text{th}},
	\underbrace{F_k(h)}_{(k+1)\text{th}},\dots,
	\underbrace{F_k(h)}_{r\text{th}}),
	\]
	where $h = \Upsilon(b)$.
	Then the map $\psi_k$ is $\varphi_k$-equivariant, namely,
	for $b \in B_{\SL(m_k+1)}$ and $g \in \SL(m_k+1)$ we have that
	\begin{align*}
		\Theta_0(&\psi_k(g), \varphi_k(b)) \\
		&= \Theta_0((e,\dots,e,F_k(g),e,\dots,e), 
		(e,\dots,e,F_k(b),F_k(h),\dots,F_k(h))) \\
		&= (e,\dots,e, F_k(g)F_k(b), 
		\Upsilon(F_k(b))^{-1}F_k(h),e,\dots,e) \\
		&= (e,\dots,e, F_k(gb),e,e,\dots,e)\\
		&= \psi_k(g b).
	\end{align*}
	Here the third equality comes from the fact that $F_k$ is a homomorphism 
	and $\Upsilon(F_k(b)) = F_k(\Upsilon(b))$.

	Under the map~\eqref{eq_map_B} the weight $(\lambda_1,\dots,\lambda_r)$ of $H^r$ pulls back to the weight 
	\begin{equation}\label{eq_weight_pull_back}
	\sum_{l=1}^{m_k} \langle \lambda_k+\cdots+\lambda_r,
	\alpha_{u_{k,l}}^{\vee} \rangle
	\varpi_l \in \mathfrak{h}_{\SL(m_k+1)}^{\ast}
	\end{equation}
	by~\eqref{eq_pull_back_lam}.
	The integer vector $\mathbf a_k \in \Z^{m_k+1}$ is completely determined by the weight in~\eqref{eq_weight_pull_back} because of the construction of a flag Bott manifold (see~\cite[\S 2.2]{KLSS}).	Indeed, the integer vector $\mathbf{a}_k \in \Z^{m_k+1}$ should satisfy the equality:
	\begin{equation}\label{eq_prop_4.7_1}
	\sum_{l=1}^{m_k} \langle \lambda_k+\cdots+\lambda_r,
	\alpha_{u_{k,l}}^{\vee} \rangle
	\varpi_l 
	= \sum_{l=1}^{m_k +1}\mathbf{a}_k(l) \varepsilon_l,
\end{equation}
where $\varepsilon_i \in \mathfrak{h}_{\SL(m_k+1)}^{\ast}$ sends $\text{diag}(h_1,\dots,h_{m_k+1})$ in $\mathfrak{h}_{\SL(m_k+1)}$ to $h_i$.
	Using the identification $\varpi_l = \varepsilon_1 + \cdots + \varepsilon_l$, 
	we have that
	\begin{equation}\label{eq_prop_4.7_2}
	\begin{split}
		\sum_{l=1}^{m_k}& \langle \lambda_k+\cdots+\lambda_r,
		\alpha_{u_{k,l}}^{\vee} \rangle
		\varpi_l \\
		&= 	\langle \lambda_k + \cdots + \lambda_r, \alpha_{u_{k,1}}^{\vee} 
		\rangle \varepsilon_1 
		+ \langle \lambda_k + \cdots + \lambda_r, \alpha_{u_{k,2}}^{\vee} 
		\rangle (\varepsilon_1 + \varepsilon_2) \\
		& \qquad + \cdots +
		\langle \lambda_k + \cdots + \lambda_r, \alpha_{u_{k,m_k}}^{\vee} 
		\rangle (\varepsilon_1 + \cdots + \varepsilon_{m_k}) \\
		&= \langle \lambda_k + \cdots + \lambda_r, 
		\alpha_{u_{k,1}}^{\vee} + \alpha_{u_{k,2}}^{\vee} 
		+ \cdots + \alpha_{u_{k,m_k}}^{\vee} \rangle \varepsilon_1 \\
		& \qquad + 
		\langle \lambda_k + \cdots + \lambda_r, 
		\alpha_{u_{k,2}}^{\vee} 
		+ \cdots + \alpha_{u_{k,m_k}}^{\vee} \rangle \varepsilon_2  \\
		& \qquad \qquad + \cdots + 
		\langle \lambda_k+\cdots +\lambda_r, \alpha_{u_{k,m_k}}^{\vee} \rangle 
		\varepsilon_{m_k}.
	\end{split}
	\end{equation}
	Comparing equations~\eqref{eq_prop_4.7_1} and~\eqref{eq_prop_4.7_2}, we obtain the assertion of the proposition.
\end{proof}

By combining Propositions~\ref{prop_flag_bundle_str} 
and~\ref{prop_deg_of_line_bdle}, we can prove the following
theorem:
\begin{theorem}\label{thm_fBS_is_fBT}
	Suppose that $\mathcal I = (I_1,\dots,I_r)$ is a sequence of subsets of $[n]$ such that the Levi subgroup $L_{I_k}$ of the parabolic subgroup $P_{I_k}$ has Lie type $A_{m_k}$ for all $1 \leq k \leq r$. Take an enumeration $I_k = \{u_{k,1},\dots,u_{k,m_k}\}$ which satisfies~\eqref{eq_Cartan_integer_A}.
	Then the manifold $Z_{\mathcal I}^0$ is an $r$-stage flag Bott manifold
	which is determined by
$
	\{\bolda{l}{k}{j}
	=  (\bolda{l}{k}{j}(1), \bolda{l}{k}{j}(2),
	\dots, \bolda{l}{k}{j}(m_{j}+1))\}_{1 \leq l \leq m_k+1, 1 \leq j < k \leq r}$ in the sense of Proposition~\ref{prop_fBT_quotient_description},
	where $\bolda{l}{k}{j}(p)$ is
\[	\langle
	\alpha_{u_{k,l}} + \cdots + \alpha_{u_{k,m_k}},
	\alpha_{u_{j,p}}^{\vee} + \cdots + 
	\alpha_{u_{j,m_j}}^{\vee}\rangle
\]	 if $1\leq l\leq m_k$ and $1\leq p \leq m_j$, and $0$ otherwise.

\end{theorem}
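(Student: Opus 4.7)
The plan is to prove the theorem by induction on $r$, peeling off the top stage of the tower using Proposition~\ref{prop_flag_bundle_str} and computing the Chern classes of the resulting line bundles using Proposition~\ref{prop_deg_of_line_bdle}. For the base case $r = 1$, the variety $Z_{(I_1)}^0 = P_{I_1}/B$ is diffeomorphic to $\flag(m_1 + 1)$, which is a $1$-stage flag Bott manifold; the index set $\{(l,k,j) : 1 \le l \le m_k + 1,\ 1 \le j < k \le 1\}$ is empty, so there is nothing further to verify.

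For the inductive step, assume the theorem for the subsequence $\mathcal I' = (I_1, \dots, I_{r-1})$. Proposition~\ref{prop_flag_bundle_str} yields a diffeomorphism
\[
Z_{\mathcal I}^0 \cong \flag\!\left(\mathcal{L}^0_{\mathcal I', \chi_1} \oplus \cdots \oplus \mathcal{L}^0_{\mathcal I', \chi_{m_r}} \oplus \underline{\C}\right),
\]
where $\chi_l = \alpha_{u_{r, l}} + \cdots + \alpha_{u_{r, m_r}}$. Combined with the inductive hypothesis that $Z_{\mathcal I'}^0$ is an $(r-1)$-stage flag Bott manifold with the prescribed structure, this exhibits $Z_{\mathcal I}^0$ as an $r$-stage flag Bott manifold with top-stage line bundles $\xii{l}{r} = \mathcal{L}^0_{\mathcal I', \chi_l}$ for $1 \le l \le m_r$ and $\xii{m_r + 1}{r} = \underline{\C}$.

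To finish, I must match the first Chern classes of these top-stage line bundles with the integer vectors claimed in the statement. For $1 \le l \le m_r$, apply Proposition~\ref{prop_deg_of_line_bdle} to the sequence $\mathcal I'$ with weights $(\lambda_1, \dots, \lambda_{r-1}) = (0, \dots, 0, \chi_l)$. This yields, for each $1 \le j \le r - 1$, an integer vector whose $p$-th entry ($1 \le p \le m_j$) is
\[
\langle \chi_l,\, \alpha_{u_{j, p}}^{\vee} + \cdots + \alpha_{u_{j, m_j}}^{\vee} \rangle = \langle \alpha_{u_{r, l}} + \cdots + \alpha_{u_{r, m_r}},\, \alpha_{u_{j, p}}^{\vee} + \cdots + \alpha_{u_{j, m_j}}^{\vee} \rangle,
\]
and whose $(m_j + 1)$-st entry is $0$. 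This matches $\bolda{l}{r}{j}$ exactly as defined in the statement. The trivial line bundle $\underline{\C}$ has vanishing first Chern class, matching $\bolda{m_r + 1}{r}{j} = 0$ for all $j$. For the indices $1 \le j < k \le r - 1$, the vectors $\bolda{l}{k}{j}$ are supplied by the inductive hypothesis and agree with the uniform formula in the statement.

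I do not expect any real obstacle: the substantive inputs, namely the top-stage flag-bundle decomposition and the explicit Chern-class computation, are provided by the two preceding propositions, and the proof amounts to checking that their outputs assemble into precisely the data described in the theorem. The only care required is the bookkeeping of indices and the verification that the inductive hypothesis applied to $\mathcal I'$ is compatible with the extra stage $I_r$ added on top, which follows because the formula defining $\bolda{l}{k}{j}$ depends only on the simple roots indexed by $I_j$ and $I_k$ and not on the remainder of the sequence.
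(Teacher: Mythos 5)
Your proposal is correct and follows essentially the same route as the paper: the paper's proof likewise iterates over the subsequences $\mathcal I_k=(I_1,\dots,I_k)$, invoking Proposition~\ref{prop_flag_bundle_str} to realize each $Z_{\mathcal I_k}^0$ as a flag bundle over $Z_{\mathcal I_{k-1}}^0$ with fiber weights $\chi_l$ and Proposition~\ref{prop_deg_of_line_bdle} to read off the integer vectors $\bolda{l}{k}{j}(p)$. Phrasing this iteration as an induction on $r$, as you do, is just a cosmetic repackaging of the same argument.
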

\begin{proof}
	Consider the subsequence $\mathcal I_k \coloneqq (I_1,\dots,I_k)$ of 
	the sequence $\mathcal I$ for all $1\leq k \leq r$. 
	Recall from Proposition~\ref{prop_flag_bundle_str} that 
	the flag Bott manifold $Z_{\mathcal I_k}^0$ is the induced
	flag bundle over~$Z_{\mathcal I_{k-1}}^0$:
	\[
	Z_{\mathcal I_k}^0 = \flag(\mathcal{L}_{\mathcal I_{k-1}, \chi_{1}}^0 \oplus
	\cdots \oplus \mathcal{L}_{\mathcal I_{k-1},\chi_{m_k}}^0 \oplus
	\underline{\C}),
	\]
	where
	$\chi_l = \alpha_{u_{k,l}} + \cdots + 
	\alpha_{u_{k,m_k}}$ for $1 \leq l \leq m_k$.
	By Proposition~\ref{prop_deg_of_line_bdle}, 
	the integer vectors $\{\bolda{l}{k}{j} \in \Z^{m_{j}+1}\}_{1 \leq j \leq k-1}$
	which define the line bundle $\mathcal{L}_{\mathcal I_{k-1}, \chi_l}^0$ are
	given by
	\begin{align*}
		&\bolda{l}{k}{j}(p) \\
		&\quad= \langle \chi_l, \alpha_{u_{j,p}}^{\vee} 
		+ \cdots + \alpha_{u_{j,m_j}}^{\vee} \rangle 
		\quad (\text{by Proposition~\ref{prop_deg_of_line_bdle}})
		\\
		&\quad= \langle \alpha_{u_{k,l}} + \cdots + \alpha_{u_{k,m_k}}, 
		\alpha_{u_{j,p}}^{\vee} 	+ \cdots + \alpha_{u_{j,m_j}}^{\vee} \rangle
		\quad (\text{by the definition of }\chi_l)
	\end{align*}
	for $1\leq l \leq m_k$ and 	$1 \leq p \leq m_{j}$. 
	Moreover we have $\bolda{l}{k}{j}(p)=0$ 
	if $l = m_k+1$ or $p = m_{j}+1$ 	by  Proposition~\ref{prop_deg_of_line_bdle}.
	Hence the result follows. 
\end{proof}
\begin{example}
	Let $G = \SL(4)$. Consider the sequence 
	$\mathcal I = (\{1,2\}, \{ 1,2 \} )$.
	Hence $u_{1,1} = 1, u_{1,2} = 2, u_{2,1} = 1, u_{2,2} =2$. 
	The manifold $Z_{\mathcal I}^0$ is a $2$-stage flag 
	Bott manifold with $F_2 = \flag(\xii{1}{2} \oplus \xii{2}{2} \oplus
	\underline{\C})$,
	where line bundles $\xii{1}{2}$ and $\xii{2}{2}$ are determined by
	the following integer vectors:
	{{\reqnomode
			\begin{align*}
				\bolda{1}{2}{1} 
				&= (\langle 
				\alpha_1 + \alpha_2, \alpha_1^{\vee} + \alpha_2^{\vee}
				\rangle, 
				\langle \alpha_1 + \alpha_2, \alpha_2^{\vee} \rangle, 0)
				= (2,1,0), \\
				\bolda{2}{2}{1} 
				&= (\langle 
				\alpha_2, \alpha_1^{\vee} + \alpha_2 ^{\vee} \rangle,
				\langle \alpha_2, \alpha_2^{\vee} \rangle, 0)
				= (1,2,0).
	\end{align*}}}
\end{example}
\begin{remark}
	Suppose that the flag Bott--Samelson variety 
	$Z_{\mathcal I}$ is a Bott--Samelson variety, i.e., 
	$m_1=\cdots=m_r=1$.
	Then integer vectors $\{\bolda{l}{k}{j} \in \Z^2\}_{l \in [2], 1 \leq j < k \leq r}$ 
	determining the flag Bott tower $Z_{\mathcal I}^0$ is 
	\[
	\bolda{l}{k}{j} 
	= \begin{cases}
	(\langle \al_{u_{k,1}}, \al_{u_{j,1}}^{\vee} \rangle, 0)
	& \text{ if } l = 1, \\
	(0,0) & \text{ if } l =2
	\end{cases}
	\]
	by Theorem~\ref{thm_fBS_is_fBT}. 
	This computation of $\bolda{1}{k}{j}(1)$ 
	for $1 \leq j < k \leq r$
	coincides with the known result in~\cite[\S 3.7]{GrKa94}.
\end{remark}

%%%%%%%%%%%%%%%%%%%%%%%%%%%%%%%%%%%%%%%%%%%%%%%%%%%%%%%%%%%%%%%%%%%%%%%%
\section{Torus actions and Duistermaat--Heckman measure}
\label{sec_torus_actions_on_fBS}
%%%%%%%%%%%%%%%%%%%%%%%%%%%%%%%%%%%%%%%%%%%%%%%%%%%%%%%%%%%%%%%%%%%%%%%%
Let $\mathcal I= (I_1,\dots,I_r)$ be a sequence of subsets of $[n]$ such that $|I_k| = m_k$. 
In this section we study torus actions on the manifold
$Z_{\mathcal I}^0$.
% when the Levi subgroup $L_{I_k}$ of the parabolic subgroup $P_{I_k}$ has Lie type
%$A_{m_k}$ for all $1 \leq k \leq r$.
We define a torus invariant closed $2$-form induced from a given complex
line bundle, and we consider the Duistermaat--Heckman measure of the flag Bott--Samelson manifold using a Bott--Samelson variety $Z_{\mathbf i}$ admitting 
the birational morphism $\eta_{{\bf i}, \mathcal I} \colon Z_{\mathbf i} \to Z_{\mathcal I}$ (see Theorem~\ref{thm_DH_measure_of_fBS}). 

We first study torus actions on $Z_{\mathcal I}^0$.
Let $T$ be the maximal compact torus of $G$ contained in $H$.
Define an action of $T^{(r)}$ on $Z_{\mathcal I}^0$ as
\begin{equation}\label{eq_def_of_T_on_fBS}
\begin{split}
(s_1,\dots,s_r) \cdot [p_1,\dots,p_r] 
&= [s_1p_1, s_1^{-1}s_2p_2,\dots,
s_{r-1}^{-1}s_rp_r]  \\
&= [s_1p_1s_1^{-1},\dots,s_rp_rs_r^{-1}].
\end{split}
\end{equation}
This action is smooth but not effective.
We now find the subtorus which acts trivially on $Z_{\mathcal I}^0$. 
Define a subtorus $T_{I} \subset T$ for a subset $I \subset [n]$ as
\[
T_I \coloneqq \{s \in T \mid 
\alpha_i(s) = 1 \text{ for all } i \in I \}^0
\]
which is similar to~\eqref{eq_def_of_HI}. 
Here, we consider a simple root $\alpha \in \chi(H)$ as a homomorphism $T \to S^1$. For a given sequence $\mathcal I = (I_1,\dots,I_r)$ of subsets of~$[n]$, we define the subtorus $T_{\mathcal I}$ of $T^{(r)}$ as
\[
T_{\mathcal I}\coloneqq T_{I_1} \times \cdots \times T_{I_r}.
\]
Similarly, we set $T_{\mathbf i} \coloneqq T_{\{i_1\}} \times \cdots \times T_{\{i_r\}}$ for a sequence $\mathbf i = (i_1,\dots,i_r) \in [n]^r$. Then the following proposition comes from~\eqref{eq_def_of_T_on_fBS}.
\begin{proposition}\label{prop_torus_on_ZI0}
	The torus $T_{\mathcal I}$ acts trivially on $Z_{\mathcal I}^0$.
\end{proposition}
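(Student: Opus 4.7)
My plan is to verify directly from the definitions of $\Theta_0$ and the action~\eqref{eq_def_of_T_on_fBS} that for $(s_1,\dots,s_r) \in T_{\mathcal I}$ one has $[s_1 p_1 s_1^{-1},\dots,s_r p_r s_r^{-1}] = [p_1,\dots,p_r]$ in $Z_{\mathcal I}^0$. In other words, I need to exhibit an element $(b_1,\dots,b_r) \in B^r$ such that $\Theta_0((p_1,\dots,p_r),(b_1,\dots,b_r)) = (s_1 p_1 s_1^{-1},\dots,s_r p_r s_r^{-1})$.

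The key observation is that $T_{I_k} \subset H_{I_k}$, since both are defined by the same vanishing conditions on simple roots and $T \subset H$. Consequently, because $L_{I_k} = C_G(H_{I_k})$, the element $s_k \in T_{I_k}$ centralizes $L_{I_k}$. Using the Levi decomposition $P_{I_k} = L_{I_k} U_{I_k}$, I will write $p_k = l_k u_k$ with $l_k \in L_{I_k}$ and $u_k \in U_{I_k}$; then $s_k p_k s_k^{-1} = l_k \bigl( s_k u_k s_k^{-1} \bigr)$, so $s_k p_k s_k^{-1}$ and $p_k$ differ by an element of $U_{I_k}$.

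This suggests defining $b_k \coloneqq u_k^{-1}\bigl(s_k u_k s_k^{-1}\bigr)$. Since $U_{I_k}$ is normalized by $s_k \in P_{I_k}$, the element $b_k$ lies in $U_{I_k} \subset U \subset B$, in particular $\Upsilon(b_k) = e$. A direct computation then gives $p_k b_k = l_k u_k \cdot u_k^{-1}(s_k u_k s_k^{-1}) = s_k p_k s_k^{-1}$, and since each $\Upsilon(b_{k-1}) = e$, we get $\Upsilon(b_{k-1})^{-1} p_k b_k = s_k p_k s_k^{-1}$ for all $k$, as required.

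There is no real obstacle beyond checking that $b_k \in B$, which is handled by the normality of $U_{I_k}$ in $P_{I_k}$ and the inclusion $U_{I_k} \subset U$. The second equality in~\eqref{eq_def_of_T_on_fBS} (which rewrites the $T^{(r)}$-action as simultaneous conjugation) is itself obtained by applying $(b_1,\dots,b_r) = (s_1^{-1},\dots,s_r^{-1})$, using $\Upsilon(s_k^{-1}) = s_k^{-1}$; I will briefly record this at the start of the proof.
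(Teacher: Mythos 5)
Your proof is correct. The paper itself gives no argument here, simply asserting that the proposition ``comes from'' \eqref{eq_def_of_T_on_fBS}; your write-up supplies exactly the details one needs to make that assertion rigorous. The chain of observations is the right one: $T_{I_k} \subset H_{I_k}$ (a connected subset of $\{h \in H \mid \alpha_i(h)=1\ \forall i \in I_k\}$ containing the identity must lie in its identity component), hence $s_k$ centralizes $L_{I_k} = C_G(H_{I_k})$; the Levi decomposition $p_k = l_k u_k$ then gives $s_k p_k s_k^{-1} = l_k\,(s_k u_k s_k^{-1})$; normality of $U_{I_k}$ in $P_{I_k}$ (together with $s_k \in B \subset P_{I_k}$) puts $b_k := u_k^{-1} s_k u_k s_k^{-1}$ in $U_{I_k} \subset U \subset B$; and since $b_k$ is unipotent, $\Upsilon(b_k)=e$, so the twist in $\Theta_0$ plays no role and each coordinate satisfies $\Upsilon(b_{k-1})^{-1} p_k b_k = s_k p_k s_k^{-1}$. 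Your preliminary remark justifying the second equality in \eqref{eq_def_of_T_on_fBS} via $(b_1,\dots,b_r)=(s_1^{-1},\dots,s_r^{-1})$ and $\Upsilon(s_k^{-1})=s_k^{-1}$ is also correct.
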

By Proposition~\ref{prop_torus_on_ZI0}, we have the torus action on $Z_{\mathcal I}^0$: 
\begin{equation}\label{eq_torus_action_on_ZI0}
T^{(r)}/T_{\mathcal I} \curvearrowright Z_{\mathcal I}^0.
\end{equation}
Note that $T^{(r)}/T_{\mathcal I} \cong (S^1)^{m_1+\cdots +m_r}$. 

Suppose that ${\bf i} = (i_{k, l})_{1 \le k \le r, 1 \le l \le N_k} \in [n]^{N_1 + \cdots + N_r}$ is a sequence such that
$(i_{k,1},\dots,i_{k,N_k})$ is a reduced word for the longest element in $W_{I_k}$ for $1\leq k \leq r$. From now on, we ignore the complex structure on the flag Bott--Samelson manifold $Z_{\mathcal I}$ and regard it as a smooth manifold. Therefore we can identify $Z_{\mathcal I}$ with $Z_{\mathcal I}^0$ and $Z_{\mathbf i}$ with $Z_{\mathbf i}^0$ by Proposition~\ref{prop_degeneration_diffeomorphic}.
Using the observation~\eqref{eq_torus_action_on_ZI0}, we have the torus action on the Bott--Samelson manifold~$Z_{\mathbf i}$:
\[
(S^1)^{N} \cong T^{(N)}/T_{\mathbf i} \curvearrowright Z_{\mathbf i},
\]
where $N\coloneqq N_1 + N_2 + \cdots + N_r$. 
We denote $\widetilde{\mathbf T} \coloneqq (T^{(N)})/T_{\mathbf i}$ and
$\mathbf T \coloneqq (T^{(r)})/T_{\mathcal I}$ for simplicity. 
\begin{lemma}\label{lemma_homo_A_between_tori}
	There is a homomorphism 
	$A \colon \mathbf T \to \widetilde{\mathbf T}$ 
	such that the map $\eta_{{\bf i}, \mathcal I} \colon Z_{\mathbf i}\to Z_{\mathcal I}$
	is equivariant with respect to the action of 
	$\mathbf T$, i.e.
	\[
	\eta_{{\bf i}, \mathcal I}(A(t) \cdot x) = t \cdot \eta_{{\bf i}, \mathcal I}(x)
	\] 
	for any $t \in \mathbf T$ and 
	$x \in Z_{\mathbf i}$. 
\end{lemma}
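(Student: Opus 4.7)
The plan is to construct $A$ explicitly by ``repetition'' of coordinates and then verify equivariance via a telescoping product computation.

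First, I would define a homomorphism $\widetilde{A} \colon T^{(r)} \to T^{(N)}$ on the ambient tori by
\[
\widetilde{A}(t_1,\dots,t_r) \coloneqq (\underbrace{t_1,\dots,t_1}_{N_1},\underbrace{t_2,\dots,t_2}_{N_2},\dots,\underbrace{t_r,\dots,t_r}_{N_r}).
\]
Next I would check that $\widetilde{A}$ descends to the quotient tori, i.e.\ that $\widetilde{A}(T_{\mathcal I}) \subseteq T_{\mathbf i}$. If $(t_1,\dots,t_r) \in T_{I_1} \times \cdots \times T_{I_r}$, then $\alpha_i(t_k) = 1$ for every $i \in I_k$. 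Since $(i_{k,1},\dots,i_{k,N_k})$ is a reduced word in $W_{I_k}$, each index $i_{k,l}$ belongs to $I_k$, so $\alpha_{i_{k,l}}(t_k) = 1$, which is exactly the condition $t_k \in T_{\{i_{k,l}\}}$. Hence $\widetilde{A}$ induces the desired homomorphism $A \colon \mathbf{T} \to \widetilde{\mathbf{T}}$.

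For the equivariance, fix $t = [t_1,\dots,t_r] \in \mathbf{T}$ and $x = [(p_{k,l})_{k,l}] \in Z_{\mathbf i}$. Using the second expression in~\eqref{eq_def_of_T_on_fBS}, the $\widetilde{\mathbf T}$-action on $Z_{\mathbf i}$ via $\widetilde{A}(t)$ sends $x$ to $[(t_k p_{k,l} t_k^{-1})_{k,l}]$. Applying $\eta_{\mathbf i,\mathcal I}$ and using telescoping,
\[
\eta_{\mathbf i,\mathcal I}(A(t)\cdot x) = \left[\, t_1\!\left(\prod_{l=1}^{N_1} p_{1,l}\right)\! t_1^{-1},\ \dots,\ t_r\!\left(\prod_{l=1}^{N_r} p_{r,l}\right)\! t_r^{-1} \right] = t \cdot \eta_{\mathbf i,\mathcal I}(x),
\]
where the last equality is again~\eqref{eq_def_of_T_on_fBS} applied on $Z_{\mathcal I}$. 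This establishes the claim.

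The only delicate point is the descent to the quotient tori, which requires the observation that every letter in the reduced word for the longest element of $W_{I_k}$ lies in $I_k$; everything else is a direct definitional check, and no substantive obstacle is expected.
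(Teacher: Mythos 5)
Your proof is correct and follows essentially the same route as the paper's: you define the repetition homomorphism $\widetilde{A} \colon T^{(r)} \to T^{(N)}$ (the paper calls it $\iota$), check that it carries $T_{\mathcal I}$ into $T_{\mathbf i}$ using that every letter $i_{k,l}$ of the reduced word lies in $I_k$, and then deduce equivariance. The only difference is cosmetic: the paper appeals to ``equivariance by the definition of the torus action,'' whereas you carry out the telescoping computation explicitly, which makes the argument a little more self-contained.
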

\begin{proof}
	Define an inclusion map $\iota \colon T^{(r)} \hookrightarrow
	T^{(N)}$ as
	\[
	(a_1,\dots,a_r) \stackrel{\iota}{\mapsto}
	(\underbrace{a_1,\dots,a_1}_{N_1}, \dots,
	\underbrace{a_k,\dots,a_k}_{N_k},\dots,
	\underbrace{a_r,\dots,a_r}_{N_r}).
	\]
	Then we have the action $T^{(r)} \curvearrowright Z_{\mathbf i}$ via the inclusion $\iota$ and 
	the map $\eta_{{\bf i}, \mathcal I} \colon Z_{\mathbf i}\to Z_{\mathcal I}$ is 
	equivariant with respect to the action of $T^{(r)}$
	by the definition of torus action in~\eqref{eq_def_of_T_on_fBS}.
	
	We claim that $\iota(T_{\mathcal I}) \subset T_{\mathbf i}$.
	For an element $(a_1,\dots,a_r) \in T^{(r)}$, we have that
	\begin{align*}
	(a_1,\dots,a_r) \in T_{\mathcal I} & 
	\Longleftrightarrow a_k
	\in T_{I_k} \quad \text{ for all } 1\leq k \leq r.
	\end{align*}
	Hence we have $a_k \in T_{i_{k,1}},\dots, a_k \in T_{i_{k,N_k}}$
	since $\{i_{k,1},\dots,i_{k,N_k}\}= I_k$
	for all $1 \leq k \leq r$. 
	This gives that $\iota(T_{\mathcal I}) \subset T_{\mathbf i}$ as claimed. We thus have the homomorphism
	\begin{equation}\label{eq_def_of_A}
	A \colon T^{(r)}/T_{\mathcal I} \to T^{(N)}/T_{\mathbf i}
	\end{equation}
	induced from the inclusion $\iota$. 
	Moreover the projection map $Z_{\mathbf i} \to Z_{\mathcal I}$ is equivariant
	with respect to the action of $\mathbf T$ because of the $T^{(r)}$-equivariance of the projection. 
\end{proof}
We set $A_k \colon T/T_{I_k} \to T^{(N_k)}/T_{(i_{k,1},\dots,i_{k,N_k})}$ for $1 \leq k \leq r$. By the definition of $T_{(i_{k,1},\dots,i_{k,N_k})}$, the torus $T^{(N_k)}/T_{(i_{k,1},\dots,i_{k,N_k})}$ has dimension $N_k$. Suppose that $\{f_{k,1},\dots,f_{k,N_k}\}$ is the standard basis of $\Lie((S^1)^{N_k})^{\ast} \cong \mathbb{R}^{N_k}$. Then it is known from~\cite[\S 3.7]{GrKa94} that the pullback of $f_{k,l}$ is $\alpha_{i_{k,l}}$ for $1 \leq l \leq N_k$. 
Since the homomorphism $A$ can be identified with $A_1 \times \cdots \times A_r$, the Lie algebra homomorphism $(dA)^{\ast} \colon \mathbb{R}^N \to \mathbb{R}^{m_1+\cdots+m_r}$ maps $f_{k,l}$ to $\alpha_{i_{k,l}}$ for $1 \leq k \leq r$ and $1 \leq l \leq N_k$. 

\begin{example}\label{example_projection_A}
	Recall from Example~\ref{ex_fBS_and_BS} that we have a morphism
	$\eta_{(1,2,1,3), \mathcal I}$ from $Z_{(1,2,1,3)}$ to $Z_{\mathcal I}$, where
	$\mathcal I = (\{1,2\}, \{3\})$. 
	Suppose that $A \colon T^{(2)}/T_{\mathcal I} \to T^{(4)}/T_{(1,2,1,3)}$ is the homomorphism in Lemma~\ref{lemma_homo_A_between_tori}. Then the Lie algebra homomorphism $(dA)^{\ast} \colon \mathbb{R}^4 \to \mathbb{R}^3$ is defined using
	the integer matrix:
	\[
	\begin{bmatrix}
	1 & 0 & 1 & 0 \\
	0 & 1 & 0 & 0 \\
	0 & 0 & 0 & 1
	\end{bmatrix}.
	\]
\end{example}

We now consider Duistermaat--Heckman measures corresponding
to flag Bott--Samelson manifolds. 
 We recall definitions from~\cite{Audin04torus}.
Suppose that $M$ is an oriented, compact manifold of real dimension $2d$
with an action of a compact torus $T$. 
Let $\omega$ be a presymplectic form, i.e. a $T$-invariant closed not necessarily non-degenerate $2$-form. 
Then we call the manifold $(M, \omega, T)$ \defi{presymplectic $T$-manifold}.
A \defi{moment map} on $(M, \omega, T)$ is defined to be a map 
$\Phi \colon M \to \Lie(T)^{\ast}$ such that 
\[
\langle d \Phi, \xi \rangle 
= - \iota(\xi_M) \omega \quad \text{ for all } \xi \in \Lie(T),
\]
where $\xi_M$ is the vector field on $M$ which generates the action
of the one-parameter subgroup $\{ \exp(t \xi) \mid t \in \mathbb{R}\}$ of $T$. 
Note that the Liouville measure on $M$ is defined to be 
$\int_A \omega^d/d!$ for an open subset $A \subset M$, and its push-forward
$\Phi_{\ast} \omega^d/d!$ is called the \defi{Duistermaat--Heckman measure} in $\Lie(T)^{\ast}$.

Consider the line bundle $\mathcal{L}_{\mathcal I, \lambda_1,\dots,\lambda_r}$
over $Z_{\mathcal I}$ determined by integral weights $\lambda_1,\dots,\lambda_r$.
Then we have an integer vector
$\mathbf a = (\mathbf a^{(1)},\dots,\mathbf a^{(r)}) 
\in \Z^{N_1} \oplus \cdots \oplus \Z^{N_r}$
such that
$\eta^{\ast} \mathcal L_{\mathcal I, \lambda_1,\dots,\lambda_r}
= \mathcal{L}_{\mathbf i, {\mathbf a}}$ by 
Proposition~\ref{prop_line_bdle_over_fBS_and_BS}.
Let $\omega_{\mathbf i}'$, respectively $\omega_{\mathcal I}'$,
be a closed 2-form corresponding to the first
Chern class of the line bundle $\mathcal{L}_{\mathbf i, {\mathbf a}} \to 
Z_{\mathbf i}$, respectively $\mathcal{L}_{\mathcal I, \lambda_1,\dots,\lambda_r} \to Z_{\mathcal I}$.
By taking averages of $\omega_{\mathbf i}'$ and $\omega_{\mathcal I}'$ by corresponding torus actions
we have the following two 2-forms:
\begin{equation}\label{eq_def_of_omega}
{\omega}_{\mathbf i} \coloneqq \int_{a \in \widetilde{\mathbf{T}}}
(a^{\ast} \omega_{\mathbf i}') da
\quad \text{ and } \quad
{\omega}_{\mathcal I} \coloneqq \int_{t \in {\mathbf{T}}}
(t^{\ast} \omega_{\mathcal I}') dt.
\end{equation}
Then the form ${\omega}_{\mathbf i}$, respectively 
${\omega}_{\mathcal I}$, is a 
$\widetilde{\mathbf T}$-invariant, respectively 
$\mathbf T$-invariant, 
closed 2-form on 
$(Z_{\mathbf i}, \widetilde{\mathbf T})$, respectively
$(Z_{\mathcal I}, {\mathbf T})$.
Since compact tori $\widetilde{\mathbf T}$ and $\mathbf T$ are connected,
we have that 
\begin{equation}\label{eq_omega_and_omega_prime}
[\omega_{\mathbf i}] = [\omega_{\mathbf i}'] \text{ in } H^2(Z_{\bf i}; \mathbb{R}), \quad
[\omega_{\mathcal I}] = [\omega_{\mathcal I}'] \text{ in }
H^2(Z_{\mathcal I}; \mathbb{R})
\end{equation}
(see~\cite[Corollary~B.13]{GGK02}).

Grossberg and Karshon~\cite{GrKa94}  proved that
the Duistermaat--Heckman measure of the presymplectic manifold
$(Z_{\mathbf i}, {\omega}_{\mathbf i}, \widetilde{\mathbf T})$
can be computed by considering a combinatorial object, called a \defi{Grossberg--Karshon twisted cube}.
We use it to compute the Duistermaat--Heckman measure
of the presymplectic manifold $(Z_{\mathcal I}, \omega_{\mathcal I},
\bf T)$.

We recall from~\cite[\S 2.5]{GrKa94} the definition of Grossberg--Karshon twisted cubes. 
Let $\mathbf i =(i_1,\dots,i_N)$ be a sequence of elements
in $[n]$ and $\mathbf a = (a_1,\dots,a_N) \in \Z^N$.
A Grossberg--Karshon twisted cube is a pair $(C(\mathbf i, \mathbf a), \rho)$, where 
$C(\mathbf i, \mathbf a)$
is a subset of $\R^N$ and $\rho \colon \R^N \to \R$ is a density function
with support equal to $C(\mathbf i, \mathbf a)$. 
We define the following functions on $\R^N$:
\begin{align*}
	A_N(x) &= A_N(x_1,\dots,x_N) = -\langle 
	a_N \varpi_{{i}_N},  \alpha_{{i}_N}^{\vee}
	\rangle, \\
	A_{\ell}(x) &= A_{\ell}(x_1,\dots,x_N) \\
	&= -\langle 
	a_{\ell} \varpi_{{i}_{\ell}} + \cdots + a_N \varpi_{{i}_N}, \alpha_{{i}_{\ell}}^{\vee}
	\rangle
	- \sum_{j > \ell} 
	\langle \alpha_{{i}_j}, \alpha_{{i}_{\ell}}^{\vee} \rangle x_j  \quad \text{ for } 1 \leq \ell \leq N-1. 
\end{align*}
We also define a function $\text{sign} \colon \mathbb{R} 
\to \{\pm 1\}$ as $\text{sign}(x) = -1$ for $x \leq 0$ and 
$\text{sign}(x) = 1$ for $x >0$. 
\begin{definition}
	Let $C(\mathbf i, \mathbf a)$ be the following 
	subset of $\mathbb{R}^N$:
	\begin{align*}
		C(\mathbf i, \mathbf a)\coloneqq 
		\{x = (x_1,\dots,x_N) \in \R^N &\mid 
		A_j(x) \leq x_j \leq 0 \text{ or }
		0 < x_j < A_j(x) \\
		& \qquad \quad \text{ for } 1 \leq j \leq N
		\}.
	\end{align*}
	We define a density function $\rho \colon \R^N \to \R$ 
	whose support is $C(\mathbf i, \mathbf a)$ and 
	$\rho(x) = (-1)^N \text{sign}(x_1) \cdots \text{sign}(x_N)$ on 
	the set $C(\mathbf i, \mathbf a)$. 
	We call the pair $(C(\mathbf i, \mathbf a), \rho)$ the 
	\defi{Grossberg--Karshon twisted cube associated to $\mathbf i$ and $\mathbf a$}. 
	Also we define a measure 
	\[
	m_{C(\mathbf i, \mathbf a)} = \rho(\alpha)|d \alpha|,
	\]
	where 
	$|d \alpha|$ is Lebesgue measure in $\R^N$. 
\end{definition}
Now we have the following theorem.
\begin{theorem}\label{thm_DH_measure_of_fBS}
	Let $(Z_{\mathcal I}, \omega_{\mathcal I}, \mathbf T)$ be as above, and
	let $\Phi \colon Z_{\mathcal I} \to \R^{m_1+ \cdots +m_r}$
	be a moment map of $(Z_{\mathcal I}, \omega_{\mathcal I}, \mathbf T)$.
	Then there is a Grossberg--Karshon twisted cube $(C(\mathbf i, \mathbf a), \rho)$
	and an affine projection $L \colon \R^N \to \R^{m_1+ \cdots +m_r}$
	such that the Duistermaat--Heckman measure in ${\rm Lie}(\mathbf{T})^{\ast} \cong \R^{m_1+\cdots+m_r}$
	is $L_{\ast} m_{C(\mathbf i, \mathbf a)}$. 
\end{theorem}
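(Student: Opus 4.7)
The strategy is to transfer the Duistermaat--Heckman computation from the Bott--Samelson case to the flag Bott--Samelson case by means of the birational $\mathbf T$-equivariant morphism $\eta_{\mathbf i, \mathcal I} \colon Z_{\mathbf i} \to Z_{\mathcal I}$ supplied by Proposition~\ref{prop_fBS_and_BS} and Lemma~\ref{lemma_homo_A_between_tori}. Choose $\mathbf i = (i_{k,l})_{1 \le k \le r, 1 \le l \le N_k} \in [n]^N$ so that $(i_{k,1}, \ldots, i_{k, N_k})$ is a reduced word for the longest element of $W_{I_k}$, and let $\mathbf a \in \mathbb{Z}^N$ be the integer vector produced by Proposition~\ref{prop_line_bdle_over_fBS_and_BS} so that $\eta_{\mathbf i, \mathcal I}^{\ast} \mathcal L_{\mathcal I, \lambda_1, \dots, \lambda_r} \cong \mathcal L_{\mathbf i, \mathbf a}$ on $Z_{\mathbf i}$.

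The first step is to apply the Grossberg--Karshon theorem~\cite[Theorem~3]{GrKa94} to the presymplectic $\widetilde{\mathbf T}$-manifold $(Z_{\mathbf i}, \omega_{\mathbf i}, \widetilde{\mathbf T})$ equipped with its natural moment map $\widetilde{\Phi} \colon Z_{\mathbf i} \to \mathrm{Lie}(\widetilde{\mathbf T})^{\ast} \cong \mathbb R^N$; this gives an affine identification $L_1 \colon \mathbb R^N \to \mathrm{Lie}(\widetilde{\mathbf T})^{\ast}$ for which the Duistermaat--Heckman measure on the Bott--Samelson side is $(L_1)_{\ast} m_{C(\mathbf i, \mathbf a)}$. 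The second step is to reduce the symmetry group: since $\eta_{\mathbf i, \mathcal I}$ is $\mathbf T$-equivariant via the homomorphism $A \colon \mathbf T \to \widetilde{\mathbf T}$, the same form $\omega_{\mathbf i}$ is also $\mathbf T$-invariant (acting through $A$), and $(dA)^{\ast} \circ \widetilde{\Phi}$ is a moment map for the $\mathbf T$-action. By the functoriality of the Duistermaat--Heckman construction under subtorus embeddings, the DH measure of $(Z_{\mathbf i}, \omega_{\mathbf i}, \mathbf T)$ equals $((dA)^{\ast})_{\ast} (L_1)_{\ast} m_{C(\mathbf i, \mathbf a)} = L_{\ast} m_{C(\mathbf i, \mathbf a)}$ where $L \coloneqq (dA)^{\ast} \circ L_1$, an affine projection $\mathbb R^N \to \mathbb R^{m_1 + \cdots + m_r}$ of the type exhibited in Example~\ref{example_projection_A}.

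The third and final step is to match the DH measures on $Z_{\mathbf i}$ and $Z_{\mathcal I}$. Because $\eta_{\mathbf i, \mathcal I}$ is birational (of degree one) and surjective, for any smooth top form $\Omega$ on $Z_{\mathcal I}$ one has $\int_{Z_{\mathbf i}} \eta_{\mathbf i, \mathcal I}^{\ast} \Omega = \int_{Z_{\mathcal I}} \Omega$; applied fibrewise over $\mathrm{Lie}(\mathbf T)^{\ast}$ together with the identity $\Phi \circ \eta_{\mathbf i, \mathcal I} = (dA)^{\ast} \circ \widetilde{\Phi}$ (up to an additive constant, determined by the choice of base points), this gives
\[
\Phi_{\ast}\!\left( \frac{\omega_{\mathcal I}^{\,d}}{d!} \right) = (\Phi \circ \eta_{\mathbf i, \mathcal I})_{\ast}\!\left( \frac{(\eta_{\mathbf i, \mathcal I}^{\ast} \omega_{\mathcal I})^{d}}{d!} \right),
\]
where $d = \dim_{\mathbb C} Z_{\mathcal I} = \dim_{\mathbb C} Z_{\mathbf i}$. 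Since $\eta_{\mathbf i, \mathcal I}^{\ast} \omega_{\mathcal I}$ and $\omega_{\mathbf i}$ are both $\mathbf T$-invariant closed $2$-forms representing $c_1(\mathcal L_{\mathbf i, \mathbf a})$ (by~\eqref{eq_omega_and_omega_prime} and the pullback identification of line bundles), their equivariant cohomology classes agree with matching equivariant moment data, and the Duistermaat--Heckman theorem yields equality of the resulting DH measures on $\mathrm{Lie}(\mathbf T)^{\ast}$.

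The delicate point I expect to be the main obstacle is precisely this last identification: showing that $\eta_{\mathbf i, \mathcal I}^{\ast} \omega_{\mathcal I}$ and $\omega_{\mathbf i}$ produce the same $\mathbf T$-equivariant Duistermaat--Heckman measure even though a priori they differ by an exact $\mathbf T$-invariant form (both being averages of cohomologous curvature representatives over different tori). One clean way to finish is to observe that after replacing the curvature representative $\omega_{\mathbf i}^\prime$ on $\mathcal L_{\mathbf i, \mathbf a}$ by $\eta_{\mathbf i, \mathcal I}^{\ast} \omega_{\mathcal I}^\prime$ and re-averaging over $\widetilde{\mathbf T}$ (which preserves the cohomology class and the Grossberg--Karshon conclusion, as the choice of $\omega_{\mathbf i}$ used in~\cite{GrKa94} is only determined up to $\widetilde{\mathbf T}$-invariant exact forms), the two forms become literally equal, and the corresponding moment maps differ only by the constant absorbed into the affine part of $L$.
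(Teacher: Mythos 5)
Your argument is essentially the paper's proof: transfer to the Bott--Samelson side via the birational $\mathbf T$-equivariant morphism $\eta_{\mathbf i,\mathcal I}$, invoke Grossberg--Karshon for $(Z_{\mathbf i}, \omega_{\mathbf i}, \widetilde{\mathbf T})$, push forward along $L = (dA)^{\ast}$, and close the loop using the fact that the push-forward of the Liouville measure depends only on the cohomology class of the invariant $2$-form (the paper cites \cite[Theorem~1]{GrKa94} for exactly this) together with $[\omega_{\mathbf i}] = [\eta_{\mathbf i,\mathcal I}^{\ast}\omega_{\mathcal I}]$ and the observation that $\eta_{\mathbf i,\mathcal I}$ is a diffeomorphism off a measure-zero Zariski-closed set. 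One caveat about your last paragraph: re-averaging $\eta_{\mathbf i,\mathcal I}^{\ast}\omega_{\mathcal I}'$ over $\widetilde{\mathbf T}$ does \emph{not} reproduce $\eta_{\mathbf i,\mathcal I}^{\ast}\omega_{\mathcal I}$ --- the latter is only the average over the subtorus $A(\mathbf T)$, and a further average over $\widetilde{\mathbf T}/A(\mathbf T)$ will in general change the form, so the claim that ``the two forms become literally equal'' fails; fortunately this alternative route is unnecessary, since your earlier appeal to cohomology-class dependence of the push-forward already completes the argument, and that is precisely what the paper does.
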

To give a proof, we need the following theorem of 
Grossberg and Karshon.
\begin{theorem}[{\cite[Theorem 2]{GrKa94}}]
	\label{thm_GK_DH_measure}
	Let $\widetilde{\Phi} \colon Z_{\mathbf i} \to \R^N$ 
	be a moment map of~$(Z_{\mathbf i}, {\omega}_{\mathbf i}, \widetilde{\mathbf T})$. 
	Then the Duistermaat--Heckman measure in ${\rm Lie}(\widetilde{\mathbf T})^{\ast} \cong 
	\R^N$ coincides with the measure $m_{C(\mathbf i, \mathbf a)}$ for the Grossberg--Karshon twisted cube ~$C(\mathbf i, \mathbf a)$. 
\end{theorem}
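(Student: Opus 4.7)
The plan is to prove the Duistermaat--Heckman measure identification by first reducing to the toric degeneration $Z_{\mathbf i}^0$, where $Z_{\mathbf i}$ is regarded as a Bott manifold (an iterated $\C P^1$-bundle), and then computing the pushforward of the Liouville measure inductively along the Bott tower. By Proposition~\ref{prop_degeneration_diffeomorphic} specialized to the Bott--Samelson case, all complex structures $Z_{\mathbf i}^t$ share a common underlying smooth manifold $X_{\mathbf i}$ carrying the fixed $\widetilde{\mathbf T}$-action, and the cohomology class $[\omega_{\mathbf i}] = c_1(\mathcal L_{\mathbf i, \mathbf a}) \in H^2(Z_{\mathbf i};\R)$ is preserved under this degeneration by~\eqref{eq_omega_and_omega_prime}. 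Since the Duistermaat--Heckman measure depends only on the torus action and on the cohomology class of the invariant closed 2-form (see, e.g., \cite[Theorem~3.4.3]{GGK02}), it suffices to perform the computation on the toric Bott manifold $Z_{\mathbf i}^0$ with any convenient $\widetilde{\mathbf T}$-invariant representative of $c_1(\mathcal L_{\mathbf i, \mathbf a})$.

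Next I would argue by induction on the length $N$. The Bott tower structure gives a fibration $\C P^1 \hookrightarrow Z_{\mathbf i}^0 \to Z_{\mathbf i'}^0$ for $\mathbf i' = (i_1,\dots,i_{N-1})$. For the base case $N=1$, the manifold is $\C P^1$ with an $S^1$-invariant 2-form of total mass $-\langle a_1 \varpi_{i_1},\alpha_{i_1}^{\vee}\rangle = A_1$, whose moment map image is the signed interval between $0$ and $A_1$ with density $\mathrm{sign}(A_1)$, matching $C(\mathbf i, \mathbf a)$. For the inductive step, I would use the explicit Chern class computation of the line bundles on the Bott tower, analogous to Theorem~\ref{thm_fBS_is_fBT} restricted to the type-$A_1$ case, to show that on the fiber $\C P^1$ over a point in $Z_{\mathbf i'}^0$ with moment coordinates $(x_1,\dots,x_{N-1})$, the restriction of $\omega_{\mathbf i}$ is an $S^1$-invariant closed 2-form of total mass precisely
\[
A_N(x) = -\langle a_N \varpi_{i_N}, \alpha_{i_N}^{\vee}\rangle - \sum_{j<N} \langle \alpha_{i_j}, \alpha_{i_N}^{\vee}\rangle x_j,
\]
the $x_j$-term arising from the twisting of the line bundle $\xii{l}{N}$ by the Chern classes of the earlier stages. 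Fiberwise integration then contributes a factor $\mathrm{sign}(x_N)\,\mathbf 1_{C_N(x)}(x_N)\,|dx_N|$ to the pushforward, where $C_N(x)$ is the signed interval between $0$ and $A_N(x)$, and iterating this procedure through the $N-1$ further stages yields exactly the measure $m_{C(\mathbf i, \mathbf a)}$.

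The main technical obstacle is the careful handling of signs and orientations when $\omega_{\mathbf i}$ is genuinely presymplectic, i.e.\ when some of the $a_k$ (and correspondingly some $A_k(x)$) are negative. The fiberwise 2-form may then fail to be positive and the moment map image is traversed with reversed orientation, producing the $\mathrm{sign}(x_\ell)$ factors in $\rho$. To make the iterative Fubini calculation unambiguous, I would choose $\omega_{\mathbf i}$ within its $\widetilde{\mathbf T}$-invariant cohomology class to be the curvature of a $\widetilde{\mathbf T}$-equivariant Hermitian metric on $\mathcal L_{\mathbf i, \mathbf a}$ built stage-by-stage from the Bott tower $\C P^1 \hookrightarrow Z_{\mathbf i}^0 \to Z_{\mathbf i'}^0$; this guarantees that the restriction of $\omega_{\mathbf i}$ to each fiber has constant sign and total mass the affine function $A_N(x)$ above. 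With this choice, the recursion closes and the result is identified with the Grossberg--Karshon twisted cube measure $m_{C(\mathbf i, \mathbf a)}$.
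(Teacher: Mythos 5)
The paper does not actually prove this statement: it is imported verbatim as \cite[Theorem~2]{GrKa94} and used as a black box in the proof of Theorem~\ref{thm_DH_measure_of_fBS}, so there is no internal proof to compare against. Your overall strategy --- degenerate to the Bott manifold $Z_{\mathbf i}^0$, use the fact that the Duistermaat--Heckman measure depends only on the invariant cohomology class (the paper's appeal to \cite[Theorem~1]{GrKa94} and~\eqref{eq_omega_and_omega_prime}), and then compute by iterating over the Bott tower --- is indeed the strategy of the original Grossberg--Karshon proof. Note, however, that you must also fix the additive normalization of the moment map (e.g.\ via the linearization of $\widetilde{\mathbf T}$ on $\mathcal L_{\mathbf i,\mathbf a}$), since $C(\mathbf i,\mathbf a)$ is a specific region, not a region defined only up to translation.

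The genuine gap is in your inductive step. You claim that the restriction of $\omega_{\mathbf i}$ to the last-stage fiber over a point with base moment coordinates $(x_1,\dots,x_{N-1})$ has total mass $A_N(x)=-\langle a_N\varpi_{i_N},\alpha_{i_N}^\vee\rangle-\sum_{j<N}\langle\alpha_{i_j},\alpha_{i_N}^\vee\rangle x_j$. This cannot happen: all fibers of $Z_{\mathbf i}^0\to Z_{\mathbf i'}^0$ are homologous and $\omega_{\mathbf i}$ is closed, so $\int_F\omega_{\mathbf i}=\langle c_1(\mathcal L_{\mathbf i,\mathbf a}),[F]\rangle=\pm a_N$ is the same constant for every fiber. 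It also conflicts with the definition of the twisted cube used in the paper, where $A_N$ is the constant $-\langle a_N\varpi_{i_N},\alpha_{i_N}^\vee\rangle$ and the twisting occurs in $A_\ell$ for $\ell<N$, whose bounds depend on the \emph{later} coordinates $x_j$, $j>\ell$ (with constant term $-\langle a_\ell\varpi_{i_\ell}+\cdots+a_N\varpi_{i_N},\alpha_{i_\ell}^\vee\rangle$ as well). In other words, the fiber-most direction has constant extent, and it is the extents of the earlier directions that shift affinely with the later coordinates; your recursion, as set up, produces a region of the wrong shape already for $N=2$ (a Hirzebruch surface). The correct mechanism is that the earlier moment components $x_\ell$ are not constant along the later-stage invariant spheres, because the stage-$j$ bundles are twisted over the stage-$\ell$ base with Chern numbers $\langle\alpha_{i_j},\alpha_{i_\ell}^\vee\rangle$, $j>\ell$ (cf.\ Theorem~\ref{thm_fBS_is_fBT}); a workable induction either peels off the \emph{first} stage (base $\C P^1$, fiber $Z_{(i_2,\dots,i_N)}$) and shows each slice in $x_1$ is the signed interval between $0$ and $A_1(x_2,\dots,x_N)$, or follows Grossberg--Karshon's explicit connection-form computation. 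A minor further point: in your base case the density should be $-\mathrm{sign}(A_1)$ (positive when $a_1>0$), consistent with $\rho=(-1)^N\mathrm{sign}(x_1)\cdots\mathrm{sign}(x_N)$.
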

\begin{proof}[Proof of Theorem~\ref{thm_DH_measure_of_fBS}]
	Suppose that $\mathbf i \in [n]^N$ defines a Bott--Samelson manifold 
	$Z_{\mathbf i}$ which has a birational morphism 
	$\eta \colon Z_{\mathbf i} \to Z_{\mathcal I}$.
	For given weights $\lambda_1,\dots,\lambda_r$, 
	let $\mathbf a \in \Z^N$ be an integer vector such that 
	$\eta^{\ast} \mathcal L_{\mathcal I, \lambda_1,\dots,\lambda_r} = \mathcal{L}_{\mathbf i,
		\mathbf a}$. 
	Consider the pullback of $\omega_{\mathcal I}$ under the map
	$\eta$. Then we have 
	$[\omega_{\mathbf i}] = [\eta^{\ast}(\omega_{\mathcal I})]$ in $H^2(Z_{\mathbf i}; \R)$ by~\eqref{eq_omega_and_omega_prime}.
	
	Now we have the following diagram which does not necessarily commute
	because two forms $\eta^{\ast} \omega_{\mathcal I}$ and  $\omega_{\mathbf i}$
	do not necessarily coincide because of taking averages:
	\[
	\begin{tikzcd}
	Z_{\mathbf i} \arrow[r, "\widetilde{\Phi}"]
	\arrow[d, "\eta"]
	& \R^N  \cong \Lie (\widetilde{\mathbf T})^{\ast}
	\arrow[d, "L"]\\
	Z_{\mathcal I} \arrow[r, "\Phi"]
	& \R^{m_1+ \cdots + m_r}  \cong \Lie(\mathbf T)^{\ast}
	\end{tikzcd}
	\]
	Here, the map $L \colon \R^N \to \R^{m_1+ \cdots +m_r}$ 
	is defined as $dA^{\ast}$, where
	$A \colon \mathbf{T}\rightarrow \widetilde{\mathbf T} $ 
	in~\eqref{eq_def_of_A}. 
	
	But one can see that $L \circ \widetilde{\Phi}$,
	respectively $\Phi \circ \eta$, is a moment map for 
	$(Z_{\mathbf i},  \omega_{\mathbf i}, \mathbf T)$, respectively
	$(Z_{\mathbf i},  \eta^{\ast} \omega_{\mathcal I}, \mathbf T)$.
	Recall from~\cite[Theorem 1]{GrKa94} that
	the push-forward of Liouville measure only depends on the cohomology
	class, so we have that
	\[
	(L \circ \widetilde{\Phi})_{\ast} \omega_{\mathbf i}^N
	= (\Phi \circ \eta)_{\ast} (\eta^{\ast} \omega_{\mathcal I})^N
	= \Phi_{\ast} \omega_{\mathcal I}^N.
	\]
		Here the last equality holds since $\eta$ induces
		a diffeomorphism between Zariski open dense subsets, 
		and a Zariski closed subset is measure zero. 
	By Theorem~\ref{thm_GK_DH_measure}, we have that
	$\Phi_{\ast} \omega_{\mathcal I}^N/N! = L_{\ast} m_{C(\mathbf i, \mathbf a)}$,
	so the result follows.
\end{proof}
\begin{example}\label{example_TC_projection}
	Let $G = \SL(4)$, $\mathcal I= (\{1,2\}, \{3\})$ and $\mathbf i =(1,2,1,3)$.
	The projection map $L = (dA)^{\ast} \colon \mathbb{R}^4 \to \mathbb{R}^3$ is
	given by the integer matrix
	\[
	\begin{bmatrix}
	1 & 0 & 1 & 0 \\
	0 & 1 & 0 & 0 \\
	0 & 0 & 0 & 1
	\end{bmatrix}
	\]
	as in Example~\ref{example_projection_A}.
	In Figure~\ref{figure_projections} we draw figures for four different pairs of 	weights $(\lambda_1,\lambda_2) = (2 \varpi_1 + 4\varpi_2, 2\varpi_3),
	(\varpi_1 + 4 \varpi_2, 2 \varpi_3),
	(2\varpi_1 + 4 \varpi_2, \varpi_3),
	(2\varpi_1 + 3 \varpi_2, 2\varpi_3)$
	which determine line bundles 
	$\mathcal{L}_{\mathcal I, \lambda_1,\lambda_2}$.
	The polytope in Figure~\ref{figure_projections}-(1) has eight facets. When
	we change an integer vector $(\lambda_1,\lambda_2)$ a little bit,  some facets move as one can
	see in the figure. In Figure~\ref{figure_projections}-(2), (3), (4) the red dots
	represent vertices of the projection for the corresponding integer vector,
	and the blue dots represent vertices of the projection for 
	$(\lambda_1,\lambda_2) = (2\varpi_1 + 4 \varpi_2, 2\varpi_3)$.
	For pairs $(2 \varpi_1 + 4\varpi_2, 2\varpi_3)$,
	$(\varpi_1 + 4 \varpi_2, 2 \varpi_3)$, and
	$(2\varpi_1 + 4 \varpi_2, \varpi_3)$, the projections are honest polytopes
	while the projection for $(2\varpi_1 +3\varpi_2, 2\varpi_3)$ is not.
	\begin{figure}[t]
		\tabcolsep=10pt\renewcommand*{\arraystretch}{2}
		\begin{subfigure}{0.45\linewidth}
			\begin{tikzpicture}[x = {(4mm, 1.8mm)}, y={(-4mm,1.8mm)},z={(0cm,2cm)}, scale=0.8]
			\definecolor{mycolor1}{rgb}{ 0 ,   0.4470  ,  0.7410};
			\definecolor{mycolor2}{rgb}{ 0.8500 ,   0.3250 ,   0.0980};
			\definecolor{mycolor3}{rgb}{ 0.9290 ,   0.6940  ,  0.1250};
			\definecolor{mycolor4}{rgb}{0.4940 ,   0.1840  ,  0.5560};	
			\definecolor{mycolor5}{rgb}{  0  ,  0.4470  ,  0.7410};
			\definecolor{mycolor6}{rgb}{0.4660  ,  0.6740  ,  0.1880};
			\definecolor{mycolor7}{rgb}{0.3010 ,   0.7450  ,  0.9330};
			\definecolor{mycolor8}{rgb}{ 0.6350 ,   0.0780  ,  0.1840};

			\coordinate (1) at (-6,0,0);
			\coordinate (2) at (-4,0,-2);
			\coordinate (3) at (-6,2,0);
			\coordinate (4) at (-4,4,-2);
			\coordinate (5) at (0,0,0);
			\coordinate (6) at (0,0,-2);
			\coordinate (7) at (-2,2,0);
			\coordinate (8) at (-4,4,-2);
			\coordinate (9) at (-2,-4,0);
			\coordinate (10) at (-4,0,-2);
			\coordinate (11) at (0,-4,0);
			\coordinate (12) at (0,0,-2);
			
			\begin{scope}[color=gray!50, thin]
			\foreach \xi in {0,-2,-4,-6} { \draw (\xi, 4,0) -- (\xi, 4,-2) -- (\xi, -4, -2); }%
			\foreach \yi in {-4,-2,0,2,4} {\draw (-6,\yi,-2) -- (0,\yi,-2) -- (0,\yi,0);}%
			\foreach \zi in {0,-0.5,-1,-1.5,-2} {\draw (-6,4,\zi) -- (0,4,\zi) -- (0,-4,\zi);}%
			\end{scope}
			
			\draw[thin] (-6,4,0) -- (-6,4,-2) -- (-6,-4,-2) -- (0,-4,-2);
			\foreach \xi in {0,-2,-4,-6} {\draw (\xi,-4,-2) -- (\xi, -4.1, -2) node[anchor=west] {\tiny{\xi}};}
			\foreach \yi in {-4,-2,0,2,4} {\draw (-6,\yi,-2) -- (-6.1,\yi, -2) node[anchor=north] {\tiny{\yi}};}
			\foreach \zi in {0,-0.5,-1,-1.5,-2} {\draw (-6,4,\zi) -- (-6,4.1,\zi) node[anchor=east] {\tiny{\zi}};}
			
			\node at (-3,-5,-2) {$x$};
			\node at (-8,-1,-2) {$y$};
			\node at (-6,6,-1) {$z$};

			\draw[thick, mycolor1] (1) -- (9) -- (11) -- (5) -- (7) -- (3) -- cycle;
			\draw[thick, mycolor2] (5) -- (6) -- (8) -- (7) --cycle;
			\draw[thick, mycolor3] (5) -- (6) -- (12) -- (11) -- cycle;
			\draw[thick, mycolor4] (1) -- (3)-- (4) -- (2) -- cycle;
			\draw[thick, mycolor5] (2) -- (10) -- (12) -- (6) -- (8) -- (4) -- cycle;
			\draw[thick, mycolor6] (3)-- (7)-- (8)--(4) --cycle;
			\draw[thick, mycolor7] (9) -- (10) --(12)--(11)--cycle;
			\draw[thick, mycolor8] (9)--(10)--(2)--(1)--cycle;
			
			\foreach \x in {1,...,12}{
				\node[circle,fill=blue,inner sep=1pt] at (\x) {};
			}
			
			\end{tikzpicture}
			\caption{$(\lambda_1,\lambda_2) = (2\varpi_1+4\varpi_2,2\varpi_3)$.}
		\end{subfigure}
		~~~~
		\begin{subfigure}{0.45\linewidth}
			\begin{tikzpicture}[x = {(4mm, 1.8mm)}, y={(-4mm,1.8mm)},z={(0cm,2cm)}, scale=0.8]
			\definecolor{mycolor1}{rgb}{ 0 ,   0.4470  ,  0.7410};
			\definecolor{mycolor2}{rgb}{ 0.8500 ,   0.3250 ,   0.0980};
			\definecolor{mycolor3}{rgb}{ 0.9290 ,   0.6940  ,  0.1250};
			\definecolor{mycolor4}{rgb}{0.4940 ,   0.1840  ,  0.5560};	
			\definecolor{mycolor5}{rgb}{  0  ,  0.4470  ,  0.7410};
			\definecolor{mycolor6}{rgb}{0.4660  ,  0.6740  ,  0.1880};
			\definecolor{mycolor7}{rgb}{0.3010 ,   0.7450  ,  0.9330};
			\definecolor{mycolor8}{rgb}{ 0.6350 ,   0.0780  ,  0.1840};

			\coordinate (1) at (-6,0,0);
			\coordinate (2) at (-4,0,-2);
			\coordinate (3) at (-6,2,0);
			\coordinate (4) at (-4,4,-2);
			\coordinate (5) at (0,0,0);
			\coordinate (6) at (0,0,-2);
			\coordinate (7) at (-2,2,0);
			\coordinate (8) at (-4,4,-2);
			\coordinate (9) at (-2,-4,0);
			\coordinate (10) at (-4,0,-2);
			\coordinate (11) at (0,-4,0);
			\coordinate (12) at (0,0,-2);
			
			\coordinate (n1) at (-5,0,0);
			\coordinate (n2) at (-3,0,-2);
			\coordinate (n3) at (-5,1,0);
			\coordinate (n4) at (-3,3,-2);
			\coordinate (n5) at (0,0,0);
			\coordinate (n6) at (0,0,-2);
			\coordinate (n7) at (-1,1,0);
			\coordinate (n8) at (-3,3,-2);
			\coordinate (n9) at (-1,-4,0);
			\coordinate (n10) at (-3,0,-2);
			\coordinate (n11) at (0,-4,0);
			\coordinate (n12) at (0,0,-2);
			
			\begin{scope}[color=gray!50, thin]
			\foreach \xi in {0,-2,-4,-6} { \draw (\xi, 4,0) -- (\xi, 4,-2) -- (\xi, -4, -2); }%
			\foreach \yi in {-4,-2,0,2,4} {\draw (-6,\yi,-2) -- (0,\yi,-2) -- (0,\yi,0);}%
			\foreach \zi in {0,-0.5,-1,-1.5,-2} {\draw (-6,4,\zi) -- (0,4,\zi) -- (0,-4,\zi);}%
			\end{scope}
			
			\draw[thin] (-6,4,0) -- (-6,4,-2) -- (-6,-4,-2) -- (0,-4,-2);
			\foreach \xi in {0,-2,-4,-6} {\draw (\xi,-4,-2) -- (\xi, -4.1, -2) node[anchor=west] {\tiny{\xi}};}
			\foreach \yi in {-4,-2,0,2,4} {\draw (-6,\yi,-2) -- (-6.1,\yi, -2) node[anchor=north] {\tiny{\yi}};}
			\foreach \zi in {0,-0.5,-1,-1.5,-2} {\draw (-6,4,\zi) -- (-6,4.1,\zi) node[anchor=east] {\tiny{\zi}};}
			
			\node at (-3,-5,-2) {$x$};
			\node at (-8,-1,-2) {$y$};
			\node at (-6,6,-1) {$z$};

			\draw[dashed, mycolor1] (1) -- (9) -- (11) -- (5) -- (7) -- (3) -- cycle;
			\draw[dashed,mycolor2] (5) -- (6) -- (8) -- (7) --cycle;
			\draw[dashed,mycolor3] (5) -- (6) -- (12) -- (11) -- cycle;
			\draw[dashed,mycolor4] (1) -- (3)-- (4) -- (2) -- cycle;
			\draw[dashed,mycolor5] (2) -- (10) -- (12) -- (6) -- (8) -- (4) -- cycle;
			\draw[dashed,mycolor6] (3)-- (7)-- (8)--(4) --cycle;
			\draw[dashed,mycolor7] (9) -- (10) --(12)--(11)--cycle;
			\draw[dashed,mycolor8] (9)--(10)--(2)--(1)--cycle;
			
			\draw[thick, mycolor1] (n1) -- (n9) -- (n11) -- (n5) -- (n7) -- (n3) -- cycle;
			\draw[thick, mycolor2] (n5) -- (n6) -- (n8) -- (n7) --cycle;
			\draw[thick, mycolor3] (n5) -- (n6) -- (n12) -- (n11) -- cycle;
			\draw[thick, mycolor4] (n1) -- (n3)-- (n4) -- (n2) -- cycle;
			\draw[thick, mycolor5] (n2) -- (n10) -- (n12) -- (n6) -- (n8) -- (n4) -- cycle;
			\draw[thick, mycolor6] (n3)-- (n7)-- (n8)--(n4) --cycle;
			\draw[thick, mycolor7] (n9) -- (n10) --(n12)--(n11)--cycle;
			\draw[thick, mycolor8] (n9)--(n10)--(n2)--(n1)--cycle;
			
			\foreach \x in {1,...,12}{
				\node[circle,fill=blue,inner sep=1pt] at (\x) {};
				\node[circle,fill=red,inner sep=1pt] at (n\x) {};
			}
			\end{tikzpicture}
			\caption{$(\lambda_1,\lambda_2) = (\varpi_1+4\varpi_2,2\varpi_3)$.}
		\end{subfigure}
		\par\bigskip\par\bigskip
		\begin{subfigure}{0.45\linewidth}
			\begin{tikzpicture}[x = {(4mm, 1.8mm)}, y={(-4mm,1.8mm)},z={(0cm,2cm)}, scale=0.8]
			\definecolor{mycolor1}{rgb}{ 0 ,   0.4470  ,  0.7410};
			\definecolor{mycolor2}{rgb}{ 0.8500 ,   0.3250 ,   0.0980};
			\definecolor{mycolor3}{rgb}{ 0.9290 ,   0.6940  ,  0.1250};
			\definecolor{mycolor4}{rgb}{0.4940 ,   0.1840  ,  0.5560};	
			\definecolor{mycolor5}{rgb}{  0  ,  0.4470  ,  0.7410};
			\definecolor{mycolor6}{rgb}{0.4660  ,  0.6740  ,  0.1880};
			\definecolor{mycolor7}{rgb}{0.3010 ,   0.7450  ,  0.9330};
			\definecolor{mycolor8}{rgb}{ 0.6350 ,   0.0780  ,  0.1840};

			\coordinate (1) at (-6,0,0);
			\coordinate (2) at (-4,0,-2);
			\coordinate (3) at (-6,2,0);
			\coordinate (4) at (-4,4,-2);
			\coordinate (5) at (0,0,0);
			\coordinate (6) at (0,0,-2);
			\coordinate (7) at (-2,2,0);
			\coordinate (8) at (-4,4,-2);
			\coordinate (9) at (-2,-4,0);
			\coordinate (10) at (-4,0,-2);
			\coordinate (11) at (0,-4,0);
			\coordinate (12) at (0,0,-2);
			
			\coordinate (n1) at (-6,0,0);
			\coordinate (n2) at (-5,0,-1);
			\coordinate (n3) at (-6,2,0);
			\coordinate (n4) at (-5,3,-1);
			\coordinate (n5) at (0,0,0);
			\coordinate (n6) at (0,0,-1);
			\coordinate (n7) at (-2,2,0);
			\coordinate (n8) at (-3,3,-1);
			\coordinate (n9) at (-2,-4,0);
			\coordinate (n10) at (-3,-2,-1);
			\coordinate (n11) at (0,-4,0);
			\coordinate (n12) at (0,-2,-1);
			
			\begin{scope}[color=gray!50, thin]
			\foreach \xi in {0,-2,-4,-6} { \draw (\xi, 4,0) -- (\xi, 4,-2) -- (\xi, -4, -2); }%
			\foreach \yi in {-4,-2,0,2,4} {\draw (-6,\yi,-2) -- (0,\yi,-2) -- (0,\yi,0);}%
			\foreach \zi in {0,-0.5,-1,-1.5,-2} {\draw (-6,4,\zi) -- (0,4,\zi) -- (0,-4,\zi);}%
			\end{scope}
			
			\draw[thin] (-6,4,0) -- (-6,4,-2) -- (-6,-4,-2) -- (0,-4,-2);
			\foreach \xi in {0,-2,-4,-6} {\draw (\xi,-4,-2) -- (\xi, -4.1, -2) node[anchor=west] {\tiny{\xi}};}
			\foreach \yi in {-4,-2,0,2,4} {\draw (-6,\yi,-2) -- (-6.1,\yi, -2) node[anchor=north] {\tiny{\yi}};}
			\foreach \zi in {0,-0.5,-1,-1.5,-2} {\draw (-6,4,\zi) -- (-6,4.1,\zi) node[anchor=east] {\tiny{\zi}};}
			
			\node at (-3,-5,-2) {$x$};
			\node at (-8,-1,-2) {$y$};
			\node at (-6,6,-1) {$z$};

			\draw[dashed, mycolor1] (1) -- (9) -- (11) -- (5) -- (7) -- (3) -- cycle;
			\draw[dashed,mycolor2] (5) -- (6) -- (8) -- (7) --cycle;
			\draw[dashed,mycolor3] (5) -- (6) -- (12) -- (11) -- cycle;
			\draw[dashed,mycolor4] (1) -- (3)-- (4) -- (2) -- cycle;
			\draw[dashed,mycolor5] (2) -- (10) -- (12) -- (6) -- (8) -- (4) -- cycle;
			\draw[dashed,mycolor6] (3)-- (7)-- (8)--(4) --cycle;
			\draw[dashed,mycolor7] (9) -- (10) --(12)--(11)--cycle;
			\draw[dashed,mycolor8] (9)--(10)--(2)--(1)--cycle;
			
			\draw[thick, mycolor1] (n1) -- (n9) -- (n11) -- (n5) -- (n7) -- (n3) -- cycle;
			\draw[thick, mycolor2] (n5) -- (n6) -- (n8) -- (n7) --cycle;
			\draw[thick, mycolor3] (n5) -- (n6) -- (n12) -- (n11) -- cycle;
			\draw[thick, mycolor4] (n1) -- (n3)-- (n4) -- (n2) -- cycle;
			\draw[thick, mycolor5] (n2) -- (n10) -- (n12) -- (n6) -- (n8) -- (n4) -- cycle;
			\draw[thick, mycolor6] (n3)-- (n7)-- (n8)--(n4) --cycle;
			\draw[thick, mycolor7] (n9) -- (n10) --(n12)--(n11)--cycle;
			\draw[thick, mycolor8] (n9)--(n10)--(n2)--(n1)--cycle;
			
			\foreach \x in {1,...,12}{
				\node[circle,fill=blue,inner sep=1pt] at (\x) {};
				\node[circle,fill=red,inner sep=1pt] at (n\x) {};
			}
			\end{tikzpicture}
			\caption{$(\lambda_1,\lambda_2) = (2\varpi_1+4\varpi_2,\varpi_3)$.}
		\end{subfigure}
		~~~~
		\begin{subfigure}{0.45\linewidth}
			\begin{tikzpicture}[x = {(4mm, 1.8mm)}, y={(-4mm,1.8mm)},z={(0cm,2cm)}, scale=0.8]
			\definecolor{mycolor1}{rgb}{ 0 ,   0.4470  ,  0.7410};
			\definecolor{mycolor2}{rgb}{ 0.8500 ,   0.3250 ,   0.0980};
			\definecolor{mycolor3}{rgb}{ 0.9290 ,   0.6940  ,  0.1250};
			\definecolor{mycolor4}{rgb}{0.4940 ,   0.1840  ,  0.5560};	
			\definecolor{mycolor5}{rgb}{  0  ,  0.4470  ,  0.7410};
			\definecolor{mycolor6}{rgb}{0.4660  ,  0.6740  ,  0.1880};
			\definecolor{mycolor7}{rgb}{0.3010 ,   0.7450  ,  0.9330};
			\definecolor{mycolor8}{rgb}{ 0.6350 ,   0.0780  ,  0.1840};

			\coordinate (1) at (-6,0,0);
			\coordinate (2) at (-4,0,-2);
			\coordinate (3) at (-6,2,0);
			\coordinate (4) at (-4,4,-2);
			\coordinate (5) at (0,0,0);
			\coordinate (6) at (0,0,-2);
			\coordinate (7) at (-2,2,0);
			\coordinate (8) at (-4,4,-2);
			\coordinate (9) at (-2,-4,0);
			\coordinate (10) at (-4,0,-2);
			\coordinate (11) at (0,-4,0);
			\coordinate (12) at (0,0,-2);
			
			\coordinate (n1) at (-5,0,0);
			\coordinate (n2) at (-3,0,-2);
			\coordinate (n3) at (-5,2,0);
			\coordinate (n4) at (-3,4,-2);
			\coordinate (n5) at (0,0,0);
			\coordinate (n6) at (0,0,-2);
			\coordinate (n7) at (-2,2,0);
			\coordinate (n8) at (-4,4,-2);
			\coordinate (n9) at (-2,-3,0);
			\coordinate (n10) at (-4,1,-2);
			\coordinate (n11) at (0,-3,0);
			\coordinate (n12) at (0,1,-2);
			
			\begin{scope}[color=gray!50, thin]
			\foreach \xi in {0,-2,-4,-6} { \draw (\xi, 4,0) -- (\xi, 4,-2) -- (\xi, -4, -2); }%
			\foreach \yi in {-4,-2,0,2,4} {\draw (-6,\yi,-2) -- (0,\yi,-2) -- (0,\yi,0);}%
			\foreach \zi in {0,-0.5,-1,-1.5,-2} {\draw (-6,4,\zi) -- (0,4,\zi) -- (0,-4,\zi);}%
			\end{scope}
			
			\draw[thin] (-6,4,0) -- (-6,4,-2) -- (-6,-4,-2) -- (0,-4,-2);
			\foreach \xi in {0,-2,-4,-6} {\draw (\xi,-4,-2) -- (\xi, -4.1, -2) node[anchor=west] {\tiny{\xi}};}
			\foreach \yi in {-4,-2,0,2,4} {\draw (-6,\yi,-2) -- (-6.1,\yi, -2) node[anchor=north] {\tiny{\yi}};}
			\foreach \zi in {0,-0.5,-1,-1.5,-2} {\draw (-6,4,\zi) -- (-6,4.1,\zi) node[anchor=east] {\tiny{\zi}};}
			
			\node at (-3,-5,-2) {$x$};
			\node at (-8,-1,-2) {$y$};
			\node at (-6,6,-1) {$z$};

			\draw[dashed, mycolor1] (1) -- (9) -- (11) -- (5) -- (7) -- (3) -- cycle;
			\draw[dashed,mycolor2] (5) -- (6) -- (8) -- (7) --cycle;
			\draw[dashed,mycolor3] (5) -- (6) -- (12) -- (11) -- cycle;
			\draw[dashed,mycolor4] (1) -- (3)-- (4) -- (2) -- cycle;
			\draw[dashed,mycolor5] (2) -- (10) -- (12) -- (6) -- (8) -- (4) -- cycle;
			\draw[dashed,mycolor6] (3)-- (7)-- (8)--(4) --cycle;
			\draw[dashed,mycolor7] (9) -- (10) --(12)--(11)--cycle;
			\draw[dashed,mycolor8] (9)--(10)--(2)--(1)--cycle;
			
			\draw[thick, mycolor1] (n1) -- (n9) -- (n11) -- (n5) -- (n7) -- (n3) -- cycle;
			\draw[thick, mycolor2] (n5) -- (n6) -- (n8) -- (n7) --cycle;
			\draw[thick, mycolor3] (n5) -- (n6) -- (n12) -- (n11) -- cycle;
			\draw[thick, mycolor4] (n1) -- (n3)-- (n4) -- (n2) -- cycle;
			\draw[thick, mycolor5] (n2) -- (n10) -- (n12) -- (n6) -- (n8) -- (n4) -- cycle;
			\draw[thick, mycolor6] (n3)-- (n7)-- (n8)--(n4) --cycle;
			\draw[thick, mycolor7] (n9) -- (n10) --(n12)--(n11)--cycle;
			\draw[thick, mycolor8] (n9)--(n10)--(n2)--(n1)--cycle;
			
			\foreach \x in {1,...,12}{
				\node[circle,fill=blue,inner sep=1pt] at (\x) {};
				\node[circle,fill=red,inner sep=1pt] at (n\x) {};
			}
			
			\end{tikzpicture}
			\caption{$(\lambda_1,\lambda_2) = (2\varpi_1+3\varpi_2,2\varpi_3)$.}
		\end{subfigure}
		\caption{The projection images of Grossberg--Karshon twisted cubes.}
		\label{figure_projections}
	\end{figure}
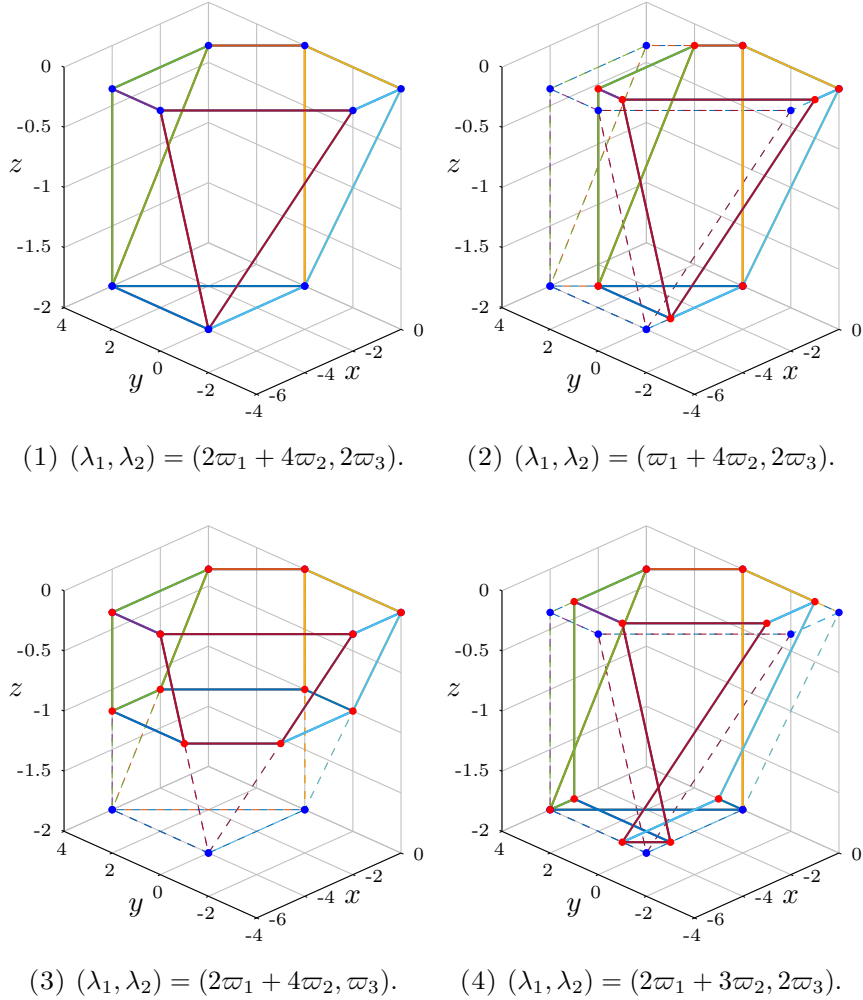
\end{example}
\begin{remark}
	Note that a Grossberg--Karshon twisted cube is neither closed not convex. When the Grossberg--Karshon twisted cube is a closed convex polytope, then we say it is \defi{untwisted}. In~\cite{Lee_GKcube}, an interpretation of untwistedness of Grossberg--Karshon twisted cubes $C(\mathbf i, \mathbf a)$ using combinatorics of $\mathbf i$ and $\mathbf a$ is provided. (Also, see~\cite{HaYa_GKcube, HaLee_GKcube}.) Using the result~\cite[Theorem~1]{Lee_GKcube}, Grossberg--Karshon twisted cubes appearing in Example~\ref{example_TC_projection} are all \textit{twisted}. However, their projections can be honest polytopes as we saw in Figure~\ref{figure_projections}. Determining whether the projection of a Grossberg--Karshon twisted cube is an honest polytope is a still open problem.
\end{remark}

%%%%%%%%%%%%%%%%%%%%%%%%%%%%%%%%%%%%%%%%%%%%%%%%%%%%%%%%%%%%%%%%%%%%%%%%

%\bibliographystyle{plain}
%\bibliography{fBS}

%%%%%%%%%%%%%%%%%%%%%%%%%%%%%%%%%%%%%%%%%%%%%%%%%%%%%%%%%%%%%%%%%%%%%%%%
\end{document}